\definecolor{Brown}{rgb}{.75,.5,.25}
\definecolor{DGreen}{rgb}{0,0.55,0}
\newcommand{\mathd}{\mathrm{d}}
\newcommand{\dd}{\mathrm{d}}
\newtheorem{theorem}{Theorem}[section]
\newtheorem{ass}[theorem]{Assumption}
\newtheorem{prop}[theorem]{Proposition}
\newtheorem{lemma}[theorem]{Lemma}
\newtheorem{corollary}[theorem]{Corollary}
\theoremstyle{definition}
\newtheorem{remark}[theorem]{Remark}
\newtheorem{example}[theorem]{Example}
\newtheorem{definition}[theorem]{Definition}
\newcommand{\red}[1]{{\color{red} #1}}
\newcommand{\blue}[1]{{\color{blue} #1}}
\newcommand{\cF}{\mathcal{F}}
\newcommand{\cA}{\mathcal{A}}
\newcommand{\bF}{\mathbb{F}}
\newcommand{\cL}{\mathcal{L}}
\newcommand{\cP}{\mathcal{P}}
\newcommand{\cD}{\mathcal{D}}
\newcommand{\cS}{\mathcal{S}}
\newcommand{\cH}{\mathcal{H}}
\newcommand{\R}{\mathbb{R}}
\newcommand{\E}{\mathbb{E}}
\newcommand{\N}{\mathbb{N}}
\newcommand{\PP}{\mathbb{P}}
\newcommand{\eps}{\varepsilon}
\newcommand{\one}{\mathbf{1}}
\newcommand{\id}{{\mathrm{id}}}
\newcommand{\var}{{\mathrm{var}}}
\newcommand{\loc}{\mathrm{loc}}
\newcommand{\sign}{{\mathrm{sign}}}
\title{Solution theory of fractional SDEs\\ in complete subcritical regimes}
\author{Lucio Galeati and M\'at\'e Gerencs\'er}
\begin{document}
\maketitle
\begin{abstract}
We consider stochastic differential equations (SDEs) driven by a fractional Brownian motion with a drift coefficient that is allowed to be arbitrarily close to criticality in a scaling sense. We develop a comprehensive solution theory that includes strong existence, path-by-path uniqueness, existence of a solution flow of diffeomorphisms, Malliavin differentiability and $\rho$-irregularity.
As a consequence, we can also treat McKean-Vlasov, transport and continuity equations.
\end{abstract}
\tableofcontents

\section{Introduction}\label{sec:intro}
Given a vector field $b:\R_+\times\R^d\to\R^d$, an initial condition $x_0\in\R^d$, and a function $f:\R_+\to\R^d$, consider the differential equation
\begin{equation}\label{eq:ODE}
  X_t = x_0 + \int_0^t b_r ( X_r) \mathd r + f_t.
\end{equation}
When $f$ is chosen according to some random distribution, one obtains a stochastic differential equation (SDE), which often exhibits much better properties than the unperturbed equation ($f\equiv 0$), even at the level of existence and uniqueness of solutions.
This phenomenon is often referred to as \textit{regularisation by noise} and its study goes back to the works of Zvonkin \cite{zvonkin1974} and Veterennikov \cite{veretennikov1981}, see the monograph \cite{flandoli2011random} for a survey in the case of standard  Brownian $f$.


Although there is plenty of evidence \cite{Davie,CG16,HP,galeati2021noiseless} that it is the \emph{pathwise} properties of the perturbation that determine the regularisation effects, the available results are far more abundant in the Brownian, and in general, the Markovian case. 

However, a wide variety of applications motivate models with \emph{anomalous diffusions} with long-range memory, including statistical description of turbulence \cite{Kolmogorov}, hydrology \cite{Hurst}, anomalous polymer dynamics \cite{Panja}, diffusion in living cells \cite{Cell}, rough volatility models in finance \cite{Gatheral}.
Such non-Markovian processes are commonly modeled by fractional Brownian motion (fBm).
In this case the lack of Markovian and semimartingale structure renders a large part of a ``standard'' toolbox (It\^o's formula, Kolmogorov equations, Zvonkin transformation, martingale problem) unavailable.
Nevertheless, since fBm paths share many properties with the standard Brownian ones (up to changes in the scaling exponents), one would expect similar regularisation phenomena.

The goal of the present work is twofold.
First, we provide the first well-posedness results in the case of non-Markovian noise under demonstrably sharp conditions on $b$. The optimality follows both from a scaling heuristic (see Section \ref{sec:scaling-heuristic} below) and from rigorous construction of counterexamples (see Section \ref{sec:counterexample} below).
The second goal is to expand the existing well-posedness theory by studying various properties of the solutions that are well-known (though often nontrivial) in the Brownian case, but much less so for fractional noise.
These include existence, regularity, invertibility of the solution flow, stability with respect to perturbations of the initial condition and/or the nonlinearity, and Malliavin differentiability. The proofs can also be of interest in cases where the results are not new: the methods presented here go beyond not only the Markovian framework, but also the scope of Girsanov's theorem (see Remark \ref{rem:Girsanov} and Appendix \ref{app:girsanov}).

At the same time, the idea is quite intuitive: in order to develop a strong solution theory for \eqref{eq:ODE}, it is natural to investigate first the solvability of the \emph{linearised equation} around any given solution $X$, namely to show that
\begin{equation}\label{eq:ODE-linearised}
Y_t= y + \int_0^t \nabla b_r(X_r) Y_r\, \dd r 
\end{equation}
has a well-defined, unique solution for any $y\in \R^d$; observe that, due to its additive nature, the perturbation $f$ does not appear in \eqref{eq:ODE-linearised}. The study of \eqref{eq:ODE-linearised} is perfectly in line with the classical setting of a continuously differentiable drift $b$, where \eqref{eq:ODE-linearised} can be solved directly and its behaviour matches the Gr\"onwall-type estimates encountered when looking at the difference of any two solutions.
However if $b$ is not assumed to be differentiable, $\nabla b_r(X_r)$ a priori doesn't make sense and thus a standard interpretation for \eqref{eq:ODE-linearised} is no longer possible. The key idea in order to overcome this difficulty is two-fold:
\begin{itemize}
\item[a)] $\nabla b(\cdot)$ in \eqref{eq:ODE-linearised} is not evaluated at arbitrary space points, rather along the solution $X$, which can have very special properties inherited from the noise $f$.
\item[b)] In order to give meaning to \eqref{eq:ODE-linearised} in a \textit{Young integral sense}, we don't need to define $\nabla b_r(X_r)$ pointwise, instead it suffices to show that the path
\begin{equation}\label{eq:intro-additive}
t\mapsto L_t:=\int_0^t \nabla b_r(X_r) \dd r
\end{equation}
is well-defined and enjoys sufficiently nice time regularity (more precisely, it is of finite $p$-variation for some $p<2$).  In view of a), depending on the structure of the noise $f$, this can be a much more reasonable requirement.
\end{itemize}
In analogy with the Lipschitz setting, one can then transfer estimates for classical linear Young equations of the form
\begin{equ}\label{eq:introbound}
\sup_{t\in [0,1]} |Y_t|\lesssim e^{C\|L\|_{p-\var}^p} |y|
\end{equ}
to \emph{pathwise bounds} for the difference of any two solutions $X$ and $\tilde X$ with different initial conditions, up to replacing $L$ by another process $\hat L=\hat L(X,\tilde X)$ similar in spirit to \eqref{eq:intro-additive}.

In order to rigorously formalise all of the above, it is crucial to identify the correct space of perturbations $\varphi$ such that $X=\varphi+f$ indeed inherits the relevant properties from $f$; these are the a priori estimates given by Lemmas \ref{lem:drift-regularity}-\ref{lem:apriori-estim}. Correspondingly, we formulate two new versions of the 
Stochastic Sewing Lemma (SSL) by L\^e \cite{Khoa}, cf. Lemmas \ref{lem:SSL1} and \ref{lem:SSL2} below, which are tailor-made for our analysis.
Once this setup is in place, it provides exponential moment estimates of certain additive functionals of $X$, like the one defined in \eqref{eq:intro-additive}, turning pathwise bounds like \eqref{eq:introbound} into moment bounds. Finally, once the behaviour of the linearised equation \eqref{eq:ODE-linearised} is understood, many further properties (uniqueness, stability, differentiability of the flow) of the ODE follow similarly. 

\subsection{Scaling heuristics and existing literature}\label{sec:scaling-heuristic}
One way to have a unified view on the many works on regularization by noise is by a scaling argument; for a similar approach in the Brownian setting and $L^q_t L^p_x$ spaces, see \cite[Section 1.5]{beck2019stochastic}.
 
From now on we sample the perturbation as a fBm $B^H$ with Hurst parameter $H\in(0,+\infty) \setminus \mathbb{N}$, which satisfies the scaling relation
\begin{equ}\label{eq:fBM scaling}
(B^H_t)_{t\geq0}\overset{\mathrm{law}}{=}(\lambda^{-H}B^H_{\lambda t})_{t\geq 0}, \quad \forall\, \lambda>0.
\end{equ}
Details about the processes $B^H$ are given in Section \ref{sec:prelim-fbm} below; let us just briefly recall that $H=1/2$ gives the standard Brownian motion, that this is the only case where $B^H$ is a Markov process.
For the values $H=k+1/2$, $k\in\N_+$ (which we call ``degenerate Brownian'') 
the Markovian toolbox is still available, since the SDE can be rewritten as a higher dimensional equation driven by degenerate Brownian noise, see e.g. \cite{CM-weak}.
For all other choices of $H$ such tools
are unavailable and the study of the SDE requires a fundamentally different approach.
The equation then takes the form
\begin{equation}\label{eq:SDE}
  X_t = x_0 + \int_0^t b_r ( X_r) \mathd r + B^H_t.
\end{equation}
In order for the regularising effects of $B^H$ to dominate the irregularities of $b$, it is natural to require that, when zooming into small scales in a way that keeps the noise strength constant, the nonlinearity vanishes; if this weren't the case, and the nonlinearity were dominant, we would expect to see all the same pathologies (e.g. coalescence or branching of solutions) which could manifest in the ODE without noise.
Therefore, keeping \eqref{eq:fBM scaling} in mind, for a fixed parameter $H$ we call a space $V$ of functions (or distributions) on $\R_+\times\R^d$ \emph{critical} (resp. \emph{subcritical}/\emph{supercritical}) if for the rescaled drift coefficient
\begin{equ}
b^\lambda_t(x)=\lambda^{1-H} b(\lambda t, \lambda^H x),
\end{equ}
the \emph{leading order seminorm} $\llbracket b^\lambda\rrbracket_V$ (see the examples below for its practical meaning) scales like $\lambda^\gamma \llbracket b\rrbracket_V$, for all $\lambda\leq 1$,\footnote{
Specifically, we are interested in understanding how $\llbracket b^\lambda\rrbracket_V$ scales as $\lambda\to 0$, which is related to studying the local behaviour of solutions; instead the scaling of $\llbracket b^\lambda\rrbracket_V$ as $\lambda\to\infty$ reflects a ``zoom out'' which identifies the dominant term concerning the long-time dynamics.} with $\gamma=0$ (resp. $\gamma>0$/$\gamma<0$).

We refer to Section \ref{sec:notation} for more details on the function spaces appearing in the upcoming examples.

\begin{example}\label{ex:Calpha}
Consider autonomous, inhomogeneous H\"older-Besov spaces $V=B^\alpha_{\infty,\infty}$, where $b$ does not depend on the time variable.
Here the leading order seminorm is the associated homogeneous seminorm, namely we set $\llbracket f\rrbracket_V:=\| f\|_{\dot{B}^\alpha_{\infty,\infty}}$ as defined in \cite{BCD}; alternatively, for $f\in B^\alpha_{\infty,\infty}$ and $\alpha\geq 0$, one can regard it as $\| (-\Delta)^{\alpha/2} f\|_{B^0_{\infty,\infty}}$ , while for $\alpha<0$ one can define it by duality with the homogeneous seminorm of $\dot{B}^{-\alpha}_{1,1}$. Either way, one finds the scaling relation
\begin{equation*}
\| f(\eta\, \cdot)\|_{\dot{B}^\alpha_{\infty,\infty}} \sim_\alpha \eta^\alpha \| f(\cdot)\|_{\dot{B}^\alpha_{\infty,\infty}}  \quad\, \forall\, (\eta,\alpha)\in \R_{>0}\times \R.
\end{equation*}
Combined with our definition of $b^\lambda$, one finds $\gamma=1-H+\alpha H$ and so the subcriticality condition reads as
\begin{equ}\label{444}
\alpha>1-\frac{1}{H}.
\end{equ}
However, even in the classical Brownian case, where one gets the condition $\alpha>-1$, this remains out of reach.
Weak well-posedness is known for $\alpha>-1/2$ \cite{FIR}, and a nonstandard kind of well-posedness (where uniqueness is even weaker than uniqueness in law) is shown for $\alpha>-2/3$ \cite{Delarue, CC}, for special classes of drift $b$.
The classical works \cite{zvonkin1974, veretennikov1981} show strong well-posedness for $V=C^\alpha_x$ and $\alpha\geq 0$.\footnote{Please see our convention on the definition of $C^\alpha_x$ from Section \ref{sec:notation} below, especially for $\alpha\in\N$; in particular, $C^0_x$ is understood as the space bounded and measurable functions, with $L^\infty$-norm.}
Interestingly, in the degenerate Brownian case weak well-posedness is proved in \cite{CM-weak} in the full regime $\alpha>(2k-1)/(2k+1)$, which is precisely the condition \eqref{444}.
For strong well-posedness one requires the more restrictive condition
\begin{equ}
\alpha>1-\frac{1}{2H},
\end{equ}
see \cite[Equation (1.11)]{CHM}.
The same condition is required for strong well-posedness in the non-Markovian case for all $H\in(0,\infty)\setminus\N$, cf. \cite{NO1, CG16, galeati2021noiseless, gerencser2020regularisation}. After the first version of this manuscript, the work \cite{butkovsky2023} appeared, where the authors are able to establish (among several results) weak existence of solutions in the full subcritical regime \eqref{444}, under the additional assumption that $b$ is a Radon measure; however, uniqueness is still open.
\end{example}

\begin{example}\label{ex:LqLp}
Another well-studied case is the mixed Lebesgue space $V=L^q_tL^p_x$. Here we can take the seminorm to be $\| \cdot\|_V$ itself; using the scaling relation $\|f(\eta\,\cdot)\|_{L^p_x}= \eta^{-d/p}$, one finds $\gamma=1-H-1/q-(Hd)/p$ and the subcritical regime is
\begin{equ}\label{eq:LqLp-exponents}
\frac{1}{q}+\frac{Hd}{p}<1-H.
\end{equ}
In the classical case $H=1/2$, equation \eqref{eq:LqLp-exponents} reads as
\begin{equation}\label{eq:LPS}
\frac{2}{q}+\frac{d}{p}<1
\end{equation}
which is precisely the condition from the classical work \cite{KR}, where strong well-posedness is proved (under the additional constrant $p\geq 2$); instead the critical regime corresponds to the celebrated Ladyzhenskaya--Prodi--Serrin (LPS) condition.
This case has then been extensively studied by several authors, allowing also for multiplicative noise with Sobolev diffusion coefficients, see among others \cite{zhang2010stochastic, fedrizzi2013holder,zhang2016,xia2020}.
In recent years, even the critical case has been reached \cite{Krylov-Critical, RZ-Critical} under certain constraints on $d,p,q$; the results have been further refined by allowing coefficients in Morrey spaces, cf. \cite{krylov2022strong,Krylov2025}, or form-bounded drifts, cf. \cite{KinSem2023,kinzebulatov2023strong} and the references therein. 
It was recently understood in \cite{zhang2021stochastic} that one can go beyond condition \eqref{eq:LqLp-exponents}, up to imposing additional constraints on ${\rm div}\, b$; for further progress in this exciting direction, see also \cite{hao2023sdes,grafner2024weak}.

For $H\in(1/2,1)$ no results are known and for $H\in(0, 1/2)$ the main previously known results for weak and strong well-posedness are both from \cite{Khoa}, under the stronger conditions
\begin{equ}\label{eq:LqLp suboptimal}
\frac{1}{q}+\frac{Hd}{p}<\frac{1}{2},\qquad\frac{1}{q}+\frac{Hd}{p}<\frac{1}{2}-H,
\end{equ}
respectively, with the additional constraint $p\in[2,\infty]$, later removed in \cite{galeati2021noiseless}.
It is conjectured  in \cite{Khoa} that the first condition in \eqref{eq:LqLp suboptimal} is enough to guarantee strong well-posedness. One particular corollary of our result is that for $q\in(1,2]$ even \eqref{eq:LqLp-exponents} is sufficient.
Therefore we propose to update the conjecture of \cite{Khoa} (if $q\in(1,2]$, now a theorem) to assert strong well-posedness under the scaling condition \eqref{eq:LqLp-exponents}. Let us also mention that we have recently learned about an ongoing work \cite{Toyomu} towards improving \eqref{eq:LqLp suboptimal}.
\end{example}

\begin{example}\label{ex:LqCa}
A common generalisation of Examples \ref{ex:Calpha} and \ref{ex:LqLp} is the space $V=L^q_t C^\alpha_x$, where (adopting the leading seminorm to be the one of $L^q_t \dot{B}^\alpha_{\infty,\infty}$, in agreement with both previous cases,\footnote{By Besov embeddings $L^p_x \hookrightarrow B^{-d/p}_{\infty,\infty}$ with homogeneous norms behaving in the same way under rescaling.}) the scaling works out to be $\gamma=1-H-1/q+\alpha H$.
Therefore the subcriticality condition reads as
\begin{equ}
\alpha>1-\frac{1}{H}+\frac{1}{Hq}=1-\frac{1}{q'H},
\end{equ}
where, here and in the rest of the paper, $q$ and $q'$ are conjugate exponents, $1/q+1/q'=1$.
This generality has only been studied recently in \cite{galeati2021noiseless, galeati2021distribution}, where strong well-posedness is proved under the stronger condition
\begin{equ}\label{eq:LqCa suboptimal}
\alpha>1-\frac{1}{2H}+\frac{1}{Hq},
\end{equ}
with the additional constraints $H\in(0,1/2]$, $q\in(2,\infty]$. Note that, by setting $\alpha=-d/p$, condition \eqref{eq:LqCa suboptimal} coincides with the second one in \eqref{eq:LqLp suboptimal}.
\end{example}

In summary, to the best of our knowledge, weak well-posedness results in a whole subcritical regime are available only in the degenerate Brownian case $H=k+1/2$, $k\in\N$, and strong well-posedness only in the standard Brownian case $H=1/2$. 

\subsection{Discussion of the main results}\label{sec:main-results}
In the present paper we establish strong well-posedness in the full subcritical regime for all $H\in(0,\infty)\setminus \N$, with coefficients from the class in Example \ref{ex:LqCa}, under the additional constraint $q\in(1,2]$. In other terms, our main conditions are summarised by the assumption
\begin{equ}\label{eq:main exponent}
H\in(0,\infty) \setminus \mathbb{N},\qquad q\in(1,2],\qquad
\alpha\in\Big(1-\frac{1}{q'H},1\Big).\tag{A}
\end{equ}
The solution theory we present in fact goes beyond strong well-posedness.
We show existence in the strong sense not only of solutions but also of solution flows, and uniqueness in the path-by-path sense.
Furthermore, several further properties of solutions are established such as stability, continuous differentiability of the flow and its inverse, Malliavin differentiability, and $\rho$-irregularity.

Many of these results are even new in the time-independent case: if $b$ is only a function of $x$ and belongs to $C^\alpha_x$, then the optimal choice to put it in the framework of \eqref{eq:main exponent} is to choose $q=2$, leading to the condition $\alpha>1-1/(2H)$. This is the classical condition under which strong well-posedness is known \cite{NO1, CG16, gerencser2020regularisation}, but several of the further properties have not been previously established.

Our main findings are loosely summarised (without aiming for full precision or generality) in the following statement; the corresponding results (often in a somewhat sharper form) can be found throughout the paper in Theorems \ref{thm:functional-existence}, \ref{thm:functional:PBP-uniqueness}, \ref{thm:dist-existence}, \ref{thm:distributional:PBP-uniqueness} for \emph{i}), \ref{thm:stability} for \emph{ii}), \ref{thm:flow-diffeo} for \emph{iii}), \ref{thm:malliavin}  for \emph{iv}), \ref{thm:MKV} for \emph{v}), \ref{thm:rho-irr} for \emph{vi}), \ref{thm:transport-equation} for \emph{vii}).
For simplicity, we restrict ourselves to the time interval $t\in [0,1]$, but it's clear that up to rescaling we could consider any finite $[0,T]$ (up to allowing the hidden constants to depend on $T$).

\begin{theorem}\label{thm:BIG}
Assume \eqref{eq:main exponent} and let $x_0\in\R^d$, $b\in L^q_t C^\alpha_x$, $m\in[1,\infty)$. Then:
\begin{enumerate}[label=\roman*)]
\item Strong existence and path-by-path uniqueness holds for \eqref{eq:SDE};
\item For any other $\tilde x_0\in\R^d$ and $\tilde b\in L^q_t C^\alpha_x$, the associated solutions $X$ and $\tilde X$ satisfy the stability estimate
\begin{equ}
\E\bigg[\sup_{t\in[0,1]}|X_t-\tilde X_t|^m\bigg]^{1/m}\lesssim |x_0-\tilde x_0|+\|b-
\tilde b\|_{L^q_t C^{\alpha-1}_x};
\end{equ}
\item The solutions form a stochastic flow of diffeomorphisms $\Phi_{s\to t}(x)$, whose spatial gradient $\nabla \Phi$ is $\PP$-a.s. continuous in all variables; moreover it holds
\begin{equation*}
\sup_{0\leq s\leq t\leq 1, x\in \R^d} \E\big[| \nabla \Phi_{s\to t} (x) |^m\big] <\infty;
\end{equation*}
\item For each $s<t$ and $x\in \R^d$, the random variable $\omega\mapsto \Phi_{s\to t}(x;\omega)$ is Malliavin differentiable; moreover it holds
\begin{equation*}
\sup_{0\leq s\leq t\leq 1, x\in \R^d} \E\big[ \| D \Phi_{s\to t}(x)\|_{\mathcal{H}^H}^m \big]<\infty,
\end{equation*}
where $D$ is the Malliavin derivative and $\cH^H$ the Cameron-Martin space of $B^H$;
\item Strong existence and uniqueness holds also for the McKean-Vlasov equation
\begin{equation}\label{eq:MKV}
X_t=x_0+\int_0^t(b_r\ast\mu_r)(X_r)\dd r +B^H_t,\qquad\mu_t=\cL(X_t);
\end{equation}
\item Solutions $X$ are $\PP$-a.s. $\rho$-irregular for any $\rho<1/(2H)$;
\item If additionally $\alpha>0$, then for any $p>1$ strong existence and path-by-path uniqueness holds for solutions $u\in L^\infty_t W^{1,p}_x$ to the transport equation
\begin{equ}
\partial_t u + b\cdot \nabla u + \dot{B}^H_t\cdot \nabla u=0
\end{equ}
for all initial data $u_0\in W^{1,p}_x$.
\end{enumerate}
\end{theorem}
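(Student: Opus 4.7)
The overall plan is to set up a nonlinear Young framework around the decomposition $X = \varphi + B^H$, in which $\varphi$ is a remainder inheriting enough regularity from $B^H$ to make all relevant integrals convergent. Specifically, writing the drift accumulation $\int_0^t b_r(X_r)\,dr$ and the Jacobian accumulation $\int_0^t Db_r(X_r)\,dr$ as germs of the form $A_{s,t}(\omega) = \int_s^t b_r(\varphi_s + B^H_r)\,dr$, I would verify the hypotheses of the two stochastic sewing lemmas (Lemmas \ref{lem:SSL1}--\ref{lem:SSL2}), using the regularising effect of $B^H$ and the scaling afforded by \eqref{eq:main exponent}. The outcome is that these functionals have finite $p$-variation for some $p<2$, with exponential moment estimates of the Kahane--Khintchine type. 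This is the technical heart of the paper and also where the main obstacle lies: one must carefully track the scaling in $H$, $q$ and $\alpha$ to reach the full subcritical regime $\alpha > 1 - 1/(q'H)$, and the a priori estimates of Lemmas \ref{lem:drift-regularity}--\ref{lem:apriori-estim} must be used to close the bootstrap ensuring $\varphi$ is smooth enough to bring $A_{s,t}$ back into the sewing framework.

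Granted this toolkit, the items (i)--(vii) follow in a fairly standard way. For (i), strong existence comes from a compactness/Girsanov or tightness argument applied to smooth mollifications, while path-by-path uniqueness is obtained by showing that the difference $Y = X - \tilde X$ of any two solutions (with the same $B^H$) satisfies a linear Young equation driven by $\hat L$ analogous to \eqref{eq:intro-additive}, to which the bound \eqref{eq:introbound} applies with $y = 0$. For (ii) the same Young-linear argument, but keeping $y = x_0 - \tilde x_0 \neq 0$ and perturbing $b$ to $\tilde b$, gives the estimate after taking expectations and invoking the exponential integrability of $\|L\|_{p\text{-}\var}$. For (iii), the flow property is standard once uniqueness and stability hold, and differentiability of $\Phi_{s\to t}$ in $x$ follows from solving the linearised equation \eqref{eq:ODE-linearised} in the Young sense along $X$; invertibility of $\nabla \Phi$ comes from solving the corresponding backward linear Young equation, with moment bounds again coming from \eqref{eq:introbound}.

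Item (iv) is morally the same as (iii): perturbing $B^H$ by an element $h$ of the Cameron--Martin space $\cH^H$ gives, upon formally differentiating, a linear Young equation of the same type as \eqref{eq:ODE-linearised} with an inhomogeneity driven by $h$, whose solution defines $D\Phi_{s\to t}(x)$; the moment bound follows from the same exponential integrability. Item (v) is obtained by a fixed-point argument on the law $\mu_t$ using the stability estimate (ii) applied to the frozen drift $b_r \ast \mu_r$, plus continuity of convolution in $L^q_t C^{\alpha-1}_x$. Item (vi) reduces to showing that $X - B^H = \varphi$ has higher regularity than $B^H$, which holds by Lemma \ref{lem:drift-regularity}, so $X$ inherits the $\rho$-irregularity of $B^H$ (which holds for all $\rho < 1/(2H)$) by a standard stability-of-$\rho$-irregularity argument.

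Finally, item (vii) is deduced from (iii) by representing the solution to the transport equation as $u_t(x) = u_0(\Phi_{t\to 0}(x))$, using that $\Phi$ and $\Phi^{-1}$ are $C^1$-diffeomorphisms almost surely; the $W^{1,p}_x$ regularity of $u_t$ then follows from the moment control of $\nabla \Phi$ and $\nabla \Phi^{-1}$, while path-by-path uniqueness in the transport equation follows from path-by-path uniqueness in the SDE via the method of characteristics. The decisive step throughout remains the verification of the SSL hypotheses at the full subcritical threshold; once that is secured, the remainder of the programme is a matter of carefully translating classical Brownian flow-theoretic arguments into the nonlinear Young language.
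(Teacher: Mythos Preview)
Your overall framework is right, and items (ii)--(v) are essentially on target. However, there are two genuine gaps.

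\textbf{Path-by-path uniqueness in (i).} Your argument---writing the difference $Y = X - \tilde X$ as a linear Young equation driven by $\hat L = \int_0^\cdot \int_0^1 Db_r(\lambda X_r + (1-\lambda)\tilde X_r)\,d\lambda\,dr$ and applying \eqref{eq:introbound} with $y=0$---only yields \emph{pathwise} uniqueness, not path-by-path uniqueness. Controlling $\hat L$ in $p$-variation via stochastic sewing (Lemma~\ref{lem:integral-estimates}) requires the a priori estimate \eqref{eq:key-quantity} on $\lambda\varphi + (1-\lambda)\tilde\varphi$, and Lemmas~\ref{lem:drift-regularity} and~\ref{lem:apriori-estim} only supply this for \emph{adapted} solutions. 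Path-by-path uniqueness asks for uniqueness among \emph{all} solutions for a.e.\ fixed realisation of $B^H$, including non-adapted ones, for which sewing is unavailable. The paper instead first constructs a locally $\beta$-H\"older semiflow $\Phi$ from smooth approximations---as a Cauchy sequence in $L^m$ via the stability estimate, not by Girsanov or tightness (Girsanov fails in most of the regime \eqref{eq:main exponent}, see Remark~\ref{rem:Girsanov})---and then proves a purely deterministic statement (Lemmas~\ref{lem:shaposhnikov} and~\ref{lem:shaposhnikov-young}): once a $\beta$-H\"older semiflow exists with $\beta(1+\alpha)>1$ (resp.\ $\beta(1+\eta)/p>1$ in the nonlinear Young case), any other solution $z$ must coincide with it, by showing that $t\mapsto \Phi_{t\to\tau}(z_t)$ is constant.

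\textbf{$\rho$-irregularity in (vi).} The ``stability of $\rho$-irregularity'' route---$\varphi$ is smoother than $B^H$, so $X = B^H + \varphi$ inherits the $\rho$-irregularity of $B^H$---does not reach the full range $\rho < 1/(2H)$. As the paper notes explicitly in the remark following Theorem~\ref{thm:rho-irr}, the known deterministic additive-perturbation results lose at least $1/2$ in the exponent $\rho$. The paper instead applies stochastic sewing directly to the germ $A_{s,t} = \E_{s-(t-s)}\int_s^t e^{i\xi\cdot(\E_{s-(t-s)}\varphi_r + B^H_r)}\,dr$, using the stochastic regularity of $\varphi$ (Assumption~\ref{asn:abstract-condition}) rather than its pathwise H\"older regularity, which recovers the full range without loss.

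A smaller point on (vii): uniqueness for weak $L^\infty_t W^{1,p}_x$ solutions of the transport equation is not immediate from the method of characteristics; the paper establishes it by duality against solutions of the backward continuity equation (Proposition~\ref{prop:backward-continuity} and Lemma~\ref{lem:duality-transport}).
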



The various aspects of the main results are discussed in detail in their respective sections, so here let us just briefly comment on them.

The notion of path-by-path uniqueness in \emph{i}), as a strengthening of the classical pathwise uniqueness, was first established in the seminal work \cite{Davie} by Davie, with a simpler proof was later provided by Shaposhnikov \cite{shaposhnikov2016some}.
This kind of result was then generalised to fBm in \cite{CG16}, suggesting it is a consequence of the pathwise properties of the trajectories of the driving noise.
Such a uniqueness concept requires to give a pathwise interpretation to the SDE, which becomes nontrivial for $\alpha<0$, where $b$ can be a distribution of negative regularity and not a function anymore. In this case, following \cite{CG16}, we will give meaning to \eqref{eq:SDE} as a nonlinear Young ODE, see Section \ref{sec:distributional} for more details.

Stability estimates in the style of \emph{ii}) are useful to bypass abstract Yamada-Watanabe arguments and get strong existence directly. Among other possible applications, let us mention their importance in numerical schemes with distributional drifts, see e.g. the recent work \cite{goudenege2023numerical}.
In this paper, stability estimates play a key role when solving McKean-Vlasov equations as in \emph{v}), see Section \ref{sec:MKV}.
The study of stochastic flows \emph{iii}) for SDEs goes back to the classical work \cite{kunita1984stochastic}, see also \cite{fedrizzi2013holder, X-M-Flow, Menoukeu2013} for flows in irregular settings.
In \emph{iv}), we can in fact derive differentiability with respect to perturbations of the noise quite a bit more generally than in Cameron-Martin directions (see Remark \ref{rem:Malliavin}), in line with the observations from \cite{kusuoka1993regularity, friz2006}.
Concerning \emph{v}), regularisation by fractional noise for distribution dependent SDEs has been investigated in \cite{galeati2021distribution} and recently in \cite{han2022}.
Above we only stated the simplest example of McKean-Vlasov equation for the sake of presentation, Theorem \ref{thm:MKV} below allows for more general dependence on $(X,\mu)$.
The notion of $\rho$-irregularity in \emph{vi}) was introduced by \cite{CG16} as a powerful measurement of the averaging properties of paths.
Extending $\rho$-irregularity from Gaussian processes to perturbed Gaussian processes has previously only been achieved efficiently via Girsanov transform, here we provide a simple and more robust alternative. Concerning \emph{vii}), regularisation by noise results for the transport equation were first established for Brownian noise in \cite{flandoli2010well} and further developed in \cite{MoNiPr2015,fedrizzi2013noise}, see also \cite{catellier2016rough,nilssen2020,galeati2021noiseless} for further investigations in the fractional case.

The scope of some intermediate estimates we obtain is larger than \eqref{eq:main exponent}, and therefore in some regime where we do not obtain strong well-posedness, we still get compactness and therefore existence of weak solutions.
To state the result, we need to enforce the following different condition:

\begin{equation}\label{eq:condition-existence}
H\in(0,1)
,\quad q\in(1,\infty],\quad \alpha>\frac{1}{2}-\frac{1}{2H}, \quad \alpha>1-\frac{1}{H q'}.\tag{B}
\end{equation}

The proof of Theorem \ref{thm:weak-existence-intro} is presented in Section \ref{sec:weak-compactness}, where we also define rigorously what we mean by weak solution to \eqref{eq:SDE} in this case; see Theorem \ref{thm:weak-existence} for a more precise statement.

\begin{theorem}\label{thm:weak-existence-intro}
Assume \eqref{eq:condition-existence} and let $x_0\in\R^d$, $b\in L^q_t C^\alpha_x$; then there exists a weak solution to the SDE \eqref{eq:SDE}.
\end{theorem}

\begin{remark}\label{rem:other-condition-intro}
For $b\in L^q_t C^\alpha_x$ with $\alpha>0$, existence of weak solutions can be shown classically by standard Peano-type arguments, for any choice of $H\in (0,\infty)\setminus\mathbb{N}$. Therefore condition \eqref{eq:condition-existence} is of real interest only when considering $\alpha<0$; in this case $H\in (0,1)$ is not a real restriction, as it follows from the first condition on $\alpha$. Note further that in the case $q\in (1,2]$, it always holds $1-\frac{1}{Hq'}\geq \frac{1}{2}-\frac{1}{2H}$ and so \eqref{eq:condition-existence} reduces to \eqref{eq:main exponent}; thus the interesting cases covered by Theorem \ref{thm:weak-existence-intro} are for $q\in (2,\infty]$.
\end{remark}

\begin{remark}
For $q=\infty$, condition \eqref{eq:condition-existence} reduced to $b\in L^\infty_t C^\alpha_x$, $\alpha>\frac{1}{2}-\frac{1}{2H}$. In the Brownian case $H=1/2$, this recovers the condition $\alpha>-1/2$ obtained in \cite{FIR}, which showed uniqueness in law. Recently, \cite[Theorem 6.7]{kremp2023rough} provided counterexamples to uniqueness in law for Brownian SDEs with drifts $b\in C_t C^\alpha_x$, for any $\alpha\leq -1/2$; non-uniqueness here is meant in the class of ``canonical weak solutions'', i.e. satisfying a definition à la Bass-Chen \cite{Bass-Chen} (cf. Definition \ref{defn:weak-solution} below).
So there can be a non trivial gap between well-posedness results and the prediction offered by scaling arguments.
On the positive side, recently\cite{butkovsky2024weak} proved uniqueness in law of the solutions constructed by Theorem \ref{thm:weak-existence-intro}, at least in the case $H\in (0,1/2]$ and autonomous drift $b\in C^\alpha_x$ with $\alpha>\frac{1}{2}-\frac{1}{2H}$.
\end{remark}

\begin{remark}\label{rem:Girsanov}
One fundamental stochastic analytic tool that still applies in the non-Markovian fBm setting is Girsanov's transform. Indeed, it is heavily used in the seminal works \cite{NO1,CG16} and many subsequent ones.
However, it has its limitations: in our setting it only applies under the additional assumption $1-1/(q'H)<0$ (which in turn may only happen if $H\in(0,1/2)$), see Appendix \ref{app:girsanov} for details.
Even in the Brownian case $H=1/2$ our methods yield results beyond the scope of Girsanov's theorem, which is not available for $q<2$, see Remark \ref{rem:Brownian} below.
Therefore, throughout the article we avoid Girsanov's transform altogether.

Another motivation for a Girsanov-free approach is to develop tools that are robust enough to extend to other classes of process; see \cite{BDG-Levy} for some first results on such equations via stochastic sewing for L\'evy-driven SDEs and Remarks \ref{rem:possible-extensions}-\ref{rem:other-fractional-processes} below for other classes of Gaussian processes which fit our framework.
\end{remark}

\begin{remark}\label{rem:Brownian}
Theorem \ref{thm:BIG} gives new results also in the classical $H=1/2$ case.
Indeed, to solve \eqref{eq:SDE} with classical tools, one would require a good solution theory of the corresponding Kolmogorov equation 
\begin{equ}\label{eq:PDE}
\partial_t u-\tfrac{1}{2}\Delta u=b\cdot\nabla u.
\end{equ}
Suppose that $b\in L^q_t C^\alpha_x$ with $q\in(1,2)$. Then the naive power counting fails: replacing first $u$ by a smooth function on the right-hand side gives, by Schauder estimates, $u\in L^\infty_t C^\beta_x$ with $\beta=\alpha+2-2/q$, and so $b\cdot\nabla u\in L^q_t C^{\alpha+1-2/q}_x$.
Since $\alpha+1-2/q<\alpha$, iterating the procedure implies worse and worse spatial regularity on $u$, and after finitely many steps the product $b\cdot\nabla u$ becomes even ill-defined.
This is somewhat similar to the issue of the Kolmogorov equation of L\'evy SDEs with low stability index, which was circumvented in \cite{supcrit}. After this manuscript appeared,  Schauder estimates for \eqref{eq:PDE} with $b\in L^q_t C^\alpha_x$ with $q\in (1,2)$ were developed in \cite{wei2023stochastic}.
\end{remark}

\begin{remark}
By the embedding $L^p_x\subset C^{-d/p}_x$ our result immediately implies well-posedness of \eqref{eq:SDE} with $L^q_t L^p_x$ drift in the full subcritical regime (with respect to $p$) \eqref{eq:LqLp-exponents} if $q\in(1,2]$, which can be seen as a fractional analogue of \cite{KR}.
Note that unlike in \cite{Khoa}, $p\in[1,2)$ is also allowed.
\end{remark}

The rest of the article is structured as follows. In Section \ref{sec:counterexample} we present some counterexamples in the supercritical regime, demonstrating that (up to reaching the critical equality) condition \eqref{eq:main exponent} cannot be improved; we then conclude the introduction by recalling some fundamental properties of fBm in Section \ref{sec:prelim-fbm} and by introducing the main notations used throughout the paper in Section \ref{sec:notation}.
In Section \ref{sec:first} we state and prove some fundamental lemmata, including the aforementioned a priori estimates for solutions of \eqref{eq:SDE} and the two new forms of the stochastic sewing lemma of \cite{Khoa}.
Section \ref{sec:stability} contains further estimates for additive functionals of processes, as well as a key stability property of solutions.
In Sections \ref{sec:functional WP} and \ref{sec:distributional} we use these estimates to establish well-posedness of \eqref{eq:SDE}; we distinguish the cases $\alpha>0$ and $\alpha<0$ cases, which require a  different analysis.
Along the way we prove the existence of a solution semiflow, which we upgrade to a flow of diffeomphisms in Section \ref{sec:flow}.
Section \ref{sec:MKV} contains applications of our stability estimates to McKean-Vlasov equations.
In Section \ref{sec:weak-compactness} we construct weak solutions under condition \eqref{eq:condition-existence}, via a compactness argument enabled by the available a priori estimates.
In Section \ref{sec:rho} we show $\rho$-irregularity of solutions and more general perturbations of fractional Brownian motions.
Finally, Section \ref{sec:transport} contains applications to transport and continuity equations. 
In the appendices we collect some useful tools for which we did not find exact references in the literature: Appendix \ref{app:kolmogorov} contains variants of Kolmogorov continuity criterion, Appendix \ref{app:Young} gives two basic bounds for solutions of Young differential equations, and in Appendix \ref{app:girsanov} we summarise relations of various Sobolev spaces and their use in Girsanov transform for fractional Brownian motions.

\subsection{Counterexamples to uniqueness in the supercritical regime}\label{sec:counterexample}

Although the scaling argument is heuristic, one can often construct counterexamples in the supercritical case.
The constructions below are motivated by \cite{CDR1}, which gives counterexamples for $q=\infty$, $\alpha>0$.

Assume $d\geq 1$, $H\in(0,1)$ and $(\alpha,q)\in \R\times (1,\infty)$ satisfy
\begin{equ}\label{eq:counterexample-exponents}
\alpha<1-\frac{1}{H q'},\qquad \alpha>-1;
\end{equ}
let $B$ be an $\R^d$-valued stochastic process such that $\PP$-almost surely $B\in C^\gamma$ for all $\gamma\in(0,H)$.
We claim that under \eqref{eq:counterexample-exponents} there exists $b\in L^q_t C^\alpha_x$ such that the equation
\begin{equ}\label{eq:SDE-counterexample}
X_t=x+\int_0^t b_s(X_s)\,ds+B_t
\end{equ}
with initial condition $x_0=0$ has at least two solutions whose laws are mutually singular.

We will treat separately the cases $\alpha\in (0,1)$ and $\alpha\in (-1,0)$.

For $\alpha\in (0,1)$, the construction is actually one-dimensional and can be extended trivially to higher dimensions by taking $b=(b^i)_{i=1}^d$ with $b^i\equiv 0$ for $i\geq 2$; therefore here we will set $d=1$. Take $\tilde q>q$ such that $(\alpha,\tilde q)$ still satisfies \eqref{eq:counterexample-exponents} and define the function
\begin{equ}
b_t(x)=t^{-1/\tilde q}\,\sign(x)|x|^\alpha;
\end{equ}
clearly $b\in L^q_t C^\alpha_x$.
Let $\gamma=1/(\tilde q'(1-\alpha))$; by definition, $\gamma$ satisfies the identity
\begin{equ}\label{eq:exponents}
\gamma=1-\frac{1}{\tilde q}+\gamma\alpha,
\end{equ}
and furthermore $\gamma<H$ thanks to \eqref{eq:counterexample-exponents}. Fix furthermore $\delta>0$ small such that
$\delta^\alpha/\gamma>2\delta$, which exists since $\alpha\in (0,1)$.
Take $x\in(0,1]$ and consider a weak solution $(X^x,B)$ of \eqref{eq:SDE-counterexample}, which is well-known to exist due to the spatial continuity and sublinear growth of $b$.
Define the stopping time
\begin{equ}
\tilde\tau:=\inf\{t>0:\, |B_t|\geq \delta t^\gamma\}\wedge 1;
\end{equ}
it is strictly positive $\PP$-almost surely, since $\gamma<H$ and $B\in C^{\tilde\gamma}$ with $\tilde\gamma\in (\gamma,H)$. Also define
\begin{equ}
\tau_x:=\inf\{t\geq 0:\, X_t^x\leq\delta t^\gamma\}\wedge 1.
\end{equ}
We claim that $\tau_x\geq\tilde\tau$. Indeed, $\tau_x>0$ since $x>0$, and for all $t\leq \tau_x$ by \eqref{eq:SDE-counterexample} and our construction it holds
\begin{equ}\label{eq:counterexample2}
X_t^x>\int_0^ts^{-1/\tilde q}(\delta s^\gamma)^{\alpha}\,ds+B_t= (\delta^\alpha/\gamma) t^\gamma+B_t>\delta t^\gamma+\big(\delta t^\gamma+B_t)
\end{equ}
where in the intermediate passage we used \eqref{eq:exponents}.
Since $\tau_x\geq\tilde\tau>0$ $\PP$-a.s., there exist $\rho>0$ independent of $x\in (0,1]$ such that
\begin{equ}
\PP(\tilde\tau_x>\rho)\geq 3/4.
\end{equ}
The laws of $(X^x,B)$ on $C([0,1])^2$ are tight, and therefore by Skorohod's representation theorem, we may assume that for a sequence $x_n\searrow 0$ the random variables $(X^{x_n},B^{x_n})$ live on the same probability space and converge in $C([0,1])^2$ $\PP$-a.s.
The limit $(X^{0,+},B^{0,+})$ is a solution to \eqref{eq:SDE-counterexample} with initial condition $0$ and satisfies
\begin{equs}
\PP\big(X^{0,+}_t>0\,\,\forall t\in(0,\rho]\big)&\geq \PP\big(X^{0,+}_t\geq \delta t^\gamma\,\,\forall t\in[0,\rho]\big)
\\&=\lim_{n\to\infty}\PP\big(X^{x_n}_t\geq \delta t^\gamma\,\,\forall t\in[0,\rho]\big)\geq3/4.
\end{equs}
Since $b$ is odd, we can run the same argument for $y\in [-1,0)$: if $X^y$ is a solution to \eqref{eq:SDE-counterexample}, then  $-X^y$ is a solution for $(-y,-B)$, and the definition of $\tilde \tau$ only depends on $|B|$. Therefore for the same choice of $\rho$, in this case one finds that
\begin{equs}
\PP\big(X^y_t \leq -\delta t^\gamma \,\forall t\in(0,\rho]\big) \geq 3/4
\end{equs}
and so by considering a sequence $y_n\nearrow 0$ by compactness one can construct $(X^{0,-}, B^{0,-})$ another weak solution to \eqref{eq:SDE-counterexample} with initial condition $0$ satisfying
\begin{equ}
\PP\big(X^{0,-}_t<0\,\,\forall t\in(0,\rho]\big)\geq 3/4.
\end{equ}
This shows that $X^{0,+}$ and $X^{0,-}$ do not have the same law, yielding weak non-uniqueness 
(we leave it as an exercise to the reader to show that their laws are in fact mutually singular).

In the distributional case $\alpha\in(-1,0)$, we have to be a bit more careful, since the meaning of the SDE becomes unclear if $X$ gets too close to $0$. To this end, we argue again by stopping times, and the construction we present this time is genuinely $d$-dimensional.
Again take $\tilde q>q$ such that $(\alpha,\tilde q)$ still satisfy \eqref{eq:counterexample-exponents} and define a vector field $b=(b^i)_{i=1}^d$ by
\begin{equ}
b^1(t,x)=t^{-1/\tilde q}\,\sign(x^1)|x|^{\alpha},\qquad b^i(t,x)\equiv 0 \text{ for } i=2,\ldots,d;
\end{equ}
again $b\in L^q_t C^\alpha_x$. Take $x\in(0,1]$ and consider a local-in-time solution $X^x$ of \eqref{eq:SDE-counterexample} with initial condition $x_0=(x,0,\ldots,0)$, which is well-known to exist due to the spatial regularity of $b$ locally around $x_0$.
Define $\gamma$ as before, so that $\gamma<H$ and \eqref{eq:exponents} holds; let us furthermore take an auxiliary parameter $\delta$ that will be specified later. Define the stopping times
\begin{equ}
\tilde\tau:=\inf\{t>0:\, |B_t|\geq \delta t^\gamma\}\wedge 1, \quad \tau_x:=\inf\{t\geq 0:\, (X_t^x)^1\leq\delta t^\gamma\}\wedge 1;
\end{equ}
as before, $\tilde\tau$ is strictly positive $\PP$-almost surely since $\gamma<H$.
We claim that $\tau_x\geq\tilde\tau$, for which it suffices to show that for $t\leq \tau_x\wedge\tilde \tau$ one has $(X_t^x)^1\geq 2\delta t^\gamma$. If $x>3\delta t^\gamma$, then by simply using the nonnegativity of the first component of $b$ up to $\tau_x$ and the definition of $\tilde \tau$, we see that
\begin{equ}
(X_t^x)^1\geq x+B_t^1\geq 3\delta t^\gamma-\delta t^\gamma
\end{equ}
as required. 
Suppose now that $x\leq 3\delta t^\gamma$.
Clearly, for $s\leq\tau_x$ one also has $|X_s^x|\geq \delta s^\gamma$. Inserting this bound in the equation, we get for $s\leq \tau_x\wedge\tilde \tau$ 
\begin{equs}
(X_s^x)^1&\leq x+\int_0^sr^{-1/\tilde q}(\delta r^\gamma)^{\alpha}\,dr+B_s^1
\\
&=x+ (\delta^{\alpha}/\gamma) s^\gamma+B_s^1
\\
&\leq x+ \big(\delta^{\alpha}/\gamma+\delta\big)s^\gamma;
\end{equs}
observe that since $\alpha<0$, we find reversed inequalities compared to the previous case.
In particular, if $s\geq t/2$, then using $x\leq 3\delta t^\gamma$ we also get
\begin{equ}
(X_s^x)^1 \leq \big(3\delta 2^\gamma+\delta+\delta^{\alpha}/\gamma\big)s^\gamma.
\end{equ}
For $\delta\in (0,1)$, there exist constants $C',C$ depending only on $d,\alpha,\gamma$ such that the above bound implies $(X_s^x)^1\leq C'\delta^{\alpha}s^\gamma$, as well as $|X_s^x|\leq C \delta^{\alpha}s^\gamma$. Using this bound in the equation once more:
\begin{equs}
(X_t^x)^1&>\int_{t/2}^ts^{-1/\tilde q}\big(C \delta^{\alpha}s^\gamma)^{\alpha}\,ds+B_t^1
\\
&\geq (1/2)C^{\alpha}\delta^{\alpha^2}t^\gamma
-\delta t^\gamma.
\end{equs}
At this point (using the condition $\alpha>-1$, so that $\alpha^2<1$) one can choose $\delta$ small enough so that the right-hand side is bounded from below by $2\delta t^\gamma$. With this we conclude the proof of the property $\tau_x\geq \tilde \tau$.
In other words, for all $t\leq\tilde \tau$, for all $x\in(0,1]$, we have $(X_t^x)^1\geq\delta t^\gamma$. In a symmetric way, for all $t\leq\tilde \tau$, for all $y\in[-1,0)$ we have $(X_t^y)^1\leq-\delta t^\gamma$.

We now want to pass to the $x\to 0$ limit, which we can do by noticing that the laws of $(B,\tilde\tau,X^x,X^{-x})$ are tight on the space
\begin{equ}
\cS=
C([0,1])\times\{(a,g):\,a\in(0,1],g\in C([0,a])^2\}
\end{equ}
with the metric
\begin{equ}
d\big((f,a,g),(f',a',g')\big)=\|f-f'\|_{C([0,1])}+|a-a'|+\|g-g'\|_{C([0,a\wedge a'])^2}.
\end{equ}
By Prokhorov's theorem and Skorohod's representation, we get a sequence $x_n\to 0$ and on another probability space a sequence $(\bar B^{x_n},\bar{\tilde\tau}^{x_n},\bar X^{x_n},\bar X^{-x_n})\overset{\mathrm{law}}{=}(B,\tilde\tau,X^{x_n},X^{-x_n})$ converging $\PP$-almost surely as random variables taking values $\cS$. The limits $X^{0,+}:=\lim\bar X^{x_n}$ and $X^{0,-}:=\lim\bar X^{-x_n}$ both solve \eqref{eq:SDE-counterexample} with initial condition $0$ and driving noise $B^{0}:=\lim \bar B^{x_n}$. Moreover, $X^{0,+}_t\geq \delta t^\gamma$ for $t\leq\tilde\tau^0:=\lim\bar{\tilde\tau}^{x_n}$ and $X^{0,-}_t\leq-\delta t^\gamma$ for $t\leq\tilde\tau^0$.
Since $\tilde\tau^0\overset{\mathrm{law}}{=}\tilde\tau$, it is $\PP$-a.s. positive, and therefore the laws of $X^{0,+}$ and $X^{0,-}$ are mutually singular (for example on $C([0,1])$ after extending them as constants after $\tilde\tau^0$).

\begin{remark}
Up to multiplying $b$ by a cutoff function at infinity, by taking $\alpha=-d/(p+\eps)$ for sufficiently small $\eps>0$, the construction presented in the regime $\alpha<0$ provides non-uniqueness for $b\in L^q_tL^p_x$, for any pair $(p,q)\in [1,\infty]^2$ satisfying
\begin{equ}\label{eq:LqLp-counterexample}
\frac{1}{q}+\frac{Hd}{p}>1-H,\qquad p>d.
\end{equ}
If $H=1/2$, then $B$ can be taken as Brownian motion and \eqref{eq:LqLp-counterexample} becomes
\begin{equ}\label{eq:LqLp-counterexample-brownian}
\frac{2}{q}+\frac{d}{p}>1,\qquad p>d;
\end{equ}
in particular, the exponents $p,q$ violate the LPS condition \eqref{eq:LPS}. It is interesting to compare \eqref{eq:LqLp-counterexample-brownian} to the result from \cite{Krylov2020}, where weak existence for the Brownian SDE was established under the condition
\begin{equation}\label{eq:parameters-weak-existence-brownian}
\frac{1}{q}+\frac{d}{p}\leq 1
\end{equation}
which is further shown to be optimal by construction of counterexamples in the case $1/q+d/p>1$.
Let us also mention \cite{Gal2024} for a heuristic explanation on why condition \eqref{eq:parameters-weak-existence-brownian} (as well as \eqref{eq:parameters-weak-existence} below) arises naturally when only focusing on weak existence results.
Our counterexample shows that under \eqref{eq:parameters-weak-existence-brownian} uniqueness in law in general does not hold, answering a problem left open in \cite{Krylov2020} (see the discussion right above Remark 3.1 therein).

After the completion of this work, it has been further shown in \cite{butkovsky2023} that in the time-independent case, for $H\in (0,1)$, there exist $b\in C^\alpha$ with supercritical $\alpha<1-1/H$ for which even weak existence doesn't hold, see Theorem 2.7 therein. More recently, \cite{ButGal2023} expanded the result from \cite{Krylov2020} by establishing weak existence of solutions for $H\in (0,1)$ and $b\in L^q_t L^p_x$ with
\begin{equation}\label{eq:parameters-weak-existence}
	\frac{1-H}{q}+ \frac{H d}{p} < 1-H.
\end{equation}
Combined with our counterexample, one gets a regime (namely, the intersection of \eqref{eq:LqLp-counterexample} and \eqref{eq:parameters-weak-existence}) where weak existence holds but uniqueness in law does not.
\end{remark}

\subsection{Preliminaries on fractional Brownian motion}\label{sec:prelim-fbm}

We recall here several 
facts about fractional Brownian motion (fBm); for some standard references we refer to \cite{nualart2006,Picard}.

An $\R^d$-valued fBm of Hurst parameter $H$ is defined as the unique centered Gaussian process with covariance
\begin{equation*}
\E(B^H_t\otimes B^H_s)=\tfrac{1}{2}\big(|t|^{2H}+|s|^{2H}-|t-s|^{2H}\big) I_d
\end{equation*}
where $I_d$ denotes is the $d\times d$ identity matrix; in other words, its components are i.i.d. one dimensional fBms. FBm paths are well-known to be $\PP$-a.s. $(H-\eps)$-H\"older, but nowhere $H$-H\"older continuous. 
FBm admits several representations as a stochastic integral; in particular, given any fBm $B^H$ defined on a probability space, one can construct therein a standard Bm $W$ such that 
\begin{equ}\label{eq:Volterra}
B^H_t=\int_0^t K_H(t,r)\dd W_r\quad\forall\, t \geq 0.
\end{equ}
Such Volterra kernel representation is referred as \emph{canonical} since $B^H$ and $W$ generate the same filtration. The exact formula for the kernels $K_H$ can be found in e.g. \cite{NO1}, for our purposes it is enough to recall that $K_H$ is deterministic and $K_H(t,\cdot)\in L^2([0,t])$.

Another standard representation of fBm is the one introduced in \cite{MvN}: given $B^H$, one can construct a two-sided Bm $\tilde W$ such that
\begin{equation}\label{eq:MvN-representation}
B^H_t = \gamma_H \int_{-\infty}^t \big[(t-r)_+^{H-1/2}-(-r)_+^{H-1/2}\big]\, \dd \tilde W_r;
\end{equation}
where $\gamma_H=\Gamma(H+1/2)^{-1}$ is a normalizing constant and $x_+$ denotes the positive part.


We will mostly work with representation \eqref{eq:Volterra}, but we invite the reader to keep in mind \eqref{eq:MvN-representation} since it is usually easier to manipulate in order to derive key properties of the process, like its local nondeterminism, see \eqref{eq:LND-fBm} and the discussion below.
Given a filtration $\bF$, we say that $B^H$ is a $\bF$-fBM if the associated $W$ given by \eqref{eq:Volterra} is a $\bF$-Brownian motion.

FBm of parameter $H=1$ is somewhat trivial or ill-defined, see \cite{Picard};
however one can extend the definition to all values $H\in (0,+\infty)\setminus\N$ inductively as in \cite{IEEE} by $B^{H+1}_t:=\int_0^t B^H_s\dd s$.

Such definition is consistent with most aforementioned properties: it is still a centered, Gaussian process, with trajectories $\PP$-a.s. in $C^{H-\eps}_t$ but nowhere $C^H_t$, satisfying the scaling relation \eqref{eq:fBM scaling}; using stochastic Fubini one can also easily derive similar representations as \eqref{eq:Volterra}-\eqref{eq:MvN-representation}.
A key consequence of the last property is that for any $H\in (0,+\infty)\setminus \N$ there exists a constant $c_H\in (0, +\infty)$ such that
\begin{equation}\label{eq:LND-fBm}
{\rm Cov} \big(B^H_t - \E_s B^H_t \big) = c_H |t-s|^{2H} I_d \quad\forall \, s\leq t,
\end{equation}
see \cite[Proposition 2.1]{gerencser2020regularisation}; here $\E_s B^H_t:=\E[B^H_t|\mathcal{F}_s]$, where $\mathcal{F}_s$ can be the natural filtration of $B^H$ or more generally any filtration such that $B^H$ is a $\bF$-fBm.
Property \eqref{eq:LND-fBm} is a special form of \emph{strong local nondeterminism} (LND)\footnote{In fact, any integral in time of an LND Gaussian process admitting a \emph{moving average representation} in the style of \eqref{eq:MvN-representation} is still LND, see \cite[Sec. 4.2, Example iv.]{galeati2020prevalence}.}, see \cite[Section 2.4]{galeati2020prevalence} for a deeper discussion on its relevance on regularisation by noise. 
Since conditional expectations are also $L^2$-projections, $B^H_t-\E_s B^H_t$ and $\E_s B^H_t$ are orthogonal Gaussian variables, thus independent; more generally, $B^H_t-\E_sB^H_t$ is independent of all the history up to time $s$.
Therefore for any $s\leq t$, any bounded measurable function $f:\R^d\to\R$ and any other $\cF_s$-measurable random variable $X$, it holds
\begin{equ}\label{eq:conditioning}
\E_s f(B^H_t+X)= P_{ {\rm Cov}(B^H_t - \E_s B^H_t)} f(\E_s B^H_t+X) = P_{c_H |t-s|^{2H} I_d} f(\E_s B^H_t+X).
\end{equ}
where in the last passage we applied \eqref{eq:LND-fBm}; here given a symmetric nonnegative $\Sigma$, $P_\Sigma$ denotes the convolution with the Gaussian density $p_\Sigma$ associated to $\mathcal{N}(0,\Sigma)$. Throughout the paper we will adopt the convention that $P_{t I_d}=P_t$, in agreement with the standard notation for heat kernels, and for simplicity we will drop the constant $c_H$, so that in expressions like \eqref{eq:conditioning} only $P_{|t-s|^{2H}}$ will appear.

\begin{remark}\label{rem:possible-extensions}
At the price of slightly anticipating some key concepts which will be introduced throughout the paper, let us discuss here how our methods extend to a larger class of random perturbations $B^H$ than just pure fBm.
The main requirement we need, relaxing \eqref{eq:LND-fBm}, is for $B^H$ to be a Gaussian process\footnote{For non-Gaussian processes one can still find a replacement for \eqref{eq:requirements}, for example in the case of L\'evy processes see \cite{BDG-Levy}. } satisfying a two-sided bound
\begin{equation}\label{eq:requirements}
C^{-1} |t-s|^{2H} I_d \leq {\rm Cov} \big( B^H_t-\E_s B^H_t \big)\leq C |t-s|^{2H} I_d
\end{equation}
for some $C\in (0,+\infty)$ and for all $s<t$ with $|t-s|$ sufficiently small; here $\mathcal{F}_t$ is the natural filtration of $B^H$.
More precisely, the upper bound in \eqref{eq:requirements} provides a priori estimates in the style of Lemma \ref{lem:drift-regularity}, while the lower bound (which is the actual LND property), ensures the regularising effect of $B^H$ and the application of stochastic sewing techniques. Indeed, by using properties of Gaussian convolutions, heat kernel bounds and a relation of the form \eqref{eq:conditioning}, one can still find estimates of the form
\begin{align*}
\| \E_s f(B^H_t+X)\|_{L^\infty}
& = \| \big( P_{ {\rm Cov} (B^H_t-\E_s B^H_t)} f \big) (\E_s B^H_t+X)\|_{L^\infty}
\leq \| P_{ {\rm Cov} (B^H_t-\E_s B^H_t)} f\|_{L^\infty}\\
& \lesssim \| P_{C^{-1}|t-s|^{2H}} f\|_{L^\infty}
\lesssim |t-s|^{\alpha H} \| f\|_{C^\alpha},
\end{align*}
for $\alpha\leq 0$, which are the typical bounds needed throughout the proof.
There are some passages where condition \eqref{eq:requirements} alone is not enough and we exploited other properties of fBm.
Specifically: the counterxamples in Section \ref{sec:counterexample} assume $B^H$ to be $(H-\eps)$-H\"older continuous and symmetric;
the flows constructed in Sections \ref{sec:functional WP}-\ref{sec:distributional} need some basic time-continuity $\E|B^H_t-B^H_s|\lesssim |t-s|^{H\wedge 1}$ in order to apply Kolmogorov-type criteria;
more substantially, the results from Section \ref{sec:weak-compactness} rely on a Volterra representation $B^H_t =\int_0^t K(t,s) \dd W_s$. These properties are satisfied by other interesting examples, e.g. type-II fBm and mixed fBm discussed in Remark \ref{rem:other-fractional-processes} below.

The only section truly specific to fBm is Appendix  \ref{app:girsanov}, which however exactly for this reason is not used throughout the main body of the paper. In this case, ad hoc criteria to check Girsanov transform for fBm are presented; any extension to other processes would require precise knowledge of the associated kernel $K(t,s)$ and its verification can be very technical, cf. \cite{sonmez2021}.
\end{remark}

\begin{remark}\label{rem:other-fractional-processes}
Standard examples of processes satisfying \eqref{eq:requirements} are deterministic additive perturbations of fBm (cf. Lemma \ref{lem:perturbed-sde}), the so called type-II fBm \cite{marinucci1999} and mixed fBm introduced in \cite{cheridito2001}; given any $H_1\neq H_2$, the process $B^{H_1}+B^{H_2}$ will satisfy condition \eqref{eq:requirements} with $H=H_1\wedge H_2$, both in the case $B^{H_1}$ and $B^{H_2}$ are sampled independently and the one instead where they are constructed from the same reference Brownian motion. In this case, our results yield a far reaching generalization (also to any $d\geq 2$) of the ones provided in \cite{sonmez2021}, while not requiring highly technical use of Girsanov transform as therein.

Another interesting example is Bifractional Brownian motion of parameters $(H,K)$ (see \cite{RuTu2006}) which is known to be LND with parameter $HK$ \cite{TuXi2007}; it is a generalization of fBm ($K=1$), but even in the case $HK=1/2$ is not a semimartingale nor a Dirichlet process, although it scales like standard Bm. Our results show that it has a comparable regularising effect, although not amenable to Markovian/martingale techniques.

Another generalization of fBm is the so called multifractional Brownian motion, in which the Hurst parameter is allowed to vary continuously in time, $H=H(t)$; two non-equivalent definitions for this process are given respectively in \cite{peltier1995} (by modifying representation \eqref{eq:MvN-representation} by allowing $H=H(t)$) and in \cite{benassi1997} (by a harmonisable representation). In both cases, the process can be shown to be ``locally LND around $t$'' with parameter $H(t)$ (see \cite{AyShXi2011} in the harmonisable case) and thus we still expect our strategy to yield interesting results, under appropriate modifications. Likely, the admissible range of $\alpha$ here would depend on both the supremum and infimum of $H(t)$; we leave more precise investigations for future research.

Finally, let us mention that for (sufficiently regular) solutions $u(x,t)$ to certain linear stochastic PDEs for any fixed $x$ the process $t\mapsto u(x,t)$ is LND, see e.g. \cite{TuXia2017}; this fact was exploited crucially in regularisation by noise for nonlinear SPDEs in \cite{ABLM}.
\end{remark}

\subsection{Setup and notation}\label{sec:notation}
We provide here in a  list all the main notations and conventions adopted throughout the paper.
\begin{itemize}
\item We always work on the time interval $t\in [0,1]$. Increments of functions $f$ on $[0,1]$ are denoted by $f_{s,t}:=f_t-f_s$.
\item Whenever considering a filtered probability space $(\Omega,\cF,\bF,\PP)$, we will implicitly assume that the filtration $\bF=(\cF_t)_{t\in[0,1]}$ satisfies the standard assumptions; in particular, $\cF_0$ is complete.
To denote conditional expectations, we use the shortcut notation $\mathbb{E}_s Y :=\mathbb{E} [Y| \mathcal{F}_s]$. 
\item $L^m$-norms without further notation are understood with respect to $\omega$, that is, $\|Y\|_{L^m}=\big(\E|Y|^m\big)^{1/m}$ for $m<\infty$ and $\|Y\|_{L^\infty}=\mathrm{esssup}_{\omega\in\Omega}|Y(\omega)|$. For conditional $L^m$-norms we use the notation  $\|Y\|_{L^m|\cF_s}=\big(\E(|Y|^m|\cF_s)\big)^{1/m}.$
For any $X,Y\in L^m$ such that $Y$ is $\cF_s$-measurable, by conditional Jensen's inequality one has the $\PP$-a.s. bound
\begin{equ}\label{eq:conditional replacement}
\|X-\E_s X\|_{L^m|\cF_s} \leq \|X-Y\|_{L^m|\cF_s} +\|Y-\E_s X\|_{L^m|\cF_s} \leq 2\|X-Y\|_{L^m|\cF_s}.
\end{equ}
Apart from the usual $L^m$-norms, we also use the norms $\big\|\,\|\cdot\|_{L^m|\cF_s}\big\|_{L^n}$. We will always consider $n\geq m$, in which case again by conditional Jensen it holds
\begin{equation*}
\| X\|_{L^m} \leq \big\|\,\| X \|_{L^m|\cF_s}\big\|_{L^n}
\end{equation*}
with equality in the case $m=n$. Such mixed norms still satisfy natural analogues of classical inequalities like Jensen's, H\"older's and Minkowski's, as can be verified using properties of conditional expectation.
Moreover, by the tower property, one can see that for $t\geq s$, $\big\|\,\|\cdot\|_{L^m|\cF_t}\big\|_{L^n}$ is stronger than $\big\|\,\|\cdot\|_{L^m|\cF_s}\big\|_{L^n}$.
\item Whenever talking about a weak solution $X$ to the SDE \eqref{eq:SDE}, we will actually mean a tuple $(X,B^H; \Omega, \bF, \PP)$ such that $(\Omega,\bF,\PP)$ is a filtered probability space as above, $X$ is $\bF$-adapted and $B^H$ is a $\bF$-fBm of parameter $H$. As usual, $X$ is a strong solution if it is adapted to the (standard augmentation of) the filtration generated by $B^H$. We say that pathwise uniqueness holds for the SDE if for any two solutions $X^1$, $X^2$, defined on the same $(\Omega,\bF,\PP)$, driven by the same $B^H$ and with same initial condition $x_0$, it holds $X^1\equiv X^2$ $\PP$-a.s.
We warn the reader to keep in mind that all such concepts are rather classical when $b$ is at least a measurable function, so that \eqref{eq:SDE} is meaningful in the Lebesgue sense. In the distributional regime $\alpha<0$, this is not the case anymore, therefore the concept of weak solution becomes less standard; we postpone this discussion to the relevant Section \ref{sec:distributional}, similarly for the concept of path-by-path uniqueness.
\item Function spaces in the variable $x\in\R^d$ will often be denoted by the subscript $x$. For instance, standard Lebesgue spaces $L^p(\R^d;\R^m)$ with $p\in[1,\infty]$ will often be denoted, when the target dimension $m$ is clear, simply by $L^p_x$.
For $\alpha\in\R\setminus\N$, we denote by $C^\alpha_x$ the inhomogeneous H\"older-Besov space $B^{\alpha}_{\infty,\infty}$ (cf. \cite{BCD}); instead for nonnegative integer $\alpha$, by $C^\alpha_x$ we mean the space of bounded measurable functions whose all partial weak derivatives up to order $\alpha$ are also essentially bounded and measurable (in other words, $C^\alpha_x=W^{\alpha,\infty}_x$ Sobolev spaces); note that with this convention, elements of $C^0_x$ are not necessarily continuous.
Recall that for $\alpha\in(0,1)$ the space $C^\alpha_x=B^\alpha_{\infty,\infty}$ coincides with the usual space of bounded $\alpha$-H\"older continuous functions.
By $C^{\alpha,\loc}_x$ we mean the space of functions $f$ such that for all compactly supported smooth $g$ one has $f g\in C^\alpha_x$.
More quantitative versions of them are the weighted H\"older spaces $C^{\alpha,\lambda}_x$, for $\alpha\in(0,1]$ and $\lambda\in\R$, defined through the (semi)norms
\begin{equ}
\|f\|_{C^{\alpha,\lambda}_x}:=|f(0)|+\llbracket f\rrbracket_{C^{\alpha,\lambda}_x}:=|f(0)|+\sup_{R\geq 1} \sup_{x\neq y\in B_R} \frac{|f(x)-f(y)|}{|x-y|^\alpha\, R^\lambda},
\end{equ}
where $B_R$ is the ball of radius $R$ around the origin.
\item Given a Banach space $E$, we will use the shortcut notation $L^q_t E$ to denote the space $L^q([0,1];E)$ of Bochner measurable function with finite norm $\| f\|_{L^q E}^q=\int_0^1 \| f_t\|_E^q\, \dd t$, for any $q\in [1,\infty]$ (up to the standard essential supremum convention for $q=\infty$). We use the shortcut notation $C_t E = C([0,1];E)$ for the space of continuous, $E$-valued functions with supremum norm; similarly for $\gamma\in (0,1)$, $C^\gamma_t E= C^\gamma([0,1];E)$ is the space of $E$-valued, bounded and $\gamma$-H\"older continuous functions. All definitions can be extended classically to Fr\'echet spaces $E$ (in particular allowing for $E=C^{\alpha,\loc}_x$ or $L^{p,\loc}_x$), for instance in the the case of $L^q_t E$ by requiring the associated countable seminorms $t\mapsto \| f_t \|_k$ to be all $L^q$-integrable.
%
\item Given a metric space $E$ and $p\in[1,\infty)$, we say that a continuous $E$-valued function $f$ on $[0,1]$ is of finite $p$-variation, in notation $f\in C^{p-\var}_t E$, if 
\begin{equ}
\llbracket f\rrbracket_{p-\var,E}^p:=\sup\sum_{i=1}^n d_E(f_{t_{i-1}},f_{t_i})^p<\infty,
\end{equ}
where the supremum runs over all possible partitions $0=t_0\leq t_1\leq\cdots\leq t_n=1$ of $[0,1]$.
The $p$-variation seminorm on subintervals $[s,t]\subset[0,1]$ is defined similarly and denoted by $\llbracket\cdot\rrbracket_{p-\var,E;[s,t]}$. Whenever $E=\R^m$ for some $m\in\N$, for simplicity we just drop it and write $C^{p-\var}_t$, $\llbracket \cdot \rrbracket_{p-\var;[s,t]}$, similarly for $C^\alpha_t$.

\item All the notations introduced above can be concatenated, by considering a different Banach/Fréchet space at each step. The convention we adopt is that, when writing spaces with respect to different variables, this is to be read from left to right; for example $L^q_t C^\alpha_x L^m$ stands for $L^q\big([0,1], C^\alpha(\R^d,L^m(\Omega))\big)$. Similarly one can define e.g. $L^m C^{p-\var}_t C^{\alpha,\loc}_x$, $C^\gamma_t L^\infty_x$ and so on. Mind in particular that with this convention $C^\alpha_t C^\alpha_x\neq C^\alpha_{t,x}$, the latter denoting the space of $\alpha$-H\"older continuous functions in $(t,x)$.
\item Let us recall some standard heat kernel estimates: for any $\alpha\geq \beta$ there exists a constant $N=N(d,\alpha,\beta)$ such that, for all $t\in(0,1]$, one has the bound
\begin{equ}\label{eq:HK-estimates}
\|P_{t}f\|_{C^\alpha_x}\leq N t^{(\beta-\alpha)/2}\|f\|_{C^\beta_x};
\end{equ}
see \cite[Lemma A.10]{galeati2021noiseless} and the references therein for a more general statement.
\item For $0\leq S\leq T\leq 1$, we denote $[S,T]^2_\leq=\{(s,t)\in[S,T]^2:\,s\leq t\}$ and $[S,T]^3_\leq=\{(s,u,t)\in[S,T]^3:\,s\leq u\leq t\}$.
For $(s,t)\in[S,T]^2_\leq$, define $s_-=s-(t-s)$.
We then set the slightly more restricted sets of pairs/triples as
\begin{align*}
	& \overline{[S,T]}^2_\leq:=\{(s,t)\in[S,T]^2_\leq:\,s_-\geq S\},\\
	& \overline{[S,T]}^3_\leq=\{(s,u,t)\in[S,T]^3_\leq:\,(u-s)\wedge(t-u)\geq (t-s)/3,\,s_-\geq S\}.
\end{align*}
\item Given a Frechét space $E$ and a map $A:[S,T]^2_\leq\to E$, we define $\delta A:[S,T]^3_\leq \to E$ by
$\delta A_{s,u,t} = A_{s,t}- A_{s,u}- A_{u,t}$.
\item We say that a function $w:[0,1]^2_\leq\to\R_+ $ is a \emph{control} if it is continuous and superadditive, i.e. $w(s,u)+w(u,t)\leq w(s,t)$ for all $(s,u,t)\in[S,T]^3_\leq$. 
The most common controls for us will be of the form
\begin{equ}\label{eq:the-real-control}
w_{b,\alpha,q}(s,t):=\int_s^t\|b_r\|_{C^\alpha_x}^q\,dr.
\end{equ}
Recall that for any two controls $w_1,w_2$ and $\theta_1,\theta_2\in[0,\infty)$ such that $\theta_1+\theta_2\geq 1$, $w=w_1^{\theta_1}w_2^{\theta_2}$ is also a control (see \cite[Exercises~1.8,1.9]{FVBook}).
Note also that if $w$ is a control, $\psi$ is an $\R^m$-valued path and $\gamma\in(0,1]$, then
\begin{equ}\label{eq:trivial-var}
\|\psi\|_{\frac{1}{\gamma}-\var}\leq w(0,1)^\gamma\sup_{0\leq s<t\leq 1}\frac{|\psi_{s,t}|}{w(s,t)^\gamma};
\end{equ}
conversely, for $p\geq 1$, if $\psi\in C^{p-\var}_t$ then $w(s,t)=\llbracket \psi\rrbracket_{p-\var; [s,t]}^p$ is a control and $|\psi_{s,t}|\leq w(s,t)^{1/p}$, cf. \cite[Propositions 5.8-5.10]{FVBook}.
\item The space of probability measures on $\R^d$ is denoted by $\mathcal{P}(\R^d)$.
The law of a random variable $X$ is denoted by $\cL(X)$.
For $p\geq 1$ we denote the $p$-Wasserstein distance on $\mathcal{P}(\R^d)$ by $\mathbb{W}_p$, defined as
\begin{equ}
\mathbb{W}_p(\mu,\nu)^p=\inf_{\gamma\in \Gamma(\mu,\nu)}\int_{\R^{d}\times\R^d}|x-y|^p\gamma(\dd x,\dd y),
\end{equ}
where $\Gamma(\mu,\nu)$ is the set of all couplings of $\mu$ and $\nu$, i.e. the probability measures on $\R^{d}\times\R^d$ whose first and second marginals are $\mu$ and $\nu$ respectively. Note that $\mathbb{W}_p$ can take value $+\infty$ and is defined for any $\mu$, $\nu$, without any moment assumption.
\item When a statement contains an estimate with a constant depending on a certain set of parameters, in the proof we do not carry the constants from line to line. Rather, we write $A\lesssim B$ to denote the existence of a constant $N$ depending on the same set of parameters such that $A\leq N B$.
Whenever such set of parameters includes a parameter that is a norm (this will typically be the norm of the coefficient $b$), this dependence is always monotone increasing.

\end{itemize}

\section{A priori estimates and stochastic sewing}\label{sec:first}
The key consequence of the subcriticality condition \eqref{eq:main exponent} is that in terms of local nondeterminism, drifts of solutions are more regular than the noise; in particular, the solution decomposes as $X=\varphi+B^H$, where $\varphi$ plays the role of a \emph{slow variable}, while $B^H$ is the \emph{highly oscillating component}.\footnote{In the regularisation by noise literature, to the best of our knowledge this concept originates from \cite{CG16}, where a similar \emph{pathwise} solution ansatz leads to the formalism of nonlinear Young integrals, based on deterministic sewing. Here, also inspired by the works \cite{Khoa,FHL,gerencser2020regularisation,BDG-Levy}, we take a step further and readapt the concept to a more probabilistic setup, where a combination of \eqref{eq:key-quantity}, LND and stochastic sewing yields sharper results.}
This can be formulated as a precise quantitative bound, by looking at the best conditional error committed by predicting the process $\varphi_t$, given the history up to time $s$; more precisely, we look for estimates of the form
\begin{equation}\label{eq:key-quantity}
\big\|\|\varphi_t-\E_s\varphi_t\|_{L^m|\cF_s}\big\|_{L^\infty}\leq w(s,t)^{1/q}|t-s|^{1/q'+\alpha H}\quad \forall\, (s,t)\in [0,1]^2_{\leq}, 
\end{equation}
where $m\in [1,\infty)$, $w$ is a suitable control and $(q,\alpha,H)$ are the parameters related to $b$, $B^H$.

The subcritical regime $\alpha>1-1/(q' H)$ corresponds to the exponent $1/q'+\alpha H$ appearing in \eqref{eq:key-quantity} being greater than $H$; this is in stark contrast with the lower bound provided by the LND property of fBm \eqref{eq:LND-fBm}, which tells us that such an estimate cannot hold for $\varphi$ replaced by $B^H$, justifying the slow-fast heuristic above.

It is also worth pointing out that $1/q'+\alpha H$ is allowed to exceed $1$ (this is indeed always the case for $H>1$), which will be used crucially in the following; in this case, the same bound could not hold if in \eqref{eq:key-quantity} $\E_s \varphi_t$ were replaced by $\varphi_s$, as one can easily check that the only processes satisfying the corresponding condition are the constant ones.

It will become clear in the sequel why \eqref{eq:key-quantity} is exactly the right condition needed in our analysis; for the moment, let us show that solutions to SDEs naturally enjoy \eqref{eq:key-quantity}.

Lemma \ref{lem:drift-regularity} below is based on a readaption of \cite[Lemma~2.4]{gerencser2020regularisation}, \cite[Lemma~4.2]{BDG-Levy} to our setting.
Note that in the statement, while we enforce the subcritical condition $\alpha>1-1/(q'H)$, the restriction $q\leq 2$ is not necessary; we do however restrict to $\alpha\geq 0$ first. For distributional drifts, similar bounds will be derived from stochastic sewing, see Lemma \ref{lem:apriori-estim} below.

\begin{lemma}\label{lem:drift-regularity}
Let $H\in(0,\infty)\setminus\N$, $q\in[1,\infty)$, and $\alpha\in[0,1]$ satisfy $\alpha>1-1/(q'H)$; let $b\in L^q_t C^\alpha_x$, $X$ be a weak solution of \eqref{eq:SDE} and set $\varphi:=X-B^H$, so that
\begin{equation*}
\varphi_t = x_0 + \int_0^t b_r(X_r) \dd r.
\end{equation*}
Then, for any $m\in [1,\infty)$, there exists a constant $N=N(d,H,\alpha,m,\| b\|_{L^q_t C^\alpha_x})$ such that estimate \eqref{eq:key-quantity} holds with the choice
\begin{equation}\label{eq:drift-regularity}
w(s,t)=N w_{b,\alpha,q}(s,t)= N \int_s^t \| b_r\|_{C^\alpha}^q \dd r.
\end{equation}
\end{lemma}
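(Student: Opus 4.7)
The plan is to derive a (Volterra-type) integral inequality for
$$\Psi(s,t):=\big\|\|\varphi_t-\E_s\varphi_t\|_{L^m|\cF_s}\big\|_{L^\infty},$$
and then to bootstrap its time exponent up to the target $1/q'+\alpha H$. Starting from $\varphi_t-\E_s\varphi_t=\int_s^t[b_r(X_r)-\E_s b_r(X_r)]\,dr$, conditional Minkowski gives
$$\Psi(s,t)\leq\int_s^t\big\|\|b_r(X_r)-\E_s b_r(X_r)\|_{L^m|\cF_s}\big\|_{L^\infty}\,dr.$$
For the integrand I would apply \eqref{eq:conditional replacement} with the $\cF_s$-measurable substitute $Y=b_r(\E_s X_r)$; the $\alpha$-H\"older regularity of $b_r$, together with the identity $\||Z|^\alpha\|_{L^m|\cF_s}=\|Z\|_{L^{m\alpha}|\cF_s}^\alpha$, then yields
$$\|b_r(X_r)-\E_s b_r(X_r)\|_{L^m|\cF_s}\leq 2\|b_r\|_{C^\alpha_x}\,\|X_r-\E_s X_r\|_{L^{m\alpha}|\cF_s}^{\alpha}.$$

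Next, I would decompose $X_r-\E_s X_r=(\varphi_r-\E_s\varphi_r)+(B^H_r-\E_s B^H_r)$. By \eqref{eq:LND-fBm}, the second summand is Gaussian with covariance $c_H(r-s)^{2H}I_d$ independent of $\cF_s$, and hence has (unconditional) $L^{m\alpha}$-norm $\lesssim(r-s)^H$; for the first summand, Lyapunov monotonicity of conditional $L^p$-norms (valid since $\alpha\leq 1$) gives $\|\varphi_r-\E_s\varphi_r\|_{L^{m\alpha}|\cF_s}\leq\|\varphi_r-\E_s\varphi_r\|_{L^m|\cF_s}$. Combining these observations with the subadditivity $(a+b)^\alpha\leq a^\alpha+b^\alpha$ (for $\alpha\in(0,1]$), one arrives at the fundamental inequality
$$\Psi(s,t)\leq C\int_s^t\|b_r\|_{C^\alpha_x}\big[\Psi(s,r)^\alpha+(r-s)^{\alpha H}\big]\,dr,\qquad (s,t)\in[0,1]^2_\leq,$$
with $C=C(d,H,\alpha,m)$.

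The proof then concludes by a bootstrap on the ansatz $\Psi(s,t)\leq C_k\,w_{b,\alpha,q}(s,t)^{1/q}\,(t-s)^{\beta_k}$, starting from the crude deterministic bound $\Psi(s,t)\leq 2\int_s^t\|b_r\|_\infty\,dr\leq 2\,w_{b,\alpha,q}(s,t)^{1/q}(t-s)^{1/q'}$ (so $\beta_0=1/q'$). Plugging the ansatz into the inequality above, applying H\"older in $r$, and absorbing the resulting factor $w_{b,\alpha,q}(s,t)^{\alpha/q}\leq\|b\|_{L^q_t C^\alpha_x}^\alpha$ into the constant, yields a new bound with exponent $\beta_{k+1}=\min(1/q'+\alpha\beta_k,\,1/q'+\alpha H)$ and with $C_{k+1}$ uniformly bounded for $\alpha<1$ (fixed-point argument $C_\infty = C'(1+M^\alpha C_\infty^\alpha)$). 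The unconstrained recursion has fixed point $\beta^\star=1/(q'(1-\alpha))$, and a direct computation shows that the subcriticality condition $\alpha>1-1/(q'H)$ is exactly equivalent to $\beta^\star>1/q'+\alpha H$; hence $\beta_k$ saturates at $1/q'+\alpha H$ after finitely many iterations, giving the constant $N$ in \eqref{eq:drift-regularity} with the asserted dependence. The main technical point is keeping the mixed $w_{b,\alpha,q}(s,t)^{1/q}(t-s)^{\beta}$ scaling consistent through each iteration; the endpoint $\alpha=1$ (where the recursion becomes linear and $\beta^\star=\infty$) is instead closed by a direct Gr\"onwall argument on the above integral inequality, and $\alpha=0$ is immediate from the crude starting bound.
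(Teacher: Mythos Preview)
Your proof is correct and follows essentially the same approach as the paper's: both derive the integral inequality $\Psi(s,t)\lesssim\int_s^t\|b_r\|_{C^\alpha_x}[\Psi(s,r)^\alpha+(r-s)^{\alpha H}]\,dr$ via \eqref{eq:conditional replacement} with the $\cF_s$-measurable substitute $b_r(\E_sX_r)$, and then bootstrap the exponent through the recursion $\beta\mapsto\alpha(\beta\wedge H)+1/q'$, which stabilises at $\alpha H+1/q'$ after finitely many steps thanks to subcriticality. The only cosmetic differences are that you pass through $L^{m\alpha}$ norms explicitly (the paper applies Jensen directly to get $\||Z|^\alpha\|_{L^m|\cF_s}\leq\|Z\|_{L^m|\cF_s}^\alpha$) and that you track the constants via a fixed-point argument and treat the endpoints $\alpha\in\{0,1\}$ separately, whereas the paper simply notes that the iteration terminates in a number of steps depending only on $(\alpha,H,q)$.
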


\begin{proof}
First assume that, for some given $\beta \geq 0$, the bound \eqref{eq:key-quantity} holds with $w$ as above and exponent $\beta$ in place of $1/q'+\alpha H$.
This is definitely the case with $\beta=1/q'$, as one can see from
\begin{equation}\label{eq:drift-regularity-proof}\begin{split}
\big\|\|\varphi_t-\E_s\varphi_t\|_{L^m|\cF_s}\big\|_{L^\infty}
& \leq 2\big\|\|\varphi_t-\varphi_s\|_{L^m|\cF_s}\big\|_{L^\infty}\\
& \leq 2 \int_s^t \| b_r\|_{C^0}\, \dd r
\leq 2 w_{b,\alpha,q}(s,t)^{1/q} |t-s|^{1/q'};
\end{split}\end{equation}
in the above passages, we applied \eqref{eq:conditional replacement}, the definition of $\varphi$ and lastly H\"older's inequality.

Assuming we already have the bound for a generic $\beta\geq 1/q'$, we can then apply \eqref{eq:conditional replacement} for the choice $Y=\varphi_s + \int_s^t b_r(\E_s X_r) \dd r$, together with the definition of $\varphi$, to find
\begin{align*}
\| \varphi_t   -\mathbb{E}_s \varphi_t \|_{L^m|\cF_s}
& \leq 2 \Big\| \varphi_t - \varphi_s - \int_s^t b_r ( \mathbb{E}_s \varphi_r +\E_s B^H_r) \mathd r \Big\|_{L^m|\cF_s}\\
& \leq 2 \int_s^t \big\| b_r (\varphi_r + B^H_r) - b_r (\mathbb{E}_s \varphi_r +\E_s B^H_r) \big\|_{L^m|\cF_s} \,\mathd r\\
& \leq 2 \int_s^t \| b_r \|_{C^{\alpha}_x} \big\| \varphi_r -\mathbb{E}_s \varphi_r + B^H_r -\E_s B^H_r \big\|_{L^m|\cF_s}^{\alpha}\,\dd r\\
& \leq 2 \int_s^t \| b_r \|_{C^{\alpha}_x} \big( \| \varphi_r -\mathbb{E}_s \varphi_r\|_{L^m|\cF_s}^{\alpha} + \| B^H_r -\E_s B^H_r \|_{L^m|\cF_s}^{\alpha}\big)\,\dd r;
\end{align*}
in the above estimates we used multiple times basic properties of conditional norms like Jensen's and Minkowski's inequality.
By the properties of fBm recalled in Section \ref{sec:prelim-fbm} and the independence of $B_r^H-\E_s B^H_r$ from $\cF_s$, we have the bound
\begin{equation*}
\big\| \|B^H_r -\E_s B^H_r \|_{L^m|\cF_s} \big\|_{L^\infty}\lesssim |r-s|^H \quad \forall s\leq r.
\end{equation*}
Combined with our standing assumption on $\varphi$, by taking $L^\infty$-norms on both sides and using Minkowski's and H\"older's inequalities for the integral, we get
\begin{align*}
\big\|\| \varphi_t  -\mathbb{E}_s \varphi_t \|_{L^m|\cF_s}\big\|_{L^\infty}
& \lesssim \int_s^t \| b_r\|_{C^\alpha_x} \Big(  \big\|\| \varphi_r  -\mathbb{E}_s \varphi_r \|_{L^m|\cF_s}\big\|_{L^\infty}^\alpha + |r-s|^{\alpha H} \Big) \dd r\\
& \lesssim \int_s^t \| b_r\|_{C^\alpha_x} \Big(  w_{\alpha,b,q}(s,r)^{\alpha/q}|r-s|^{\alpha\beta} + |r-s|^{\alpha H} \Big) \dd r\\
& \lesssim w_{\alpha,b,q}(s,t)^{1/q}\Big(w_{\alpha,b,q}(s,t)^{\alpha/q}|t-s|^{\alpha\beta+1/q'} + |t-s|^{\alpha H+1/q'}\Big).
\end{align*}
In other terms, if $\varphi$ satisfies \eqref{eq:key-quantity} with $1/q'+\alpha H$ replaced by $\beta$, then it does so also with $\tilde{\beta}=f(\beta)=\alpha (\beta \wedge H)+1/q'$ (up to a change in the generic constant $N$).

From here, the argument is identical to the one from \cite[Lemma~2.4]{gerencser2020regularisation}: by iterating, we can define a sequence $\{\beta^n\}_n$ by $\beta^{n+1}= f(\beta^n)$ with $\beta_0=1/q'$; it remains to note that the condition $\alpha>1-1/(q'H)$ guarantees that the only fixed point $\bar{\beta}$ of the map $\tilde{f}(\beta)= \alpha \beta+1/q'$ is strictly larger than $H$ and is attracting exponentially fast any orbit defined by $\tilde{\beta}_{n+1}=\tilde{f}(\tilde \beta_n)$. Given that the sequences $\{\beta_n\}_n$ and $\{\tilde\beta_n\}_n$ coincide as long as the first one doesn't exceed $H$, this necessarily implies that the first one stabilizes to $\beta=\alpha H+ 1/q'$ after a finite number of iterations $\bar{n}$ (i.e. $\beta_n=\alpha H+ 1/q'$ for all $n\geq \bar{n}$).
\end{proof}

\begin{remark}\label{rem:drift-regularity}
The case $m=\infty$ can be handled with an appropriate stopping argument, see \cite[Lemma~2.4]{gerencser2020regularisation}. This can be used to derive similar bounds for processes that are not exact solutions (for example Picard iterates), but we do not need this generality.
\end{remark}

The next ingredient is an a priori estimate for $\alpha<0$, analogous to Lemma \ref{lem:drift-regularity}.
Recall that for any adapted process $\varphi$, by \eqref{eq:conditional replacement} one has
\begin{equation*}
\big\|\|\varphi_t-\E_s\varphi_t\|_{L^m|\cF_s}\big\|_{L^\infty}\leq 2\big\|\|\varphi_{s,t}\|_{L^m|\cF_s}\big\|_{L^\infty};
\end{equation*}
in the distributional case, we will directly bound the latter quantity.
Unlike Lemma \ref{lem:drift-regularity}, here we cannot allow for any $q\in(2,\infty]$ and subcritical $\alpha$, rather we need to impose the stronger condition \eqref{eq:condition-existence}, which was introduced just before Theorem \ref{thm:weak-existence-intro}.
%
\begin{remark}\label{rem:about-the-other-condition}
As mentioned in Remark \ref{rem:other-condition-intro}, for $q\in (1,2]$, condition \eqref{eq:condition-existence} reduces to \ref{eq:main exponent}. For $q\in(2,\infty)$, the a priori estimate below will be relevant in Section \ref{sec:weak-compactness}, where we establish existence of weak solutions in a regime where the uniqueness is not known.
Contrary to Lemma \ref{lem:drift-regularity}, the proof of Lemma \ref{lem:apriori-estim} will rely on stochastic sewing techniques. We could use the upcoming very general (but quite technical) Lemma \ref{lem:SSL1} for this task; but in order to help the intuition, we prefer first to invoke the result from \cite{FHL}, whose statement is simpler, and postpone the application of Lemma \ref{lem:SSL1} to where it is truly needed (e.g. Lemma \ref{lem:integral-estimates}).
\end{remark}

\begin{lemma}\label{lem:apriori-estim}
Assume \eqref{eq:condition-existence} and in addition $\alpha<0$.
Let $b\in L^q_tC^1_x$ and let $X$ be the unique strong solution to \eqref{eq:SDE} for some initial condition $x_0\in\R^d$; set $w:=w_{b,\alpha,q}$ and $\varphi=X-B^H$.
Then for any $m\in [1,\infty)$ there exists a constant $N=N(m,d,\alpha,q,H,\| b\|_{L^q_t C^\alpha_x})$ such that for all $(s,t)\in[0,1]_\leq^2$ one has the bound
\begin{equation}\label{eq:apriori-estim}
\big\|\| \varphi_{s,t}\|_{L^m\vert \mathcal{F}_s}\big\|_{L^\infty} \leq N w(s,t)^{1/q} |t-s|^{\alpha H + 1/q'}.
\end{equation}
\end{lemma}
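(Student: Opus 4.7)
Since $b \in L^q_t C^1_x$, the SDE admits a classical strong solution $X$, and $\varphi_t=\int_0^t b_r(X_r)\,dr$ is defined pointwise; the task is to establish \eqref{eq:apriori-estim} with a constant depending only on $\|b\|_{L^q_t C^\alpha_x}$, not on $\|b\|_{L^q_t C^1_x}$. The plan is to apply the stochastic sewing lemma (Lemma~\ref{lem:SSL1} or equivalently the version in \cite{FHL}) to a carefully chosen germ, coupled with a Picard bootstrap analogous to the one appearing in the proof of Lemma~\ref{lem:drift-regularity}.

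The natural germ, obtained by freezing $\varphi$ at time $s$, is
\[
A_{s,t}:=\mathbb{E}_s\int_s^t b_r(\varphi_s+B^H_r)\,dr=\int_s^t\bigl(P_{c_H(r-s)^{2H}}b_r\bigr)(\varphi_s+\mathbb{E}_s B^H_r)\,dr,
\]
where the two forms agree by the LND conditioning formula \eqref{eq:conditioning}. Since $A_{s,t}$ is $\mathcal{F}_s$-measurable, the heat kernel estimate \eqref{eq:HK-estimates} (applied with $\alpha\leq 0$) and H\"older's inequality yield
\[
\|A_{s,t}\|_{L^\infty}\leq\int_s^t\bigl\|P_{c_H(r-s)^{2H}}b_r\bigr\|_{L^\infty_x}\,dr\lesssim\int_s^t(r-s)^{\alpha H}\|b_r\|_{C^\alpha_x}\,dr\lesssim w(s,t)^{1/q}(t-s)^{\alpha H+1/q'},
\]
where finiteness of the last integral is guaranteed by subcriticality, $\alpha H q'+1>0$.

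For the sewing increment $\delta A_{s,u,t}$, a direct computation using the tower property yields
\[
\mathbb{E}_s\delta A_{s,u,t}=\int_u^t\mathbb{E}_s\bigl[(P_{c_H(r-s)^{2H}}b_r)(\varphi_s+\mathbb{E}_sB^H_r)-(P_{c_H(r-u)^{2H}}b_r)(\varphi_u+\mathbb{E}_uB^H_r)\bigr]dr,
\]
which splits into a semigroup-difference contribution (bounded pointwise by $\|\nabla^2 P_{c_H(r-u)^{2H}}b_r\|_{L^\infty_x}\cdot|(r-s)^{2H}-(r-u)^{2H}|$) and an argument-difference contribution (bounded by $\|\nabla P_{c_H(r-u)^{2H}}b_r\|_{L^\infty_x}$ times $\|\varphi_{s,u}\|_{L^m|\mathcal{F}_s}+|u-s|^H$); the non-conditional bound $\|\delta A_{s,u,t}\|_{L^m|\mathcal{F}_s}$ is handled similarly but with an extra factor coming from the martingale-type increment of $B^H$. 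Assuming inductively $\|\varphi_{s,u}\|_{L^m|\mathcal{F}_s}\leq C\, w(s,u)^{1/q}(u-s)^\beta$ and feeding the resulting estimates into the sewing lemma produces the updated exponent $\beta':=\alpha(\beta\wedge H)+1/q'$, the same iteration as in Lemma~\ref{lem:drift-regularity}; its unique attractive fixed point $\beta^\ast=\alpha H+1/q'$ exceeds $H$ under \eqref{eq:condition-existence} and is reached in finitely many steps. The sewn process is identified with $\varphi$ via the uniqueness clause of the sewing lemma, since $|\varphi_{s,t}-A_{s,t}|=o(t-s)$ follows from the spatial smoothness of $b$.

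The main difficulty lies in the estimate of $\delta A_{s,u,t}$. The fundamental obstruction is that $\varphi_r$ is not $\mathcal{F}_s$-measurable for $r>s$, so \eqref{eq:conditioning} cannot be applied directly to $\mathbb{E}_s b_r(X_r)$; the germ is forced to use the frozen value $\varphi_s$, and the error introduced between $b_r(\varphi_s+B^H_r)$ and $b_r(\varphi_r+B^H_r)$ is controlled by the very quantity we are trying to bound. This self-referential structure is what necessitates the bootstrap, and the subcriticality condition \eqref{eq:condition-existence} is precisely what ensures that the Picard iteration converges to the desired exponent.
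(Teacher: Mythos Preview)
Your overall architecture (stochastic sewing with the frozen germ $A_{s,t}=\E_s\int_s^t b_r(\varphi_s+B^H_r)\,dr$, followed by a self-improving closure) matches the paper, but the closure step contains a genuine error. The iteration map $\beta'=\alpha(\beta\wedge H)+1/q'$ is the one from Lemma~\ref{lem:drift-regularity}; there it arises from the pointwise $\alpha$-H\"older inequality $|b(x)-b(y)|\le\|b\|_{C^\alpha}|x-y|^\alpha$, which is meaningless for $\alpha<0$. You yourself invoke the Lipschitz estimate $\|\nabla P_{(r-u)^{2H}}b_r\|_{L^\infty_x}\lesssim(r-u)^{(\alpha-1)H}\|b_r\|_{C^\alpha}$, and this gives \emph{linear} dependence on $|\varphi_{s,u}|$. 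Feeding in the inductive hypothesis therefore produces the exponent $(\alpha-1)H+1/q'+\beta$ on $\E_s\delta A$, not $\alpha\beta+1/q'$; these coincide only when $\beta=H$. For $\alpha$ sufficiently negative (permitted under \eqref{eq:condition-existence} when $H$ is small) your stated map even outputs negative exponents, so it cannot be the correct one.

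The paper avoids any iteration on the exponent. Since $\alpha<0$, the target $\beta:=\alpha H+1/q'$ lies \emph{below} $1/q'$, hence the trivial bound $\|\varphi_{s,t}\|_{L^m|\cF_s}\le 2\,w_{b,0,q}(s,t)^{1/q}|t-s|^{1/q'}$ already realises exponent $\beta$, just with the wrong control $\tilde w=w_{b,0,q}$. One then works at exponent $\beta$ throughout: sewing yields
\[
\llbracket\varphi\rrbracket_{\beta,w;[s,t]}\le N_0\bigl(1+|t-s|^{\beta-H}\tilde w(s,t)^{1/q}\llbracket\varphi\rrbracket_{\beta,\tilde w;[s,t]}\bigr),
\]
whence $\llbracket\varphi\rrbracket_{\beta,w}<\infty$; rerunning with $\tilde w=w$ and absorbing on short intervals gives the local bound, which is then globalised by a partition argument you do not address. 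The sewing hypothesis on $\E_s\delta A$ requires $2\beta-H+2/q>1$, equivalent to $\alpha>1/2-1/(2H)$: this is exactly where the additional condition in \eqref{eq:condition-existence} enters, a point your proposal misses entirely. Finally, your splitting of $\E_s\delta A_{s,u,t}$ into a ``semigroup-difference'' term and an argument-difference with contribution $|u-s|^H$ is unnecessary: applying the tower property (conditioning first on $\cF_u$) puts both heat kernels at level $(r-u)^{2H}$ and leaves only $\varphi_s-\varphi_u$, so no $B^H$-increment or semigroup term appears.
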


\begin{proof}
Up to shifting, we can assume without loss of generality $x_0=0$; moreover we only need to deal with $m\in [2,\infty)$, since $\| \cdot\|_{L^m\vert \cF_s} \leq \| \cdot\|_{L^2\vert \cF_s}$ otherwise.
Fix $m\in [2,\infty)$, set the shorthand $\beta:=\alpha H +1/q'$; recall that by \eqref{eq:condition-existence}, one has $\beta>H$.

Let us first assume that \eqref{eq:apriori-estim} holds with $w$ replaced by another control $\tilde{w}$; this is definitely the case for $\tilde w = w_{b,0,q}$, arguing as in \eqref{eq:drift-regularity-proof}.
Given such $\tilde w$ and any closed subinterval $I\subset [0,1]$, define
\begin{equation*}
\llbracket \varphi\rrbracket_{\beta,\tilde w, I} := \sup_{s,t\in I, s<t} \frac{ \big\|\| \varphi_{s,t}\|_{L^m\vert \mathcal{F}_s} \big\|_{L^\infty}}{|t-s|^\beta \tilde w(s,t)^{1/q}}
\end{equation*}
with the convention $0/0=0$.
Fix $(s,t)\in[0,1]_\leq^2$ and, for any $(s',t')\in[s,t]_\leq^2$, set
$$
A_{s',t'}:=\E_{s'} \int_{s'}^{t'} b_r(\varphi_{s'}+B^H_r)\dd r
= \int_{s'}^{t'} P_{|r-s'|^{2H}} b_r (\varphi_{s'}+\E_{s'} B^H_r) \dd r
$$
where in the second passage we used conditional Fubini and property \eqref{eq:conditioning} (please remember our convention about not writing explicitly the constant $c_H$ or the matrix $I_d$).

Our aim is to apply the stochastic sewing lemma (in the version given by \cite[Theorem 2.7]{FHL}) to $A$ in order to find a closed estimate for $\llbracket \varphi\rrbracket_{\beta,w,I}$. By the heat kernel estimates \eqref{eq:HK-estimates}, we have $\PP$-almost surely
\begin{equs}
| A_{s',t'}|
&\leq \int_{s'}^{t'} \| P_{|r-s'|^{2H}} b_r\|_{C^0} \dd r
\lesssim \int_{s'}^{t'} |r-s'|^{\alpha H} \|b_r\|_{C^\alpha} \dd r
\lesssim |t'-s'|^\beta w(s',t')^{1/q},
\end{equs}
where in the last passage we applied H\"older's inequality and the $L^{q'}$-integrability of $|r-s|^{\alpha H}$ follows from \eqref{eq:condition-existence}.
Similarly, we have the $\PP$-a.s. bound
\begin{align*}
| \E_{s'} \delta A_{s',u',t'} |
& = \bigg| \E_{s'} \E_{u'} \int_{u'}^{t'} b_r(\varphi_{s'}+B_r)- b_r(\varphi_{u'} + B_r) \dd r \bigg|\\
& \lesssim \bigg| \int_{u'}^{t'} |r-u'|^{H(\alpha-1)} \| b_r\|_{C^\alpha} \dd r\, \E_{s'}| \varphi_{s',u'} | \bigg|\\
& \lesssim |t'-s'|^{2\beta-H}  w(s',t')^{1/q} \tilde w(s',t')^{1/q} \llbracket \varphi \rrbracket_{\beta,\tilde w, [s,t]}.
\end{align*}
The integrability of the power follows again from \eqref{eq:condition-existence}, as do the inequalities $\beta+1/q>1/2$, $2\beta-H +2/q>1$ (we remark that it is only the latter for which the additional condition in \eqref{eq:condition-existence} was introduced).
Therefore the stochastic sewing lemma \cite[Theorem 2.7]{FHL} applies and allows us to derive estimates for the sewing $\cA$ associated to $A$. However, one can easily identify $\cA_\cdot$; indeed, by the spatial regularity of $b$, we have the bound
\begin{equ}
\|\varphi_{s',t'}-A_{s',t'}\|_{L^m}\lesssim |t'-s'|^\eps\, w_{b,1,1}(s',t')
\end{equ}
for some $\eps>0$, which allows to conclude that $\cA_{\cdot}=\varphi_{s,\cdot}$ again by \cite[Theorem 2.7-(b)]{FHL}.
Overall, we deduce that there exists a constant $N_0=N_0(m,d,\alpha,q,H)$ such that
\begin{equation*}
\big\| \|\varphi_{s',t'}\|_{L^m\vert \mathcal{F}_{s'}} \big\|_\infty
\leq N_0 |t'-s'|^\beta w(s',t')^{1/q} \Big(1 + |t'-s'|^{\beta-H} \tilde{w}(s',t')^{1/q} \llbracket\varphi\rrbracket_{\beta,\tilde w,[s',t']} \Big).
\end{equation*}
Diving both sides by $|t'-s'|^\beta w^{1/q}(s',t')$, taking supremum over $[s',t']\subset [s,t]$ and using the fact that all our estimates are on $[s,t]\subset [0,1]$, we obtain
\begin{equation}\label{eq:proof-existence-0}
\llbracket \varphi \rrbracket_{\beta,w,[s,t]} \leq N_0  \Big(1 + |t-s|^{\beta-H} \tilde{w}(s,t)^{1/q}\llbracket\varphi\rrbracket_{\beta,\tilde w,[s,t]} \Big).
\end{equation}
In particular, \eqref{eq:proof-existence-0} shows that $\llbracket \varphi \rrbracket_{\beta,w,[s,t]}$ is finite;
we can then go again through the whole argument, with $\tilde{w}$ replaced by $w$, to find
\begin{equation}\label{eq:proof-existence-1}
\llbracket \varphi \rrbracket_{\beta,w,[s,t]} \leq N_0  \Big(1 + |t-s|^{\beta-H} w(s,t)^{1/q} \llbracket\varphi\rrbracket_{\beta,w,[s,t]} \Big)
\end{equation}
which readily yields a closed estimate for $\llbracket \varphi \rrbracket_{\beta,w,[s,t]}$, at least for $[s,t]$ sufficiently small.

Our last task is to remove the smallness condition on $[s,t]$ in order to achieve a global bound.
To this end, define a new control $w_\ast$ by $w_\ast(s,t)^{1/q+\beta-H}=w(s,t)^{1/q} |t-s|^{\beta-H}$ and an increasing sequence $\{t_n\}_n$ by $t_0=0$ and $w_\ast(t_n,t_{n+1})^{1/q+\beta-H}=(2N_0)^{-1}$. Applying \eqref{eq:proof-existence-1} for $[s,t]=[t_n,t_{n+1}]$, by construction one finds $\llbracket \varphi \rrbracket_{\beta,w,[t_n,t_{n+1}]} \leq 2N_0$.

If $t_1=1$, this immediately yields the conclusion. Suppose this is not the case, then for any pair $s<t$ which do not belong to the same subinterval $[t_n,t_{n+1}]$, there exist $\ell,m\in\N$ such that $t_{\ell-1}< s\leq t_\ell\leq \ldots \leq t_m\leq t<t_{m+1}$. Set $\tau_{\ell-1}=s$, $\tau_i=t_i$ for $i=\ell,\ldots, m$ and $\tau_{m+1}=t$. It holds
\begin{align*}
\big\| \| \varphi_{s,t}\|_{L^m\vert \mathcal{F}_s}\big\|_{L^\infty}
& \leq \sum_{i=\ell-1}^{m} \big\|\| \varphi_{\tau_i,\tau_{i+1}}\|_{L^m\vert \mathcal{F}_s} \big\|_{L^\infty}
\leq \sum_{i=\ell-1}^{m} \big\|\| \varphi_{\tau_i,\tau_{i+1}}\|_{L^m\vert \mathcal{F}_{\tau_i}} \big\|_{L^\infty}\\
& \lesssim_{N_0} \sum_{i=\ell-1}^{m} w(\tau_i,\tau_{i+1})^{1/q} |\tau_i-\tau_{i+1}|^\beta \\
& \leq (m+1-\ell)^{-\alpha H} \Big( \sum_{i=\ell-1}^{m} \big [w(\tau_i,\tau_{i+1})^{1/q} |\tau_i-\tau_{i+1}|^\beta\big]^{\frac{1}{1+\alpha H}} \Big)^{1+\alpha H}\\
& \leq (m+1-\ell)^{-\alpha H} w(s,t)^{1/q} |t-s|^\beta
\end{align*}
where in the last two passages we used the fact that $\beta+1/q=1+\alpha H\in (0,1)$, Jensen's inequality and the superadditivity of the control $[w(s,t)^{1/q} |t-s|^\beta]^{\frac{1}{1+\alpha H}}$. Observe that $m+1-\ell$ is less or equal to the overall amount of intervals $[t_n,t_{n+1}]$. In turn, by their definition and subadditivity of $w_\ast$, this is bounded by a multiple of
\[ w_\ast (0,1)=w(0,1)^{\frac{(\alpha H + 1-H)^{-1}}{q}}=\| b\|_{L^q C^\alpha}^{(\alpha H + 1-H)^{-1}}\]
which finally yields the conclusion.
\end{proof}

Next we formulate two appropriate versions of the stochastic sewing lemma (SSL).
After its introduction by L\^e \cite{Khoa}, in recent years the SSL has seen many variations.
Our first SSL combines three modifications: it incorporates shifting (as in \cite{gerencser2020regularisation}), as well as controls and general $\big\|\,\|\cdot\|_{L^m|\cF_s}\big\|_{L^n}$ norms (as in \cite{FHL, Khoa-Banach}).
Let us remark that this combination is not completely obvious and comes with a price: due to the shifting, we need a nontrivial ``time component'' $|t-s|^\eps$ in our estimates, which does not appear in \cite{FHL, Khoa-Banach}.
Nontheless, the resulting statement is well suited for our applications, where such ``time component'' always appears naturally.

Recall the notations from Section \ref{sec:notation}, concerning $[0,1]_\leq$, $\overline{[S,T]}^2_\leq$, $s_{-}$ and so on.

\begin{lemma}\label{lem:SSL1}
Let $w_1,w_2$ be controls, and let $m,n$ satisfy $2\leq m\leq n\leq \infty$ and $m<\infty$. Let $(S,T)\in[0,1]_\leq$. Assume that $(A_{s,t})_{(s,t)\in\overline{[S,T]}^2_\leq}$ is a continuous mapping from $\overline{[S,T]}^2_\leq$ to $L^m$ such that for all $(s,t)\in\overline{[S,T]}^2_\leq$, $A_{s,t}$ is $\cF_t$-measurable.
Suppose that there exist constants $\eps_1,\eps_2>0$ such that the bounds
\begin{align}
\big\|\|A_{s,t}\|_{L^m|\cF_s}\big\|_{L^n}&\leq w_1(s_-,t)^{1/2}|t-s|^{\eps_1},\label{eq:SSL-cond1} \\
\|\E_{s_-}\delta A_{s,u,t}\|_{L^n}&\leq w_2(s_-,t)|t-s|^{\eps_2}\label{eq:SSL-cond2}
\end{align}
hold for all $(s,u,t)\in\overline{[S,T]}^3_\leq$.
Then for all $S<s\leq t\leq T$ the Riemann sums
\begin{equation}\label{eq:SSL-conv}
\sum_{j=0}^{2^\ell-1} A_{s+j2^{-\ell}(t-s),s+(j+1)2^{-\ell}(t-s)}
\end{equation}
converge as $\ell\to\infty$ in $L^m$, to the increments $\cA_t-\cA_s$ of
an adapted stochastic process $(\cA_t)_{t\in[S,T]}$ that is continuous as a mapping from $[S,T]$ to $L^m$ and $\cA_S=0$.
Moreover $\cA$ is the unique such process that satisfies the bounds
\begin{align}
\big\|\|\cA_{t}-\cA_s-A_{s,t}\|_{L^m|\cF_s}\big\|_{L^n}&\leq K_1 w_1(s_-,t)^{1/2}|t-s|^{\eps_1}+K_2 w_2(s_-,t)|t-s|^{\eps_2},\label{eq:SSL-conc1}
\\
\|\E_{s_-}\big(\cA_t-\cA_s-A_{s,t}\big)\|_{L^n}&\leq K_2 w_2(s_-,t)|t-s|^{\eps_2},\label{eq:SSL-conc2}
\end{align}
with some $K_1,K_2$ for all $(s,u,t)\in\overline{[S,T]}^3_\leq$. Furthermore, there exists a constant $K$ depending only on $\eps_1,\eps_2,m,n,d$ such that the bounds \eqref{eq:SSL-conc1}-\eqref{eq:SSL-conc2} hold with $K_1=K_2=K$, and moreover the bound
\begin{equation}\label{eq:SSL-conc3}
\big\|\|\cA_t-\cA_s\|_{L^m|\cF_s}\big\|_{L^n}\leq K\big( w_1(s,t)^{1/2}|t-s|^{\eps_1}+w_2(s,t)|t-s|^{\eps_2}\big)
\end{equation}
holds for all $(s,t)\in[S,T]^2_\leq$.
\end{lemma}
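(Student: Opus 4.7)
The plan is to follow the dyadic construction of the stochastic sewing lemma from \cite{Khoa}, adapted to the shifted setting of \cite{gerencser2020regularisation} and the mixed conditional/outer norms from \cite{FHL, Khoa-Banach}. For $(s,t)\in\overline{[S,T]}^2_\leq$ set $t^\ell_j:=s+j\,2^{-\ell}(t-s)$ and $A^\ell_{s,t}:=\sum_{j=0}^{2^\ell-1}A_{t^\ell_j,t^\ell_{j+1}}$; the telescoping identity reads
\[
A^{\ell+1}_{s,t}-A^\ell_{s,t}=-\sum_{j=0}^{2^\ell-1}\delta A_{t^\ell_j,t^\ell_{j+1/2},t^\ell_{j+1}}.
\]
Each summand I would decompose as $\delta A=D^\ell_j+M^\ell_j$ with $D^\ell_j:=\E_{(t^\ell_j)_-}\delta A$ and $M^\ell_j$ its residual, then estimate the two contributions separately, showing that both sum to something geometric in $\ell$; this yields Cauchyness of $(A^\ell_{s,t})_\ell$ in $L^m$.

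The drift part is handled by \eqref{eq:SSL-cond2}, the triangle inequality, and the superadditivity of $w_2$ over the intervals $[t^\ell_{j-1},t^\ell_{j+1}]$ (which overlap with multiplicity at most two), giving $\|\sum_j D^\ell_j\|_{L^n}\lesssim w_2(s_-,t)(2^{-\ell}|t-s|)^{\eps_2}$. The martingale part is the main obstacle: $M^\ell_j$ is $\cF_{t^\ell_{j+1}}$-measurable with $\E_{(t^\ell_j)_-}M^\ell_j=0$, but since $(t^\ell_j)_-=t^\ell_{j-1}$, consecutive $M^\ell_j$'s do not form a martingale-difference sequence with respect to any single filtration. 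The standard remedy is to split the sum over $j$ into even and odd sub-sums: on the even sub-sum the intervals $[t^\ell_{2k-1},t^\ell_{2k+1}]$ have pairwise disjoint interiors, and $(M^\ell_{2k})_k$ is a martingale-difference sequence with respect to $(\cF_{t^\ell_{2k-1}})_k$, with initial $\sigma$-algebra $\cF_{s_-}$. Conditional BDG (which requires $m\geq 2$) followed by Minkowski's inequality in both the conditional $L^m$ and outer $L^n$ norms yields
\[
\Big\|\|{\textstyle\sum_k}M^\ell_{2k}\|_{L^m|\cF_{s_-}}\Big\|_{L^n}^{2}\lesssim \sum_k \Big\|\|M^\ell_{2k}\|_{L^m|\cF_{t^\ell_{2k-1}}}\Big\|_{L^n}^{2};
\]
each summand can then be bounded via \eqref{eq:conditional replacement} (splitting $\delta A$ into its three constituent $A$-increments), the monotonicity of $\|\|\cdot\|_{L^m|\cF_\cdot}\|_{L^n}$ under enlargement of the conditioning, and \eqref{eq:SSL-cond1}. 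Summing by disjointness of intervals and superadditivity of $w_1$ produces $\lesssim w_1(s_-,t)^{1/2}(2^{-\ell}|t-s|)^{\eps_1}$, and the odd sub-sum is identical.

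Summing over $\ell$ yields the Cauchy property of $(A^\ell_{s,t})_\ell$ and defines $\cA_t-\cA_s:=\lim_\ell A^\ell_{s,t}$, with adaptedness, continuity, and $\cA_S=0$ routine to verify. Passing to the limit gives \eqref{eq:SSL-conc1} and \eqref{eq:SSL-conc2}; a short postprocessing via the tower property upgrades the $\|\|\cdot\|_{L^m|\cF_{s_-}}\|_{L^n}$ martingale bound to the $\|\|\cdot\|_{L^m|\cF_s}\|_{L^n}$ norm required in \eqref{eq:SSL-conc1}. The bound \eqref{eq:SSL-conc3} follows from \eqref{eq:SSL-conc1} combined with \eqref{eq:SSL-cond1} by the triangle inequality (with a dyadic halving handling the case $s_-<S$ not covered by the hypotheses). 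Uniqueness is standard: the difference of any two candidate sewings has vanishing dyadic Riemann sums, hence vanishes. The hard part throughout is the shift $s\mapsto s_-$, which breaks the direct martingale structure of the dyadic sums; the even/odd split recovers it at the price of conditioning on $\cF_{s_-}$ rather than $\cF_s$ in the BDG step, and the ``time headroom'' $|t-s|^{\eps_i}$ built into the hypotheses is precisely what makes the summation over dyadic scales converge.
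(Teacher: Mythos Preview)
Your overall strategy matches the paper's: drift/martingale splitting of the dyadic telescoping sum, triangle inequality plus superadditivity of $w_2$ for the drift, even/odd decomposition plus conditional BDG for the martingale residuals, and a dyadic reconstruction to pass from $\overline{[S,T]}^2_\leq$ to $[S,T]^2_\leq$. The drift estimate and the individual martingale term estimates you sketch are correct.

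There is, however, one genuine error. The claim that ``a short postprocessing via the tower property upgrades the $\|\|\cdot\|_{L^m|\cF_{s_-}}\|_{L^n}$ martingale bound to the $\|\|\cdot\|_{L^m|\cF_s}\|_{L^n}$ norm'' is false: as recorded in the notation section, the mixed norm $\big\|\|\cdot\|_{L^m|\cF_\tau}\big\|_{L^n}$ is \emph{increasing} in $\tau$, and the tower property only lets you pass to a \emph{smaller} $\sigma$-algebra. So a bound conditioned on $\cF_{s_-}$ cannot be upgraded to one conditioned on $\cF_s$. The issue is localized to the term $j=0$ in your even sub-sum: $M^\ell_0$ is centered at $\cF_{(t^\ell_0)_-}=\cF_{t^\ell_{-1}}\subsetneq\cF_s$, so the even martingale starts strictly below $\cF_s$ and BDG cannot deliver $\cF_s$-conditioning.

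The paper's remedy is precisely to peel off the leftmost increment $\delta A_{t^\ell_0,t^\ell_{1/2},t^\ell_1}$ and bound it trivially by \eqref{eq:SSL-cond1} (three $A$-increments, each controlled with $\cF_s$-conditioning). With that term removed, your even sub-sum starts at $M^\ell_2$, centered at $\cF_{t^\ell_1}\supset\cF_s$, and your odd sub-sum already starts at $M^\ell_1$, centered at $\cF_{t^\ell_0}=\cF_s$; now both martingales have initial $\sigma$-algebra containing $\cF_s$, and conditional BDG yields \eqref{eq:SSL-conc1} with the correct conditioning. With this single adjustment your argument coincides with the paper's (the paper's further split into indices spaced by $4$ is a cosmetic reindexing of the same even/odd trick).
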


\begin{proof}
Since by the time of the present work there is an abundance of SSLs in the recent literature, we do not aim to give a fully self-contained proof. We only provide the details as long as the combination of the arguments of \cite{gerencser2020regularisation} and \cite{FHL, Khoa-Banach} is nontrivial.

\emph{Step 1 (convergence along dyadic partitions).}
Let $(s,t)\in\overline{[S,T]}_\leq^2$ and for each $k=0,1,\ldots$ define $\cD_k=\{t_0^k,t_1^k,\ldots,t_{2^k}^k\}$, where $t_i^k=s+i2^{-k}(t-s)$, and
set
\begin{equ}
\cA^k_{s,t}=\sum_{i=1}^{2^k}A_{t_{i-1}^k,t_i^k}.
\end{equ}
We claim that $\cA^k_{s,t}$ converges and its limit $\tilde\cA_{s,t}$ satisfies the bounds \eqref{eq:SSL-conc1}-\eqref{eq:SSL-conc2} with $K=K_1=K_2$ when replacing $\cA_t-\cA_s$ by it.
In particular, this would also imply the bound
\begin{equ}\label{eq:SSL-proof0}
\big\|\|\tilde \cA_{s,t}\|_{L^m|\cF_s}\big\|_{L^n}\leq K\big( w_1(s_-,t)^{1/2}|t-s|^{\eps_1}+w_2(s_-,t)|t-s|^{\eps_2}\big)
\end{equ}
for all $(s,t)\in\overline [S,T]^2_\leq$.
The claim clearly follows from the following two bounds:
\begin{equs}
\big\|\| \cA^{k-1} _{s,t} -\cA^{k} _{s,t}   \|_{L^m|\cF_s}\big\|_{L^n}  & \lesssim  w_1(s_-,t)^{1/2}|t-s|^{\eps_1}2^{-k\eps_1}+w_2(s_-,t)|t-s|^{\eps_2}2^{-k\eps_2},    \label{eq:A-pi-1/2}
\\
\| \E_{s_-} \big(   \cA^{k-1} _{s,t} -\cA^{k} _{s,t}  \big)  \| _{L^n}& \lesssim  w_2(s_-,t)|t-s|^{\eps_2}2^{-k\eps_2}.   \label{eq:A-pi-1}
\end{equs}
It is no loss of generality to assume $k\geq 2$ (otherwise the trivial bounds below suffice), in which case we write
\begin{equ}\label{eq:SSL-proof1}
\cA^{k+1}_{s,t}-\cA^{k}_{s,t}=-\delta A_{t_0^{k},t_1^{k},t_2^{k}}-\sum_{j=1}^{2^{k-1}-1}\delta A_{t_{2j}^{k},t_{2j+1}^{k},t_{2j+2}^{k}}.
\end{equ} 
For the first term we used the conditions \eqref{eq:SSL-cond1}-\eqref{eq:SSL-cond2} in a trivial way:
\begin{equs}
\,&\big\|\|\delta A_{t_{0}^{k},t_{1}^{k},t_{2}^{k}}\|_{L^m|\cF_s}\big\|_{L^n}  \lesssim w_1(t_{0}^{k}-(t_{2}^{k}-t_{0}^{k}),t_{2}^{k})^{1/2}|t_{2}^{k}-t_{0}^{k}|^{\eps_1}\lesssim w_1(s_-,t)^{1/2}|t-s|^{\eps_1}2^{-k\eps_1},
\\
\,&\|\E_{s_-} \delta A_{t_{0}^{k},t_{1}^{k},t_{2}^{k}}\|_{L^n}  \leq w_2(t_{0}^{k}-(t_{2}^{k}-t_{0}^{k}),t_{2}^{k})|t_{2}^{k}-t_{0}^{k}|^{\eps_2}\lesssim w_2(s_-,t)|t-s|^{\eps_2}2^{-k\eps_2}.
\end{equs}
For the sum in \eqref{eq:SSL-proof1}
we write
\begin{equs}
\sum_{j=1}^{2^{k-1}-1}\delta A_{t_{2j}^{k},t_{2j+1}^{k},t_{2j+2}^{k}}&=
\sum_{j=1}^{2^{k-1}-1}\E_{t_{2j-2}^k}\delta A_{t_{2j}^k,t_{2j+1}^k,t_{2j+2}^k}
\\
&\qquad+\sum_{\ell=0}^1\sum_{j=0}^{2^{k-2}}(\id-\E_{t_{4j+2\ell}^k}\big)\delta A_{t_{4j+2\ell+2}^k,t_{4j+2\ell+3}^k,t_{4j+2\ell+4}^k}
\\
&=:I_1+I_2,\label{eq:SSL-proof1.5}
\end{equs}
where the term $\delta A_{t_{2^k}^k,t_{2^k+1}^k,t_{2^k+2}^k}$ is defined to be $0$.
The point of this unaesthetic decomposition is twofold. First, since $t^k_{2j-2}=t^k_{2j}-(t^k_{2j+2}-t^k_{2j})$, in the terms in the first sum there is sufficient shifting in the conditioning so that they can be estimated via the assumed bound \eqref{eq:SSL-cond2}. Second, for each $\ell=0,1$, the inner sum above is one of martingale differences.

Therefore, we first estimate by the triangle inequality
\begin{equs}
\big\|\|I_1\|_{L^m|\cF_s}\big\|_{L^n}&\leq\sum_{j=1}^{2^{k-1}-1}\big\|\|\E_{t_{2j-2}^k}\delta A_{t_{2j}^k,t_{2j+1}^k,t_{2j+2}^k}\|_{L^m|\cF_s}\big\|_{L^n}  
\\
&\leq\sum_{j=1}^{2^{k-1}-1}\|\E_{t_{2j}^k-(t_{2j+2}^k-t_{2j}^k)}\delta A_{t_{2j}^k,t_{2j+1}^k,t_{2j+2}^k}\big\|_{L^n}  
\\
&\leq \sum_{j=1}^{2^{k-1}-1} w_2(t_{2j-2}^k,t_{2j+2}^k)|t_{2j+2}^k-t_{2j}^k|^{\eps_2}
\\
&\lesssim  |t-s|^{\eps_2}2^{-k\eps_2}w_2(s,t),\label{eq:SSL-proof2}
\end{equs}
using the superadditivity of $w_2$ in the last line.
Similarly, but replacing the triangle inequality by the Burkholder-Davis-Gundy and Minkowski inequalities (e.g. in the form given in \cite[Lemma 2.5]{Khoa-Banach} for $\mathfrak{p}=2$),
we have
\begin{equs}
\big\|\|I_2\|_{L^m|\cF_s}\big\|_{L^n}&\lesssim\sum_{\ell=0}^1\Big(\sum_{j=0}^{2^{k-2}}\big\|\|\delta A_{t_{4j+2\ell+2}^k,t_{4j+2\ell+3}^k,t_{4j+2\ell+4}^k}\|_{L^m|\cF_s}\big\|_{L^n}^2\Big)^{1/2}
\\
&\lesssim 2^{-k\eps_1}\sum_{\ell=0}^1  \Big(\sum_{j=0}^{2^{k-2}}w_1(t_{4j+2\ell}^k,t_{4j+2\ell+4}^k)\Big)^{1/2}
\\
&\lesssim |t-s|^{\eps_1}2^{-k\eps_1}w_1(s,t)^{1/2}.\label{eq:SSL-proof3}
\end{equs}
This proves \eqref{eq:A-pi-1/2}. As for \eqref{eq:A-pi-1}, it is only easier: noting that
\begin{equ}
\E_s\sum_{j=1}^{2^{k-1}-1}\delta A_{t_{2j}^{k},t_{2j+1}^{k},t_{2j+2}^{k}}=\E_s I_1,
\end{equ}
we can bound $\|\E_sI_1\|_{L^n}\leq\|I_1\|_{L^n}$ just as in \eqref{eq:SSL-proof2}. This concludes the proof of \eqref{eq:A-pi-1/2}-\eqref{eq:A-pi-1}.

\emph{Step 2 (convergence along regular partitions)}. Let us say that a partition $\pi=\{s=t_0<t_1<\cdots<t_n=t\}$ is regular, if $|\pi|:=\max(t_i-t_{i-1})\leq 2\min(t_i-t_{i-1})$. For any partition we can define 
\begin{equ}
\cA^{\pi}_{s,t}=\sum_{i=1}^n A_{t_{i-1},t_i}.
\end{equ}
Very similarly to Step 1, we get that for any sequence of regular partitions $(\pi_n)_{n\in\N}$ with $|\pi_n|\to 0$, $\cA^{\pi}_{s,t}$ converges (for details see \cite[Lemma~2.2]{gerencser2020regularisation}). Therefore on one hand this limit has to coincide with $\tilde\cA_{s,t}$, on the other hand, this limit is clearly additive.
Moreover notice that by construction $\tilde\cA_{s,t}$ is $\cF_t$-measurable for all $(s,t)\in\overline{[S,T]}_\leq^2$, and since it vanishes in $L^m$, the additivity implies that it is continuous in both arguments as a two-parameter process with values in $L^m$.

\emph{Step 3 (the process $\cA$ and its bounds)}. For any $t\in(S,T]$ we set $t_i:=S+2^{-i}(t-S)$. We then claim that the series
\begin{equ}
\cA_t:=\sum_{i=1}^\infty \tilde\cA_{(S+2^{-i})\wedge t,(S+2^{-i+1})\wedge t}=:\sum_{i=1}^\infty \tilde\cA_{s_i,s_{i-1}}
\end{equ}
converges. Indeed, since $(s_i,s_{i-1})\in\overline{[S,T]}^2_\leq$, we may use the bound \eqref{eq:SSL-proof0}.
By the trivial bounds $w((s_i)_-,s_{i-1})\leq w(S,t)$ and $|s_{i-1}-s_i|\leq 2^{-i}\one_{t-S\geq 2^{-i}}$, we get not only the convergence of the series but also the bound
\begin{equ}
\big\|\|\cA_t\|_{L^m|\cF_S}\big\|_{L^n}\leq K\big( w_1(S,t)^{1/2}|t-S|^{\eps_1}+w_2(S,t)|t-S|^{\eps_2}\big).
\end{equ}
This is precisely \eqref{eq:SSL-conc3} with $s=S$. The case for general $(s,t)\in[S,T]_{\leq}^2$ follows in the same way.
It is also clear that $\cA_0=0$, and by the remarks in Step 2, that $\cA$ is adapted and continuous in $L^m$. Therefore $\cA$ satisfies all of the claimed properties.

\emph{Step 4 (Uniqueness).} The proof of this is standard and can be found in e.g. \cite{Khoa-Banach}.
\end{proof}

The other version of SSL that we use seems to be new.
In Lemma \ref{lem:SSL1} one can transfer $L^m$ bounds from $A$ to $\cA$ if $m<\infty$.
The $m=\infty$ case is a bit different: $L^\infty$ bounds on $A$ imply Gaussian moment bounds on $\cA$.
An alternative way to obtain Gaussian moment bounds via stochastic sewing is presented in \cite{BDG} (see e.g. Theorem 3.3. and Lemma 4.6. therein), but the conditions herein are easier to verify.
The proof relies on a conditional version of Azuma--Hoeffding inequality, see Lemma \ref{lem:azuma-hoeffding-conditional} in Appendix \ref{app:kolmogorov}.


%
%

\begin{lemma}\label{lem:SSL2}
Let $(A_{s,t})_{(s,t)\in\overline{[S,T]}^2_\leq}$ be a continuous mapping from $\overline{[S,T]}^2_\leq$ to $L^2$, with $A_{s,t}$ $\cF_t$-measurable for all $(s,t)\in\overline{[S,T]}^2_\leq$, such that the conditions of Lemma \ref{lem:SSL1} hold with $m=n=\infty$; namely, assume that there exist controls $w_1,w_2$ and constants $\eps_1,\eps_2>0$ such that the bounds
\begin{align}
\big\|\|A_{s,t}\|_{L^\infty|\cF_s}\big\|_{L^\infty}&\leq w_1(s_-,t)^{1/2}|t-s|^{\eps_1},\label{eq:SSL2-cond1} \\
\|\E_{s_-}\delta A_{s,u,t}\|_{L^\infty}&\leq w_2(s_-,t)|t-s|^{\eps_2}\label{eq:SSL2-cond2}
\end{align}
hold for all $(s,u,t)\in\overline{[S,T]}^3_\leq$. Denote by $(\mathcal{A}_t)_{t\in [S,T]}$ the associated process coming from Lemma \ref{lem:SSL1}.
Then there exist positive constants $\mu$ and $K$ depending only on $\eps_1,\eps_2,d$ such that the bound
\begin{equation}\label{eq:SSLexp-conc}
\E\bigg[\exp\bigg(\mu\,\frac{|\cA_t-\cA_s|^2}{\big(w_1(s,t)^{1/2}|t-s|^{\eps_1}+w_2(s,t)|t-s|^{\eps_2}\big)^2}\bigg)\bigg\vert \mathcal{F}_s\bigg]\leq K
\end{equation}
holds for all $(s,t)\in[S,T]_{\leq}^2$.
\end{lemma}

\begin{proof}
We continue using the notation of the proof of Lemma \ref{lem:SSL1}.
Let $(s,t)\in\overline{[S,T]}_\leq^2$ and $k=0,1,\ldots$, and let us bound $\cA_{s,t}^{k+1}-\cA_{s,t}^k$.
The first term on the right-hand side of \eqref{eq:SSL-proof1} is trivially bounded by $2w_1(s_-,t)^{1/2}|t-s|^{\eps_1}2^{-k\eps_1}$ with probability $1$.
Decomposing the second term into $I_1$ and $I_2$ as in \eqref{eq:SSL-proof1.5}, a simple use of triangle inequality as in \eqref{eq:SSL-proof2} yields the $\PP$-almost sure bound
\begin{equ}
|I_1|\lesssim 2^{-k\eps_2}|t-s|^{\eps_2}w_2(s,t).
\end{equ}
As for $I_2$, recalling that it is the sum of two martingales, for each we may use the Azuma-Hoeffding inequality. The role of $\delta_j$ as in Lemma \ref{lem:azuma-hoeffding-conditional} is played by $4w_1(t_{4j+2\ell}^k,t_{4j+2\ell+4}^k)^{1/2}$, so similarly to the calculation as in \eqref{eq:SSL-proof3}, we get
\begin{equ}
\Lambda:=\sum_i \delta_i^2 \lesssim 2^{-2k\eps_1}|t-s|^{2\eps_1}w_1(s,t).
\end{equ}
Therefore by \eqref{eq:azuma-hoeffding-conditional}, combined with the aforementioned $\PP$-almost sure bounds, we get that with some $\mu_1>0$, $K_1$
\begin{equ}
\E\bigg[\exp\Big(\mu_12^{k(\eps_1\wedge\eps_2)}\frac{|\cA_{s,t}^{k+1}-\cA_{s,t}^k|^2}{(w_1(s_-,t)^{1/2}|t-s|^{\eps_1}+w_2(s_-,t)|t-s|^{\eps_2})^2}\Big)\bigg\vert \mathcal{F}_{S}\bigg]\leq K_1.
\end{equ}
Since one can write
\begin{equ}
|(\cA_t-\cA_s)-A_{s,t}|\leq \sum_{k=0}^\infty 2^{-k(\eps_1\wedge\eps_2)}2^{k(\eps_1\wedge\eps_2)}|\cA_{s,t}^{k+1}-\cA_{s,t}^k|,
\end{equ}
we get by conditional Jensen's inequality
\begin{equs}
\E&\bigg[\exp\Big(\mu_1\frac{|(\cA_t-\cA_s)-A_{s,t}|^2}{(w_1(s_-,t)^{1/2}|t-s|^{\eps_1}+w_2(s_-,t)|t-s|^{\eps_2})^2}\Big)\bigg\vert\mathcal{F}_{S}\bigg]\leq \sum_{k=0}^\infty 2^{-k(\eps_1\wedge\eps_2)}K_1.
\end{equs}
Using again the assumed bounds on $A_{s,t}$, we get with some other constant $K_2$ 
\begin{equs}
\E&\bigg[\exp\Big(\mu_1\frac{|\cA_t-\cA_s|^2}{w_1(s_-,t)^{1/2}|t-s|^{\eps_1}+w_2(s_-,t)|t-s|^{\eps_2}}\Big)\bigg\vert \mathcal{F}_{S}\bigg]\leq K_2.
\end{equs}
It only remains to remove the shifts in the denominator and substitute $\mathcal{F}_S$ with $\mathcal{F}_s$, which can be done just as in Step 3 of the proof of Lemma \ref{lem:SSL1}, and therefore we obtain \eqref{eq:SSLexp-conc}.
\end{proof}

\section{Stability}\label{sec:stability}
The use of the tools from Section \ref{sec:first} is illustrated by the following lemma, which will play a key role in our analysis.
Let us emphasise the important feature of the statement that although $h$ is assumed to have $\delta$ spatial regularity, in the estimate only its $\alpha-1$ norm is used.

\begin{lemma}\label{lem:integral-estimates}
Assume \eqref{eq:main exponent} and let $(S,T)\in[0,1]_\leq^2$.
Suppose that $h\in L^q_t C^\delta_x$ for some $\delta>0$ and let $\varphi$ be an adapted process satisfying \eqref{eq:key-quantity} with $m=1$ and some control $w$.
For $t\in [S,T]$, define the process
\begin{equ}
\psi_t=\int_S^t h_r\big(B^H_r+\varphi_r\big)\dd r
\end{equ}
and set $\eps=1/q'+(\alpha-1)H$.
Then there exists positive constants $\mu$ and $K$, depending only on $H$, $q$, $\alpha$, and $d$, such that for all $(s,t)\in[S,T]^2_\leq$ one has the bound
\begin{equation}\label{eq:first average bound}
\E\bigg[ \exp\bigg(\mu\,\frac{|\psi_t-\psi_s|^2}{w_{h,\alpha-1,q}(s,t)^{2/q}|t-s|^{2\eps} \big(1+w(s,t)^{1/q}|t-s|^{\eps})^2}\bigg)\bigg\vert \mathcal{F}_s\bigg]\leq K.
\end{equation}
As a consequence, for any $\widetilde m\in[1,\infty)$ there exists a constant $\tilde K$, depending only on
 $\tilde m$, $H$, $q$, $\alpha$, and $d$, such that for all $(s,t)\in[S,T]^2_\leq$ one has the bound
 \begin{equation}\label{eq:second average bound}
 \big\|\|\psi_t-\psi_s\|_{L^{\widetilde m}|\cF_s}\big\|_{L^\infty}\leq \tilde K w_{h,\alpha-1,q}(s,t)^{1/q}|t-s|^{\eps}\big(1+w(s,t)^{1/q}|t-s|^{\eps}\big).
 \end{equation}
\end{lemma}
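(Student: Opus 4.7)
The plan is to apply the exponential stochastic sewing lemma, Lemma \ref{lem:SSL2}, to the germ
\[A_{s,t}:=\E_s\int_s^t h_r(B^H_r+\varphi_s)\,dr=\int_s^t(P_{c_H(r-s)^{2H}}h_r)(\E_sB^H_r+\varphi_s)\,dr,\]
where the equivalent form follows from \eqref{eq:conditioning}. Once the sewing $\cA$ is constructed and identified with $\psi$, the exponential bound \eqref{eq:first average bound} follows directly, and the $L^{\widetilde m}$ bound \eqref{eq:second average bound} is deduced by integrating Gaussian tails.

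The first SSL hypothesis \eqref{eq:SSL-cond1} is essentially immediate: since $A_{s,t}$ is $\cF_s$-measurable, the heat-kernel estimate \eqref{eq:HK-estimates} with $\beta=\alpha-1\leq 0$ gives $\|P_{(r-s)^{2H}}h_r\|_{L^\infty_x}\lesssim(r-s)^{(\alpha-1)H}\|h_r\|_{C^{\alpha-1}_x}$, and H\"older's inequality (using $(\alpha-1)Hq'>-1$, i.e. subcriticality) yields the pointwise bound $|A_{s,t}|\lesssim w_{h,\alpha-1,q}(s,t)^{1/q}(t-s)^{\eps}$. One may therefore take $w_1:=w_{h,\alpha-1,q}^{2/q}$ (a control since $q\leq 2$) and $\eps_1:=\eps>0$.

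For the second hypothesis \eqref{eq:SSL-cond2}, the tower property (applicable since $s_-=2s-t\leq s\leq u$) simplifies $\E_{s_-}\delta A_{s,u,t}$ to $\int_u^t\E_{s_-}[h_r(B^H_r+\varphi_s)-h_r(B^H_r+\varphi_u)]\,dr$. The strategy is to pivot the integrand through the $\cF_{s_-}$-measurable quantities $\E_{s_-}\varphi_s$ and $\E_{s_-}\varphi_u$, producing three pieces. The two ``tail'' pieces involve the mean-zero increments $\varphi_s-\E_{s_-}\varphi_s$ and $\varphi_u-\E_{s_-}\varphi_u$, whose $L^1|\cF_{s_-}$-norms are controlled directly by the hypothesis \eqref{eq:key-quantity}. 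The ``middle'' piece involves only $\cF_{s_-}$-measurable arguments, so it reduces to a mean-value estimate on the Gaussian-smoothed kernel $P_{(r-s_-)^{2H}}h_r$, whose $C^1$-norm (again by \eqref{eq:HK-estimates}) is $\lesssim(r-s_-)^{(\alpha-2)H}\asymp(t-s)^{(\alpha-2)H}$; the crucial point is that the shift $s_-$ ensures $r-s_-\asymp t-s$ stays bounded away from zero. Combining these bounds via H\"older's inequality in $r$, and using that $(u-s)\wedge(t-u)\asymp t-s$ for regular triples in $\overline{[S,T]}_\leq^3$, one arrives at
\[\|\E_{s_-}\delta A_{s,u,t}\|_{L^\infty}\lesssim\bigl(w_{h,\alpha-1,q}(s_-,t)\cdot w(s_-,t)\bigr)^{1/q}(t-s)^{2\eps},\]
so \eqref{eq:SSL-cond2} holds with $w_2:=(w_{h,\alpha-1,q}\cdot w)^{1/q}$ (a control as $2/q\geq 1$) and $\eps_2:=2\eps>0$.

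With both conditions in place, Lemma \ref{lem:SSL2} delivers an adapted process $\cA$ with exactly the Gaussian moment bound appearing on the right-hand side of \eqref{eq:first average bound}. Identification of $\cA_t-\cA_s$ with $\psi_t-\psi_s$ uses the uniqueness clause of the sewing: the assumption $h\in L^q_tC^\delta_x$ with $\delta>0$ makes $\psi$ continuous and adapted, and standard estimates using $\delta$-H\"older continuity of $h$ show that $\|\psi_t-\psi_s-A_{s,t}\|_{L^m|\cF_s}$ is strictly sub-dominant as $|t-s|\to 0$, forcing the required identification. Finally, \eqref{eq:second average bound} is a direct consequence of \eqref{eq:first average bound} via a standard tail-integration argument: from $\E[\exp(\mu Y^2/Z^2)|\cF_s]\leq K$ one deduces $\|Y\|_{L^{\widetilde m}|\cF_s}\lesssim_{\widetilde m}Z$ a.s., with $Z=w_{h,\alpha-1,q}(s,t)^{1/q}(t-s)^\eps(1+w(s,t)^{1/q}(t-s)^\eps)$.

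The main technical obstacle is the careful organisation of the three-piece decomposition used to establish \eqref{eq:SSL-cond2}: a naive bound via the $C^1$-regularity of $P_{(r-u)^{2H}}h_r$ would require $(r-u)^{(\alpha-2)H}$ to be integrable on $[u,t]$, which fails in the subcritical regime. The resolution exploits the shifted sewing structure by ensuring that all Gaussian smoothing occurs at the shifted scale $(r-s_-)^{2H}$, which is bounded below by $(t-s)^{2H}$; any lost H\"older regularity is then compensated by the extra $(t-s)^H$ coming from the $\varphi$-increments, which ultimately produces the scaling $(t-s)^{2\eps}$.
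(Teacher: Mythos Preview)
Your choice of germ $A_{s,t}=\E_s\int_s^t h_r(B^H_r+\varphi_s)\,\dd r$ freezes $\varphi$ at its \emph{actual} value $\varphi_s$, so $\E_{s_-}\delta A_{s,u,t}$ involves the true increment $\varphi_u-\varphi_s$. The hypothesis \eqref{eq:key-quantity} only controls the \emph{stochastic} increment $\varphi_t-\E_s\varphi_t$; this distinction is essential, since for $H>1$ the exponent $1/q'+\alpha H$ exceeds $1$ and any bound on true increments would force $\varphi$ constant. Concretely, your ``middle'' piece requires an $L^\infty$ bound on $|\E_{s_-}\varphi_s-\E_{s_-}\varphi_u|$, which does not follow from \eqref{eq:key-quantity}. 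Take $\varphi_t=tZ$ with $Z$ an unbounded $\cF_0$-measurable random variable: then $\varphi_t-\E_s\varphi_t\equiv 0$ so \eqref{eq:key-quantity} holds with $w=0$, yet $|\E_{s_-}\varphi_s-\E_{s_-}\varphi_u|=|u-s||Z|$ is unbounded. In this example your claimed bound $\|\E_{s_-}\delta A_{s,u,t}\|_{L^\infty}\lesssim (w_{h,\alpha-1,q}\cdot w)^{1/q}(t-s)^{2\eps}$ vanishes, while the left-hand side does not; and the best available $L^\infty$ bound is only $(t-s)^\eps w_{h,\alpha-1,q}^{1/q}$, whose total homogeneity $1+(\alpha-1)H<1$ cannot be cast as $w_2(s_-,t)|t-s|^{\eps_2}$ with $\eps_2>0$ and $w_2$ a control. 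There is also a second issue: your claim that ``all Gaussian smoothing occurs at scale $(r-s_-)^{2H}$'' fails for the tail piece containing $\varphi_u$. Since $\varphi_u$ is only $\cF_u$-measurable, the inner conditioning must be at $u$, yielding $\|P_{(r-u)^{2H}}h_r\|_{C^1}\lesssim (r-u)^{(\alpha-2)H}$, and $(\alpha-2)Hq'>-1$ is \emph{not} guaranteed by \eqref{eq:main exponent}.

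The paper repairs both problems by taking instead
\[
A_{s,t}=\E_{s_-}\int_s^t h_r\big(B^H_r+\E_{s_-}\varphi_r\big)\,\dd r,
\]
freezing $\varphi$ at its \emph{conditional prediction} $\E_{s_-}\varphi_r$ rather than at a past value. Then in $\E_{s_-}\delta A_{s,u,t}$ the only $\varphi$-differences that appear are $\E_{s_1}\varphi_r-\E_{s_j}\varphi_r$ for the shifted times $s_1=s_-$, $s_2=2s-u$, $s_3=2u-t$; after pulling out the deterministic heat-kernel factor and applying the outer $\E_{s_1}$ with Jensen, these reduce to $\E_{s_1}|\varphi_r-\E_{s_1}\varphi_r|$, which \emph{is} bounded almost surely by \eqref{eq:key-quantity}. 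Moreover all frozen arguments are now $\cF_{s_j}$-measurable with $r-s_j\gtrsim t-s$, so the smoothing scale is genuinely bounded below and the $(\alpha-2)H$ power is harmless.
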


\begin{proof}
Note that thanks to the condition \eqref{eq:main exponent}, $\eps>0$.
For $(s,t)\in\overline{[S,T]}^2_\leq$ let us set
\begin{equ}
 A_{s,t}=\E_{s-(t-s)}\int_s^t h_r(B^H_r+\E_{s-(t-s)}\varphi_r)\dd r,
\end{equ}
and verify the conditions of Lemma \ref{lem:SSL2} (namely those of Lemma \ref{lem:SSL1} with $m=n=\infty$).

Fix $(s,u,t)\in\overline{[S,T]}_\leq^3$ and denote $s_1=s-(t-s)$, $s_2=s-(u-s)$, $s_3=u-(t-u)$, $s_4=s$, $s_5=u$, $s_6=t$. These points are almost ordered according to their indices, except $s_3$ and $s_4$, for which $s_4\leq s_3$ may happen, but this plays no role whatsoever.
First, by property \eqref{eq:conditioning} we have
\begin{equ}
A_{s,t}=\int_s^t P_{|r-s_1|^{2H}}h_r\big(\E_{s_1}(B^H_r+\varphi_r)\big)\dd r.
\end{equ}
Therefore, by \eqref{eq:HK-estimates} and H\"older's inequality, it holds
\begin{equs}
|A_{s,t}|\leq\int_s^t \|P_{|r-s_1|^{2H}}h_r\|_{C^0_x}\dd r\lesssim &\int_s^t|r-s_1|^{(\alpha-1) H}\|h_r\|_{C^{\alpha-1}_x}\dd r
\\&\lesssim |t-s|^{1/q'+(\alpha-1) H}w_{h,\alpha-1,q}(s,t)^{1/q}.
\end{equs}
Since $q\leq 2$, by the definition of $\eps$, \eqref{eq:SSL-cond1} is satisfied with $\eps_1=\eps$ and $w_1=N w_{h,\alpha-1,q}^{2/q}$.

Next, we need to bound $\E_{s-(t-s)}\delta A_{s,u,t}=\E_{s_1}\delta A_{s_4,s_5,s_6}$.
After an elementary rearrangement we get
\begin{equs}
\E_{s_1}\delta A_{s_4,s_5,s_6}=I+J:&=\E_{s_1}\E_{s_2}\int_{s_4}^{s_5} h_r(B^H_r+\E_{s_1}\varphi_r)-h_r(B^H_r+\E_{s_2}\varphi_r)\dd r
\\
&\quad+\E_{s_1}\E_{s_3}\int_{s_5}^{s_6} h(B^H_r+\E_{s_1}\varphi_r)-h_r(B^H_r+\E_{s_3}\varphi_r)\dd r.
\end{equs}
The two terms are treated in exactly the same way, so we only detail $I$.
We use \eqref{eq:HK-estimates} similarly as before to get
\begin{equs}
|I|&\leq\E_{s_1}\int_{s_4}^{s_5}\big|P_{|r-s_2|^{2H}}h_r(\E_{s_2}B^H_r+\E_{s_1}\varphi_r)-P_{|r-s_2|^{2H}}h_r(\E_{s_2}B^H_r+\E_{s_2}\varphi_r)\big|\dd r
\\
&\leq \E_{s_1}\int_{s_4}^{s_5}\|P_{|r-s_2|^{2H}} h_r\|_{C^1_x}|\E_{s_1}\varphi_r-\E_{s_2}\varphi_r|\dd r
\\
&\lesssim \E_{s_1}\int_{s_4}^{s_5}|r-s_2|^{(\alpha-2)H}\|h_r\|_{C^{\alpha-1}_x}|\E_{s_1}\varphi_r-\E_{s_2}\varphi_r|\dd r.
\end{equs}
By Jensen's inequality and the assumption on $\varphi$ we have the $\PP$-almost sure bound
\begin{equ}
\E_{s_1}|\E_{s_1}\varphi_r-\E_{s_2}\varphi_r|\leq\E_{s_1}|\E_{s_1}\varphi_r-\varphi_r|\leq w(s_1,r)^{1/q}|t-s|^{1/q'+\alpha H}.
\end{equ}
Also note that $r\mapsto|r-s_2|^{(\alpha-2)H}\in L^{q'}([s_4,s_5])$ because of the shifted basepoint, in general this would not be true with $s_2$ replaced by $s_4$. Therefore, by H\"older's inequality
\begin{equ}
|I|\lesssim |t-s|^{1/q'+(\alpha-2)H+1/q'+\alpha H}w_{h,\alpha-1,q}(s,t)^{1/q}w(s_1,t)^{1/q}.
\end{equ}
Note that the exponent of $|t-s|$ is simply $2\eps$.
Using again that $q\leq 2$, we see that condition \eqref{eq:SSL-cond2} is satisfied with $\eps_2=2\eps$ and $w_2=N w_{h,\alpha-1,q}(s,t)^{1/q}w(s_1,t)^{1/q}$.

It remains to verify that the process $\cA$ of Lemma \ref{lem:SSL1} is given by $\psi$. Since $\psi_0=0$, it suffices to show that 
\begin{equ}\label{eq:identify cA}
\|\psi_t-\psi_s-A_{s,t}\|_{L^1}\leq \tilde{w}(s_-,t)|t-s|^\kappa
\end{equ}
for all $(s,t)\in\overline{[S,T]}^2_{\leq}$, with some control $\tilde w$ and some $\kappa>0$.
This follows from three easy bounds: first,
\begin{equs}
\Big\|&\psi_t-\psi_s-\int_s^t h_r(B^H_r+\E_{s_-}\varphi_r\big)\dd r\Big\|_{L^1}
\\&\leq \int_s^t\|h_r\|_{C^\delta_x} w(s_-,r)^{\delta/q}\dd r
\leq w_{h,\delta,q}(s,t)^{1/q}|t-s|^{1/q'}w(s_-,t)^{\delta/q},
\end{equs}
second,
\begin{equs}
\Big\|\int_s^t &h_r(B^H_r+\E_{s_-}\varphi_r\big)\dd r-\int_s^t h_r(\E_{s_-}B^H_r+\E_{s_-}\varphi_r\big)\dd r\Big\|_{L^1}
\\&\leq \int_s^t\|h_r\|_{C^\delta_x} |r-s_-|^{\delta H}\dd r
\lesssim w_{h,\delta,q}(s,t)^{1/q}|t-s|^{1/q'+\delta H},
\end{equs}
and third,
\begin{equs}
\Big\|\int_s^t &h_r(\E_{s_-}B^H_r+\E_{s_-}\varphi_r\big)\dd r-A_{s,t}\Big\|_{L^1}
\\&\leq \int_s^t\|h_r-P_{|r-s_-|^{2H}}h_r\|_{C^0_x} \dd r
\lesssim w_{h,\delta,q}(s,t)^{1/q}|t-s|^{1/q'+\delta H}.
\end{equs}
Hence we can conclude $\psi=\cA$ and \eqref{eq:first average bound} follows from \eqref{eq:SSLexp-conc}.
\end{proof}

We will often consider \eqref{eq:SDE} with nonzero initial time. If $b$ is a function, a solution of \eqref{eq:SDE} on some interval $[S,T]\subset[0,1]$ with initial condition $X_S$ is a process $X$ satisfying
\begin{equ}
X_t=X_S+\int_S^t b_r(X_r)\dd r+B^H_t-B^H_S
\end{equ}
for all $t\in[S,T]$.
Our main stability estimate for solutions is then formulated as follows.

\begin{theorem}\label{thm:stability}
Assume \eqref{eq:main exponent}. Let $\delta>0$.
Let $[S,T]\subset [0,1]$ and
for $i=1,2$ let $X^i$ be adapted continuous processes satisfying \eqref{eq:SDE} on $[S,T]$ with initial conditions $X^i_S$ and drifts $b^i\in L^q_t C^{1+\delta}_x$.
Denote $M=\max_{i=1,2}\|b^i\|_{L^q_t C^\alpha_x}$.
Then for any $m\in[2,\infty)$ there exists a positive constant $N=N(m,M,H,\alpha,q,d)$, such that one has the $\PP$-almost sure bound
\begin{equation}\label{eq:stability-estimate}
\Big\|\sup_{t\in[S,T]}|X^1_t-X^2_t|\Big\|_{L^m|\cF_s}\leq N\Big(|X_S^1-X_S^2|+\|b^1-b^2\|_{L^q_t([S,T];C^{\alpha-1}_x)}\Big).
\end{equation}
Moreover, if $b^1=b^2$, then one also has the $\PP$-almost sure bound
\begin{equation}\label{eq:inverse-estimate}
\Big\|\sup_{t\in[S,T]}\big(|X^1_t-X^2_t|^{-1}\big)\Big\|_{L^m|\cF_s}\leq N |X_S^1-X_S^2|^{-1}.
\end{equation}
\end{theorem}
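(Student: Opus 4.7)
The plan is to interpret $Z := X^1 - X^2$ as the solution of an inhomogeneous linear Young equation
\begin{equation*}
Z_t = Z_S + \int_S^t dL_r\cdot Z_r + (g_t - g_S),
\end{equation*}
where
\begin{equation*}
L_t := \int_S^t \int_0^1 \nabla b^1_r(X^2_r + \theta Z_r)\,d\theta\,dr,\qquad g_t := \int_S^t (b^1_r - b^2_r)(X^2_r)\,dr.
\end{equation*}
Because the $b^i$ are $C^{1+\delta}_x$ this decomposition holds classically; however, since we want bounds depending only on $M$, the Young viewpoint is essential, with $L$ playing exactly the role of the process in \eqref{eq:intro-additive}.

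Next I would establish that $L$ and $g$ have finite $p$-variation for some $p<2$ with good conditional tails. Write $\varphi^i := X^i - B^H$; then the interpolating trajectory reads $X^2_r + \theta Z_r = B^H_r + \varphi^\theta_r$ with $\varphi^\theta := (1-\theta)\varphi^2 + \theta\varphi^1$. By Lemma \ref{lem:drift-regularity} (if $\alpha\geq 0$) or Lemma \ref{lem:apriori-estim} (if $\alpha<0$), each $\varphi^i$ satisfies \eqref{eq:key-quantity} with a control depending only on $M$, and Minkowski's inequality transfers the same property to $\varphi^\theta$ uniformly in $\theta$. I can then invoke Lemma \ref{lem:integral-estimates} separately with $h = \nabla b^1$ (whose $C^{\alpha-1}_x$ seminorm is bounded by $\|b^1\|_{C^\alpha_x}$) and with $h = b^1 - b^2$, integrate in $\theta$, and obtain the conditional Gaussian bound \eqref{eq:first average bound} for the increments of $L$ and $g$, scaled respectively by $w_{b^1,\alpha,q}$ and by $w_{b^1-b^2,\alpha-1,q}$, with $\eps = 1/q' + (\alpha-1)H$.

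The subcriticality condition \eqref{eq:main exponent} combined with $q\leq 2$ (which forces $1/(q'H)\leq 1/(2H)$) gives $\eps + 1/q = 1 + (\alpha-1)H > 1/2$, so I can pick $p\in(1,2)$ with $p(\eps+1/q)>1$. Applying a Kolmogorov-type criterion with Gaussian tails (Appendix \ref{app:kolmogorov}) then yields that $\llbracket L\rrbracket_{p-\var;[S,T]}^p$ has a conditional exponential moment controlled by $M$, while $\llbracket g\rrbracket_{p-\var;[S,T]}$ has every conditional $L^n$-norm bounded by $\|b^1 - b^2\|_{L^q_t C^{\alpha-1}_x}$. The standard linear Young ODE bound (Appendix \ref{app:Young}) then reads
\begin{equation*}
\sup_{t\in[S,T]}|Z_t| \leq N \exp\bigl(C\llbracket L\rrbracket_{p-\var;[S,T]}^p\bigr)\bigl(|Z_S| + \llbracket g\rrbracket_{p-\var;[S,T]}\bigr),
\end{equation*}
and combining it with Cauchy--Schwarz in the $L^m|\cF_s$ norm gives \eqref{eq:stability-estimate}.

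For \eqref{eq:inverse-estimate}, with $b^1 = b^2$ we have $g\equiv 0$ and $Z$ solves the linear Young equation $dZ = dL\cdot Z$. Hence $Z_t = \Psi_{S,t}Z_S$ where the matrix Young flow $\Psi$ solves $d\Psi = dL\cdot\Psi$, $\Psi_S = I$. Its inverse $\Psi^{-1}$ solves $d\Psi^{-1} = -\Psi^{-1}\cdot dL$, which is again a linear Young ODE driven by $L$, and enjoys the analogous bound $\|\Psi^{-1}_{S,t}\|_{\mathrm{op}} \leq N\exp(C\llbracket L\rrbracket_{p-\var;[S,T]}^p)$. The identity $|Z_S|\leq \|\Psi^{-1}_{S,t}\|_{\mathrm{op}}|Z_t|$ gives $|Z_t|^{-1}\leq \|\Psi^{-1}_{S,t}\|_{\mathrm{op}}|Z_S|^{-1}$, and taking conditional $L^m$-norm concludes. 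The main obstacle is the passage from the conditional Gaussian bound of Lemma \ref{lem:integral-estimates} to an exponential moment on $\llbracket L\rrbracket_{p-\var}^p$: it requires the right variant of the Kolmogorov criterion and must be strong enough to absorb the Young-ODE exponential when computing $L^m|\cF_s$ moments.
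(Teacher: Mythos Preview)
Your proposal is correct and follows essentially the same route as the paper: the identical decomposition into a linear Young equation driven by $L$ (the paper's $A$) with forcing $g$ (the paper's $z$), the same application of Lemma~\ref{lem:integral-estimates} to the convex combinations $\varphi^\theta$, the same passage from conditional Gaussian increment bounds to $p$-variation via the Kolmogorov criterion of Appendix~\ref{app:kolmogorov}, and the same Young-ODE estimate from Appendix~\ref{app:Young}. Two cosmetic differences are worth noting. First, you explicitly invoke Lemma~\ref{lem:apriori-estim} when $\alpha<0$, whereas the paper only cites Lemma~\ref{lem:drift-regularity}; your version is arguably cleaner, since Lemma~\ref{lem:drift-regularity} is stated only for $\alpha\in[0,1]$ and one otherwise has to argue that the smoothness $b^i\in L^q_tC^{1+\delta}_x$ lets one run it at a surrogate nonnegative exponent while still controlling constants by $M$. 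Second, for \eqref{eq:inverse-estimate} you use the matrix flow $\Psi$ and its inverse (which solves $d\Psi^{-1}=-\Psi^{-1}\,dL$), while the paper instead time-reverses the homogeneous Young equation; both arguments are standard and yield the same bound $|Z_t|^{-1}\lesssim e^{C\llbracket L\rrbracket_{p-\var}^p}|Z_S|^{-1}$.
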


\begin{proof}
As usual, we denote $\varphi^1=X^1-B^H$ and $\varphi^2=X^2-B^H$.
For $t\in [S,T]$, we write
\begin{equs}
X^1_t-X^2_t&=X^1_S-X^2_S+\int_S^t\Big(\int_0^1\nabla b^1_r\big(B^H_r+\lambda\varphi^1_r+(1-\lambda)\varphi^2_r\big)\dd \lambda\Big)\cdot(X^1_r-X^2_r)\dd r
\\
&\qquad+\int_0^t(b^1-b^2)_r(B^H_r+\varphi^2_r)\dd r.\label{eq:diff-eq}
\end{equs}
Note that $\nabla b^1\in L^q_tC^\delta_x$, and therefore the process
\begin{equ}
A_t:=\int_0^1A^\lambda_t\dd \lambda:=\int_0^1\Big(\int_S^t\nabla b^1_r\big(B^H_r+\lambda\varphi^1_r+(1-\lambda)\varphi^2_r\big)\dd r\Big)\dd \lambda
\end{equ}
is well defined.
Define furthermore
\begin{equ}
z_t:=\int_0^t(b^1-b^2)_r(B^H_r+\varphi^2_r)\dd r.
\end{equ}
We then apply Lemma \ref{lem:integral-estimates} with $\varphi=\lambda\varphi^1_r+(1-\lambda)\varphi^2_r$ and $h=\nabla b^1$, as well as with $\varphi=\varphi^2$ and $h=b^1-b^2$.
Since $\varphi^1$ and $\varphi^2$ are the drift parts of solutions, by Lemma \ref{lem:drift-regularity} the processes $\varphi=\lambda\varphi^1+(1-\lambda)\varphi^2$ satisfy the bound \eqref{eq:drift-regularity} with control $w=w_{b^1,\alpha,q}+w_{b^2,\alpha,q}$,
 and so Lemma \ref{lem:integral-estimates} indeed applies.
Combining the bound \eqref{eq:first average bound} with Lemma \ref{lem:kolmogorov}, we get that there exist random variables $\eta_A,\eta_z$ with Gaussian moments\footnote{Note that in terms of the coefficients, the moments of $\eta_A$ depend on $w_{b^1,\alpha,q}+w_{b^2,\alpha,q}$, while the moments of $z$ depend only on $w_{b^2,\alpha,q}$.} conditionally on $\cF_S$, as well as $\delta>0$ and $p\in(1,2)$, such that
\begin{equs}
\|A\|_{p-\var;[S,T]}&\leq w_{b^1,\alpha,q}(S,T)^{1/q}\sup_{S\leq s<t\leq T}\frac{|A_t-A_s|}{w_{b^1,\alpha,q}(s,t)^{1/q}|t-s|^\delta}
\\
&\leq w_{b^1,\alpha,q}(S,T)^{1/q}\eta_A,
\\
\|z\|_{p-\var;[S,T]}&\leq w_{b^1-b^2,\alpha-1,q}(S,T)^{1/q}\sup_{S\leq s<t\leq T}\frac{|z_t-z_s|}{w_{b^1-b^2,\alpha-1,q}(s,t)^{1/q}|t-s|^\delta}
\\
&\leq w_{b^1-b^2,\alpha-1,q}(S,T)^{1/q}\eta_z.
\end{equs}
We can rewrite  \eqref{eq:diff-eq} as
\begin{equ}\label{eq:diff-Young}
\dd(X^1_t-X^2_t)= \dd A_t (X^1_t-X^2_t)+\dd z_t, \quad (X^1_t-X_t^2)\vert_{t=S}=X^1_S-X^2_S,
\end{equ}
meaning that we are interpreting \eqref{eq:diff-eq} as an affine Young differential equation; see also Appendix \ref{app:Young} for more details.
By applying Lemma \ref{lem:young-estimate} for $x=X^1-X^2$ and $\tilde{p}=p$, we get
\begin{equ}
\sup_{t\in[S,T]}|X_t^1-X_t^2|\lesssim e^{C\|A\|_{p-\var;[S,T]}^p}\big(|X^1_S-X^2_S|+\|z\|_{p-\var;[S,T]}\big).
\end{equ}
Recall that $\eta_A$ satisfies $\E_S [e^{\mu \eta_A^2}]\lesssim 1$ for some $\mu>0$, thus also $\E_S[e^{K \eta_A^p}] \lesssim_{K,p} 1$ for all $K>0$ since $p<2$. Therefore we obtain
\begin{equs}
\E_S\Big[ \sup_{t\in[S,T]}|X_t^1-X_t^2|^m \Big]
& \lesssim  \E_S[e^{mC\| A\|_{p-\var; [S,T]}^p} ] |X^1_S-X^2_S|^m
\\&\qquad+ \E_S\Big[ e^{mC\| A\|_{p-\var;[S,T]}^p} \| z\|_{p-\var;[S,T]}^m \Big]\\
& \lesssim |X^1_S-X^2_S|^m + w_{b^1-b^2,\alpha-1,q}(S,T)^{m/q},
\end{equs}
using conditional H\"older's inequality to get the last line.
This gives \eqref{eq:stability-estimate}.

In case $b^1=b^2$, we have $z=0$ and the Young equation \eqref{eq:diff-Young} becomes homogeneous. Moreover, note that Young equations allow time-reversal: if we fix $\tau\in[S,T]$, write $\tilde A_t=A_{\tau-t}$, and
\begin{equ}
\dd Y_t=\dd \tilde A_t Y_t,\quad Y_t\vert_{t=0}=X^1_\tau-X^2_\tau,
\end{equ}
then $Y_{\tau-S}=X_S^1-X_S^2$. Therefore by Lemma \ref{lem:young-estimate} we also have the pathwise estimate
\begin{equ}
|X_S^1-X_S^2|\lesssim e^{C\|\tilde A\|_{p-\var;[0,\tau-S]}^p}|X^1_\tau-X^2_\tau|.
\end{equ}
Of course $\|\tilde A\|_{p-\var;[0,\tau-S]}^p=\| A\|_{p-\var;[S,\tau]}^p \leq \| A \|_{p-\var;[S,T]}^p$, so after rearranging for the inverses, taking supremum in $\tau\in[S,T]$, and taking $L^m|\cF_S$ norms, we get \eqref{eq:inverse-estimate}.
\end{proof}

\section{Strong well-posedness for functional drift}\label{sec:functional WP}
We first apply the stability estimate to establish existence and uniqueness of solutions of \eqref{eq:SDE} with $\alpha>0$.
In this case the meaning of solutions is unambiguous, but we will also need the following stronger concepts of solutions.

In the next definition, we denote by $C^{\loc}_x$ the space of continuous functions from $\R^d$ to itself, endowed with the topology of uniform convergence on compact sets. Correspondingly, $L^1_t C^{\loc}_x$ denotes the set of functions $f:[0,1]\times \R^d\to \R^d$ such that, for all smooth compactly supported $g$, $f g\in L^1([0,T]; C_b(\R^d;\R^d))$, where $C_b(\R^d;\R^d)$ denotes the Banach space of continuous and bounded functions, endowed with the supremum norm. As for most localized spaces, it is easy to check that $L^1_t C^{\loc}_x$ is a separable Fréchet space.

\begin{definition}\label{def:flow-classical}
\begin{enumerate}
\item[(i)] Assume $b\in L^1_t C^{\loc}_x$ and let $\gamma:[0,1]\to\R^d$ be bounded and measurable.
A semiflow associated to the ODE
\begin{equ}\label{eq:ODE simple}
y_t = y_0 +  \int_0^t b_s (y_s) \dd s + \gamma_t
\end{equ}
is a jointly measurable map $\Phi:[0,1]^2_\leq\times\R^d\to\R^d$ such that
\begin{itemize}
\item for all $(s,x)\in[0,1]\times\R^d$ and all $t\in[s,1]$ one has
\begin{equ}
\Phi_{s\to t}(x)=x+\int_s^t b_r\big(\Phi_{s\to r}(x)\big)\dd r+\gamma_t-\gamma_s;
\end{equ}
\item for all $(s,r,t,x)\in[0,1]^3_\leq\times \R^d$ one has $\Phi_{s\to t}(x)=\Phi_{r\to t}\big(\Phi_{s\to r}(x)\big)$.
\end{itemize}
\item[(ii)] A flow is a semiflow such that for all $(s,t)\in\times[0,1]^2_\leq$ the map $x\mapsto \Phi_{s\to t}(x)$ is a homeomorphism of $\R^d$.
\item[(iii)] If $\gamma$ is a stochastic process, a random (semi)flow  is a jointly measurable map $\Phi:\Omega\times[0,1]^2_\leq\times\R^d\to\R^d$ such that for $\PP$-almost all $\omega\in\Omega$, the map $\Phi^\omega:[0,1]^2_\leq\times\R^d\to\R^d$ is a (semi)flow associated to \eqref{eq:ODE simple} with $\gamma=\gamma(\omega)$.
\item[(iv)] We say that a random (semi)flow is adapted if for all $(s,t,x)\in[0,1]^2_\leq\times\R^d$, the random variable $\Phi_{s\to t}(x)$ is $\cF_t$-measurable.
\item[(v)] Given $\beta\in(0,1)$, we say that a (semi)flow is locally $\beta$-H\"older continuous if for all $K$ there exists a constant $N$ such that for all $(s,t,x,y)\in[0,1]_\leq^2\times B_K^2$ one has $|\Phi_{s\to t}(x)-\Phi_{s\to t}(y)|\leq N|x-y|^\beta$.
\end{enumerate}
\end{definition}

\begin{remark}
Definition \ref{def:flow-classical} is based on Kunita's classical one, cf. \cite[Theorem II.4.3]{kunita1984stochastic}; it is slightly different (in fact, stronger) from other definitions proposed in the literature, like \cite[Definition 5.1]{fedrizzi2013holder}, due to the ordering of the quantifiers. One can draw a nice analogy between this kind of difference and the one between so called \textit{crude} and \textit{perfect} random dynamical systems, cf. \cite[Remark 2.5]{zhang2010stochastic}.
\end{remark}

\begin{theorem}\label{thm:functional-existence}
Assume \eqref{eq:main exponent}, $\alpha>0$, and let $b\in L^q_t C^\alpha_x$. Then there exists an adapted random semiflow of solutions to \eqref{eq:SDE} that is furthermore $\PP$-almost surely locally $\beta$-H\"older continuous for all $\beta\in(0,1)$.
\end{theorem}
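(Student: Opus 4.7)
The plan is to regularise the drift, invoke classical ODE theory for the smoothed equations, and pass to the limit using the stability estimate of Theorem \ref{thm:stability}. Let $\rho_{1/n}$ be a standard spatial mollifier, set $b^n = b \ast \rho_{1/n}$, so that $b^n \in L^q_t C^\infty_x$, the norms $\|b^n\|_{L^q_t C^\alpha_x}$ stay uniformly bounded by $\|b\|_{L^q_t C^\alpha_x}$, and $b^n \to b$ in $L^q_t C^{\alpha-1}_x$. For each $n$, classical pathwise ODE theory (applied $\omega$ by $\omega$ to the equation with perturbation $f = B^H(\omega)$) produces a jointly smooth random flow $\Phi^n$ on $[0,1]^2_\leq \times \R^d$, which is adapted since $\Phi^n_{s \to t}(x)$ is a measurable functional of the increments of $B^H$ on $[s,t]$.

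Applying Theorem \ref{thm:stability} with $(b^1, b^2) = (b^n, b^k)$ and common initial condition $(s, x)$ gives, for any $m \geq 2$,
\begin{equation*}
\Big\| \sup_{t \in [s,1]} |\Phi^n_{s \to t}(x) - \Phi^k_{s \to t}(x)| \Big\|_{L^m} \leq N \|b^n - b^k\|_{L^q_t C^{\alpha-1}_x},
\end{equation*}
with $N$ independent of $(s, x, n, k)$. Hence $\{\Phi^n_{s \to \cdot}(x)\}_n$ is Cauchy in $L^m(\Omega; C_t)$, and its limit $\Phi_{s \to \cdot}(x)$ is adapted. To identify $\Phi$ as an SDE solution, one has to pass to the limit in the integral equation for $\Phi^n$; the convergence of the nonlinear integral $\int_s^t b^n_r(\Phi^n_{s \to r}(x))\, dr$ to $\int_s^t b_r(\Phi_{s \to r}(x))\, dr$ follows from Lemma \ref{lem:integral-estimates} applied with $h = b^n - b$ and $\varphi$ the drift part of $\Phi^n_{s \to \cdot}(x)$, whose a priori estimate \eqref{eq:key-quantity} is provided by Lemma \ref{lem:drift-regularity} with a control uniform in $n$.

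For the local Hölder regularity in $x$, I apply Theorem \ref{thm:stability} once more, now with a common drift $b^n$ and differing initial conditions $x, y$: combining \eqref{eq:stability-estimate} and \eqref{eq:inverse-estimate} yields, uniformly in $n$ and $s$,
\begin{equation*}
\Big\| \sup_{t \in [s,1]} |\Phi^n_{s \to t}(x) - \Phi^n_{s \to t}(y)| \Big\|_{L^m} \lesssim |x - y|, \qquad \Big\| \sup_{t \in [s,1]} |\Phi^n_{s \to t}(x) - \Phi^n_{s \to t}(y)|^{-1} \Big\|_{L^m} \lesssim |x - y|^{-1}.
\end{equation*}
The first bound, fed into the multiparameter Kolmogorov continuity criterion of Appendix \ref{app:kolmogorov} and letting $m \to \infty$, yields local $\beta$-Hölder continuity in $x$ for any $\beta \in (0,1)$, with the same conclusion passing to the $L^m$-limit $\Phi$; the inverse bound guarantees injectivity. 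Joint continuity in $(s, t, x)$ then follows by combining spatial Hölder regularity with continuity of $t \mapsto \Phi_{s \to t}(x)$ (inherited from the SDE and the continuity of $B^H$) and a direct estimate of $|\Phi^n_{s \to t}(x) - \Phi^n_{s' \to t}(x)|$ using the drift integral between $s$ and $s'$.

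The semiflow identity $\Phi_{s \to t}(x) = \Phi_{r \to t}(\Phi_{s \to r}(x))$ holds exactly for each smooth $\Phi^n$; passing $n \to \infty$ with $L^m$-convergence of $\Phi^n_{s \to t}(x)$ and $\Phi^n_{s \to r}(x)$, together with the uniform-in-$n$ spatial continuity of $\Phi^n_{r \to t}(\cdot)$, yields the identity for $\Phi$ on a countable dense set of tuples $(s,r,t,x)$, and joint continuity extends it everywhere. The principal technical difficulty is the identification of the limit as a solution, that is, the control of $\int_s^t (b^n - b)_r(\Phi^n_{s \to r}(x))\, dr$ in the regime where $\alpha < 1$ renders naïve Riemann--Lebesgue arguments inapplicable; this is precisely where the stochastic sewing lemmas of Section \ref{sec:first}, rather than pointwise continuity of $b$, become indispensable.
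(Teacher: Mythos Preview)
Your approach is essentially the same as the paper's: approximate $b$ by smooth $b^n$, use Theorem~\ref{thm:stability} to get Cauchy and Lipschitz-in-$x$ estimates uniformly in $n$, and apply Kolmogorov to upgrade $L^m$-convergence to almost-sure locally uniform convergence with the desired H\"older regularity.

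One point deserves correction. Your closing remark that the identification of the limit as a solution is ``the principal technical difficulty'' where ``stochastic sewing lemmas\ldots become indispensable'' is mistaken in the regime $\alpha>0$. Once Kolmogorov gives $\Phi^n\to\Phi$ locally uniformly almost surely, and since $b\in L^q_t C^\alpha_x$ with $\alpha>0$ is spatially continuous, the passage to the limit in $\int_s^t b^n_r(\Phi^n_{s\to r}(x))\,\dd r$ is elementary: split into $\int_s^t (b^n-b)_r(\Phi^n_{s\to r}(x))\,\dd r$, which vanishes because $b^n\to b$ in $L^q_t C^0_x$ (mollification plus $\alpha>0$), and $\int_s^t b_r(\Phi^n_{s\to r}(x))\,\dd r - \int_s^t b_r(\Phi_{s\to r}(x))\,\dd r$, which vanishes by the $C^\alpha_x$ regularity of $b$ and dominated convergence. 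This is exactly what the paper does in one line (``follows from the locally uniform convergence of $\Phi^n$ to $\Phi$\ldots and the spatial continuity of the drift $b$''). Your route through Lemma~\ref{lem:integral-estimates} also works but is overkill here; that machinery becomes genuinely necessary only in the distributional case $\alpha<0$ treated in Section~\ref{sec:distributional}.

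A minor aside: invoking the inverse bound \eqref{eq:inverse-estimate} for injectivity is unnecessary at this stage, since the theorem only asserts a \emph{semiflow}; invertibility is established separately in Theorem~\ref{thm:flow-invertibility}.
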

\begin{proof}
Let $m\in[2,\infty)$, to be specified later.
Take a sequence of functions $(b^{n})_{n\in\N}$ such that $b^{n}\in L^q_tC^{2}_x$ and $\|b^{n}\|_{L^q_t C^\alpha_x}\leq \|b\|_{L^q_t C^\alpha_x}$ for all $n\in\N$,
and $\|b^{n}-b\|_{L^q_t C^{\alpha-1}_x}\to 0$ as $n\to\infty$.
Replacing $b$ by $b^{n}$ in \eqref{eq:SDE}, the equation clearly admits an adapted random semiflow which we denote by $\Phi^{n}$.
For fixed $(s,t)\in[0,1]^2_\leq$, $x\in\R^d$, and $n,n'\in\N$, we may apply Theorem \ref{thm:stability} to obtain the bound
\begin{equ}
\big\|\Phi_{s\to t}^{n}(x)-\Phi_{s\to t}^{n'}(x)\big\|_{L^m}\lesssim \|b^{n}-b^{n'}\|_{L^q_t C^{\alpha-1}_x}.
\end{equ}
Here and below the only important feature of the hidden proportionality constant in $\lesssim$ is that it is independent of $n,n'$.
Next, let $(s,s',t),(s,s',t')\in[0,1]^3_\leq$, $x,x'\in\R^d$, and $n\in\N$.
Then from applying Theorem \ref{thm:stability} again we get
\begin{equ}
\big\|\Phi_{s\to t}^{n}(x)-\Phi_{s\to t}^{n}(x')\big\|_{L^m}\lesssim |x-x'|;
\end{equ}
by a trivial estimate we get
\begin{equ}
\big\|\Phi_{s\to t}^{n}(x)-\Phi_{s\to t'}^{n}(x)\big\|_{L^m}\lesssim |t-t'|^{H\wedge (1/q')},
\end{equ}
and using the semigroup property and Theorem \ref{thm:stability} once more we have
\begin{equs}[eq:s-regularity]
\|\Phi_{s\to t}^{n}(x)-\Phi_{s'\to t}^{n}(x)\|_{L^m}&=\|\Phi_{s'\to t}^{n}(\Phi_{s\to s'}^{n}(x))-\Phi_{s'\to t}^{n}(x)\|_{L^m}\\&\lesssim\|\Phi_{s\to s'}^{n}(x)-x\|_{L^m}\lesssim |s'-s|^{H\wedge (1/q')}.
\end{equs}
We therefore get that the sequence $\big(\Phi^{n}\big)_{n\in\N}$ is on the one hand Cauchy in $C_{s,t,x} L^m_\omega$, and on the other hand, bounded in $C_{s,t}C^1_xL^m_\omega\cap C_x C^{H\wedge (1/q')}_{s,t}L^m_\omega$.
This implies that for some random field $\Phi$,
one has $\Phi^{n}\to \Phi$ in $C_{s,t}C^{1-\kappa}_xL^m_\omega\cap C_x C^{H\wedge (1/q')-\kappa}_{s,t} L^m_\omega$, where $\kappa>0$ is arbitrary.
By Kolmogorov's continuity theorem, for sufficiently large $m$, the convergence also holds in $L^m_\omega C_{s,t}C^{1-2\kappa,\loc}_x\cap L^m_\omega C^{\loc}_x C^{H\wedge (1/q')-2\kappa}_{s,t}$.
This yields the claimed spatial regularity of $\Phi$; the fact that $\Phi$ is indeed a semiflow for \eqref{eq:SDE} instead follows from the locally uniform convergence of $\Phi^n$ to $\Phi$, $\Phi^n$ being semiflows, and the spatial continuity of the drift $b$.
\end{proof}

\begin{theorem}\label{thm:functional:PBP-uniqueness}
Assume \eqref{eq:main exponent}, $\alpha>0$, and let $b\in L^q_t C^\alpha_x$. Then there exists an event $\tilde\Omega$ of full probability such that for all $\omega\in\tilde \Omega$, for all $(S,T)\in[0,1]^2_\leq$, $x\in\R^d$, there exists only one solution to \eqref{eq:SDE} on $[S,T]$ with initial condition $x$.
\end{theorem}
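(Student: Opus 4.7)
The plan is to promote the stability estimate of Theorem~\ref{thm:stability} to a pathwise comparison with a regularised solution, in the spirit of the nonlinear Young approach of Catellier--Gubinelli \cite{CG16}. Fix smooth approximations $b^n\in L^q_tC^{1+\delta}_x$ with $\|b^n\|_{L^q_tC^\alpha_x}\leq\|b\|_{L^q_tC^\alpha_x}$ and $\|b^n-b\|_{L^q_tC^{\alpha-1}_x}\to 0$, and denote by $X^n_t := \Phi^n_{S\to t}(x)$ the classical flow of the regularised SDE, as used in the proof of Theorem~\ref{thm:functional-existence}. The aim is to exhibit a single $\PP$-a.s. event $\tilde\Omega$ such that for every $\omega\in\tilde\Omega$, every $(S,T,x)$, and every pathwise solution $Y$ on $[S,T]$ with $Y_S=x$, one has $Y=\lim_n X^n$.

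The first key step is to establish pathwise regularity of the averaged fields
\begin{equs}
T^n_{s,t}(y):=\int_s^t b^n_r(B^H_r+y)\,\mathd r,\qquad R^n_{s,t}(y):=\int_s^t(b_r-b^n_r)(B^H_r+y)\,\mathd r.
\end{equs}
Applying Lemma~\ref{lem:integral-estimates} with $\varphi\equiv 0$ (trivially adapted and satisfying \eqref{eq:key-quantity}) to translation differences of $b^n$, $\nabla b^n$ and of $b-b^n$, the norms of the test functions being controlled by interpolation between $C^{\alpha}_x$ and $C^{\alpha-1}_x$, yields Gaussian conditional tails on all relevant joint $(t,y)$-increments. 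A joint-in-$(s,t,y)$ Kolmogorov argument (Appendix~\ref{app:kolmogorov}) then delivers a single $\PP$-a.s. event $\tilde\Omega$ on which, for some $p\in(1,2)$ and $\delta'>0$,
\begin{equ}
\sup_n\|T^n\|_{C^{p-\var}_tC^{1+\delta',\loc}_x}<\infty,\qquad \|R^n\|_{C^{p-\var}_tC^{1,\loc}_x}\to 0.
\end{equ}
The subcritical assumption \eqref{eq:main exponent} enters through the exponent $\eps=1/q'+(\alpha-1)H>0$ of Lemma~\ref{lem:integral-estimates}, which after losing an arbitrarily small amount of time-regularity is enough to secure $1/p>1/2$ together with $(1+\delta')/p>1$.

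Now fix $\omega\in\tilde\Omega$, an initial datum $(S,x)$ and a pathwise solution $Y$ on $[S,T]$. The drift parts $\varphi^Y:=Y-B^H$ and $\varphi^n:=X^n-B^H$ are automatically bounded in $C^{1/q'}_t$ by $\|b\|_{L^q_tC^0_x}$, so the nonlinear Young integrals $\int T^n(\mathd r,\varphi^{\cdot})$ and $\int R^n(\mathd r,\varphi^Y)$ are well defined and coincide with the Lebesgue integrals $\int b^n_r(Y_r)\mathd r$, etc. Subtracting the integral identities for $Y$ and $X^n$ and linearising through the spatial $C^1$-regularity of $T^n$, the difference $\psi^n:=Y-X^n$ satisfies an affine Young equation
\begin{equ}
\mathd\psi^n_t = \mathd L^n_t\,\psi^n_t+\mathd z^n_t,\qquad \psi^n_S=0,
\end{equ}
where $L^n$ is built from the nonlinear Young integral of $\nabla T^n$ along a convex combination of $\varphi^Y,\varphi^n$, and $z^n_t=\int_S^t R^n(\mathd r,\varphi^Y_r)$. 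By Step 1 one has $\|L^n\|_{p-\var}\leq C(\omega)$ uniformly in $n$ and $\|z^n\|_{p-\var}\to 0$, so Lemma~\ref{lem:young-estimate} gives $\sup_t|\psi^n_t|\to 0$; hence $Y=\lim_n X^n$ on $\tilde\Omega$, and this limit depends only on $(S,T,x,\omega)$.

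The main obstacle is the regularity step: the $L^m|\cF_s$ conditional Gaussian bounds of Lemma~\ref{lem:integral-estimates} must be upgraded to uniform-in-$n$ pathwise $C^{p-\var}_tC^{1+\delta',\loc}_x$ estimates for $T^n$ on a single $\omega$-set valid for all initial conditions simultaneously. This calls for a local, joint-in-$(s,t,y)$ version of Kolmogorov's continuity criterion, a careful choice of interpolation exponents so that all $n$-dependent constants are absorbed into $\|b\|_{L^q_tC^\alpha_x}$, and a verification that the exponents provided by subcriticality \eqref{eq:main exponent} leave enough room for both the $p$-variation time regularity with $p<2$ and the spatial Hölder regularity strictly above $1$.
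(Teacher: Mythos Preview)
You correctly flag the exponent verification as the main obstacle, but in fact it cannot be overcome in the full range of \eqref{eq:main exponent}. The trouble is the $L^n$ term. To bound $\|L^n\|_{p-\var}$ uniformly in $n$ on a fixed $\omega$-set you must use purely deterministic nonlinear Young sewing of $\nabla T^n$ along $\varphi^\lambda=\lambda\varphi^Y+(1-\lambda)\varphi^n$, since $\varphi^Y$ comes from an arbitrary pathwise solution and Lemma~\ref{lem:integral-estimates} (which needs an adapted $\varphi$ satisfying \eqref{eq:key-quantity}) is unavailable for $L^n$ itself. Now the only uniform-in-$n$ control on $\nabla b^n$ is in $C^{\alpha-1}_x$, so obtaining $\nabla T^n\in C^{p-\var}_t C^{\delta'}_x$ forces you to apply Lemma~\ref{lem:integral-estimates} at spatial exponent $\alpha-\delta'$, giving $1/p$ at best $1+(\alpha-\delta'-1)H$. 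Even using the sharpest path regularity $\varphi^\lambda\in C^{1-\var}_t$, the Young compatibility $1/p+\delta'>1$ reads $\delta'(1-H)>(1-\alpha)H$. For $H\geq 1$ this is impossible; for $H<1$ it requires $\delta'>(1-\alpha)H/(1-H)$, while keeping $\alpha-\delta'$ subcritical requires $\delta'<\alpha-1+1/(q'H)$, and these two constraints are incompatible whenever $\alpha\leq 1-1/(q'H)+1/q'$, a range that \eqref{eq:main exponent} allows (e.g.\ $H=1/2$, $q=2$, $\alpha=1/4$). So your scheme only covers a strict sub-regime.

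The paper takes a very different and much shorter route. It combines Theorem~\ref{thm:functional-existence}, which already produces a locally $\beta$-H\"older semiflow $\Phi$ for every $\beta<1$, with the purely deterministic Lemma~\ref{lem:shaposhnikov} (a refinement of Shaposhnikov's trick \cite{shaposhnikov2016some}): for any other solution $z$ and any $\tau$, the map $t\mapsto\Phi_{t\to\tau}(z_t)$ has increments bounded by $w_{b,\alpha,1}(s,t)^{\beta(1+\alpha)}$, hence is constant once $\beta(1+\alpha)>1$, forcing $z_\tau=\Phi_{S\to\tau}(y)$. Since $\alpha>0$ one can always pick such $\beta<1$; no averaging along the unknown solution is needed, and the argument works throughout \eqref{eq:main exponent}.
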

The theorem will follow immediately from Theorem \ref{thm:functional-existence} and the following lemma, which is a refinement of the technique illustrated in \cite[Theorem 3.1]{shaposhnikov2016some}.

\begin{lemma}\label{lem:shaposhnikov}
Let $\gamma:[0,1]\to \R^d$ be bounded and measurable, $b\in L^1_t C^{\alpha,\loc}_x$ and consider the ODE \eqref{eq:ODE simple}.
%
%
Suppose 
%
%
that it admits a locally $\beta$-H\"older continuous semiflow $\Phi$ with
\begin{equation}\label{eq:shaposhnikov-condition}
\beta(1+\alpha)>1.
\end{equation}
Then for any $(S,T)\in[0,1]_\leq^2$ and $y\in \R^d$ there exists a unique solution to the ODE on the interval $[S,T]$ with initial condition $y$, given by $\Phi_{S\to \cdot}(y)$.
\end{lemma}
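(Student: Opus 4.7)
\medskip
\noindent\textbf{Proof plan.}
Existence is immediate: by the very definition of semiflow, $\tilde X_t := \Phi_{S\to t}(y)$ solves the ODE on $[S,T]$ with initial condition $y$. For uniqueness, I would let $X$ be an arbitrary solution on $[S,T]$ with $X_S=y$ and introduce the ``defect''
\begin{equ}
A(s,t):=|\Phi_{s\to t}(X_s)-X_t|,\qquad (s,t)\in[S,T]^2_\leq,
\end{equ}
noting that $A(S,t)=|\tilde X_t-X_t|$, so the goal is to show $A(S,t)=0$ for every $t\in[S,T]$.

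The plan is to combine two complementary estimates for $A$. First, inserting the pivot $\Phi_{r\to t}(X_r)$ and using the semiflow identity $\Phi_{s\to t}(X_s)=\Phi_{r\to t}(\Phi_{s\to r}(X_s))$ together with the local $\beta$-H\"older regularity of $\Phi_{r\to t}$, one obtains
\begin{equ}\label{eq:plan-telescope}
A(s,t)\leq N\,A(s,r)^\beta+A(r,t)\qquad \text{for all } s\leq r\leq t.
\end{equ}
Second, since both $\Phi_{s\to \cdot}(X_s)$ and $X_\cdot$ solve the same ODE on $[s,t]$ from $X_s$, a direct subtraction gives
\begin{equ}
A(s,t)\leq \int_s^t \|b_u\|_{C^\alpha(K)}\,A(s,u)^\alpha\,\dd u,
\end{equ}
where $K$ is a fixed compact set containing all points $X_u$ and $\Phi_{s\to u}(X_s)$ that arise on $[S,T]$. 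Combined with the trivial pointwise bound $A(s,u)\leq 2\int_s^u \|b_r\|_{C^0(K)}\dd r\leq 2 M(s,u)$ where $M(s,t):=\int_s^t \|b_r\|_{C^\alpha(K)}\dd r$, one inserts and integrates to get
\begin{equ}\label{eq:plan-direct}
A(s,t)\leq C\,M(s,t)^{1+\alpha}.
\end{equ}
Note that $M$ is a continuous control on $[S,T]^2_\leq$ by the hypothesis $b\in L^1_t C^{\alpha,\loc}_x$.

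The endgame is to iterate \eqref{eq:plan-telescope} along a uniform partition $S=t_0<t_1<\cdots<t_n=t$. Telescoping and using $A(t_n,t)=0$ yields
\begin{equ}
A(S,t)\leq N\sum_{i=0}^{n-1} A(t_i,t_{i+1})^\beta\leq C'\sum_{i=0}^{n-1} M(t_i,t_{i+1})^{\beta(1+\alpha)}.
\end{equ}
Since $\beta(1+\alpha)>1$ by hypothesis \eqref{eq:shaposhnikov-condition} and $M$ is superadditive, the right-hand side is bounded by $\bigl(\max_i M(t_i,t_{i+1})\bigr)^{\beta(1+\alpha)-1}\,M(S,t)$, which tends to $0$ as $n\to\infty$ by continuity of $M$. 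This forces $A(S,t)=0$, hence $X_t=\Phi_{S\to t}(y)$, as required.

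\medskip
\noindent\textbf{Expected main difficulty.} The genuine content lies in the bootstrap inequality \eqref{eq:plan-telescope}, which is where the semiflow property and the H\"older regularity of $\Phi$ are used in tandem; once this is in place the argument is essentially a Young/sewing-style cancellation. A secondary obstacle, but important to handle cleanly, is the localisation: one must fix once and for all a compact set $K$ that simultaneously contains $X_u$ and all the auxiliary trajectories $\Phi_{s\to u}(X_s)$ so that both $N$ and $M$ can be taken uniform in \eqref{eq:plan-telescope}-\eqref{eq:plan-direct}. This is available since $X$ is continuous and $(s,u)\mapsto \Phi_{s\to u}(X_s)$ stays bounded on the compact triangle $[S,T]^2_\leq$.
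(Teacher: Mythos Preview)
Your argument is correct and essentially coincides with the paper's proof. Both hinge on the same key bootstrap estimate
\[
|\Phi_{s\to t}(X_s)-X_t|\;\lesssim\;M(s,t)^{1+\alpha},
\]
obtained by one insertion of the trivial bound $|\Phi_{s\to r}(X_s)-X_r|\lesssim M(s,r)$ into the integral inequality coming from the $C^\alpha$-regularity of $b$. The only organizational difference is that the paper fixes a terminal time $\tau$ and packages the computation via the one-parameter function $f_t:=\Phi_{t\to\tau}(X_t)-\Phi_{S\to\tau}(y)$, showing $|f_{s,t}|\lesssim A(s,t)^\beta\lesssim w(s,t)^{\beta(1+\alpha)}$ and concluding that $f$ is constant; you instead keep the two-parameter defect $A(s,t)$ and derive the near-additivity $A(s,t)\leq N A(s,r)^\beta+A(r,t)$ before telescoping. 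These are two equivalent bookkeepings of the same cancellation: the paper's ``$|f_{s,t}|\lesssim w^{\beta(1+\alpha)}$ with $\beta(1+\alpha)>1$ forces $f$ constant'' is exactly your partition sum $\sum_i M(t_i,t_{i+1})^{\beta(1+\alpha)}\to 0$. Your localisation remark is also in line with the paper's, which simply reduces to globally bounded $b$ and globally $\beta$-H\"older $\Phi$ after observing that the relevant trajectories are bounded; your version, restricting to fine partitions so that $\Phi_{t_i\to r}(X_{t_i})$ stays in a fixed ball, achieves the same end.
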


\begin{proof}
Suppose that there exists another solution to the ODE, given by $(z_t)_{t\in[S,T]}$.
Since both $z$ and $\Phi_{S\to\cdot}(y)$ are bounded, we may and will assume $b\in L^1_t C^{\alpha}_x$
and that $\Phi$ is globally $\beta$-H\"older continuous. Define the control $w=w_{b,\alpha,1}$.

Now let us fix $\tau\in [S,T]$ and define the map $f_t:= \Phi_{t\to \tau} (z_t) - \Phi_{S\to \tau}(y)$. If we are able to show that $f$ is constant in time, then $f \equiv f_0=0$, which implies $\Phi_{t\to\tau}(z_t)=\Phi_{S\to\tau}(y)$ and in turn by choosing $t=\tau$ gives $z_\tau=\Phi_{\tau\to\tau}(z_\tau)=\Phi_{S\to\tau}(y)$. In particular, if we above argument holds for any $\tau\in [S,T]$, we reach the conclusion.

It remains to prove that $f$ is constant on $[S,\tau]$. To this end, first observe that for any $S\leq s\leq t\leq\tau$ it holds
\begin{equs}[eq:sha trick]
|f_{s,t}|
& =|\Phi_{t\to \tau}(z_t)-\Phi_{s\to \tau}(z_s)|\\
& =|\Phi_{t\to \tau}(z_t)-\Phi_{t\to \tau}(\Phi_{s\to t}(z_s))|
\lesssim |\Phi_{s\to t}(z_s)-z_t|^\beta.
\end{equs}
Next, by definition of flow it holds
\[ \Phi_{s\to t}(z_s)-z_t=\int_s^t [b_r(\Phi_{s\to r} (z_s))-b_r(z_r)] \dd r \]
which immediately implies $|\Phi_{s\to t}(z_s)-z_t|\lesssim w(s,t)$; we can improve the estimate by recursively inserting it in the above identity:
\begin{align*}
|\Phi_{s\to t}(z_s)-z_t|
& \leq \int_s^t |b_r(\Phi_{s\to r} (z_s))-b_r(z_r)| \dd r\\
& \leq \int_s^t \| b_r\|_{C^\alpha} |\Phi_{s\to r} (z_s))-z_r|^\alpha \dd r
\leq w(s,t)^{1+\alpha}.
\end{align*}
Inserting the above in estimate \eqref{eq:sha trick}, we can conclude that
\[ |f_{s,t}| \lesssim |\Phi_{s\to t}(z_s)-z_t|^\beta \lesssim w(s,t)^{\beta (1+\alpha)}.\]
Since $\beta(1+\alpha)>1$ and $w$ is a control, $f$ must be necessarily constant.
\end{proof}

\begin{remark}\label{rem:weak}
In the functional setting of Definition 	\ref{def:flow-classical}, path-by-path uniqueness implies pathwise uniqueness, which in turn implies uniqueness in law by the Yamada--Watanabe theorem \cite[Proposition~1]{YW}; we refer to \cite{ShaWre2022} for a general overview on the various notions of strong/weak existence and uniqueness.
\end{remark}

\begin{remark}\label{rem:random ic}
The statement of Lemma \ref{lem:shaposhnikov} is given for deterministic initial data $y$ and semiflow $\Phi$, but immediately extends to random ones: if $X_0$ is a $\cF_0$-measurable random variable, then $(\Phi_{0\to t}(X_0)\big)_{t\in[0,1]}$ is clearly the unique adapted solution with initial condition $X_0$.
\end{remark}

\section{Strong well-posedness for distributional drift}\label{sec:distributional}

When $\alpha<0$, the very first question one has to address is the meaning of the equation, more precisely the meaning of the integral in \eqref{eq:SDE}.
We start by some consequences of Lemma \ref{lem:integral-estimates}.
Denote by $\overline{C^\alpha}$ the closure of $C^1$ in $C^\alpha$. Recall that for any $\alpha<\alpha'$ one has $C^{\alpha'}\subset\overline{C^\alpha}$.
\begin{corollary}\label{cor:T}
Assume \eqref{eq:main exponent} and $\alpha<0$, and take $\delta>0$.
Define the linear map $T^{B^H}:L^q_tC^{1+\delta}_x\to L^\infty_\omega C_tC^\delta_x$ by
\begin{equ}
\big(T^{B^H}h\big)_t(x)=\int_0^t h_r(B^H_r+x)\dd r.
\end{equ}
Denote $w=w_{h,\alpha,q}$.
Then, for any $m\in[2,\infty)$, there exists a constant $K=K(m,H,\alpha,q,d,w(0,1))$ such that for all $(s,t)\in[0,1]^2_\leq$ and $x,y\in\R^d$ one has the bound
\begin{equation}\begin{split}
\big\|&\|\big(T^{B^H}h\big)_{s,t}(x)-\big(T^{B^H}h\big)_{s,t}(y)\|_{L^m|\cF_s}\big\|_{L^\infty}
\\
&\qquad\leq K|x-y|w(s,t)^{1/q}|t-s|^{1/q'+(\alpha-1)H}.\label{eq:dist-bound-1}
\end{split}\end{equation}
Moreover, for any $\kappa\in(0,1)$ sufficiently small there exists a constant $K=K(m,H,\alpha,q,d,w(0,1),\kappa)$ such that one has the bound
\begin{equation}
\Bigg\|\sup_{0\leq s<t\leq 1}\frac{\|\big(T^{B^H}h\big)_{s,t}\|_{C^{1-\kappa,2\kappa}_x}}{w(s,t)^{1/q}|t-s|^{1/q'+(\alpha-1)H-\kappa}}\Bigg\|_{L^m}\leq K.\label{eq:dist-bound-3}
\end{equation}
Consequently with $p=\big(1+(\alpha-1)H\big)^{-1}\in(1,2)$,  the mapping $h\mapsto T^{B^H} h$ takes values in
$L^m_\omega C^{(p+\kappa)-\var}_tC^{1-\kappa,2\kappa}_x$ and as such, it
extends continuously to $L^q_t \overline{C^\alpha_x}$. This extension also satisfies the bounds \eqref{eq:dist-bound-1}-\eqref{eq:dist-bound-3}.
\end{corollary}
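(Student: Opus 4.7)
All three bounds come from two applications of Lemma \ref{lem:integral-estimates} plus a Kolmogorov-type bootstrap. The key algebraic identity is $1/q + 1/q' + (\alpha-1)H = 1/p$, which makes $w(s,t)^{1/q}|t-s|^{1/q'+(\alpha-1)H}$ behave like a control raised to the power $1/p$. For \eqref{eq:dist-bound-1}, I set $g_r(z) := h_r(z+x)-h_r(z+y)$ so that
\begin{equation*}
(T^{B^H}h)_{s,t}(x)-(T^{B^H}h)_{s,t}(y) = \int_s^t g_r(B^H_r)\,\dd r,
\end{equation*}
and apply Lemma \ref{lem:integral-estimates} to $g$ with $\varphi\equiv 0$, which trivially satisfies \eqref{eq:key-quantity} with $w\equiv 0$, collapsing the factor $(1+w(s,t)^{1/q}|t-s|^\eps)$. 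A standard shift estimate (writing $g_r = \int_0^1(x-y)\cdot\nabla h_r(\cdot+\lambda x+(1-\lambda)y)\,\dd\lambda$ and using translation invariance of the Besov norms) gives $\|g_r\|_{C^{\alpha-1}_x}\lesssim|x-y|\,\|h_r\|_{C^\alpha_x}$, so that \eqref{eq:second average bound} delivers \eqref{eq:dist-bound-1}. A second application of Lemma \ref{lem:integral-estimates}, this time with $h$ untouched and $\varphi\equiv x$ constant, provides the pointwise $L^m$-bound $\big\|\|(T^{B^H}h)_{s,t}(x)\|_{L^m|\cF_s}\big\|_{L^\infty}\lesssim w_{h,\alpha,q}(s,t)^{1/q}|t-s|^{1/q'+(\alpha-1)H}$.

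For \eqref{eq:dist-bound-3}, I would feed these two $L^m$ bounds into the weighted Kolmogorov continuity criterion from Appendix \ref{app:kolmogorov}: applied in the spatial variable, the Lipschitz-in-$L^m$ bound in $x$ upgrades to an almost sure $C^{1-\kappa,2\kappa}_x$ bound (the polynomial weight $R^{2\kappa}$ is the usual price for a noncompact spatial domain); applied in the $(s,t)$ variables, taking supremum costs an additional $|t-s|^{-\kappa}$. For the $p$-variation statement, the identity $1/q+1/q'+(\alpha-1)H=1/p$ lets one write
\begin{equation*}
w(s,t)^{1/q}|t-s|^{1/q'+(\alpha-1)H-\kappa} = \bigl(w(s,t)^{\theta_1}|t-s|^{\theta_2}\bigr)^{1/p-\kappa}
\end{equation*}
with $\theta_1+\theta_2=1$, so the inner bracket is itself a control, and \eqref{eq:trivial-var} gives $(1/(1/p-\kappa))$-variation, which for small $\kappa$ is at most $p+\kappa'$ after relabelling. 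The extension to $L^q_t\overline{C^\alpha_x}$ is then immediate: all the bounds involve $h$ linearly through its $L^q_t C^\alpha_x$ norm, the map $h\mapsto T^{B^H}h$ is linear, and $L^q_tC^{1+\delta}_x$ is dense in $L^q_t\overline{C^\alpha_x}$ by definition.

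\textbf{Main obstacle.} The technical heart is the Kolmogorov bootstrap: textbook Kolmogorov statements produce H\"older regularity from pure-power bounds $|t-s|^\gamma$, but here one needs simultaneously to handle a general control $w$ in place of $|t-s|^\gamma$, the noncompact spatial domain $\R^d$ (which is responsible for the weight $R^{2\kappa}$ in the $C^{1-\kappa,2\kappa}_x$ norm), and the uniform supremum over $(s,t)$; this is presumably what Appendix \ref{app:kolmogorov} is designed to provide.
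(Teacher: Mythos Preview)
Your proposal is correct and matches the paper's proof essentially line by line: the paper applies Lemma~\ref{lem:integral-estimates} with $\tilde h_t(z)=(x-y)\cdot\int_0^1\nabla h_t(z+\theta x+(1-\theta)y)\,\dd\theta$ (which is exactly your $g$, by the fundamental theorem of calculus) and implicitly $\varphi\equiv 0$ to get \eqref{eq:dist-bound-1}, then combines this with the pointwise bound \eqref{eq:second average bound} and the space--time Kolmogorov criterion of Corollary~\ref{cor:kolmogorov-4} to obtain \eqref{eq:dist-bound-3}; your identification of the $p$-variation via the control $w(s,t)^{\theta_1}|t-s|^{\theta_2}$ and the density argument for the extension are also the intended ones.
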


\begin{proof}
Applying Lemma \ref{lem:integral-estimates} with $t,z\mapsto(x-y)\cdot\int_0^1\nabla h_t(z+\theta x+(1-\theta)y)\dd \theta$ in place of $h$ yields \eqref{eq:dist-bound-1}.
The bound \eqref{eq:dist-bound-3} follows from \eqref{eq:second average bound} and \eqref{eq:dist-bound-1} by Kolmogorov's continuity theorem in the form of Corollary \ref{cor:kolmogorov-4}.
\end{proof}

Corollary \ref{cor:T} motivates introducing some temporary notation. Given \eqref{eq:main exponent}, set $p_{\alpha,H}=\big(\big(1+(\alpha-1)H\big)^{-1}+2\big)/2\in(1,2)$ and for any $h\in L^q_t C^\alpha_x$ we define the event
\begin{equation*}
\Omega_h:=\Big\{\omega\in \Omega: \, T^{B^H}h(\omega)\in C^{p_{\alpha,H}-\var}_tC^{1-\kappa,2\kappa}_x\,\,\forall\kappa>0\Big\}
\end{equation*}
which is therefore of full probability.

The regularity of $T^{B^H}$ obtained from Corollary \ref{cor:T} is sufficient to define a notion of solution via nonlinear Young formalism. For the proof of the next statement we refer to \cite{galeati2021nonlinear}, which can be readily readapted to the $p$-variation framework, see also \cite{anzeletti2021regularisation}.


\begin{lemma}\label{lem:nY-construct}
Let $A:[0,1]\times\R^d\to \R^n$ and $x:[0,1]\to\R^d$ satisfy $A\in C_t^{p-\var}C^{\eta,\loc}_x$ and $x\in C^{\zeta-\var}_t$ such that the exponents $p,\zeta\in[1,\infty)$, $\eta\in (0,1]$ satisfy
\begin{equ}
\frac{1}{p}+\frac{\eta}{\zeta}>1.
\end{equ}
Then the \emph{nonlinear Young integral}
\begin{equ}
y_t=\int_0^t A_{\dd s}(x_s):=\lim_{\ell\to \infty}\sum_{j=0}^{2^\ell-1}A_{j2^{-\ell}t,(j+1)2^{-\ell}t}(x_{j2^{-\ell}t})
\end{equ}
is well-defined. If $A\in C_t^{p-\var}C^{\eta}_x$, then for all $(s,t)\in[0,1]_\leq^2$ $y$ satisfies the bound
\begin{equation}\label{eq:nY-remainder}
|y_{s,t}-A_{s,t}(x_s)|\leq N \llbracket A\rrbracket_{p-\var,C^\eta_x;[s,t]}\llbracket x\rrbracket_{\zeta-\var;[s,t]}^\eta,
\end{equation}
where the constant $N$ depends only on $1/p+\eta/\zeta$.
\end{lemma}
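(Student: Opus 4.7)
The strategy is the classical one for nonlinear Young integrals: apply the deterministic sewing lemma of Gubinelli to the germ
\begin{equ}
\Xi_{s,t}:=A_{s,t}(x_s),
\end{equ}
and identify its sewing with the claimed integral.

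First I would reduce to the case $A\in C^{p-\var}_t C^\eta_x$. Since $x\in C^{\zeta-\var}_t\subset C_t$, its image $K:=\{x_s:s\in[0,1]\}$ is compact in $\R^d$; choosing a smooth cutoff $\chi$ equal to $1$ on a neighbourhood of $K$, the product $\chi A$ lies in $C^{p-\var}_t C^\eta_x$ and agrees with $A$ along $x$, so that both $\Xi$ and the candidate integral are unchanged. Thus we may, and do, assume $A\in C^{p-\var}_tC^\eta_x$.

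Next, I would compute the defect
\begin{equ}
\delta\Xi_{s,u,t}=\Xi_{s,t}-\Xi_{s,u}-\Xi_{u,t}=A_{u,t}(x_s)-A_{u,t}(x_u).
\end{equ}
From the bound $\|A_{u,t}\|_{C^\eta_x}\leq\llbracket A\rrbracket_{p-\var,C^\eta_x;[s,t]}$ and the trivial estimate $|x_{s,u}|\leq\llbracket x\rrbracket_{\zeta-\var;[s,t]}$, one obtains
\begin{equ}
|\delta\Xi_{s,u,t}|\leq\llbracket A\rrbracket_{p-\var,C^\eta_x;[s,t]}\llbracket x\rrbracket_{\zeta-\var;[s,t]}^\eta= w_1(s,t)^{1/p}w_2(s,t)^{\eta/\zeta},
\end{equ}
where $w_1(s,t):=\llbracket A\rrbracket_{p-\var,C^\eta_x;[s,t]}^p$ and $w_2(s,t):=\llbracket x\rrbracket_{\zeta-\var;[s,t]}^\zeta$ are controls. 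Since $1/p+\eta/\zeta>1$, the product $w_1^{1/p}w_2^{\eta/\zeta}$ is a superadditive function raised to a power strictly greater than $1$, which is exactly the hypothesis of the sewing lemma. A similar (in fact easier) bound holds for $|\Xi_{s,t}|$ itself, ensuring continuity.

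Applying the (deterministic) sewing lemma to $\Xi$ then yields a unique continuous path $y$ with $y_0=0$ satisfying
\begin{equ}
|y_{s,t}-A_{s,t}(x_s)|\leq N\, w_1(s,t)^{1/p}w_2(s,t)^{\eta/\zeta}=N\llbracket A\rrbracket_{p-\var,C^\eta_x;[s,t]}\llbracket x\rrbracket_{\zeta-\var;[s,t]}^\eta,
\end{equ}
where $N$ depends only on the excess exponent $1/p+\eta/\zeta>1$; this is precisely \eqref{eq:nY-remainder}. The convergence of the dyadic Riemann sums to $y_t$ is the standard output of the sewing procedure: along the dyadic refinement the partial sums differ from $\Xi_{0,t}$ by a telescoping series whose terms are bounded geometrically thanks to the exponent strictly greater than $1$, giving both existence of the limit and the remainder estimate.

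The main (minor) obstacle is bookkeeping: one must be careful that the $p$-variation of $A$ in the Banach space $C^\eta_x$ correctly controls the $C^\eta$-seminorm of the single increment $A_{u,t}$, and that the constant $N$ in the sewing lemma depends only on $1/p+\eta/\zeta$. Both are standard; the rest of the argument is then routine and can be found, with full details, in \cite{galeati2021nonlinear} (in the H\"older setting) and \cite{anzeletti2021regularisation} (in the $p$-variation setting).
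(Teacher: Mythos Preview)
Your proposal is correct and follows exactly the standard sewing-lemma approach that the paper itself points to: the paper does not give its own proof of this lemma but simply refers to \cite{galeati2021nonlinear} (H\"older setting) and \cite{anzeletti2021regularisation} ($p$-variation setting), which proceed precisely via the germ $\Xi_{s,t}=A_{s,t}(x_s)$ and the deterministic sewing lemma as you outline. Your localisation step to reduce to $A\in C^{p-\var}_tC^\eta_x$ and the control-product bound on $\delta\Xi$ are the right ingredients, and your remark that the constant depends only on $1/p+\eta/\zeta$ matches the stated conclusion.
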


\begin{definition}\label{defn:nYoung-def}
Assume \eqref{eq:main exponent}, $\alpha<0$ and $b\in L^q_t C^\alpha_x$.
Given $\omega\in \Omega_b$, we say that a path $x$ is an $\omega$-path solution to \eqref{eq:SDE} if $x=\varphi+B^H(\omega)$, $\varphi\in C^{\zeta-\var}_t$ for some $\gamma$ satisfying $1/p_{\alpha,H} + 1/\zeta>1$ and the equality
\begin{equ}\label{eq:nYoung def}
\varphi_t=\varphi_0 + \int_0^t \big(T^{B^H}b(\omega)\big)_{\dd s}(\varphi_s)
\end{equ}
holds for all $t\in[0,1]$, the integral being understood in the nonlinear Young sense.
We say that a stochastic process $X$ is a \emph{path-by-path solution} to \eqref{eq:SDE}  if, for $\PP$-a.e. $\omega\in \Omega_b$, $X(\omega)$ is an $\omega$-path solution in the above sense.
Given this formulation of the SDE, the concepts of strong and weak solutions are analogous to the classical ones, see Section \ref{sec:notation} above.
\end{definition}

Typically we encounter more special cases of nonlinear Young integrals than the generality that Lemma \ref{lem:nY-construct} allows. First of all, the spatial growth of $A$ is often quantified (as in e.g. Corollary \ref{cor:T}). Secondly, whenever $\varphi$ is a solution to a nonlinear Young equation, it is automatically  of $p$-variation and its temporal regularity can be often controlled by that of $A$ (see e.g. \cite[Section 3.2]{galeati2021nonlinear} in the H\"older case or Lemma \ref{lem:nY-apriori} in Appendix \ref{app:Young}).

We can then define the notion of flows similarly to Definition \ref{def:flow-classical}. In fact, the following definition extends the previous one: for functional drifts, taking $A=T^\gamma b$, using the Riemann sums characterization of the nonlinear Young integral one can easily verify that
\begin{equation*}
\int_0^t (T^\gamma b)_{\dd s} (\varphi_s) = \int_0^t b_s(\varphi_s+\gamma_s) \dd s \quad \forall\, t\in [0,1].
\end{equation*}
Therefore in the functional case Definitions \ref{def:flow-classical} and \ref{def:flow-nYoung} coincide via the change of variables
\begin{equation}\label{eq:change-variables}
\Psi_{s\to t}(x)=\Phi_{s\to t}(x+\gamma_s)-\gamma_t.
\end{equation}

\begin{definition}\label{def:flow-nYoung}
Assume $A\in C_t^{p-\var} C^{\eta,\loc}_x$ for some $\eta\in(0,1]$, $p\in[1,2)$ satisfying $(1+\eta)/p>1$.
A semiflow associated to the nonlinear Young equation
\begin{equ}\label{eq:nYoung ODE}
y_t = y_0 + \int_0^t A_{\dd s} (y_s) 
\end{equ}
is a jointly measurable map $\Psi:[0,1]^2_\leq\times\R^d\to\R^d$ such that
\begin{itemize}
\item for all $(s,x)\in[0,1]\times\R^d$ one has $\Psi_{s\to \cdot}(x)\in C^{p-\var}_t$ and for all $t\in[s,1]$ one has the equality
\begin{equ}
\Psi_{s\to t}(x)=x+\int_s^t A_{\dd r}\big(\Psi_{s\to r}(x)\big);
\end{equ}
\item for all $(s,r,t,x)\in\times[0,1]^3_\leq\times \R^d$ one has $\Psi_{s\to t}(x)=\Psi_{r\to t}\big(\Psi_{s\to r}(x)\big)$.
\end{itemize}
The definitions of flow, random (semi)flow, adaptedness, and H\"older continuity are then exactly as in Definition \ref{def:flow-classical}.

\end{definition}

We are now in the position to state and prove our existence and uniqueness theorems in the case of distributional drift.
\begin{theorem}\label{thm:dist-existence}
Assume \eqref{eq:main exponent}, $\alpha<0$, and let $b\in L^q_t C^\alpha_x$. Then there exists an adapted random semiflow of solutions to \eqref{eq:SDE} that is furthermore locally $\beta$-H\"older continuous $\PP$-almost surely for all $\beta\in(0,1)$.
\end{theorem}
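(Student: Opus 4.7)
The plan is to follow the three-step template of the proof of Theorem~\ref{thm:functional-existence}---approximate $b$ by smooth drifts, construct classical semiflows at the approximate level, and extract a limit via the stability estimate and Kolmogorov's criterion---but reinterpret the output as a semiflow for the nonlinear Young equation of Definition~\ref{defn:nYoung-def}. Concretely, I would choose $b^n \in L^q_t C^{1+\delta}_x$ with $\|b^n\|_{L^q_t C^\alpha_x} \leq \|b\|_{L^q_t C^\alpha_x}$ and $\|b^n - b\|_{L^q_t C^{\alpha-1}_x} \to 0$ (by spatial mollification), denote by $\Phi^n$ the classical Carath\'eodory semiflow of \eqref{eq:SDE} with drift $b^n$, and set
\[ \Psi^n_{s\to t}(y) := \Phi^n_{s\to t}(y + B^H_s) - B^H_t, \]
so that $\Psi^n$ is the semiflow of the nonlinear Young equation driven by $A^n := T^{B^H} b^n$ of Corollary~\ref{cor:T}.

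For the uniform estimates, Theorem~\ref{thm:stability} gives, for every $m\geq 2$,
\[ \|\Psi^n_{s\to t}(y) - \Psi^{n'}_{s\to t}(y')\|_{L^m} \lesssim |y-y'| + \|b^n - b^{n'}\|_{L^q_t C^{\alpha-1}_x}, \]
uniformly in $(s,t,n,n')$ with constants depending only on $\|b\|_{L^q_t C^\alpha_x}$. The temporal modulus in $L^m_\omega$ comes from Lemma~\ref{lem:apriori-estim} applied to the drift part of $\Psi^n_{s\to \cdot}(y)$, yielding a H\"older exponent $1/q' + \alpha H$ uniformly in $n$. With these three ingredients I would invoke Kolmogorov's continuity theorem in the variables $(s,t,y)$ to extract an almost-sure joint limit $\Psi$ lying in $C_{s,t} C^{1-\kappa,\loc}_y \cap C^{\loc}_y C^{\eps}_{s,t}$ for every small $\kappa>0$ and some $\eps>0$; adaptedness and the semigroup identity then survive the locally uniform convergence, giving the claimed local H\"older regularity for every $\beta \in (0,1)$.

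The delicate step is verifying that $\Psi$ actually solves the nonlinear Young equation driven by $A := T^{B^H} b$. By Corollary~\ref{cor:T}, up to extracting a subsequence, $A^n \to A$ almost surely in $C^{p_{\alpha,H}-\var}_t C^{1-\kappa, 2\kappa}_x$. Combined with a uniform-in-$n$ almost-sure $p_{\alpha,H}$-variation bound on the drift paths $\Psi^n_{s\to\cdot}(y) - y$---obtained by feeding Lemma~\ref{lem:apriori-estim} into \eqref{eq:trivial-var} and then lifting to a pathwise statement by another Kolmogorov-type argument---the stability of the nonlinear Young integral in both driver and path (a direct refinement of Lemma~\ref{lem:nY-construct}) lets me pass to the limit in
\[ \Psi^n_{s\to t}(y) = y + \int_s^t A^n_{\dd r}\big(\Psi^n_{s\to r}(y)\big) \]
and conclude that $\Psi$ satisfies the identity \eqref{eq:nYoung def} defining a semiflow. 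The main obstacle I anticipate is precisely this uniform pathwise $p$-variation control, locally uniform in the initial point $y$ and in $n$: the probabilistic estimates of Lemma~\ref{lem:apriori-estim} have exactly the right scaling, but converting them into a joint almost-sure statement suitable for plugging into the nonlinear Young integral requires a careful combined Kolmogorov/$p$-variation bookkeeping analogous to the proof of \eqref{eq:dist-bound-3}.
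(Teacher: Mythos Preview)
Your proposal is correct and matches the paper's proof closely: same approximation, same use of Theorem~\ref{thm:stability} and Lemma~\ref{lem:apriori-estim}, same Kolmogorov argument to produce the limiting semiflow. Two remarks on the step you flag as the obstacle. First, local uniformity in $y$ is not needed there: the paper verifies the nonlinear Young equation for each fixed $(s,y)$, and the required convergence $\psi^n\to\psi$ in $C^{p_{\alpha,H}-\var}_t$ (in probability) for that fixed point already follows from the $L^m_\omega C^{p_{\alpha,H}-\var}_t$ convergence you obtain via Lemma~\ref{lem:apriori-estim} and Lemma~\ref{lem:kolmogorov-2}. Second, for the actual limit in $\int (T^{B^H}b^n)_{\dd r}(\psi^n_r)$ the paper uses an auxiliary-index trick rather than citing a general stability lemma for nonlinear Young integrals: insert a smooth $b^\ell$, control $\int (T^{B^H}b^\ell)_{\dd r}(\psi^n_r)-\int (T^{B^H}b^\ell)_{\dd r}(\psi_r)$ by Lipschitzness of $b^\ell$ and $\psi^n\to\psi$, and make the remaining $b^\ell-b^n$, $b^\ell-b$ pieces small via Corollary~\ref{cor:T}. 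This is equivalent to your route but sidesteps having to state and prove a separate continuity result for $(A,\psi)\mapsto\int A_{\dd r}(\psi_r)$.
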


\begin{proof}
By sacrificing a small regularity, we may and will assume $b\in L^q_t \overline{C^\alpha_x}$.
The proof follows similar steps as that of Theorem \ref{thm:functional-existence}.
We take $m\in[2,\infty)$, to be chosen large enough later as well a sequence of functions $(b^n)_{n\in\N}$ such that $b^n\in L^q_tC^{2}_x$ and $\|b^n\|_{L^q_t C^\alpha_x}\leq \|b\|_{L^q_t C^\alpha_x}$ for all $n\in\N$,
and $\|b^n-b\|_{L^q_t C^{\alpha-1}_x}\to 0$ as $n\to\infty$.
Replacing $b$ by $b^n$ in \eqref{eq:SDE}, the equation clearly admits an adapted random semiflow $\Psi^n_{s\to t}$.
For fixed $(s,t)\in[0,1]^2_\leq$, $x\in\R^d$, and $n,n'\in\N$, by Theorem \ref{thm:stability} one has the bound
\begin{equ}
\big\|\Psi_{s\to t}^n(x)-\Psi_{s\to t}^{n'}(x)\big\|_{L^m}\lesssim \|b^n-b^{n'}\|_{L^q_t C^{\alpha-1}_x}.
\end{equ}
Similarly, for $(s,t)\in[0,1]^2_\leq$, $x,x'\in\R^d$, and $n\in\N$, Theorem \ref{thm:stability} yields
\begin{equ}
\big\|\Psi_{s\to t}^{n}(x)-\Psi_{s\to t}^{n}(x')\big\|_{L^m}\lesssim |x-x'|.
\end{equ}
The temporal regularity is obtained from Lemma \ref{lem:apriori-estim}: in our present notation we get
\begin{equ}
\big\|\Psi_{s\to t}^{n}(x)-\Psi_{s\to t'}^{n}(x)\big\|_{L^m}\lesssim w_{b,\alpha,q}(t,t')^{1/q}|t'-t|^{\alpha H+1/q'}=:\tilde w(t,t')^{1+\alpha H}
\end{equ}
with $\tilde w$ defined by the above equality.
Regularity in the $s$ variable is obtained precisely as in \eqref{eq:s-regularity}.
From these estimates we obtain the convergence
\begin{equ}
\Psi^{n}\to \Psi\qquad\text{in }L^m_\omega C_{s,t}C^{1-\kappa,\loc}_x\cap L^m_\omega C^{\loc}_x C^{p_{\alpha,H}-\var}_{s,t}
\end{equ}
to a limit $\Psi$ just as in Theorem \ref{thm:functional-existence} with all the required properties shown in the same way, except for the fact that $\Psi_{s\to \cdot}(x)$ solves the equation on $[s,1]$ with initial condition $x$ in the nonlinear Young sense.
Since at this point $s$ and $x$ are fixed, we assume for simplicity $s=0, x=0$ and denote $\Psi^{n}_{0\to t}(0)=\psi^{n}_t$, $\Psi_{0\to t}(0)=\psi_t$.
It is sufficient to show the convergence
\begin{equ}
\int_0^t \big(T^{B^H}b^{n}\big)_{\dd s}(\psi^{n}_s)\to \int_0^t\big(T^{B^H}b\big)_{\dd s}(\psi_s)
\end{equ}
in probability for each $t\in[0,1]$. Recall that by Corollary \ref{cor:T} we have that 
\begin{equ}
T^{B^H}(b^{n}-b)\to 0 \qquad\text{in } C^{p_{\alpha,H}-\var}_tC^{1-\kappa,\loc}_x
\end{equ}
in probability. From the above, we have that $\psi^{n}$ converges to $\psi$ (and in particular is bounded) in $C^{p_{\alpha,H}-\var}_t$ in probability. Therefore if we take an auxiliary $\ell\in\N$ and write
\begin{equs}
\int_0^t &\big(T^{B^H}b^{n}\big)_{\dd s}(\psi^{n}_s)- \int_0^t\big(T^{B^H}b\big)_{\dd s}(\psi_s)
\\
&=\int_0^t \big(T^{B^H}b^{\ell}\big)_{\dd s}(\psi^{n}_s)- \int_0^t\big(T^{B^H}b^{\ell}\big)_{\dd s}(\psi_s)
\\
&\qquad-\int_0^t \big(T^{B^H}(b^{\ell}-b^{n})\big)_{\dd s}(\psi^{n}_s)+ \int_0^t \big(T^{B^H}(b^{\ell}-b)\big)_{\dd s}(\psi_s),
\end{equs}
then we can first choose $\ell$ and $n$ large enough to make the third and fourth integrals small, and then we can keep the same $\ell$ and increase $n$ further to make the difference of the first two terms small, using the Lipschitzness of $b^{\ell}$. This concludes the proof.
\end{proof}

\begin{theorem}\label{thm:distributional:PBP-uniqueness}
Assume \eqref{eq:main exponent}, $\alpha<0$, and let $b\in L^q_t C^\alpha_x$. Then there exists an event $\tilde\Omega$ of full probability such that for all $\omega\in\tilde \Omega$, for all $(S,T)\in[0,1]^2_\leq$, $x\in\R^d$, there exists only one $\omega$-path solution to \eqref{eq:SDE} on $[S,T]$ with initial condition $x$; in other words, \emph{path-by-path uniqueness} holds.
\end{theorem}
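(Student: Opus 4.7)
The plan is to reproduce the functional-case argument of Theorem \ref{thm:functional:PBP-uniqueness}, but with Lemma \ref{lem:shaposhnikov} replaced by a nonlinear Young analogue. Let $\Psi$ be the random semiflow constructed in Theorem \ref{thm:dist-existence}, and let $\tilde{\Omega}$ be the full measure event on which both (i) $\Psi(\omega)$ is a semiflow associated to the nonlinear Young ODE for $A := T^{B^H}b(\omega)$ that is locally $\beta$-H\"older continuous for every $\beta\in(0,1)$, and (ii) $A \in C^{p-\var}_t C^{1-\kappa, 2\kappa}_x$ with $p$ arbitrarily close to $p_{\alpha,H}\in (1,2)$ and $\kappa>0$ small (from Corollary \ref{cor:T}). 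Fix $\omega\in\tilde{\Omega}$ and suppress it from the notation.

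Fix $(S,T)\in [0,1]^2_\leq$, $y\in \R^d$, and let $\varphi$ be any $\omega$-path solution on $[S,T]$ with $\varphi_S = y$. For each $\tau \in [S,T]$, set $f_t := \Psi_{t \to \tau}(\varphi_t) - \Psi_{S \to \tau}(y)$ for $t \in [S,\tau]$. Then $f_S = 0$ and $f_\tau = \varphi_\tau - \Psi_{S \to \tau}(y)$, so it suffices to prove that $f$ is constant. Using the semiflow identity $\Psi_{s \to \tau} = \Psi_{t \to \tau} \circ \Psi_{s \to t}$ and the local $\beta$-H\"older regularity of $\Psi_{t \to \tau}$, one obtains
\begin{equation*}
|f_{s,t}| = |\Psi_{t \to \tau}(\varphi_t) - \Psi_{t \to \tau}(\Psi_{s \to t}(\varphi_s))| \lesssim |\varphi_t - \Psi_{s \to t}(\varphi_s)|^\beta.
\end{equation*}

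The key is then to estimate $\varphi_t - \Psi_{s \to t}(\varphi_s)$. Both $\varphi|_{[s,\cdot]}$ and $\Psi_{s \to \cdot}(\varphi_s)$ are nonlinear Young solutions to $A$ on $[s,t]$ with initial value $\varphi_s$, and by the a priori $p$-variation bound for such solutions (Lemma \ref{lem:nY-apriori} in Appendix \ref{app:Young}), each has $p$-variation on $[s,t]$ bounded by $w(s,t)^{1/p}$, where $w(s,t) := \llbracket A \rrbracket^p_{p-\var,C^{1-\kappa,2\kappa}_x;[s,t]}$ is a control. Applying the remainder bound \eqref{eq:nY-remainder} to each (with $\eta = 1-\kappa$, $\zeta = p$) and subtracting yields
\begin{equation*}
|\varphi_t - \Psi_{s \to t}(\varphi_s)| \lesssim w(s,t)^{(1+\eta)/p}, \qquad \text{and hence}\qquad |f_{s,t}| \lesssim w(s,t)^{\beta(1+\eta)/p}.
\end{equation*}

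Since $p<2$ can be taken arbitrarily close to $p_{\alpha,H}$, choosing $\kappa$ small and $\beta$ close to $1$ gives $\theta := \beta(1+\eta)/p > 1$. Together with the continuity of $f$ and the superadditivity of $w$, the bound $|f_{s,t}| \lesssim w(s,t)^\theta$ forces $f \equiv f_S = 0$; setting $t=\tau$ yields $\varphi_\tau = \Psi_{S \to \tau}(y)$, and arbitrariness of $\tau$ and $y$ concludes. The principal obstacle is the localization required because $T^{B^H}b$ only has \emph{weighted} spatial regularity $C^{1-\kappa, 2\kappa}_x$: an a priori sup-norm bound on $\varphi$ and $\Psi_{s\to\cdot}(\varphi_s)$ in terms of $|y|$ and the control $w$ must be secured before \eqref{eq:nY-remainder} can be applied with a uniform constant. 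This is precisely what the weighted-H\"older a priori estimate for nonlinear Young ODEs (cf.\ Appendix \ref{app:Young}) provides.
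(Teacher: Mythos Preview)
Your proposal is correct and follows essentially the same route as the paper's proof (which packages the argument as Lemma \ref{lem:shaposhnikov-young}): define $f_t=\Psi_{t\to\tau}(\varphi_t)-\Psi_{S\to\tau}(y)$, use the semiflow property and H\"older regularity to get $|f_{s,t}|\lesssim|\varphi_t-\Psi_{s\to t}(\varphi_s)|^\beta$, then bound the latter via the nonlinear Young remainder estimate, yielding $|f_{s,t}|\lesssim w(s,t)^{\beta(1+\eta)/p}$ with exponent exceeding $1$.

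One small remark on localization: you appeal to a ``weighted-H\"older a priori estimate'' in Appendix \ref{app:Young}, but Lemma \ref{lem:nY-apriori} is stated only for unweighted $A\in C^{p-\var}_tC^\eta_x$. The paper handles this more simply: since any $\omega$-path solution $\varphi$ is continuous on $[S,T]$, it is bounded, so one may restrict $A$ to a fixed ball containing the ranges of both $\varphi$ and the relevant semiflow trajectories, reducing to the unweighted case. No quantitative bound in terms of $|y|$ and $w$ is needed---mere boundedness suffices.
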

\begin{remark}
In analogy to Remark \ref{rem:weak}, the strong form of uniqueness coming from Theorem \ref{thm:distributional:PBP-uniqueness} readily implies pathwise uniqueness of solutions defined on random time intervals (e.g. stopping times) as well as uniqueness in law of weak solutions. In fact, it gives us uniqueness in a larger class of possibly non-adapted pathwise solutions, since the nonlinear Young formalism does not require adaptedness of the processes in consideration. 
On the other hand, Theorem \ref{thm:dist-existence} tells us that the unique solution is in fact a strong one.

Notice however that all these considerations only apply in the framework of Definition \ref{defn:nYoung-def}, namely if the SDE is interpreted in a nonlinear Young sense as \eqref{eq:nYoung def}. Differently from the functional one, in the distributional setting there is no  canonical notion of solution, and one can in principle find alternative concepts which fall outside the framework of Definition \ref{defn:nYoung-def} and Theorem \ref{thm:distributional:PBP-uniqueness}; for a practical example, see Definition \ref{defn:weak-solution} further below.
\end{remark}

Theorem \ref{thm:distributional:PBP-uniqueness} follows from a version of Lemma \ref{lem:shaposhnikov} in the nonlinear Young setting, which is a generalization of Theorem 5.1 from \cite{galeati2021nonlinear}.
\begin{lemma}\label{lem:shaposhnikov-young}
Let $A\in C^{p-\var}_t C^{\eta,\loc}_x$ for 
some $\eta\in(0,1]$, $p\in[1,2)$ satisfying $(1+\eta)/p>1$. Suppose that the nonlinear YDE
\begin{equation*}
x_t = \int_0^t A_{\dd s}(x_s)
\end{equation*}
admits a locally $\beta$-H\"older continuous semiflow $\Psi$ with any $\beta\in(0,1)$.
%
%
Then for any $(S,T)\in[0,1]_\leq^2$ and $y\in\R^d$ there exists a unique solution to the nonlinear YDE on $[S,T]$, which is given by $\Psi_{S\to \cdot}(y)$.
\end{lemma}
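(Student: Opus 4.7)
The plan is to mimic the Shaposhnikov-type argument of Lemma \ref{lem:shaposhnikov} in the nonlinear Young setting, with the pointwise composition identity $\Phi_{s\to t}(\Psi_{s\to r}(z_s))-z_r$ replaced by the nonlinear Young remainder bound \eqref{eq:nY-remainder}. Fix a solution $z$ on $[S,T]$ with initial condition $y$. Since $z$ and $\Psi_{S\to\cdot}(y)$ are both continuous and defined on a compact interval, they are bounded, so by truncating $A$ outside a sufficiently large ball we may assume $A\in C^{p-\var}_t C^\eta_x$ with a control $w$ satisfying $\llbracket A\rrbracket_{p-\var,C^\eta_x;[s,t]}^p\leq w(s,t)$. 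Fix $\tau\in[S,T]$ and define $f_t:=\Psi_{t\to\tau}(z_t)-\Psi_{S\to\tau}(y)$; then $f_S=0$ and $f_\tau=z_\tau-\Psi_{S\to\tau}(y)$, so it suffices to show that $f$ is constant on $[S,\tau]$.

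For $S\leq s\leq t\leq \tau$, the semiflow property gives
\begin{equation*}
f_{s,t}=\Psi_{t\to\tau}(z_t)-\Psi_{t\to\tau}\big(\Psi_{s\to t}(z_s)\big),
\end{equation*}
so the local $\beta$-Hölder continuity of $\Psi$ yields $|f_{s,t}|\lesssim |z_t-\Psi_{s\to t}(z_s)|^\beta$ for any $\beta\in(0,1)$. Both $z$ and $r\mapsto\Psi_{s\to r}(z_s)$ solve the same nonlinear YDE on $[s,t]$ with starting value $z_s$, so a short standard argument (inserting the sewing remainder \eqref{eq:nY-remainder} into a partition sum and absorbing the higher-order term) shows that their $p$-variation is controlled by $w(s,t)^{1/p}$. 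Taking $\zeta=p$ in Lemma \ref{lem:nY-construct}, the identity $1/p+\eta/p>1$ allows us to apply \eqref{eq:nY-remainder} to both paths and subtract, obtaining
\begin{equation*}
|z_t-\Psi_{s\to t}(z_s)|\leq \big|z_t-z_s-A_{s,t}(z_s)\big|+\big|\Psi_{s\to t}(z_s)-z_s-A_{s,t}(z_s)\big|\lesssim w(s,t)^{(1+\eta)/p}.
\end{equation*}
Combining with the Hölder estimate gives $|f_{s,t}|\lesssim w(s,t)^{\beta(1+\eta)/p}$.

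Since $(1+\eta)/p>1$, we may choose $\beta\in(0,1)$ close enough to $1$ so that the exponent $\gamma:=\beta(1+\eta)/p$ exceeds $1$. For any partition $S=t_0<t_1<\cdots<t_N=\tau$ with mesh tending to zero, the continuity of $f$ together with superadditivity of $w$ gives
\begin{equation*}
|f_\tau-f_S|\leq \sum_{i=1}^N|f_{t_{i-1},t_i}|\lesssim \sum_{i=1}^N w(t_{i-1},t_i)^{\gamma}\leq \Big(\max_i w(t_{i-1},t_i)\Big)^{\gamma-1}w(S,\tau)\longrightarrow 0,
\end{equation*}
forcing $f_\tau=f_S=0$, i.e.\ $z_\tau=\Psi_{S\to\tau}(y)$. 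Since $\tau$ was arbitrary, uniqueness follows. The main obstacle I anticipate is bookkeeping the a priori $p$-variation bounds for both $z$ and $\Psi_{s\to\cdot}(z_s)$ from the sewing remainder so as to get the clean exponent $(1+\eta)/p$; once this is in place, the freedom to take $\beta$ arbitrarily close to $1$ converts the subcritical condition $(1+\eta)/p>1$ into the desired contraction on controls.
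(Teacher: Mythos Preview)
Your proposal is correct and follows essentially the same route as the paper's proof: define $f_t=\Psi_{t\to\tau}(z_t)-\Psi_{S\to\tau}(y)$, use the semiflow property and $\beta$-H\"older continuity to bound $|f_{s,t}|\lesssim|\Psi_{s\to t}(z_s)-z_t|^\beta$, then control the latter by the nonlinear Young remainder $w(s,t)^{(1+\eta)/p}$ and choose $\beta$ close enough to $1$. The paper packages your ``short standard argument'' for the a priori $p$-variation bound on solutions as Lemma~\ref{lem:nY-apriori} in Appendix~\ref{app:Young} (which also requires reducing to $\eta<1$), and simply asserts that $f$ must be constant once the exponent exceeds $1$, whereas you spell out the partition argument; otherwise the arguments coincide.
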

\begin{proof}
The proof is very similar to that of Lemma \ref{lem:shaposhnikov}, so we will mostly sketch it.
Let $z$ be a solution on $[S,T]$ starting from $y$, which by definition belongs to $C^{q-\var}_t$ with some $q$ such that $1/p+\eta/q>1$.
Thus $z$ is bounded, and in particular after localizing the argument we may assume that $\Psi$ is globally $\beta$-H\"older and that $A\in C^{p-\var}_t C^{\eta}_x$; furthermore, since the inequalities involving $(\eta,p,q)$ are strict, we can assume $\eta\in (0,1)$.

Set $w(s,t):=\llbracket A\rrbracket_{p-\var,C^{\eta}_x;[s,t]}^p$; an application of Lemma \ref{lem:nY-apriori} readily informs us that
\begin{equation}\label{eq:shaposhnikov-young-proof}
|\Psi_{s\to t}(x)-x-A_{s,t}(x)| \lesssim w(s,t)^{\frac{1+\eta}{p}}
\end{equation}
uniformly in $(s,t)\in [0,1]_\leq^2$ and $x\in\R^d$ (the hidden constant can depend on $w(0,1)$); a similar bound also holds for $\Psi_{s\to t}(x)$ replaced by $z_t$.

As before, we fix $\tau\in [S,T]$ and set $f_t:= \Psi_{t\to\tau}(z_t)-\Psi_{S\to \tau}(y)$; in order to conclude, it suffices to show that $f$ is constant.
As in \eqref{eq:sha trick}, we have $|f_{s,t}|\lesssim |\Psi_{s\to t}(z_s)-z_t|^\beta$.
Moreover by definition of solution to the YDE and estimate \eqref{eq:shaposhnikov-young-proof}, it holds that
\begin{align*}
|\Psi_{s\to t}(z_s)-z_t|
= \big|\Psi_{s\to t}(z_s)-z_s - A_{s,t}(z_s) - (z_t-z_s-A_{s,t}(z_s))\big|
\lesssim w(s,t)^{\frac{1+\eta}{p}}.
\end{align*}
%
Combining the two estimates, we get
\begin{align*}
|f_{s,t}|\lesssim w(s,t)^{\frac{\beta(1+\eta)}{p}};
\end{align*}
by assumption, we can choose $\beta$ close enough to $1$ so that $\beta(1+\eta)/p$ is bigger that $1$, implying the conclusion.
\end{proof}

\section{Flow regularity and Malliavin differentiability}\label{sec:flow}

So far we have established the existence of a random H\"older continuous \textit{semiflow} $\Phi_{s\to t}(x)$; the aim of this section is to strengthen this result, by establishing better properties for $\Phi$.
We will start by showing that $\Phi$ is a random \textit{flow}, in the sense that for each fixed $s<t$ the maps $x\mapsto\Phi_{s\to t}(x)$ are invertible, see Theorem \ref{thm:flow-invertibility} below.
The main body of the section is devoted to the proof of Theorem \ref{thm:flow-diffeo}, showing that both $\Phi_{s\to t}$ and its spatial inverse $\Phi_{s\leftarrow t}$ admit continuous derivatives. We conclude the section by showing that the random variables $\Phi_{s\to t}(x)$ possess a rather strong form of Malliavin differentiability, see Theorem \ref{thm:malliavin} below.

From now on, we will use both $\Phi_{s\to t}(x)$ and $\Phi_{s\to t}(x;\omega)$ to denote the semiflow, so to stress the dependence on the fixed element $\omega\in \Omega$ whenever needed; we start with the promised invertibility.

\begin{theorem}\label{thm:flow-invertibility}
Let \eqref{eq:main exponent} hold, $b\in L^q_tC^\alpha_x$, and denote by $\Phi_{s\to t}(x;\omega)$ the semiflow of solutions constructed in Theorems \ref{thm:functional-existence} and \ref{thm:dist-existence}. Then there exists an event $\tilde{\Omega}$ of full probability such that, for all $\omega\in \tilde{\Omega}$ and all $(s,t)\in [0,1]^2_\leq$, the map $x\mapsto \Phi_{s\to t}(x;\omega)$ is a bijection.
\end{theorem}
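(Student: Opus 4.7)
The strategy is to combine the inverse estimate \eqref{eq:inverse-estimate} with the a priori bounds of Section \ref{sec:first} and a topological (invariance of domain) argument for surjectivity. Approximate $b$ by $b^n\in L^q_t C^2_x$ with $\|b^n\|_{L^q_tC^\alpha_x}\leq\|b\|_{L^q_tC^\alpha_x}$ and $b^n\to b$ in $L^q_tC^{\alpha-1}_x$, as in the proofs of Theorems \ref{thm:functional-existence} and \ref{thm:dist-existence}; denote by $\Phi^n$ the corresponding classical flow of $C^1$-diffeomorphisms. By \eqref{eq:stability-estimate}, $\Phi^n\to\Phi$ locally in $L^m_\omega$ for every $m\in[2,\infty)$.

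\emph{Injectivity.} For any $m\in[2,\infty)$, $x\neq y$ and $(s,t)\in[0,1]^2_\leq$, the inverse estimate \eqref{eq:inverse-estimate} applied to $\Phi^n_{s\to\cdot}(x),\Phi^n_{s\to\cdot}(y)$ yields
\begin{equation*}
\E\Big[\sup_{\tau\in[s,t]}|\Phi^n_{s\to\tau}(x)-\Phi^n_{s\to\tau}(y)|^{-m}\Big]\leq N|x-y|^{-m}
\end{equation*}
with $N$ independent of $n$. Fatou's lemma transfers the bound to $\Phi$. Combined with the direct bound $\E|\Phi_{s\to t}(x)-\Phi_{s\to t}(y)|^m\leq N|x-y|^m$ coming from \eqref{eq:stability-estimate}, we obtain two-sided $L^m$-control on the ratio $|\Phi_{s\to t}(x)-\Phi_{s\to t}(y)|/|x-y|$. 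A Kolmogorov-type argument in the spirit of Appendix \ref{app:kolmogorov}, applied to this random field on $\{x\neq y\}$ jointly in $(s,t,x,y)$, produces on a single event $\tilde\Omega$ of full probability locally uniform positive lower bounds. In particular, for every $\omega\in\tilde\Omega$ and every $(s,t)\in[0,1]^2_\leq$, the map $\Phi_{s\to t}(\cdot;\omega)$ is injective on $\R^d$.

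\emph{Surjectivity.} Fix $\omega\in\tilde\Omega$ and $(s,t)\in[0,1]^2_\leq$. Write
\begin{equation*}
\Phi_{s\to t}(x)=x+\varphi^x_{s,t}+(B^H_t-B^H_s),
\end{equation*}
where $\varphi^x$ is the drift part of the solution issued from $x$ at time $s$ (interpreted via the nonlinear Young formalism when $\alpha<0$). The a priori estimates of Lemma \ref{lem:drift-regularity} (for $\alpha\geq 0$, in which case $b$ is in fact bounded so the bound is deterministic) and Lemma \ref{lem:apriori-estim} (for $\alpha<0$), applied to the approximations and passed to the limit, give $\|\varphi^x_{s,t}\|_{L^m}\leq C$ uniformly in $x$ and $(s,t)$. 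Combining this bound at $x=0$ with the locally $\beta$-H\"older continuity of $\Phi_{s\to t}(\cdot;\omega)$ for any $\beta\in(0,1)$ provided by Theorems \ref{thm:functional-existence} and \ref{thm:dist-existence}, via another Kolmogorov argument, yields, up to shrinking $\tilde\Omega$ further, the pathwise sublinear growth bound $|\varphi^x_{s,t}(\omega)|\leq C(\omega)(1+|x|^\beta)$ uniformly in $(s,t)$. Since $\beta<1$, this forces $|\Phi_{s\to t}(x;\omega)|\to+\infty$ as $|x|\to\infty$. As $\Phi_{s\to t}(\cdot;\omega)$ is continuous and injective, invariance of domain implies that its image is open in $\R^d$; the growth estimate gives closedness, so the image is all of $\R^d$.

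The main technical obstacle is the Kolmogorov upgrade of the pointwise-in-$(s,t,x,y)$ moment bounds on the reciprocal ratio to a locally uniform a.s. bound valid on a single event $\tilde\Omega$: unlike the direct Kolmogorov argument that produced $\Phi$, the reciprocal field $|x-y|/|\Phi_{s\to t}(x)-\Phi_{s\to t}(y)|$ is not a priori continuous, so joint continuity estimates must be derived carefully from Theorem \ref{thm:stability} applied to well-chosen pairs of approximating solutions before the continuity criterion can be invoked.
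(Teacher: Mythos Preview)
Your injectivity argument is essentially the paper's: both control the reciprocal field $\eta_{s,t}(x,y)=|\Phi_{s\to t}(x)-\Phi_{s\to t}(y)|^{-1}$ via the inverse estimate \eqref{eq:inverse-estimate} and then run Kolmogorov on $\{|x-y|>\delta\}$. The technical obstacle you flag at the end---that one needs \emph{increment} moment bounds on $\eta$, not just pointwise ones---is precisely what the paper resolves through estimate \eqref{eq:flow-estim-3}, obtained by combining \eqref{eq:flow-estim-1} and \eqref{eq:flow-estim-2} as in Kunita's Lemma II.4.1.

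The surjectivity argument, however, has a genuine gap. You claim that the bound $\|\varphi^x_{s,t}\|_{L^m}\leq C$ taken at $x=0$, together with the locally $\beta$-H\"older continuity of $\Phi_{s\to t}$ from Theorems \ref{thm:functional-existence}--\ref{thm:dist-existence}, yields pathwise sublinear growth $|\varphi^x_{s,t}(\omega)|\leq C(\omega)(1+|x|^\beta)$ ``via another Kolmogorov argument''. This does not follow: the local H\"older statement in those theorems gives no quantitative control on how the H\"older constant on $B_R$ grows with $R$, and if instead you feed the Lipschitz moment bound $\|\varphi^x-\varphi^y\|_{L^m}\lesssim|x-y|$ into Lemma \ref{lem:kolmogorov-3}, you only obtain $\llbracket\varphi^\cdot\rrbracket_{C^{\eta,\lambda}_x}\in L^m$ with the constraint $\lambda>1-\eta$, hence $|\varphi^x|\lesssim C(\omega)(1+|x|^{\eta+\lambda})$ with $\eta+\lambda>1$---\emph{super}linear, not sublinear. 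To repair your route you must use the uniform bound $\|\varphi^x\|_{L^m}\leq C$ at \emph{all} $x$, not just $x=0$: for instance, apply Kolmogorov on each unit cube $B_1(k)$, $k\in\Z^d$, to get $\sup_{x\in B_1(k)}|\varphi^x|\in L^m$ uniformly in $k$, and then run Borel--Cantelli over the lattice; the uniformity in $(s,t)$ requires yet another layer.

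The paper sidesteps this entirely by following Kunita's compactification (Lemma II.4.2): it works with $\tilde\eta_{s,t}(\hat x)=(1+|\Phi_{s\to t}(x)|)^{-1}$ on $\hat{\R}^d=\R^d\cup\{\infty\}$, derives the increment bound \eqref{eq:flow-estim-4}, and applies Kolmogorov to extend $\Phi_{s\to t}$ continuously to $\hat{\R}^d$; surjectivity then comes from homotopy to the identity map $\Phi_{s\to s}$. Your alternative endgame---invariance of domain plus properness---is a perfectly valid (and arguably cleaner) substitute for the homotopy step, but the properness itself must be established correctly first.
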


\begin{proof}
We follow closely the classical arguments by Kunita, cf. \cite[Lemmas II.4.1-II.4.2]{kunita1984stochastic}, as they are completely independent from the driving noise being Brownian.

First, let us define the family of random variables

\begin{equation*}
\eta_{s,t}(x,y) := |\Phi_{s\to t}(x) - \Phi_{s\to t}(y)|^{-1}
\end{equation*}
Set $\gamma=H\wedge 1/q'$ for $\alpha \geq 0$, $\gamma = \alpha H + 1/q'$ in the case $\alpha<0$.
Recall that the estimates in the proof of Theorem \ref{thm:functional-existence}, respectively Theorem \ref{thm:dist-existence}, overall yield
\begin{equation}\label{eq:flow-estim-1}
\| \Phi_{s\to t}(x) - \Phi_{s'\to t'}(y)\|_{L^m} \lesssim |s-s'|^\gamma + |t-t'|^\gamma + |x-y|;
\end{equation}
moreover, by taking expectation in \eqref{eq:inverse-estimate}, we have
\begin{equation}\label{eq:flow-estim-2}
\| |\Phi_{s\to t}(x) - \Phi_{s\to t}(y)|^{-1} \|_{L^m} \lesssim |x-y|^{-1}.
\end{equation}
Fix any $\delta>0$; we can combine estimates \eqref{eq:flow-estim-1} and \eqref{eq:flow-estim-2} and argue as in \cite[Lemma II.4.1]{kunita1984stochastic} to deduce that, for any $s<t$ and any $x$, $x'$, $y$, $y'$ satisfying $|x-y|>\delta$, $|x'-y'|>\delta$, it holds
\begin{equation}\label{eq:flow-estim-3}\begin{split}
\| & \eta_{s,t} (x,y)-\eta_{s',t'}(x',y')\|_{L^m}\\
& \lesssim \delta^{-2}  \Big[ |x-x'|+|y-y'|+ (1+|x|+|x'|+|y|+|y'|)(|t-t'|^\gamma + |s-s'|^\gamma) \Big].
\end{split}\end{equation}
%

From \eqref{eq:flow-estim-3}, one can apply Kolmogorov's continuity theorem to deduce that the map $(s,t,x,y)\mapsto \eta_{s,t}(x,y;\omega)$ is continuous on the domain $\{s<t, |x-y|>\delta\}$ for $\PP$-a.e. $\omega$.
As the argument works for any $\delta>0$, we can find an event $\tilde{\Omega}$ of full probability such that, for all $\omega\in\tilde{\Omega}$, the map $\eta_{s,t}(x,y;\omega)$ is continuous on $\{s<t, |x-y|\neq 0\}$, which implies that it must also be finite for all $s<t, x\neq y$. This clearly implies injectivity of $x\mapsto \Phi_{s,t}(x;\omega)$ for all $s<t$ and $\omega\in\tilde{\Omega}$.

We move to proving surjectivity, which this time is closely based on \cite[II.Lemma 4.2]{kunita1984stochastic}, having established the key inequalities \eqref{eq:flow-estim-1} and \eqref{eq:flow-estim-2}.
Let $\hat{\R}^d=\R^d\cup\{\infty\}$ be the one-point compactification of $\R^d$; set $\hat{x}=x/|x|^2$ for $x\in \R^d\setminus\{0\}$ and $\hat{x}=\infty$ for $x=0$. Define
\begin{equation*}
\tilde \eta_{s,t}(\hat{x}) =
\begin{cases} (1+ |\Phi_{s\to t}(x)|)^{-1}\quad & \text{if } \hat{x}\in \R^d\\
0 & \text{if } \hat{x}=0
\end{cases}
\end{equation*}
Arguing as in \cite[Lemma II.4.2]{kunita1984stochastic} we find
\begin{equation}\label{eq:flow-estim-4}
\| \tilde\eta_{s,t}(\hat{x}) -\tilde\eta_{s',t'}(\hat{y})\|_{L^m} \lesssim |\hat{x}-\hat{y}| + |t-t'|^\gamma + |s-s'|^\gamma;
\end{equation}
by Kolmogorov's theorem, we can find an event of full probability, which we still denote by $\tilde{\Omega}$, such that $\tilde \eta_{s,t}(\hat{x};\omega)$ is continuous at $\hat{x}=0$ and so that $\Phi_{s,t}(\cdot;\omega)$ can be extended to a continuous map from $\hat{\R}^d$ to itself for any $s<t$ and $\omega\in\tilde\Omega$.
This extension, denoted by $\tilde{\Phi}_{s\to t}(x;\omega)$, is continuous in $(s,t,x)$ for every $\omega\in\tilde\Omega$ and thus $\Phi_{s\to t}(\cdot\,; \omega)$ is homotopic to the identity map $\tilde{\Phi}_{s\to s}(\cdot\,;\omega)$, making it surjective.
Its original restriction $\Phi_{s\to t}(\cdot\,; \omega)$ must then be surjective as well, from which we can conclude that $x\mapsto \Phi_{s\to t}(x;\omega)$ is surjective for all $s<t$ and $\omega\in\tilde{\Omega}$.
\end{proof} 

Our next goal is to establish that $\Phi$ is in fact a \textit{random flow of diffeomorphisms}; by this we mean that, in addition to the map $(s,t,x,\omega)\mapsto \Phi_{s\to t}(x;\omega)$ satisfying all the properties listed in Definition \ref{def:flow-classical}, there exists an event of full probability $\tilde{\Omega}$ such that $x\mapsto \Phi_{s\to t}(x;\omega)$ is a diffeomorphism for all $s<t$ and $\omega\in\tilde{\Omega}$. 
We will in fact prove a little bit more:

\begin{theorem}\label{thm:flow-diffeo}
Let \eqref{eq:main exponent} hold, $b\in L^q_tC^\alpha_x$, and $\Phi$ be the associated random flow. Then there exists a constant $\delta(\alpha,H)>0$ and an event $\tilde\Omega$ of full probability such that for any $\omega\in\tilde{\Omega}$ and any $s<t$, the map $x\mapsto \Phi_{s\to t}(x;\omega)$ and its inverse are both $C^{1+\delta,\loc}_x$.
\end{theorem}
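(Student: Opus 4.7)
The natural candidate for $\nabla\Phi_{s\to t}(x)$ is the Jacobian process $J_{s\to t}(x)$ solving, along the trajectory $\Phi_{s\to\cdot}(x)$, the linear Young equation
\begin{equation*}
\dd J_r = \dd L^x_r\cdot J_r,\qquad J_s = I_d,\qquad L^x_t := \int_s^t \nabla b_r(\Phi_{s\to r}(x))\,\dd r,
\end{equation*}
exactly as anticipated in the introductory discussion around \eqref{eq:ODE-linearised}--\eqref{eq:intro-additive}. When $\alpha<1$ the integrand $\nabla b\in L^q_tC^{\alpha-1}_x$ is distributional, so $L^x$ must be understood via the nonlinear Young formalism of Corollary \ref{cor:T} and Definition \ref{defn:nYoung-def}, namely as $\int_s^t(T^{B^H}\nabla b)_{\dd r}(\Phi_{s\to r}(x)-B^H_r)$.

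The plan is to introduce a smoothing $b^n$ of $b$ as in the proofs of Theorems \ref{thm:functional-existence} and \ref{thm:dist-existence}, with smooth flow $\Phi^n$ and classical Jacobian $J^n_{s\to t}(x)$ satisfying the linearised ODE. Applying Lemma \ref{lem:integral-estimates} with $h=\nabla b^n$ (so that $\|h\|_{L^q_t C^{\alpha-1}_x}\leq\|b^n\|_{L^q_t C^\alpha_x}$ stays uniformly controlled) and $\varphi=\Phi^n_{s\to\cdot}(x)-B^H$, whose \eqref{eq:key-quantity} regularity is provided by Lemma \ref{lem:drift-regularity}, delivers Gaussian conditional moment bounds on the increments of $L^{n,x}$ with exponent $\eps = 1/q' + (\alpha-1)H$; strict positivity of $\eps$ under \eqref{eq:main exponent} combined with $q\leq 2$ allows one to bound $|L^{n,x}_{s,t}|\lesssim \tilde w(s,t)^{1/p}$ for some control $\tilde w$ and some $p\in(1,2)$ independent of $n$. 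Lemma \ref{lem:kolmogorov} then yields Gaussian moment bounds on $\llbracket L^{n,x}\rrbracket_{p-\var}$, and Lemma \ref{lem:young-estimate} gives
\begin{equation*}
\sup_{t\in[s,1]}|J^n_{s\to t}(x)|\leq \exp\bigl(C\llbracket L^{n,x}\rrbracket_{p-\var}^p\bigr),
\end{equation*}
with $L^m_\omega$ moments of any order, uniformly in $n$ and in $x$ on compact sets. The convergence $L^{n,x}\to L^x$ in $C^{p-\var}_t L^m_\omega$, combined with continuous dependence of linear Young equations on their driver, identifies a limit $J$ as the unique solution of the limiting linear YDE.

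Local Hölder regularity in $x$ is then established by reapplying Lemma \ref{lem:integral-estimates} to the interpolated integrand $(x-y)\cdot\int_0^1\nabla b^n_r\bigl(B^H_r+\lambda\varphi^n_r(x)+(1-\lambda)\varphi^n_r(y)\bigr)\dd\lambda$, which produces $\|L^{n,x}-L^{n,y}\|_{C^{p-\var}_t L^m}\lesssim|x-y|^{1-\kappa}$ for any small $\kappa>0$, and in turn, via the affine Young stability bound already used in the proof of Theorem \ref{thm:stability} applied to the linearised equation, $\|J^n_{s\to t}(x)-J^n_{s\to t}(y)\|_{L^m}\lesssim|x-y|^\delta$ for some $\delta=\delta(\alpha,H,q)>0$ uniformly in $n$ and $(s,t)$. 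A Kolmogorov continuity criterion (Appendix \ref{app:kolmogorov}) then promotes the convergence $J^n\to J$ to one in $C_{s,t}C^{\delta',\loc}_x$ almost surely for any $\delta'<\delta$, and a standard telescoping identifies $J_{s\to t}(x)$ as the genuine spatial derivative $\nabla\Phi_{s\to t}(x)$.

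For the inverse flow, the companion linear YDE $\dd K = -K\cdot \dd L^x$ is well-posed with the same kind of estimates, and by the Young product rule $KJ\equiv I_d$; hence $J^{-1}$ exists and inherits the exponential moment and local Hölder regularity bounds of $J$. The pathwise inverse function theorem applied on top of Theorem \ref{thm:flow-invertibility} then gives $\Phi_{s\leftarrow t}\in C^{1+\delta',\loc}$ with $\nabla\Phi_{s\leftarrow t}(y) = J_{s\to t}\bigl(\Phi_{s\leftarrow t}(y)\bigr)^{-1}$, whose Hölder regularity follows from composition of Hölder maps. The main technical obstacle is that the Young driver $L^x$ depends itself on $x$ through the solution of the SDE, which forces one to carry out all $p$-variation estimates and their stability \emph{uniformly} in $x$ on compact sets, while simultaneously accommodating the distributional nature of $\nabla b$ via the abstract Young formalism.
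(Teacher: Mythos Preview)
Your overall architecture matches the paper's: approximate by smooth $b^n$, interpret $J^n$ as the solution of the linear Young equation driven by $L^{n,x}=\int_s^\cdot\nabla b^n_r(\Phi^n_{s\to r}(x))\dd r$, control $\llbracket L^{n,x}\rrbracket_{p-\var}$ via Lemma \ref{lem:integral-estimates} with $h=\nabla b^n$, treat the inverse $K^n$ through the companion equation \eqref{eq:eq-for-K}, and pass to the limit with Kolmogorov's criterion.

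The genuine gap is in your treatment of the $x$-H\"older regularity. The expression you write,
\[
(x-y)\cdot\int_0^1\nabla b^n_r\bigl(B^H_r+\lambda\varphi^n_r(x)+(1-\lambda)\varphi^n_r(y)\bigr)\dd\lambda,
\]
is \emph{not} the integrand of $L^{n,x}-L^{n,y}$. The fundamental theorem of calculus applied to $\nabla b^n_r(\Phi^n_{s\to r}(x))-\nabla b^n_r(\Phi^n_{s\to r}(y))$ produces $\int_0^1\nabla^2 b^n_r(\cdots)\bigl(\varphi^n_r(x)-\varphi^n_r(y)\bigr)\dd\lambda$, with the \emph{random} prefactor $\varphi^n_r(x)-\varphi^n_r(y)$ and with $\nabla^2 b^n$ rather than $\nabla b^n$. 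Feeding $h=\nabla^2 b^n$ into Lemma \ref{lem:integral-estimates} would require a uniform bound on $\|\nabla^2 b^n\|_{L^q_tC^{\alpha-1}_x}\sim\|b^n\|_{L^q_tC^{\alpha+1}_x}$, which diverges as $n\to\infty$. The interpolation trick you are borrowing from Corollary \ref{cor:T} works there only because the spatial shifts are deterministic constants, so that $z\mapsto(x-y)\cdot\int_0^1\nabla h_r(z+\theta x+(1-\theta)y)\dd\theta$ is a legitimate deterministic test function with $C^{\alpha-1}_x$ norm bounded by $|x-y|\|h\|_{C^\alpha_x}$; here the shifts $\varphi^n_r(x),\varphi^n_r(y)$ are random and adapted, and no such substitute $h'$ exists.

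The paper closes this gap with a dedicated comparison result (Lemma \ref{lem:comparison-integrals}): for two adapted $\varphi^1,\varphi^2$ satisfying \eqref{eq:key-quantity} and $\psi^i_t=\int_S^t h_r(B^H_r+\varphi^i_r)\dd r$, one has
\[
\|(\psi^1-\psi^2)_{s,t}\|_{L^m}\lesssim|t-s|^{\eps-\gamma H}w_{h,\alpha-1,q}(s,t)^{1/q}\sup_r\|\varphi^1_r-\varphi^2_r\|_{L^m}^\gamma
\]
for sufficiently small $\gamma>0$. The proof performs the interpolation \emph{inside the stochastic sewing germ}, bounding $A^1_{s,t}-A^2_{s,t}$ via $\|P_{|r-s_-|^{2H}}h_r\|_{C^\gamma_x}\,|\E_{s_-}\varphi^1_r-\E_{s_-}\varphi^2_r|^\gamma$ and interpolating the $\E_{s_-}\delta A$ bound between a Lipschitz estimate and the individual bounds from Lemma \ref{lem:integral-estimates}. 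Applied with $h=\nabla b$ and $\varphi^1=\varphi^n_{s\to\cdot}(x)$, $\varphi^2=\varphi^n_{s\to\cdot}(y)$, combined with $\sup_r\|\varphi^n_r(x)-\varphi^n_r(y)\|_{L^m}\lesssim|x-y|$ from Theorem \ref{thm:stability}, this yields the required bound on $\tilde A=L^{n,x}-L^{n,y}$ (Proposition \ref{prop:flow-diffeo-estimates}, Step 2). You also need the analogous regularity in $s$ and $t$ (Steps 1 and 3 there) to run Kolmogorov jointly in $(s,t,x)$.
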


In order to prove Theorem \ref{thm:flow-diffeo}, we will first assume $b$ to be sufficiently smooth ($b\in L^q_t C^{1+\kappa}_x$ would suffice), so that the associated $\Phi$ is already known to be a flow of diffeomorphism, and derive estimates which only depend on $\| b\|_{L^q_t C^\alpha_x}$ (cf. Lemma \ref{lem:moments-jacobian} and Proposition \ref{prop:flow-diffeo-estimates} below). Establishing the result rigorously for general $b$ is then accomplished by standard approximation procedures, in the style of Theorems \ref{thm:functional-existence}, \ref{thm:dist-existence}.
We will frequently use the exponent $\eps=(\alpha-1) H+1/q'$ from Lemma \ref{lem:integral-estimates}, recall that \eqref{eq:main exponent} is equivalent to $\eps>0$.

Recall that, for regular $b$, the Jacobian of the flow, namely the matrix $J_{s\to t}^x := \nabla \Phi_{s\to t}(x)\in \R^{d\times d}$, is known to satisfy the \textit{variational equation}
\begin{equation}\label{eq:variational-equation}
J_{s\to t}^x = I + \int_s^t \nabla b_r(\Phi_{s\to r}(x)) J_{s\to r}^x \dd r.
\end{equation}
Already from this fact we can deduce useful moment estimates for $J^x_{s\to t}$. 

\begin{lemma}\label{lem:moments-jacobian}
Assume \eqref{eq:main exponent} and let $b\in L^q_t C^2_x$.
Then there exists $p(\alpha,H)<2$ with the following property: for any $m\in [1,\infty)$, there exists a constant $N=N(m,p,H,\alpha,q,d,\| b\|_{L^q_t C^\alpha_x})$ such that, for all $x\in \R^d$ and $s\in [0,1]$, it holds
\begin{equation}\label{eq:moments-jacobian}
\Big\| \sup_{t\in [s,1]} |J^x_{s\to t}|\, \Big\|_{L^m} + \big\| \llbracket J^x_{s\to \cdot}\rrbracket_{p-\var;[s,1]} \big\|_{L^m} \leq N;
\end{equation}
moreover, for fixed $\delta<\eps$, for any $x\in \R^d$ and $s \leq t \leq t'$ it holds
\begin{equation}\label{eq:moments-jacobian-2}
\| J^x_{s\to t} - J^x_{s\to t'} \|_{L^m} \lesssim |t-t'|^\delta.
\end{equation}
\end{lemma}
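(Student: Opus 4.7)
The plan is to interpret the variational equation \eqref{eq:variational-equation} as a linear Young ODE $\dd J^x_{s\to t}=\dd A_t\cdot J^x_{s\to t}$ with initial condition $J^x_{s\to s}=I$, where
\[
A_t:=\int_s^t \nabla b_r(\Phi_{s\to r}(x))\,\dd r,
\]
and then proceed in direct parallel with the proof of Theorem \ref{thm:stability}, where the very same process $A$ already appeared. The key ingredients are moment bounds on $\|A\|_{p-\var}$ obtained from Lemma \ref{lem:integral-estimates}, combined with the linear Young estimate of Lemma \ref{lem:young-estimate}.

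First I would bound $A$. Set $\varphi_r:=\Phi_{s\to r}(x)-B^H_r$; Lemma \ref{lem:drift-regularity} shows that $\varphi$ satisfies \eqref{eq:key-quantity} with control $N w_{b,\alpha,q}$. Since $b\in L^q_t C^2_x$ ensures $\nabla b\in L^q_t C^1_x\subset L^q_t C^\delta_x$ for $\delta=1$, and $\|\nabla b_r\|_{C^{\alpha-1}_x}\lesssim \|b_r\|_{C^\alpha_x}$, Lemma \ref{lem:integral-estimates} applies with $h=\nabla b$ and produces both the Gaussian moment bound \eqref{eq:first average bound} and the $L^{\tilde m}$-bound \eqref{eq:second average bound} for the increments $A_{t_1,t_2}$, with temporal exponent $\eps$ and control $w_{b,\alpha,q}$. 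A Kolmogorov-type argument (Corollary \ref{cor:kolmogorov-4}) then yields some $p=p(\alpha,H)\in(1,2)$, some $\delta_0>0$, and a random variable $\eta_A$ with Gaussian moments such that
\[
\|A\|_{p-\var;[t_1,t_2]}\lesssim w_{b,\alpha,q}(t_1,t_2)^{1/q}|t_1-t_2|^{\delta_0}\eta_A \quad\forall\, s\leq t_1\leq t_2\leq 1.
\]
The possibility of choosing such $p<2$ rests on the interplay between the condition $q\leq 2$ from \eqref{eq:main exponent} and $\eps>0$, exactly as exploited in Theorem \ref{thm:stability}.

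With these bounds on $A$, Lemma \ref{lem:young-estimate} applied to the homogeneous linear YDE gives the pathwise estimates
\[
\sup_{t\in[s,1]}|J^x_{s\to t}|\lesssim e^{C\|A\|_{p-\var;[s,1]}^p},\qquad \llbracket J^x_{s\to\cdot}\rrbracket_{p-\var;[s,1]}\lesssim \sup_{t\in[s,1]}|J^x_{s\to t}|\cdot\|A\|_{p-\var;[s,1]}.
\]
Since $\|A\|_{p-\var;[s,1]}$ has Gaussian moments and $p<2$, the exponential has all $L^m$-moments bounded uniformly in $s,x$; this yields \eqref{eq:moments-jacobian}.

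For the Hölder bound \eqref{eq:moments-jacobian-2}, I would use the Young integral decomposition
\[
J^x_{s\to t'}-J^x_{s\to t}=J^x_{s\to t}\cdot A_{t,t'}+R_{t,t'},
\]
with remainder satisfying $|R_{t,t'}|\lesssim \llbracket J^x_{s\to\cdot}\rrbracket_{p-\var;[t,t']}\|A\|_{p-\var;[t,t']}$. Taking $L^m$ norms and applying Cauchy--Schwarz, the main term is dominated by $\|J^x_{s\to t}\|_{L^{2m}}\|A_{t,t'}\|_{L^{2m}}\lesssim |t-t'|^\eps$ (the first factor bounded by the first part of \eqref{eq:moments-jacobian}, the second by \eqref{eq:second average bound}), while the remainder is of order $|t-t'|^{2\delta_0}$ after a further Cauchy--Schwarz splits off the Gaussian moment factors. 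Given any prescribed $\delta<\eps$, choosing $\delta_0>\delta/2$ in the Kolmogorov step makes both contributions $\lesssim|t-t'|^\delta$, giving \eqref{eq:moments-jacobian-2}. The main obstacle throughout is the simultaneous choice of $p\in(1,2)$ so that Young integration is applicable and $A$ lies in $C^{p-\var}_t$ with Gaussian moments; this balance is exactly what the condition $q\leq 2$ in \eqref{eq:main exponent} is engineered to allow.
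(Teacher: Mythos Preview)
Your proposal is correct and follows essentially the same route as the paper: interpret \eqref{eq:variational-equation} as a linear Young ODE driven by $A_t=\int_s^t \nabla b_r(\Phi_{s\to r}(x))\,\dd r$, obtain Gaussian moments for $\llbracket A\rrbracket_{p-\var}$ via Lemma~\ref{lem:integral-estimates} plus a Kolmogorov step (the paper invokes Lemma~\ref{lem:kolmogorov}, not Corollary~\ref{cor:kolmogorov-4}, since the latter is for space-time fields and only returns $L^m$ rather than exponential bounds), and then apply Lemma~\ref{lem:young-estimate}. For \eqref{eq:moments-jacobian-2} the paper appeals directly to the pointwise bound \eqref{eq:young-estimate-2} combined with \eqref{eq:jacobian-proof1}, which packages the same information as your explicit Young remainder decomposition; the two arguments are equivalent.
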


\begin{proof}
For fixed $s\in [0,1]$ and $x\in \R^d$, setting $A_{s,t}:= \int_s^t \nabla b_r(\Phi_{s\to r}(x)) \dd r$, equation \eqref{eq:variational-equation} can be regarded as a linear Young differential equation. Arguing as in the proof of Theorem \ref{thm:stability}, one can show that $A$ has finite $p$-variation for some $p<2$ and that in fact there exists $\mu>0$ (depending on the usual parameters and $\| b\|_{L^q_t C^\alpha_x}$, but not on $x$ nor $s$) such that
\begin{equation}\label{eq:jacobian-proof1}
\E\bigg[  \exp\bigg( \mu \bigg| \sup_{s\leq t<t'\leq 1} \frac{|A_{t,t'}|}{w_{b,\alpha,q}(t,t')^{1/q} |t-t'|^\delta} \bigg|^2\bigg)\bigg] <\infty;
\end{equation}
Lemma \ref{lem:young-estimate} in Appendix \ref{app:Young} (with $\tilde{p}=p$) then implies the pathwise estimate
\begin{equation*}
\sup_{t\in [s,1]} |J^x_{s,t}| + \llbracket J^x_{s\to \cdot}\rrbracket_{p-\var;[s,1]} \leq C \exp \big( C \llbracket A \rrbracket_{p-\var; [s,1]}^p \big).
\end{equation*}
Claim \eqref{eq:moments-jacobian} then follows by taking $L^m$-norms on both sides and observing (as in the proof of Theorem \ref{thm:stability}) that \eqref{eq:jacobian-proof1} implies $\E[\exp(\lambda \llbracket A \rrbracket_{p-\var}^p)]<\infty$ for all $\lambda>0$. Similarly, claim \eqref{eq:moments-jacobian-2} also follows from Lemma \ref{lem:young-estimate} (this time applying estimate \eqref{eq:young-estimate-2} therein) combined with \eqref{eq:jacobian-proof1}.
\end{proof}

The next step in the proof of Theorem \ref{thm:flow-diffeo} is given by the following key estimate.

\begin{prop}\label{prop:flow-diffeo-estimates}
Let $b$ be a regular drift, define $J^x_{s\to t}$ as above;
set $\eps=(\alpha-1)H + 1/q'$. Then there exists $\gamma\in (0,1)$ such that, for any $m\in [1,\infty)$, there exists $N=N(m,\gamma,H,\alpha,q,d,\| b\|_{L^q_t C^\alpha_x})$ such that
\begin{equation}\label{eq:flow-diffeo-goal}
\| J^x_{s\to t} - J^y_{s'\to t'}\|_{L^m} \leq N\big[ |x-y|^{\gamma} + |t-t'|^{\eps \gamma} + |s-s'|^{\eps \gamma} \big].
\end{equation}
for all $(s,t), (s',t')\in [0,1]^2_\leq $ and $x,y\in \R^d$.
\end{prop}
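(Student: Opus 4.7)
As in Theorems~\ref{thm:functional-existence} and \ref{thm:dist-existence}, we first work under the assumption that $b$ is smooth enough for \eqref{eq:variational-equation} to hold classically and derive bounds depending only on $\|b\|_{L^q_tC^\alpha_x}$; the general case then follows by a standard density argument. By the triangle inequality it suffices to control separately the three differences $J^x_{s\to t}-J^x_{s\to t'}$, $J^x_{s\to t}-J^y_{s\to t}$ and $J^x_{s\to t}-J^x_{s'\to t}$. The first is immediate from \eqref{eq:moments-jacobian-2}, choosing $\delta=\eps\gamma$. Regularity in $s$ reduces to the other two cases via the semiflow identity $J^x_{s\to t}=J^{\Phi_{s\to s'}(x)}_{s'\to t}J^x_{s\to s'}$, together with $\|J^x_{s\to s'}-I\|_{L^m}\lesssim|s-s'|^{\eps\gamma}$ from \eqref{eq:moments-jacobian-2} (applied with $t=s$ as base point) and $\|\Phi_{s\to s'}(x)-x\|_{L^m}\lesssim|s-s'|^{H\wedge 1/q'}$. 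So the real work is in the spatial difference.

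Set $Z_t:=J^x_{s\to t}-J^y_{s\to t}$, $X^x_r:=\Phi_{s\to r}(x)$, $\varphi^x:=X^x-B^H$, and define analogously $X^y,\varphi^y$. Subtracting the variational equations for $x$ and $y$ shows that $Z$ solves the affine Young equation
\begin{equ}
Z_t=\int_s^t\nabla b_r(X^y_r)Z_r\,\dd r+G^{x,y}_t,\qquad G^{x,y}_t:=\int_s^t\bigl[\nabla b_r(X^x_r)-\nabla b_r(X^y_r)\bigr]J^x_{s\to r}\,\dd r,
\end{equ}
driven by $A^y_t:=\int_s^t\nabla b_r(X^y_r)\,\dd r$. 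By Lemma~\ref{lem:young-estimate} we have $\sup_t|Z_t|\lesssim\exp(C\|A^y\|_{p-\var}^p)\|G^{x,y}\|_{p-\var}$ for some $p\in(1,2)$; Lemma~\ref{lem:integral-estimates} combined with Kolmogorov's criterion, as in the proof of Theorem~\ref{thm:stability}, shows that $\exp(C\|A^y\|_{p-\var}^p)$ has moments of all orders uniformly in the parameters. So H\"older reduces the task to controlling $\|G^{x,y}\|_{p-\var}$ in $L^{2m}$. Setting $H^{x,y}_t:=\int_s^t[\nabla b_r(X^x_r)-\nabla b_r(X^y_r)]\,\dd r$ and interpreting $G^{x,y}_t=\int_s^t J^x_{s\to r}\,\dd H^{x,y}_r$ as a Young integral, Young--L\"oeve combined with the $p$-variation moments of $J^x_{s\to\cdot}$ from Lemma~\ref{lem:moments-jacobian} yields
\begin{equ}
\|G^{x,y}\|_{p-\var}\lesssim\bigl(\|J^x_{s\to\cdot}\|_\infty+\llbracket J^x_{s\to\cdot}\rrbracket_{p-\var}\bigr)\|H^{x,y}\|_{p-\var},
\end{equ}
with a first factor having moments of all orders, so everything reduces to a $p$-variation estimate for $H^{x,y}$ that scales like $|x-y|^\gamma$.

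This last estimate is the heart of the proof and is established via stochastic sewing applied to the germ
\begin{equ}
\Xi^{x,y}_{s,t}:=\E_{s_-}\int_s^t\bigl[\nabla b_r(B^H_r+\E_{s_-}\varphi^x_r)-\nabla b_r(B^H_r+\E_{s_-}\varphi^y_r)\bigr]\,\dd r.
\end{equ}
The LND identity \eqref{eq:conditioning} rewrites the integrand as a difference of $P_{c_H|r-s_-|^{2H}}\nabla b_r$ evaluated at two points whose separation is $\E_{s_-}(\varphi^x_r-\varphi^y_r)$. Pick $\beta\in(1,\alpha+1/(q'H))$, which is possible precisely because of the subcritical condition \eqref{eq:main exponent}, and bound the difference by $\|P_{c_H|r-s_-|^{2H}}\nabla b_r\|_{C^{\beta-1}_x}|\E_{s_-}(\varphi^x_r-\varphi^y_r)|^{\beta-1}$. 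The heat-kernel estimate \eqref{eq:HK-estimates} gives $\|P_{c_H|r-s_-|^{2H}}\nabla b_r\|_{C^{\beta-1}_x}\lesssim|r-s_-|^{(\alpha-\beta)H}\|b_r\|_{C^\alpha_x}$, while the stability bound in Theorem~\ref{thm:stability} produces $|\E_{s_-}(\varphi^x_r-\varphi^y_r)|\lesssim|x-y|$ almost surely. H\"older's inequality in $r$ then yields the pointwise bound
\begin{equ}
|\Xi^{x,y}_{s,t}|\lesssim|x-y|^{\beta-1}w_{b,\alpha,q}(s,t)^{1/q}|t-s|^{1/q'+(\alpha-\beta)H},
\end{equ}
which is condition \eqref{eq:SSL-cond1} of Lemma~\ref{lem:SSL1} with $w_1=N|x-y|^{2(\beta-1)}w_{b,\alpha,q}^{2/q}$ (a control thanks to $q\leq 2$) and strictly positive time exponent. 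Condition \eqref{eq:SSL-cond2} is obtained analogously, exploiting in addition the $\cF_{s_-}$-conditional smallness $\|\varphi^x_r-\E_{u_-}\varphi^x_r\|_{L^m|\cF_{s_-}}$ (and similarly for $\varphi^y$) provided by Lemma~\ref{lem:drift-regularity}, in exactly the same spirit as in the proof of Lemma~\ref{lem:integral-estimates}. Identification of the sewing with $H^{x,y}$ is a standard limiting argument using the smoothness of $b$, and finally Kolmogorov's criterion in the time variable converts the increment bound into the desired estimate $\|H^{x,y}\|_{p-\var}\lesssim|x-y|^{\beta-1}$ in any $L^m$, giving \eqref{eq:flow-diffeo-goal} with $\gamma=\beta-1$ after rebalancing the three H\"older exponents.

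The principal obstacle is the sewing step above: a naive bound using $\|P_t\nabla b\|_{C^1_x}\lesssim t^{(\alpha-2)/2}\|b\|_{C^\alpha_x}$ would leave the non-integrable singularity $|r-s|^{(\alpha-2)H}$ in the time integrand, which fails throughout most of the subcritical regime. The fix is to trade a small fractional amount $\beta-1>0$ of regularity in $|x-y|$ for an equivalent gain $|r-s|^{(\beta-1)H}$ in time, which is precisely what the condition $\alpha+1/(q'H)>1$ in \eqref{eq:main exponent} makes possible.
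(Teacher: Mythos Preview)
Your overall strategy---write $J^x-J^y$ as the solution of an affine Young equation driven by $A^y$ with forcing $G^{x,y}$, control $G^{x,y}$ through the Young product $\int J^x\,\dd H^{x,y}$, and estimate the increments of $H^{x,y}$ by stochastic sewing with a fractional power of $|x-y|$---is exactly the paper's route. What you do by hand for $H^{x,y}$ is precisely the content of Lemma~\ref{lem:comparison-integrals} (with $h=\nabla b$, $\varphi^i=\varphi^x,\varphi^y$ and $\gamma=\beta-1$), which the paper isolates and then applies in its Step~2.

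There is one genuine slip. You write that Theorem~\ref{thm:stability} gives $|\E_{s_-}(\varphi^x_r-\varphi^y_r)|\lesssim|x-y|$ \emph{almost surely}, and then derive a pointwise bound on $|\Xi^{x,y}_{s,t}|$. Theorem~\ref{thm:stability} only provides $\big\|\sup_r|\varphi^x_r-\varphi^y_r|\big\|_{L^m|\cF_S}\leq N|x-y|$ with $S$ the flow starting time; conditioning instead on the later $\sigma$-algebra $\cF_{s_-}$ does \emph{not} yield an a.s.\ bound on $\E_{s_-}|\varphi^x_r-\varphi^y_r|$. What survives is the unconditional bound $\|\varphi^x_r-\varphi^y_r\|_{L^m}\lesssim|x-y|$, and this is enough: one runs Lemma~\ref{lem:SSL1} with $n=m$ rather than $n=\infty$, obtaining $\|\Xi^{x,y}_{s,t}\|_{L^m}\lesssim|x-y|^{\beta-1}w_{b,\alpha,q}(s,t)^{1/q}|t-s|^{\eps-(\beta-1)H}$ instead of a pointwise bound. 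This is exactly how Lemma~\ref{lem:comparison-integrals} is set up. The same caveat applies to your (very sketchy) verification of~\eqref{eq:SSL-cond2}: there one needs the interpolation between the $C^1$-estimate and the $C^\gamma$-estimate on the heat-smoothed drift, as in the proof of Lemma~\ref{lem:comparison-integrals}, and the resulting conditions on $\gamma=\beta-1$ are~\eqref{eq:hypothesis-gamma}, not merely $\beta<\alpha+1/(q'H)$.

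Your reduction of the $s$-regularity via the semiflow identity $J^x_{s\to t}=J^{\Phi_{s\to s'}(x)}_{s'\to t}J^x_{s\to s'}$ is a legitimate alternative to the paper's direct Step~3, but it is not free: the term $J^{\Phi_{s\to s'}(x)}_{s'\to t}-J^x_{s'\to t}$ asks you to apply the spatial estimate with one \emph{random} ($\cF_{s'}$-measurable) endpoint. This works because $\Phi_{s'\to r}(\Phi_{s\to s'}(x))=\Phi_{s\to r}(x)$, so the relevant $\varphi$-process still satisfies the a~priori bound~\eqref{eq:key-quantity}, and Lemma~\ref{lem:comparison-integrals} then gives $\lesssim\sup_r\|\varphi^x_r-\varphi^{\Phi_{s\to s'}(x)}_r\|_{L^m}^\gamma\lesssim\|\Phi_{s\to s'}(x)-x\|_{L^m}^\gamma$; but this step should be stated, not assumed. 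The paper sidesteps the issue by writing $v_t=J^x_{s\to t}-J^x_{s'\to t}$ directly as an affine Young equation on $[s',t]$ with nonzero initial datum $J^x_{s\to s'}-I$, which avoids random base points altogether.
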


The proof requires the following technical refinement of Lemma \ref{lem:integral-estimates}.

\begin{lemma}\label{lem:comparison-integrals}
Assume \eqref{eq:main exponent}, $h\in L^q_t C^1_x$, and let $\varphi^i$, $i=1,2$, be two processes satisfying the assumptions of Lemma \ref{lem:integral-estimates} for the same control $w$; define $\eps$ as therein and set $\psi^i_t=\int_S^t h_r(B^H_r+\varphi^i_r) \dd r$.
Then for $\gamma\in (0,1)$ satisfying
\begin{equation}\label{eq:hypothesis-gamma}
\eps-\gamma H>0, \quad \eps(2-\gamma)-\gamma H>0, \quad \eps(2-\gamma)-\gamma H + (2-\gamma)/q>1,
\end{equation}
and any $m\in [2,\infty)$, there exists $N=N(m,\gamma,H,\alpha,q,d,\| h\|_{L^q_t C^{\alpha-1}_x})$ such that
\begin{equation*}
\| (\psi^1-\psi^2)_{s,t} \|_{L^m} \leq N |t-s|^{\eps-\gamma H} w_{h,\alpha-1,q}(s,t)^{\frac{1}{q}} \big(1+w(s,t)\big) \sup_{r\in [S,T]} \| \varphi^1_r-\varphi^2_r\|_{L^m}^\gamma.
\end{equation*}
\end{lemma}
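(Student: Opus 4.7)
The plan is to apply the stochastic sewing lemma (Lemma \ref{lem:SSL1}) with $n=m$ to the germ
\begin{equation*}
A_{s,t}:=\E_{s_-}\int_s^t\bigl[h_r(B^H_r+\E_{s_-}\varphi^1_r)-h_r(B^H_r+\E_{s_-}\varphi^2_r)\bigr]\,\dd r,\qquad s_-:=s-(t-s),
\end{equation*}
defined on $\overline{[S,T]}^2_\leq$, and identify the resulting sewing $\cA$ with $\psi^1-\psi^2$ by the same three-step approximation argument used at the end of Lemma \ref{lem:integral-estimates} (available thanks to $h\in L^q_tC^\delta_x$).

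The first SSL bound \eqref{eq:SSL-cond1} proceeds along the lines of Lemma \ref{lem:integral-estimates}. Rewriting $A_{s,t}$ via the conditioning formula \eqref{eq:conditioning} in terms of $g_r:=P_{|r-s_-|^{2H}}h_r$, using the $\gamma$-H\"older interpolation $|g_r(x+u_1)-g_r(x+u_2)|\leq\|g_r\|_{C^\gamma}|u_1-u_2|^\gamma$ with the heat kernel bound $\|g_r\|_{C^\gamma}\lesssim|r-s_-|^{H(\alpha-1-\gamma)}\|h_r\|_{C^{\alpha-1}}$, and applying Jensen to replace $|\E_{s_-}(\varphi^1_r-\varphi^2_r)|^\gamma$ by $\sup_r\|\varphi^1_r-\varphi^2_r\|_{L^m}^\gamma$, H\"older in $r$ yields
\begin{equation*}
\|A_{s,t}\|_{L^m}\lesssim|t-s|^{\eps-\gamma H}\,w_{h,\alpha-1,q}(s,t)^{1/q}\sup_r\|\varphi^1-\varphi^2\|_{L^m}^\gamma.
\end{equation*}
As $q\leq 2$, this matches \eqref{eq:SSL-cond1} with $\eps_1=\eps-\gamma H>0$ (the first inequality in \eqref{eq:hypothesis-gamma}) and time-constant control $w_1\propto w_{h,\alpha-1,q}^{2/q}\sup\|\varphi^1-\varphi^2\|^{2\gamma}$.

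The main work is in the second SSL bound \eqref{eq:SSL-cond2}. Decomposing $\E_{s_-}\delta A_{s,u,t}=I+J$ as in Lemma \ref{lem:integral-estimates} and conditioning the $J$-piece further by $\E_{u_-}$ (with $u_-:=u-(t-u)$) reduces its integrand to a genuine cross-difference
\begin{equation*}
\tilde g(Y+z_1)-\tilde g(Y+z_2)-\tilde g(Y+w_1)+\tilde g(Y+w_2),
\end{equation*}
with $\tilde g:=P_{|r-u_-|^{2H}}h_r$, $Y:=\E_{u_-}B^H_r$, $z_i:=\E_{s_-}\varphi^i_r$, $w_i:=\E_{u_-}\varphi^i_r$. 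The crux is an interpolation bound for this cross-difference that simultaneously exploits the $L^m$-smallness $\|z_1-z_2\|_{L^m},\|w_1-w_2\|_{L^m}\leq\sup_r\|\varphi^1-\varphi^2\|_{L^m}$ (from Jensen) and the a priori shift smallness $\E_{s_-}|z_i-w_i|\lesssim w(s_-,r)^{1/q}|r-s_-|^{1/q'+\alpha H}$ (from \eqref{eq:key-quantity}, used via tower with both basepoints). Interpolating between the two natural H\"older pairings of the cross-difference and using the appropriate heat kernel bound on $\tilde g$, together with H\"older in $r$ (whose integrability is encoded by the second and third inequalities in \eqref{eq:hypothesis-gamma}), one obtains
\begin{equation*}
\|\E_{s_-}\delta A_{s,u,t}\|_{L^m}\lesssim|t-s|^{\eps(2-\gamma)-\gamma H}w_{h,\alpha-1,q}(s,t)^{1/q}w(s_-,t)\sup_r\|\varphi^1-\varphi^2\|_{L^m}^\gamma,
\end{equation*}
matching \eqref{eq:SSL-cond2} with $\eps_2:=\eps(2-\gamma)-\gamma H>0$ and control $w_2\propto w_{h,\alpha-1,q}^{1/q}\,w\,\sup\|\varphi^1-\varphi^2\|^\gamma$ (a product of controls whose exponents sum to $1/q+1>1$).

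The conclusion then follows from Lemma \ref{lem:SSL1}: since $|t-s|^{\eps_2-\eps_1}=|t-s|^{\eps(1-\gamma)}\leq 1$ on $[0,1]$, the $w_2$-term in \eqref{eq:SSL-conc3} is dominated by $w_1^{1/2}|t-s|^{\eps_1}\cdot w(s,t)$, which combined with the bare $w_1^{1/2}|t-s|^{\eps_1}$ term produces the advertised $(1+w(s,t))$ factor. The main technical obstacle is the interpolated cross-difference bound itself: it must deliver the full power $\sup\|\varphi^1-\varphi^2\|_{L^m}^\gamma$ of the $\varphi$-smallness, an extra $|t-s|^{\eps(1-\gamma)}$ of time decay over the first SSL bound, and the extra $w(s,t)$ factor, all while using only the $\|h_r\|_{C^{\alpha-1}}$ norm; the three inequalities in \eqref{eq:hypothesis-gamma} are calibrated exactly so that this delicate balancing goes through within the stochastic sewing framework.
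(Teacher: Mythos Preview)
Your approach is the same as the paper's: same shifted germ, same first SSL bound, same cross-difference interpolation idea for $\delta A$, and same identification argument. The paper writes $\E_{s_-}\delta A_{s,u,t}=I^1-I^2+J^1-J^2$ with $I^i,J^i$ as in Lemma~\ref{lem:integral-estimates}, bounds each $|I^i|\lesssim|t-s|^{2\eps}w_{h,\alpha-1,q}^{1/q}w^{1/q}$ via the shift-smallness pairing, bounds $\|I^1-I^2\|_{L^m}\lesssim|t-s|^{\eps-H}w_{h,\alpha-1,q}^{1/q}\sup_r\|\varphi^1_r-\varphi^2_r\|_{L^m}$ via the $C^1$ estimate in the $\varphi$-smallness pairing, and then interpolates these two bounds on the same quantity as $A^{1-\gamma}B^\gamma$.

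There is, however, a bookkeeping slip in your $\delta A$ estimate. The interpolation actually produces the factor $w(s_-,t)^{(1-\gamma)/q}$, not $w(s_-,t)$; since $(1-\gamma)/q<1$, one cannot dominate $w^{(1-\gamma)/q}$ by $w$ on short intervals, so your displayed bound and your proposed $w_2\propto w_{h,\alpha-1,q}^{1/q}\,w$ do not follow. With the correct exponent, the product $w_{h,\alpha-1,q}^{1/q}w^{(1-\gamma)/q}$ has total power $(2-\gamma)/q$, which can fall below $1$; to meet the hypotheses of Lemma~\ref{lem:SSL1} one must then absorb a factor $|t-s|^{1-(2-\gamma)/q}$ into the control and still retain a strictly positive time exponent outside. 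This is precisely what the third inequality in \eqref{eq:hypothesis-gamma} encodes --- it is a control condition for stochastic sewing, not an $r$-integrability condition as you state (the shifted basepoint already renders all $r$-integrals harmless). The second inequality is simply $\eps_2>0$. After applying the SSL, the elementary bound $w^{(1-\gamma)/q}\leq 1+w$ yields the advertised $(1+w(s,t))$ factor, so the conclusion survives once this is repaired.
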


\begin{remark}
The conditions in \eqref{eq:hypothesis-gamma} should be understood as ``$\gamma$ small enough''. Indeed, note that all three conditions are upper bounds on $\gamma$ and under condition \eqref{eq:main exponent} we can always find $\gamma>0$ satisfying \eqref{eq:hypothesis-gamma}: as $\gamma\downarrow 0$, the three conditions become respectively $\eps>0$, $2\eps>0$, and $2\eps+2/q>1$, all of which are trivial since $q\leq 2$.
\end{remark}

\begin{proof}
The proof is very similar to that of Lemma 	\ref{lem:integral-estimates}, so we will mostly sketch it; the main differences are just the use of Lemma \ref{lem:SSL1} with $n=m$ and some interpolation arguments.

Define $A^i_{s,t} = \E_{s-(t-s)}\int_s^t h_r (B^H_r + \E_{s-(t-s)}\varphi_r) \dd r$, so that $\psi^1-\psi^2$ is the stochastic sewing of $A^1-A^2$.
Arguing similarly as in Lemma \ref{lem:integral-estimates}, we have the estimate
\begin{align*}
\|A_{s,t}\|_{L^m}
& \leq \bigg\| \int_s^t \| P_{|r-s_1|^{2H}} h_r\|_{C^\gamma_x}\, |\E_{s_1} \varphi^1_r-\E_{s_1} \varphi^2_r|^\gamma \dd r \bigg\|_{L^m}\\
& \lesssim |t-s|^{\eps-\gamma H} w_{h,\alpha-1,q}(s,t)^{1/q} \sup_{r\in [S,T]} \| \varphi^1_r-\varphi^2_r\|_{L^m}^\gamma;
\end{align*}
the first condition of Lemma \ref{lem:SSL1} is verified, since $\eps-\gamma H>0$ and $1/q \geq 1/2$.
To control $\E_{s_1} \delta A_{s,u,t}=\E_{s_1} \delta A^1_{s,u,t}-\E_{s_1} \delta A^2_{s,u,t}$, we can decompose it as $\E_{s_1} \delta A_{s,u,t} = I^1-I^2+J^1-J^2$, similarly to Lemma \ref{lem:integral-estimates}. Estimating each one of them separately as therein yields
\begin{align*}
\sup_i \{|I^i|,|J^i|\}\lesssim |t-s|^{2\eps}w_{h,\alpha-1,q}(s,t)^{1/q}w(s_1,t)^{1/q};
\end{align*}
on the other hand, we have
\begin{align*}
\| I^1-I^2\|_{L^m}
& \leq \bigg\| \int_{s_4}^{s_5}\big|P_{|r-s_2|^{2H}}h_r(\E_{s_2}B^H_r+\E_{s_1}\varphi^1_r)-P_{|r-s_2|^{2H}}h_r(\E_{s_2}B^H_r+\E_{s_2}\varphi^1_r)\big|\dd r \\
& \quad -\int_{s_4}^{s_5}\big|P_{|r-s_2|^{2H}}h_r(\E_{s_2}B^H_r+\E_{s_1}\varphi^2_r)-P_{|r-s_2|^{2H}}h_r(\E_{s_2}B^H_r+\E_{s_2}\varphi^2_r)\big|\dd r\bigg\|_{L^m}\\
& \leq \int_{s_4}^{s_5} \| P_{|r-s_2|^{2H}}h_r\|_{C^1_x} \big( \| \E_{s_1} \varphi^1_r-\E_{s_1} \varphi^2_r\|_{L^m} + \| \E_{s_2} \varphi^1_r-\E_{s_2} \varphi^2_r\|_{L^m}\big) \dd r\\
& \lesssim |t-s|^{(\alpha-2)H + 1/q'} w_{h,\alpha-1,q}(s,t)^{1/q} \sup_{r\in [S,T]} \| \varphi_r^1-\varphi^2_r\|_{L^m},
\end{align*}
similarly for $\| J^1-J^2\|_{L^m}$. Interpolating the two bounds together overall yields
\begin{align*}
\| \E_{s_1} \delta A_{s,u,t}\|_{L^m}
\lesssim |t-s|^{\eps(2-\gamma)-\gamma H} w_{h,\alpha-1,q}(s,t)^{1/q} w(s_1,t)^{\frac{1-\gamma}{q}}
\sup_{r\in [S,T]} \| \varphi^1_r-\varphi^2_r\|_{L^m}^\gamma.
\end{align*}
By the hypothesis \eqref{eq:hypothesis-gamma}, the power of $|t-s|$ is positive and the total power of all the controls is greater than $1$.
The conclusion then follows from Lemma \ref{lem:SSL1}.
\end{proof}

\begin{proof}[Proof of Proposition \ref{prop:flow-diffeo-estimates}]
As usual, we can split estimate \eqref{eq:flow-diffeo-goal} into three subestimates, with two of the three parameters $(s,t,x)$ fixed and only one varying. From now on we will fix $\gamma\in (0,1)$ satisfying condition \eqref{eq:hypothesis-gamma}.

\textit{Step 1: $(s,x)$ fixed, $t<t'$.} In this case the desired estimate is just \eqref{eq:moments-jacobian-2} from Lemma \ref{lem:moments-jacobian}, for the choice $\delta=\gamma \eps < \eps$.

\textit{Step 2: $(s,t)$ fixed, $x\neq y$.}  The difference  process $v_t:=J^x_{s,t}-J^{y}_{s,t}$ satisfies an affine Young equation of the form $ \dd v_t = \dd A_t\, v_t + \dd z_t$, $v_s=0$, for
\begin{equation*}
A_t = \int_s^t \nabla b_r(\Phi_{s\to r}(x)) \dd r, \quad z_t = \int_s^t \big[ \nabla b_r(\Phi_{s\to r}(x)) - \nabla b_r(\Phi_{s\to r}(y))\big] J^y_{s\to r} \dd r;
\end{equation*}
invoking as usual Lemma \ref{lem:young-estimate} (for $\tilde{p}=1/2$) and applying estimate \eqref{eq:jacobian-proof1}, one ends up with
\[ \| J^x_{s,t}-J^y_{s,t}\|_{L^m} \lesssim \big\| \llbracket z\rrbracket_{2-\var} \big\|_{L^m}.\]
Observe that $z$ itself can be interpreted as a Young integral: $z_t= \int_s^t \dd \tilde{A}_r J^y_{s\to r}$ for
\[
\tilde{A}_u:=\int_s^u \big[ \nabla b_r(\Phi_{s\to r}(x)) - \nabla b_r(\Phi_{s\to r}(y))\big] \dd r.
\]
Standard properties of Young integral, together with Cauchy's inequality, then yield
\begin{align*}
\big\| \llbracket z\rrbracket_{2-\var} \big\|_{L^m}
\lesssim \big\| \llbracket \tilde{A}\rrbracket_{2-\var}\, \llbracket J^y_{s\to \cdot}\rrbracket_{p-\var} \big\|_{L^m}
\lesssim \big\| \llbracket \tilde{A}\rrbracket_{2-\var}\|_{L^{2m}} \big\| \llbracket J^y_{s\to \cdot}\rrbracket_{p-\var} \big\|_{L^{2m}};
\end{align*}
by estimate \eqref{eq:moments-jacobian}, it only remains to find a bound for $\llbracket \tilde{A}\rrbracket_{2-\var}$.
Recall that by construction $\Phi_{s\to r}(x) = \varphi_{s\to r}(x) + B^H_r$, where the process $\varphi_{s\to \cdot}(x)$ satisfies condition \eqref{eq:drift-regularity} (or even \eqref{eq:apriori-estim} for $\alpha<0$) for $w=w_{b,\alpha,q}$. We can apply Lemma \ref{lem:comparison-integrals} with the choice $h=\nabla b$, $\varphi^1_r=\varphi_{s\to r}(x)$, $\varphi^2_r=\varphi_{s\to r}(y)$ to obtain, for all $s\leq r<u\leq 1$ and all $m\in [1,\infty)$,
\begin{align*}
\| \tilde{A}_{r,u}\|_{L^m}
&\lesssim |r-u|^{\eps-\gamma H} w(r,u)^{1/q} (1+ \| b\|_{L^q_t C^\alpha_x}^q) \sup_{r\in [s,1]} \| \varphi^1_r - \varphi^2_r\|_{L^m}^\gamma\\
& \lesssim |r-u|^{\eps-\gamma H} w(r,u)^{1/q} |x-y|^\gamma
\end{align*}
where in the second inequality we used estimate \eqref{eq:flow-estim-1}. By Lemma \ref{lem:kolmogorov-2} in Appendix \ref{app:kolmogorov} we deduce that, for any $m\in [1,\infty)$ and $\delta<\eps-\gamma H$, it holds
\begin{align*}
\big\| \llbracket \tilde{A}\rrbracket_{2-\var}\|_{L^{2m}} \lesssim \bigg\| \sup_{r<u} \frac{|\tilde A_{r,u}|}{|r-u|^{\delta} w(r,u)^{1/q}}\bigg\|_{L^{2m}} \lesssim |x-y|^\gamma.
\end{align*}
Combining all the above estimates yields the conclusion in this case.

\textit{Step 3: $(t,x)$ fixed, $s<s'$.} This step is mostly a variation on the arguments presented in the previous cases, so we only sketch it. We can write
\[
J^x_{s,t}= J^x_{s,s'} + \int_{s'}^t \nabla b(\Phi_{s\to t}(x))  J^x_{s,r} \dd r
\]
so that the difference $v_t= J^x_{s,t} - J^x_{s',t}$ can be regarded as the solution to an affine Young equation on $[s',t]$, for $A$ and $z$ defined similarly as in Step 2; the only difference is that now $v_{s'} = J^x_{s,s'}-I$ and $z_t = \int_{s'}^t \dd \tilde A_r J^z_{s'\to r}$ for the choice
\begin{equation*}
\tilde{A}_u:=\int_{s'}^u \big[ \nabla b_r(\Phi_{s\to r}(x)) - \nabla b_r(\Phi_{s'\to r}(x))\big] \dd r.
\end{equation*}
From here, the estimates are almost identical to those of Step 2, relying on a combination of Lemmas \ref{lem:young-estimate}, \ref{lem:kolmogorov-2} and \ref{lem:comparison-integrals}; however in this case an application of Step 1 and estimate \eqref{eq:flow-estim-1} gives us
\begin{equation*}
\| J^x_{s'\to s}-I\|_{L^m} \lesssim |s-s'|^{\eps \gamma}, \quad \sup_{r\in [s',1]} \| \Phi_{s\to r}(x)-\Phi_{s'\to r}(x)\|_{L^m}^\gamma \lesssim |s-s'|^{\eps \gamma}. \qedhere
\end{equation*}
\end{proof}

We are now finally ready to complete the

\begin{proof}[Proof of Theorem \ref{thm:flow-diffeo}]
The argument is based on Theorem II.4.4 from \cite{kunita1984stochastic}; assume first $b$ to be a regular field.
It is clear from \eqref{eq:flow-diffeo-goal} that, for any $\delta <\eps \gamma$, the map $(s,t,x)\mapsto \nabla J_{s\to t}^x$ is $\PP$-a.s. locally $\delta$-H\"older continuous, suitable moment estimates depending only on $\| b\|_{L^q_t C^\alpha_x}$.
Furthermore, letting $K_{s\to t}^x$ denote the inverse of $J_{s\to t}^x$ in the sense of matrices, it is well-known that it solves the linear equation
\begin{equ}\label{eq:eq-for-K}
K_{s\to t}^x = I - \int_s^t K_{s\to r}^x\, \nabla b_r(\Phi^x_{s\to r}(x)) 	\dd r;
\end{equ}
arguing as in the proof of Proposition \ref{prop:flow-diffeo-estimates}, one can prove that
\begin{equation*}
\| K^x_{s\to t} - K^y_{s'\to t'}\|_{L^m} \lesssim |x-y|^\gamma + |t-t'|^{\eps \gamma} + |s-s'|^{\eps \gamma}
\end{equation*}
and so that it is $\PP$-a.s. $\delta$-H\"older continuous as well.

In the case of general $b\in L^q_t C^\alpha_x$, we can consider a sequence $b^n$ of regular functions such that $b^n\to b$ in $L^q_t C^\alpha_x$ (up to sacrificing a little bit of spatial regularity as usual), in which case we already know that the associated flows $\Phi^n$ converge to $\Phi$ in $L^m_\omega C_{s,t} C^{\delta,\loc}_x$; combined with the aforementioned moments estimates, one can then upgrade it to convergence in $L^m_\omega C_{s,t} C^{1+\delta,\loc}_x$. In particular, the fields $J^{x,n}_{s\to t}=\nabla \Phi^n_{s\to t}(x)$ and $K^{x,n}_{s\to t}=(\nabla \Phi^n_{s\to t}(x))^{-1}$ converge respectively to $J^x_{s\to t}$ and $K^x_{s\to t}$; by the limiting procedure, there exists an event $\tilde{\Omega}$ of full probability such that, for all $\omega\in\tilde\Omega$, it holds $J^x_{s\to t}(\omega)=\nabla \Phi_{s\to t}(x;\omega)$ and and $J^x_{s\to t}(\omega) K^x_{s\to t}(\omega)=I$ for all $s<t$ and $x\in \R^d$, as well as $J(\omega), K(\omega)\in C_{s,t} C^{\delta,\loc}_x$.

Overall, for every $\omega\in\tilde{\Omega}$, the map $(s,t,x)\mapsto \Phi_{s\to t}(x;\omega)$ has regularity $C_{s,t} C^{1+\delta,\loc}_x$ and its Jacobian admits a continuous inverse $K^x_{s\to t}(\omega)$. But this implies that, for any $s<t$, $\nabla \Phi_{s\to t}(x;\omega)$ is a nondegenerate matrix for all $x\in \R^d$, which by the implicit function theorem readily implies that the inverse of $x\mapsto \Phi_{s\to t}(x;\omega)$ must belong to $C^{1+\delta,\loc}_x$ as well. This concludes the proof.
\end{proof}

It is well known in the regular case that the Jacobian of the flow and the Malliavin derivative satisfy the same type of linear equation.
Therefore, as the last main result of the section, we show Malliavin differentiability of the random variables $X^x_{s\to t}(\omega):= \Phi_{s\to t}(x;\omega)$. To this end, we start with a simple, yet powerful lemma, showing that deterministic perturbations of the driving noise $B^H$ do not affect our solution theory.

\begin{lemma}\label{lem:perturbed-sde}
Assume \eqref{eq:main exponent}, $b\in L^q_t C^\alpha_x$, and $h: [0,1]\to \R^d$ be a deterministic, measurable function; then for any $s\in [0,1]$ and any $x\in \R^d$, there exists a pathwise unique strong solution to the perturbed SDE
\begin{equation}\label{eq:perturbed-sde}
X_t = x + \int_s^t b_r(X_r) \dd r + B^H_{s,t} + h_{s,t} \quad \forall\, t\in [s,1]
\end{equation}
which we denote by $X_{s\to \cdot}(x;h)$; in the distributional case $\alpha<0$, eq. \eqref{eq:perturbed-sde} should be interpreted in the sense of Definition \ref{defn:nYoung-def}.
\end{lemma}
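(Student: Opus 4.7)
The plan is to reduce the perturbed SDE \eqref{eq:perturbed-sde} to the unperturbed one by a deterministic change of variables, and then invoke the well-posedness results already established for pure fBm-driven SDEs.

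Concretely, for a fixed $s\in[0,1]$, define the shifted unknown $Y_t := X_t - h_{s,t}$ and the translated drift
\begin{equation*}
\tilde b^s_r(y) := b_r(y + h_{s,r}), \qquad r\in[s,1],\ y\in\R^d.
\end{equation*}
A direct substitution shows that $X$ solves \eqref{eq:perturbed-sde} on $[s,1]$ with initial value $x$ if and only if $Y$ solves
\begin{equation*}
Y_t = x + \int_s^t \tilde b^s_r(Y_r)\,\dd r + B^H_{s,t}, \qquad t\in[s,1],
\end{equation*}
which is an ordinary fBm-driven SDE. Since spatial translations preserve the (inhomogeneous) H\"older--Besov norms, one has $\|\tilde b^s_r\|_{C^\alpha_x} = \|b_r\|_{C^\alpha_x}$ for a.e.~$r$, so $\tilde b^s \in L^q_t C^\alpha_x$ with the same norm as $b$; joint measurability is inherited from that of $b$ and $h$. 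Thus Theorem~\ref{thm:functional-existence} (resp.\ Theorem~\ref{thm:dist-existence}) provides a strong solution for $\alpha>0$ (resp.\ $\alpha<0$) and Theorem~\ref{thm:functional:PBP-uniqueness} (resp.\ Theorem~\ref{thm:distributional:PBP-uniqueness}) gives path-by-path, a fortiori pathwise, uniqueness. Undoing the shift by setting $X_{s\to t}(x;h):=Y_t + h_{s,t}$ yields the desired solution.

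The only subtle point, and the step most worth checking carefully, is the distributional regime $\alpha<0$, where \eqref{eq:perturbed-sde} is understood via Definition~\ref{defn:nYoung-def}. Here one must verify that the nonlinear Young reformulations of the two equations agree under the change of variables. For smooth $b$, the Riemann-sum characterisation gives the pointwise identity
\begin{equation*}
\int_0^t (T^{B^H+h} b)_{\dd r}(\varphi_r) = \int_0^t (T^{B^H}\tilde b^s)_{\dd r}(\varphi_r - h_{s,r})
\end{equation*}
for any $\varphi\in C^{\zeta-\var}_t$, and both sides extend continuously to $b\in L^q_t \overline{C^\alpha_x}$ by Corollary~\ref{cor:T} (applied to $\tilde b^s$ on the right-hand side; the left-hand side can be treated identically, observing that $B^H + h$ satisfies condition \eqref{eq:requirements} with the same constant since $h$ is deterministic, so all proofs of Section~\ref{sec:first} apply verbatim, as noted in Remark~\ref{rem:possible-extensions}). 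This compatibility shows that path solutions of the perturbed equation and of the $\tilde b^s$-equation are in one-to-one correspondence via $Y = X - h_{s,\cdot}$, and the reduction is complete.
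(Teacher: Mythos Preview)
Your argument is correct and matches the paper's own proof almost exactly. The paper offers two short alternatives: (i) observe that $B^H+h$ still satisfies the Gaussian two-sided LND bound \eqref{eq:requirements} (since $h$ is deterministic), so the whole solution theory carries over verbatim; (ii) perform the very change of variables you wrote, setting $\tilde b_r(z):=b_r(z+h_{s,r})$ and reducing to the unperturbed SDE driven by $B^H$. You have essentially presented argument (ii) as the main line and invoked (i) only to justify the averaged-field identity in the distributional regime---which is in fact slightly more careful than the paper's own sketch of the $\alpha<0$ case.
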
 

\begin{proof}
We give two short alternative arguments to verify the claim. On one hand, carefully going through the proofs of Sections \ref{sec:first}-\ref{sec:stability}, the only key properties needed on the process $B^H$ (cf. also Remark \ref{rem:possible-extensions}) are its Gaussianity and the two-sided bounds
\begin{equation*}
\E[ |B^H_t - \E_s B^H_t|^2] \sim  |t-s|^{2H}
\end{equation*}
which are clearly still true for $\tilde{B}^H=B^H+h$, due to $h$ being deterministic.

Alternatively, if we define $\tilde{b}_t(z):= b_r(z+h_r)$, $y=x+h_s$, then any solution $X$ to \eqref{eq:perturbed-sde} must be in a $1$-$1$ correspondence with a solution $Y:=X+h$ to the unperturbed SDE
\[
Y_t = y + \int_s^t \tilde{b}_r(Y_r)\dd r + B^H_{s,t}
\]
and it is clear that $\tilde{b}$ still satisfies condition \eqref{eq:main exponent}, thus implying its well-posedness.
\end{proof}

We can now pass to study Malliavin differentiability of $X^x_{s\to t}$.
To this end, it is convenient to first recall the notion of \emph{$\mathcal{H}$-derivative}. 
Let $\mathcal{H}^H$ denote the Cameron-Martin space associated to $B^H$; we say that a function $F:\Omega\to \R$ is $\mathcal{H}$-continuously differentiable if for $\PP$-a.e. $\omega\in\Omega$, the map $h\mapsto F(\omega+h)$ is Fréchet differentiable from $\mathcal{H}^H$ to $\mathbb{R}$. In particular, this implies the existence of a random bounded linear operator $\partial F(\omega)$, which we call the \emph{$\mathcal{H}$-differential} of $F$, such that $\PP$-a.s.
\begin{align*}
	\partial F(\omega)(h)=\partial_h F(\omega):= \lim_{\eps\to 0} \frac{F(\omega+\eps h)-F(\omega)}{\eps}.
\end{align*}
Denote by $\| \partial F\|$ the (random) operator norm of $\partial F(\omega)$, as a linear operator from $\mathcal{H}^H$ to $\R^d$. It is known (cf. \cite[Section 4.1.3]{nualart2006}) that if $F\in L^2$ and $\| \partial F\|\in L^2$, then $F$ is Malliavin differentiable and its Malliavin differential $DF$ $\PP$-a.s. satisfies $\|D F\|_{\mathcal{H}^H} = \| \partial F\|$.
For this reason, when dealing with $X_{s\to t}^x$, it will be convenient for us to manipulate directly the directional derivatives $\partial_h X^x_{s\to t}$. This notion of derivative allows to consider $h$ coming from a larger class than merely Cameron-Martin paths, see Remark \ref{rem:Malliavin} below for a more detailed explanation.

%
%
%
%

\begin{theorem}\label{thm:malliavin}
Assume \eqref{eq:main exponent} and $b\in L^q_t C^\alpha_x$. In the setting of Lemma \ref{lem:perturbed-sde}, let us set $X^x_{s,t}(h):= X_{s\to t}(x;h)$. Then $\PP$-a.s. the random variables $\partial_h X^x_{s\to t}$ exist for all $h\in C^{2-\var}_t$ and define a (random) linear map $\partial X^x_{s,t}$. Moreover for any $m\in [1,\infty)$ it holds
\begin{equation}\label{eq:generalized-malliavin-moment-estim}
\sup_{s\in [0,1],x\in \R^d} \Big\| \sup_{t\in [s,1]} \| \partial X^x_{s,t}\|_{\mathcal{L}(C^{2-\var};\R^d)} \Big\|_{L^m}<\infty.
\end{equation}
In particular, $X^x_{s\to t}$ is Malliavin differentiable and for any $m\in [1,\infty)$ it holds
%
\begin{equation}\label{eq:malliavin-moment-estim}
\sup_{s\in [0,1], x\in \R^d} \Big\| \sup_{t\in [s,1]} \| D X^x_{s\to t} \|_{\mathcal{H}^H} \Big\|_{L^m} <\infty.
\end{equation}
\end{theorem}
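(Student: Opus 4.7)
The plan is to identify $\partial_h X^x_{s\to t}$ as the unique solution of the affine linear Young equation obtained by linearising \eqref{eq:SDE} along $X^x$, and to derive the variation-of-constants representation
\begin{equation*}
\partial_h X^x_{s\to t} = J^x_{s\to t}\int_s^t K^x_{s\to r}\,\dd h_r,
\end{equation*}
involving the Jacobian $J^x_{s\to t}$ and its inverse $K^x_{s\to t}$ from Section \ref{sec:flow}. Fix $s\in[0,1]$, $x\in\R^d$ and $h\in C^{2-\var}_t$. By Lemma \ref{lem:perturbed-sde}, for each $\eps\in\R$ the SDE driven by $B^H+\eps h$ with initial condition $x$ at time $s$ has a unique strong solution $X^\eps$; setting $Z^\eps:=(X^\eps-X^0)/\eps$ and linearising exactly as in \eqref{eq:diff-eq}--\eqref{eq:diff-Young}, one finds
\begin{equation*}
\dd Z^\eps_t = \dd A^\eps_t\, Z^\eps_t + \dd h_t,\qquad Z^\eps_s = 0,
\end{equation*}
with $A^\eps_t := \int_s^t\!\int_0^1 \nabla b_r\big(\lambda X^\eps_r + (1-\lambda)X^0_r\big)\,\dd\lambda\,\dd r$, understood in the nonlinear Young sense when $\alpha<0$.

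The next step is to show that $A^\eps\to A$ in $p$-variation (in probability) for some $p<2$, where $A_t := \int_s^t \nabla b_r(X^0_r)\,\dd r$, with uniform-in-$\eps$ Gaussian-type moment bounds. This is achieved by the same sewing arguments as in Theorem \ref{thm:stability}, now combined with Lemma \ref{lem:comparison-integrals} applied to $\nabla b$ and the processes $\varphi^1_r = \lambda(X^\eps_r-B^H_r)+(1-\lambda)(X^0_r-B^H_r)$ and $\varphi^2_r=X^0_r-B^H_r$, exploiting the stability estimate $\|X^\eps_r-X^0_r\|_{L^m}\lesssim \eps\,\llbracket h\rrbracket_{2-\var}$ from Theorem \ref{thm:stability}. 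The linear Young stability bound \eqref{eq:young-estimate-2} then forces $Z^\eps\to Y$, where $Y$ solves $\dd Y_t = \dd A_t\,Y_t + \dd h_t$, $Y_s=0$. Since $J^x_{s\to\cdot}$ and $K^x_{s\to\cdot}$ solve \eqref{eq:variational-equation}--\eqref{eq:eq-for-K} and admit the uniform-in-$(s,x)$ moment bounds \eqref{eq:moments-jacobian} (the analog for $K^x$ following from the same proof), the classical variation-of-constants formula for affine linear Young equations identifies $\partial_h X^x_{s\to t}=Y_t$ with the claimed representation.

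Linearity in $h$ of the resulting map is manifest from the formula, and the standard Young integral estimate gives the pathwise bound
\begin{equation*}
\sup_{t\in[s,1]}\big|\partial_h X^x_{s\to t}\big| \leq \sup_{t\in[s,1]}|J^x_{s\to t}|\,\big(1 + N\llbracket K^x_{s\to\cdot}\rrbracket_{p-\var;[s,1]}\big)\,\llbracket h\rrbracket_{2-\var;[s,1]},
\end{equation*}
which combined with Lemma \ref{lem:moments-jacobian} and H\"older's inequality yields \eqref{eq:generalized-malliavin-moment-estim}. For the Malliavin part, one invokes the continuous embedding $\mathcal{H}^H\hookrightarrow C^{2-\var}_t$: elements of the Cameron--Martin space of $B^H$ have H\"older regularity strictly greater than $1/2$ (of order $H+1/2$ for $H\in(0,1)$, and smoother still for $H\geq 1$), so that $\llbracket h\rrbracket_{2-\var}\lesssim\|h\|_{\mathcal{H}^H}$; restriction of the random linear map $h\mapsto\partial_h X^x_{s\to t}$ to $\mathcal{H}^H$ then gives \eqref{eq:malliavin-moment-estim}.

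The main technical obstacle is the uniform-in-$\eps$ $p$-variation convergence of $A^\eps$ to $A$ in the distributional regime $\alpha<0$, since the difference $A^\eps-A$ cannot be controlled directly by the $C^{\alpha-1}_x$ norm of $\nabla b$ applied to a small perturbation. Lemma \ref{lem:comparison-integrals} is designed precisely for this task, providing the required quantitative control in terms of $L^m$-distances of the integrand's arguments. As usual all computations are first performed on smooth approximations $b^n$ of $b$ and then passed to the limit using constants depending only on $\|b\|_{L^q_t C^\alpha_x}$, in the same spirit as in the proof of Theorem \ref{thm:flow-diffeo}.
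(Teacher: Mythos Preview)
Your proposal is correct and reaches the same conclusion, but the route differs from the paper's in two respects worth noting.

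First, the paper does not use the variation-of-constants representation. Working with smooth $b$ (so that the linearised equation \eqref{eq:variational-malliavin} is classical), it applies the affine Young estimate of Lemma \ref{lem:young-estimate} directly with $\tilde p=2$, obtaining
\[
|\partial_h X^x_{s\to t}| \leq C\, e^{C\llbracket A\rrbracket_{p\text{-}\var;[s,t]}^p}\,\llbracket h\rrbracket_{2\text{-}\var;[s,t]},
\]
and then feeds in the Gaussian moments of $\llbracket A\rrbracket_{p\text{-}\var}$ from \eqref{eq:jacobian-proof1}. Your bound via $J^x$ and $K^x$ is equivalent in spirit but uses more machinery (the separate moment estimates for $J^x$ and $K^x$ built in Section \ref{sec:flow}); the paper's single exponential bound is leaner.

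Second, the paper bypasses your convergence argument $A^\eps\to A$ and $Z^\eps\to Y$ entirely: since for smooth $b$ the identification of $\partial_h X^x_{s\to t}$ with the solution of the linearised equation is classical, no limiting procedure in $\eps$ is needed at that stage. All estimates are derived for smooth $b$ with constants depending only on $\|b\|_{L^q_t C^\alpha_x}$, and the passage to general $b$ is a standard approximation (invoking \cite[Lemma 1.5.3]{nualart2006} for \eqref{eq:malliavin-moment-estim}). Your use of Lemma \ref{lem:comparison-integrals} to control $A^\eps-A$ is correct and would work, but it is heavier than necessary here. On the plus side, your explicit formula $\partial_h X^x_{s\to t}=J^x_{s\to t}\int_s^t K^x_{s\to r}\,\dd h_r$ is informative in its own right. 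The Cameron--Martin embedding step is handled identically in both.
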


\begin{proof}
For simplicity, we give the proof in the case where $b$ is smooth, so that all the computations are rigorous, but keeping track that the estimate \eqref{eq:malliavin-moment-estim} only depends on $\| b\|_{L^q_t C^\alpha_x}$.
The general case then follows by standard (but a bit tedious) approximation arguments, similar to those of Theorems \ref{thm:functional-existence}-\ref{thm:dist-existence}; for estimate \eqref{eq:malliavin-moment-estim}, one can alternatively invoke \cite[Lemma 1.5.3]{nualart2006}.

For smooth $b$, $\partial_h X^x_{s\to t}$ is classically characterized as the unique solution to the affine equation
\begin{equation}\label{eq:variational-malliavin}
\partial_h X^x_{s\to t} = \int_s^t \nabla b_r(X^x_{s\to t}) \partial_h X^x_{s\to r} \dd r  + h_{s,t}.
\end{equation}
Consider the process $A_t:= \int_s^t \nabla b_r(X^x_{s\to r}) \dd r$ as usual, which satisfies \eqref{eq:jacobian-proof1}, so that it has $\PP$-a.s. finite $p$-variation for some $p<2$ and moreover
\begin{equation}\label{eq:malliavin-proof-1}
\E[\exp(\lambda \llbracket A\rrbracket_{p-\var;[s,1]}^p)]<\infty
\end{equation}
for all $\lambda\in \R$, where the estimate only depends on $\| b\|_{L^q_t C^\alpha_x}$ and does not depend on $x$ or $s$.
Interpreting \eqref{eq:variational-malliavin} as an affine Young equation and applying Lemma \ref{lem:young-estimate} from Appendix \ref{app:Young} with $\tilde{p}=2$, we then find $C>0$ such that
\begin{equation*}
|\partial_h X^x_{s\to t}| \leq C e^{C \llbracket A\rrbracket_{p-\var;[s,t]}^p} \llbracket h\rrbracket_{2-\var;[s,t]};
\end{equation*}
taking first supremum over $h\in C^{2-\var}$ with $\| h\|_{2-\var}=1$ and then over $t\in [s,1]$, we arrive at the pathwise $\PP$-a.s. inequality
\begin{equation*}
\sup_{t\in [s,1]} \| \partial X^x_{s\to t} \|_{\mathcal{L}(C^{2-\var};\R^d)} \leq e^{C \llbracket A\rrbracket_{p-\var;[s,1]}^p}.
\end{equation*}
Taking the $L^m$-norm on both sides, using \eqref{eq:malliavin-proof-1}, then readily yields \eqref{eq:generalized-malliavin-moment-estim}.

Estimate \eqref{eq:malliavin-moment-estim} then follows from the isometric identification of $D X^x_{s,t}$ with $\partial X^x_{s,t}$, so that $\|D F\|_{\mathcal{H}^H}= \| \partial X^x_{s,t}\|$, combined with the functional embedding $\mathcal{H}^H\hookrightarrow C^{2-\var}_t$, see Lemma \ref{lem:embedding-intermediate-space} in Appendix \ref{app:girsanov} for $H\in(0,1/2)$ and recall that $\cH^H\hookrightarrow C^{1-\var}_t$ for $H\geq 1/2$.
\end{proof}

\begin{remark}\label{rem:Malliavin}
Results on differentiability beyond the usual Malliavin sense, in the sense of the existence of $\partial_h X^x_{s,t}$ for $h$ belonging to a larger class than $\mathcal{H}^H$, were already observed for standard SDEs in \cite{kusuoka1993regularity} and have natural explanations in rough path theory, cf. \cite{cass2009,friz2006};
in these works however only $h\in C^{\tilde p-\var}_t$ for some $\tilde p<2$ are allowed.
Here instead, not only are we able to reach $C^{2-\var}_t$, but the result can be further strengthened to allow for some $\tilde p>2$: indeed, the key point is a combination of estimate \eqref{eq:malliavin-proof-1} and Lemma \ref{lem:young-estimate}, which works as long as the condition $1/\tilde{p}>1-1/p$ is satisfied.
\end{remark}
\section{McKean-Vlasov equations}\label{sec:MKV}

Armed with the stability estimate \eqref{eq:stability-estimate}, we can now solve distribution dependent SDEs (henceforth DDSDEs) of the form
\begin{equation}\label{eq:ddsde-general}
X_t = X_0 + \int_0^t F_s(X_s,\mu_s)\dd s + B^H_t, \quad \mu_t=\mathcal{L}(X_t).
\end{equation}
The initial condition $X_0$ is assumed to be $\cF_0$-measurable, in particular, independent of $B^H$.
The idea that estimates of the form \eqref{eq:stability-estimate}, where the difference of two drifts only appears in the weaker norm of $L^q_t C^{\alpha-1}_x$, can be exploited to solve DDSDEs was first introduced in \cite{galeati2021distribution}; the results presented here can be regarded as a natural extension, requiring less time regularity on the drift and allowing to cover $H>1$ as well. In particular, as in the previous sections, we will not need to exploit Girsanov transform, which instead played a prominent role in \cite{galeati2021distribution}.

Since our analysis also includes the case of distributional drifts $F$, we provide a meaningful definition of solution;  observe that in the case $F$ is actually continuous in the space variable (i..e $\alpha>0$), it reduces to the classical one.

\begin{definition}\label{def:solution-DDSDE}
Let $H\in (0,\infty)\setminus \N$ and $F:[0,1]\times \mathcal{P}(\R^d)\to C^\alpha_x$ be a measurable function. We say that a tuple $(\Omega,\bF,\PP; X,B^H)$ is a weak solution to \eqref{eq:ddsde-general} if:
\begin{itemize}
\item[i)] $B^H$ is an $\bF$-fBm of parameter $H$ and $X$ is $\bF$-adapted;
\item[ii)] setting $b^X_t(\cdot):=F_t(\cdot,\mathcal{L}(X_t))$, it holds $b^X\in L^q_t C^\alpha_x$ for some $(q,\alpha)$ satisfying \eqref{eq:main exponent};
\item[iii)] $X$ solves the SDE associated to $b^X$, in the sense of Section \ref{sec:distributional}.
\end{itemize}
\end{definition}

Similarly to Definition \ref{def:solution-DDSDE}, one can immediately extend the concepts of strong existence, pathwise uniqueness and uniqueness in law to the DDSDE \eqref{eq:ddsde-general}.
With a slight abuse, we will use the terminology \textit{input data} of the DDSDE \eqref{eq:ddsde-general} to indicate both the pair $(X_0,B^H)$ (when discussing strong existence and/or pathwise uniqueness of solutions) and the pair $(\xi,\mu^H)=(\mathcal{L}(X_0),\mathcal{L}(B^H))$ (when discussing uniqueness in law).  
We are now ready to formulate our main assumptions on the drift $F$.
\begin{ass}\label{ass:MKV}
Let $H\in (0,\infty)\setminus \N$ fixed, $F:[0,1]\times \cP(\R^d)\to C^\alpha_x$ be a measurable function; we assume that there exist parameters $(\alpha,q)$ satisfying \eqref{eq:main exponent} and $h\in L^q_t$ such that:
\begin{itemize}
\item[i)] for all $t\in [0,1],\, \mu\in \cP(\R^d)$, it holds $\| F_t(\cdot,\mu)\|_{C^\alpha_x} \leq h_t$;
\item[ii)] for all $t\in [0,1],\, \mu,\nu\in \cP(\R^d)$, it holds $\| F_t(\cdot,\mu)-F_t(\cdot,\nu)\|_{C^{\alpha-1}_x} \leq h_t \mathbb{W}_1(\mu,\nu)$;
\end{itemize}
\end{ass}
\begin{remark}
Basic examples of $F$ satisfying Assumption \eqref{ass:MKV} include the following (for their verification, we refer to Section 2.1 from \cite{galeati2021distribution}):
\begin{itemize}
\item[i)] The true McKean--Vlasov case $F_t(\cdot,\mu)=f_t(\cdot)+(g_t\ast \mu)(\cdot)$ for $f,g\in L^q_t C^\alpha_x$;
\item[ii)] Mean-dependence of the form $F_t(\cdot,\mu)=f_t(\cdot\,-\langle \mu\rangle)$, where $\langle \mu\rangle:=\int y\,\mu(\dd y)$;
\item[iii)] The mean $\langle \mu\rangle$ in ii) can be replaced by other functions of statistics (e.g. $\langle \psi,\mu\rangle$ for $\psi\in C^1_x$); one can also take linear combinations of the previous examples.
\end{itemize}
Also, in Assumption \ref{ass:MKV} we only considered the $1$-Wasserstein distance $\mathbb{W}_1$, but in fact all the results below would also hold if we replaced $\mathbb{W}_1$ with $\mathbb{W}_p$ for some $p\in (1,\infty)$.
\end{remark}

\begin{theorem}\label{thm:MKV}
Let $F$ satisfy Assumption \ref{ass:MKV}. Then for any $\cF_0$-measurable $X_0 \in L^1_\omega$ (respectively $\xi\in \mathcal{P}_1(\R^d)$) strong existence, pathwise uniqueness and uniqueness in law of solutions to \eqref{eq:ddsde-general} holds.
\end{theorem}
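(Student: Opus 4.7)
The plan rests on the stability estimate of Theorem \ref{thm:stability} with $m = 1$. Its crucial feature for McKean--Vlasov is that the drift discrepancy appears only in the weak norm $L^q_t C^{\alpha-1}_x$, which by Assumption \ref{ass:MKV}(ii) is dominated by $\sup_r \mathbb{W}_1(\mu^1_r, \mu^2_r)$. Combined with the trivial bound $\mathbb{W}_1(\cL(Y^1_r), \cL(Y^2_r)) \leq \E|Y^1_r - Y^2_r|$ available for any coupling $(Y^1, Y^2)$, this converts stability into the self-consistent inequality
\begin{equation*}
\E \sup_{r \in I} |Y^1_r - Y^2_r| \leq N \|h\|_{L^q(I)} \,\E \sup_{r \in I} |Y^1_r - Y^2_r|,
\end{equation*}
valid on any subinterval $I \subset [0,1]$ and for any two strong solutions $Y^i$ of SDEs with drifts $b^i_t = F_t(\cdot, \cL(Y^i_t))$, driven by the same $B^H$, started from a common $\cF_0$-measurable $X_0$, where $N$ depends only on $\|h\|_{L^q}$ (via the uniform bound $\|b^i\|_{L^q_t C^\alpha_x} \leq \|h\|_{L^q}$). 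By absolute continuity of $r \mapsto \int_0^r h^q$ we can partition $[0,1]$ into finitely many intervals $I$ with $N\|h\|_{L^q(I)} \leq 1/2$; combined with the finiteness of $\E\sup_{r \in I} |Y^1_r - Y^2_r|$ coming from the a priori estimates of Lemmas \ref{lem:drift-regularity}/\ref{lem:apriori-estim}, the displayed bound forces $Y^1 \equiv Y^2$ on each $I$, hence on $[0,1]$.

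Pathwise uniqueness is then immediate: two solutions on the same probability space with common $(X_0, B^H)$ fit the above setup with $Y^i = X^i$. For uniqueness in law, given two weak solutions $X^1, X^2$ with common initial law $\xi$, we build the coupling on an auxiliary probability space carrying $X_0 \sim \xi$ and an $\bF$-fBm $B^H$: let $Y^i$ be the unique strong solution of the SDE with drift $b^i_t := F_t(\cdot, \cL(X^i_t))$, whose existence and uniqueness are provided by Theorems \ref{thm:functional-existence}--\ref{thm:dist-existence} and \ref{thm:functional:PBP-uniqueness}--\ref{thm:distributional:PBP-uniqueness}. Since uniqueness in law holds for each fixed-drift SDE (Remark \ref{rem:weak}), $\cL(Y^i_t) = \cL(X^i_t) = \mu^i_t$, and the contraction yields $\cL(X^1) = \cL(Y^1) = \cL(Y^2) = \cL(X^2)$.

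Strong existence is proved by Picard iteration on the same short-time scale. Set $X^0_t := X_0 + B^H_t$ and inductively let $X^{n+1}$ be the unique strong solution of the SDE with drift $b^n_t := F_t(\cdot, \cL(X^n_t))$, initial condition $X_0$, and driving noise $B^H$. An identical computation gives
\begin{equation*}
\E \sup_{r \in I} |X^{n+1}_r - X^n_r| \leq \tfrac{1}{2}\,\E \sup_{r \in I} |X^n_r - X^{n-1}_r|
\end{equation*}
on each interval $I$ of the partition above, so $(X^n)$ is Cauchy in $L^1_\omega C_t(I)$ and converges to some $X$. Since $\mathbb{W}_1(\cL(X^n_t), \cL(X_t)) \to 0$, we have $b^n \to F(\cdot, \cL(X_\cdot))$ in $L^q_t C^{\alpha-1}_x$, and one more application of stability identifies $X$ on $I$ with the (unique) strong solution of the SDE driven by its own marginal flow, i.e.\ with a strong solution of \eqref{eq:ddsde-general}. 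Patching across the partition produces a global solution.

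The main technical obstacle is to make the above rigorous in the distributional regime $\alpha < 0$, where the SDEs must be read in the nonlinear Young sense of Definition \ref{defn:nYoung-def} rather than in the Lebesgue sense. One must verify that the averaging field $T^{B^H} b^n$ lies in the $C_t^{p-\var} C^{\eta,\loc}_x$ class required by Lemma \ref{lem:nY-construct}, and that $L^q_t C^{\alpha-1}_x$-convergence of drifts translates into convergence of the associated averaging fields in a topology strong enough to pass to the limit inside the nonlinear Young integral; both facts are precisely what Corollary \ref{cor:T} delivers. With this in hand, every step of the plan extends verbatim to $\alpha < 0$.
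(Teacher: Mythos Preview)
Your proof is correct and rests on the same core idea as the paper: the stability estimate \eqref{eq:stability-estimate} controls the difference of two solutions by $\|b^1-b^2\|_{L^q_t C^{\alpha-1}_x}$, which Assumption~\ref{ass:MKV}(ii) then bounds by $\|h\|_{L^q}\sup_t\mathbb{W}_1(\mu^1_t,\mu^2_t)$, closing the loop via $\mathbb{W}_1(\cL(Y^1_t),\cL(Y^2_t))\leq \E|Y^1_t-Y^2_t|$.

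The only real difference is in how the contraction is globalised. You partition $[0,1]$ into finitely many subintervals $I$ with $N\|h\|_{L^q(I)}\leq 1/2$ and patch, running Picard iteration on each. The paper instead works directly on $[0,1]$ with the weighted metric
\[
d_E(Y,Z)=\sup_{t\in[0,1]} e^{-\lambda\int_0^t |h_s|^q\,\dd s}\,\|Y_t-Z_t\|_{L^1},
\]
showing that the solution map $I(Y)$ (the unique strong solution with drift $F_t(\cdot,\cL(Y_t))$) is a contraction on $(E,d_E)$ once $\lambda$ is large enough; Banach's fixed point then gives existence and pathwise uniqueness in one stroke, with no patching. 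The uniqueness-in-law argument via coupling is essentially identical in both. Your route is slightly longer but equally rigorous; the weighted-metric trick is the standard shortcut that replaces short-time-plus-gluing by a single global step.
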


\begin{proof}
We start by showing strong existence and pathwise uniqueness by means of a contraction argument. Specifically, suppose we are given a filtered probability space $(\Omega,\bF,\PP)$ on which are defined an $\bF$-fBm $B^H$ and an $\mathcal{F}_0$-measurable $X_0\in L^1_\omega$. Consider the space of adapted processes
\[ E:=\Big\{Y:[0,1]\to\R^d:\, Y \text{ is adapted to }\mathcal{F}_t, \sup_{t\in [0,1]} \| Y_t\|_{L^1}<\infty \Big\}\]
which is a complete metric space when endowed with the metric
\[
d_E(Y,Z):=\sup_{t\in [0,1]} e^{-\lambda \int_0^t |h_s|^q \dd s} \| Y_t-Z_t\|_{L^1}
\]
for a parameter $\lambda>0$ to be chosen later.
Define a map $I$ acting on $E$ by letting $I(Y)$ be the unique solution $X$ to the SDE driven by $B^H$, with initial data $X_0$ (c.f. Remark \ref{rem:random ic}) and drift $b^Y_t:=F_t(\cdot\, ,\mathcal{L}(Y_t))$; the map $I$ is well defined thanks to Point i) from Assumption \ref{ass:MKV}, ensuring the solvability of such SDE. Note that $X$ is a solution  to the DDSDE \eqref{eq:ddsde-general} on the space $(\Omega,\bF,\PP)$ with input data $(X_0,B^H)$ if and only if it is a fixed point for $I$.

We claim that $I$ is a contraction on $(E,d_E)$; indeed, given any $Y^1,\,Y^2$, by the stability estimate \eqref{eq:stability-estimate} and Assumption \ref{ass:MKV}, for any $t\in [0,1]$ it holds
\begin{align*}
\| I(Y^1)_t-I(Y^2)_t\|_{L^1}^q
& \lesssim \int_0^t \| F_s(\cdot\,,\mathcal{L}(Y^1_s))-F_s(\cdot\,,\mathcal{L}(Y^2_s))\|_{C^{\alpha-1}}^q \dd s\\
& \lesssim \int_0^t |h_s|^q \,\mathbb{W}_1(\mathcal{L}(Y^1_s),\mathcal{L}(Y^2_s))^q \dd s\\
& \lesssim d_E(Y^1,Y^2)^q \int_0^t |h_s|^q\, e^{q \lambda \int_0^s |h_r|^q \dd r} \dd s\\
& \lesssim (q \lambda)^{-1}\, e^{\lambda q \int_0^t |h_r|^q \dd r}\, d_E(Y^1,Y^2)^q.
\end{align*}
Rearranging the terms, we overall find the estimate
\[
d_E\big(I(Y^1),I(Y^2)\big)^q \leq \frac{C}{q \lambda}\, d_E(Y^1,Y^2)^q,
\]
from which contractivity follows by choosing $\lambda$ appropriately. Pathwise uniqueness then readily follows; as the argument holds for any choice of $\bF$, we can take $\mathcal{F}_t=\sigma\{X_0, B^H_s, s\leq t\}$, yielding strong existence.

To establish uniqueness in law, it suffices to observe that, if $X$ is a weak solution, then we can construct a copy of it on any reference probability space simply by solving therein the SDE associated to $b^X_t(\cdot)= F_t(\cdot,\mathcal{L}(X_t))$: by weak uniqueness for the SDE associated to $b^X$, see Remark \ref{rem:weak}, the solution $\tilde{X}$ constructed in this way must have the same law as the original $X$ and thus be a solution to the DDSDE itself. Given any pair of weak solutions $X^1,X^2$,  possibly defined on different probability spaces, we can then construct a coupling $(\tilde{X}^1,\tilde{X}^2)$ of them on the same probability space, solving the DDSDE for the same input data $(X_0,B^H)$; by the previous argument, it must hold $\tilde{X}^1\equiv \tilde{X}^2$ and so $\mathcal{L}(X^1)=\mathcal{L}(X^2)$.
\end{proof}
\begin{remark}
In fact, going through the same strategy of proof as in \cite{galeati2021distribution} not only allows to establish wellposedness of the DDSDE, but also to establish stability estimates for DDSDEs. Specifically, assume we are given fields $F^i$, $i=1,2$, satisfying Assumption \eqref{ass:MKV} for the same parameters $(\alpha,q)$ and functions $h^i\in L^q_t$ and define the quantity
\[
\| F^1-F^2\|_{\alpha-1,q} :=\bigg( \int_0^1 \sup_{\mu\in \mathcal{P}_1} \big\| F^1_t(\cdot\,,\mu)-F^2_t(\cdot\,,\mu)\big\|_{C^{\alpha-1}_x}^q \dd t\bigg)^{1/q}.
\]
Then for any $m\in [1,\infty)$ there exists a constant $C$, depending on $\alpha,q,H,m,d, \| h^i\|_{L^q}$, such that any two solutions $X^i$ defined on the same space with input data $(X_0^i, B^H)$ satisfy
\begin{equation}\label{eq:stability-ddsde}
\big\| \| X^1-X^2\|_{C^0_t} \big\|_{L^m} \leq C \big(\|X^1_0-X^2_0|\|_{L^m} + \| F^1-F^2\|_{\alpha-1,q}\big);
\end{equation}
in the case of solutions defined on different spaces, using \eqref{eq:stability-ddsde} and coupling argument, we can easily deduce bounds on the Wasserstein distances of their laws. In the true McKean--Vlasov case, namely $F^i_t(\cdot\,,\mu)=f^i_t+g^i_t\ast \mu$ with $f^i,g^i\in L^q_t C^\alpha_x$, it holds
\[
\| F^1-F^2\|_{q,\alpha} \lesssim \| f^1-f^2\|_{L^q_t C^{\alpha-1}_x} + \| g^1-g^2\|_{L^q_t C^{\alpha-1}_x}.
\]
\end{remark}

\section{Weak compactness and weak existence}\label{sec:weak-compactness}

So far we have shown that, under suitable conditions on $b$ (condition \eqref{eq:main exponent}), we have (very) strong existence and uniqueness results. However, as we are now going to show, stochastic sewing also allows to establish weak existence and weak compactness of solutions in the regime \eqref{eq:condition-existence} (defined just before Theorem \ref{thm:weak-existence-intro}), similarly to \cite[Theorem 2.6(i)]{ABLM}, \cite[Theorem 2.8]{anzeletti2021regularisation}. For other applications of sewing techniques and compactness arguments, see also \cite{BecHof2023}.

This section is also our way to say something about the equation in the case $q>2$ that goes beyond the trivial inclusion $L^q_t\subset L^2_t$.

Since here we assume $\alpha<0$, it is a priori not fully clear what it means to be a weak solution to the equation. Contrary to Section \ref{sec:distributional}, where a robust interpretation was accomplished by the nonlinear Young formalism, here we will adopt the following, weaker definition, adapting the notion from \cite{Bass-Chen}. This allows us to prove weak existence more generally, see however Remark \ref{rem:nYoung-BassChen} for a comparison.

\begin{definition}\label{defn:weak-solution}
Let $b\in L^q_t C^\alpha_x$ for some $\alpha<0$. We say that a tuple $(\Omega,{\bF},\PP;X,B^H)$ consisting of a filtered probability space and a pair of continuous processes $(X,B^H)$ is a weak solution to the SDE
\begin{equation}\label{eq:SDE-weak}
X_t = x_0 + \int_0^t b_s(X_s)\dd s + B^H_t
\end{equation}
if $B^H$ is a $\bF$-fBm of parameter $H$, $X$ is $\bF_t$-adapted, and $X_t=x_0+V_t+B^H_t$, where the process $V_t$ has the property that, for any sequence of smooth bounded functions $b^n$ converging to $b$ in $L^q_t C^\alpha_x$, it holds that
\begin{equation*}
\Big\|\int_0^\cdot b^n(s,X_s)\dd s - V_\cdot\Big\|_{C^0_t} \to 0 \quad \text{in probability.}
\end{equation*}
\end{definition}

\begin{theorem}\label{thm:weak-existence}
Let $H\in (0,1)$ and $b\in L^q_t C^\alpha_x$ satisfying \eqref{eq:condition-existence}. Then for any $x_0\in \R^d$ there exists a weak solution to the SDE \eqref{eq:SDE-weak} in the sense of Definition \ref{defn:weak-solution}.
\end{theorem}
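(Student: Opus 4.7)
The plan is to follow the standard compactness scheme: approximate $b$ by smooth bounded $b^n$ with $\|b^n\|_{L^q_t C^\alpha_x} \lesssim \|b\|_{L^q_t C^\alpha_x}$ and $b^n\to b$ in $L^q_t C^\alpha_x$ (after possibly sacrificing a tiny bit of spatial regularity so that $b\in L^q_t\overline{C^\alpha_x}$). For each $n$ the SDE with drift $b^n$ admits a unique strong solution $X^n$ by classical theory; set $\varphi^n = X^n-B^H$. I would then extract a tight subsequence and pass to the limit via Skorohod's representation theorem.

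The tightness comes from Lemma \ref{lem:apriori-estim}, whose conclusion is uniform in $n$:
\begin{equation*}
\big\|\|\varphi^n_{s,t}\|_{L^m\vert\cF_s}\big\|_{L^\infty}\leq N (t-s)^{\alpha H+1/q'}w_{b,\alpha,q}(s,t)^{1/q}.
\end{equation*}
Since condition \eqref{eq:condition-existence} forces $\alpha H+1/q'+1/q=\alpha H+1>H$, a suitable variant of the Kolmogorov criterion (Appendix \ref{app:kolmogorov}) yields that $\{\varphi^n\}$ is tight in $C^\gamma_t$ for some $\gamma>0$, with uniform moments. Combined with the single law of $B^H$, $\{(X^n,B^H)\}$ is tight in $C_t^2$. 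Pass to a common probability space $(\tilde\Omega,\tilde\bF,\tilde\PP)$ where $(\tilde X^n,\tilde B^n)\stackrel{\mathrm{law}}{=}(X^n,B^H)$ and $(\tilde X^n,\tilde B^n)\to(\tilde X,\tilde B)$ uniformly a.s. Equipping this space with the augmented natural filtration generated by $(\tilde X,\tilde B)$ and passing to the limit in the conditional Gaussian structure of $\tilde B^n$ (orthogonality and variance of $\tilde B^n_t-\tilde\E_s\tilde B^n_t$ are preserved under weak convergence of Gaussian vectors), $\tilde B$ is an $\tilde\bF$-fBm. Setting $V:=\tilde X-x_0-\tilde B$ we obtain $V_t=\lim_n \int_0^t b^n_s(\tilde X^n_s)\dd s$ uniformly a.s.

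The hard part will be verifying the condition in Definition \ref{defn:weak-solution} for an \emph{arbitrary} smooth sequence $h^k\to b$ in $L^q_tC^\alpha_x$. The key ingredient is a stability estimate, proved by a stochastic sewing argument modeled on Corollary \ref{cor:T}/Lemma \ref{lem:integral-estimates} but adapted to the broader condition \eqref{eq:condition-existence}: using that $\tilde\varphi=\tilde X-\tilde B$ inherits from $\{\varphi^n\}$ (by Fatou applied to the a priori bound) the regularity of Lemma \ref{lem:apriori-estim}, and that $\tilde B$ has the LND property \eqref{eq:LND-fBm} w.r.t.~$\tilde\bF$, one can apply the FHL-type SSL (or Lemma \ref{lem:SSL1}) to $A_{s,t}=\tilde\E_{s_-}\int_s^t g_r(\tilde B_r+\tilde\E_{s_-}\tilde\varphi_r)\dd r$. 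The exponent arithmetic is exactly the one carried out in the proof of Lemma \ref{lem:apriori-estim} and closes thanks to the strict inequality $\alpha>1/2-1/(2H)$ in \eqref{eq:condition-existence}. This yields
\begin{equation*}
\Big\|\int_s^t g_r(\tilde X_r)\dd r\Big\|_{L^m}\lesssim (t-s)^{\eps}\|g\|_{L^q_t C^{\alpha-1}_x},\qquad \eps>0,
\end{equation*}
for every smooth $g$. Hence $g\mapsto \int_0^\cdot g_r(\tilde X_r)\dd r$ extends continuously from smooth drifts to $L^q_t\overline{C^\alpha_x}$ as a map into $L^m_\omega C^0_t$, and in particular $\int_0^\cdot h^k_r(\tilde X_r)\dd r$ converges in probability in $C^0_t$ to a process depending only on $b$.

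It remains to check that this limit is precisely $V$, which requires comparing $\int_0^\cdot b^n_s(\tilde X_s)\dd s$ with $\int_0^\cdot b^n_s(\tilde X^n_s)\dd s$ along the construction sequence. Here I would prove a companion of Lemma \ref{lem:comparison-integrals} (again via SSL applied to the obvious increment, now with both $\tilde\varphi$ and $\tilde\varphi^n$ satisfying the uniform a priori bound): for $h=b^n$ one gets
\begin{equation*}
\Big\|\int_s^t b^n_r(\tilde X_r)-b^n_r(\tilde X^n_r)\dd r\Big\|_{L^m}\lesssim \|b\|_{L^q_tC^\alpha_x}^{\text{eff}}\,(t-s)^{\eps-\gamma H}\,\sup_{r}\|\tilde X_r-\tilde X^n_r\|_{L^m}^{\gamma}
\end{equation*}
for some $\gamma>0$ with $\eps-\gamma H>0$, the point being that the right-hand side only uses the \emph{weak} $C^{\alpha-1}_x$ norm of $b^n$ (uniformly bounded) together with a small positive power of $\|\tilde X-\tilde X^n\|_{C^0_t}\to 0$. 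Dominated convergence (using uniform moments from the a priori estimate) then gives vanishing of this term, so $\int_0^\cdot b^n_s(\tilde X_s)\dd s\to V$ in $L^m_\omega C^0_t$, identifying the extension with $V$ and completing the verification of Definition \ref{defn:weak-solution}.
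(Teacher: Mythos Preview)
Your overall compactness scheme matches the paper's, but the identification step has a genuine gap. After Skorokhod, the approximants $\tilde X^n=\tilde\varphi^n+\tilde B^n$ and the limit $\tilde X=\tilde\varphi+\tilde B$ are driven by \emph{different} fractional Brownian motions $\tilde B^n\neq\tilde B$, and there is no common filtration on the Skorokhod space with respect to which $\tilde B$ is an $\bF$-fBm, both $\tilde\varphi$ and $\tilde\varphi^n$ are adapted, and both satisfy \eqref{eq:apriori-estim}. Any companion of Lemma \ref{lem:comparison-integrals} needs precisely this common structure (one filtration, one LND noise, two slow drifts), so it cannot be used to compare $\int_0^\cdot b^n_r(\tilde X^n_r)\,\dd r$ with $\int_0^\cdot b^n_r(\tilde X_r)\,\dd r$. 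The paper circumvents this with a three-term splitting via an auxiliary smooth $f$: the terms $\int(b^n-f)(\tilde X^n)$ and $\int(f-b)(\tilde X)$ each involve a \emph{single} process, so Lemma \ref{lem:singular-integrals} applies separately on the respective filtrations, while the cross term $\int f(\tilde X^n)-\int f(\tilde X)$ is handled purely by the Lipschitz regularity of $f$ and the a.s.\ uniform convergence $\tilde X^n\to\tilde X$, with no sewing needed.

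Two secondary points. First, your treatment of the limit filtration is too light: to obtain that $\tilde B$ is an $\tilde\bF$-fBm for $\tilde\bF=\sigma(\tilde\varphi,\tilde W)$, the paper carries the underlying Brownian motion $W$ from the Volterra representation \eqref{eq:Volterra} through the Skorokhod step and passes the independence relations $\E[F(W^n_{s,t})G(W^n_{t_1},\varphi^n_{t_1},\ldots)]=\E[F]\E[G]$ to the limit; a vague appeal to ``conditional Gaussian structure'' does not by itself show that the limiting $W$ is a Brownian motion with respect to the \emph{enlarged} filtration. The same mechanism is what transfers the a priori bound \eqref{eq:apriori-estim} to $\tilde\varphi$ --- ``by Fatou'' is not enough, since the conditioning $\sigma$-algebras change along the sequence. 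Second, your stability estimate with $\|g\|_{L^q_tC^{\alpha-1}_x}$ on the right-hand side relies on the \emph{shifted} sewing of Lemma \ref{lem:SSL1}, which uses $q\le 2$ to make $w_{g,\alpha-1,q}^{2/q}$ a control; under \eqref{eq:condition-existence} with $q>2$ only the unshifted bound with $\|g\|_{L^q_tC^{\alpha}_x}$ from Lemma \ref{lem:singular-integrals} is available --- and this is exactly what the paper uses, and what suffices for Definition \ref{defn:weak-solution}.
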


\begin{remark}
The above result is only interesting in the regime $H\in (0,1)$ and $q>2$, cf. Remark \ref{rem:other-condition-intro}. Indeed, for $H>1$ condition $\alpha>1/2-1/(2H)$ automatically enforces $\alpha>0$, for which existence follows by classical Peano-type results; instead for $q\leq 2$,  \eqref{eq:condition-existence} implies \eqref{eq:main exponent} and so strong wellposedness follows from the previous sections. 
\end{remark}

First we need the following lemma.

\begin{lemma}\label{lem:singular-integrals}
Let $H\in (0,1)$, $(\alpha,q)$ be parameters satisfying \eqref{eq:condition-existence}; let $X$ be a process defined on a filtered probability space $(\Omega,\bF,\PP)$ of the form $X=\varphi+B^H$, where $B^H$ is an $\bF$-fBm and $\varphi$ satisfies the property \eqref{eq:apriori-estim}.
For any $f\in L^q_t C^\delta_x$, $\delta>0$, let $w_f:=w_{f,\alpha,q}$; then for any $m\in [2,\infty)$ there exists a deterministic constant $K=K(m,d,\alpha,q,H,\| b\|_{L^q_t C^\alpha_x})$, such that
\begin{equation*}
\bigg\| \Big\| \int_s^t f_r(X_r)\dd r\Big\|_{L^m|\mathcal{F}_s} \bigg\|_{L^\infty} \leq K w_f(s,t)^{1/q} |t-s|^{\alpha H + 1/q'}. 
\end{equation*}
As a consequence, for any $\eps>0$ there exists a constant $K=K(\eps,m,d,\alpha,q,H,\| b\|_{L^q_t C^\alpha_x})$ such that
\begin{equation}\label{eq:estim-singular-integrals}
\bigg\| \Big\| \int_0^\cdot f_r(X_r)\dd r \Big\|_{C^{\alpha H + 1/q'-\eps}_t} \bigg\|_{L^m} \leq K \| f\|_{L^q_t C^\alpha_x}.
\end{equation}
By linearity and density, this allows to continuously extend in a unique way the map $f\mapsto \int_0^\cdot f_r(X_r)\dd r$ from $L^q_t \overline{C^\alpha_x}$ to $L^m_\omega C^0_t$.
\end{lemma}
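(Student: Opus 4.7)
My plan is to carry out a stochastic sewing argument modeled on Lemma~\ref{lem:apriori-estim}, with $f$ playing the role of $b$ and the given process $\varphi$ replacing the drift part of the SDE solution. Since the bound \eqref{eq:estim-singular-integrals} and the extension to $L^q_t\overline{C^\alpha_x}$ are by density, I may restrict the argument to $f\in L^q_t C^\delta_x$ with $\delta>0$, so that $\int_s^t f_r(X_r)\dd r$ is classically defined. The germ I would use is
\[
A_{s',t'}:=\mathbb{E}_{s'}\int_{s'}^{t'}f_r(\varphi_{s'}+B^H_r)\dd r=\int_{s'}^{t'}P_{|r-s'|^{2H}}f_r(\varphi_{s'}+\mathbb{E}_{s'}B^H_r)\dd r,
\]
where the second equality uses the conditioning identity \eqref{eq:conditioning}. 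Setting $\beta:=\alpha H+1/q'$, the direct bound on $A$ follows from \eqref{eq:HK-estimates} and H\"older in time, giving $\big\|\|A_{s',t'}\|_{L^m|\mathcal F_{s'}}\big\|_{L^\infty}\lesssim |t'-s'|^\beta w_f(s',t')^{1/q}$. For the germ term, after the same elementary rearrangement as in Lemma~\ref{lem:apriori-estim} one writes
\[
\mathbb{E}_{s'}\delta A_{s',u',t'}=\mathbb{E}_{s'}\!\!\int_{u'}^{t'}\!\!\big[P_{|r-u'|^{2H}}f_r(\varphi_{s'}+\mathbb{E}_{u'}B^H_r)-P_{|r-u'|^{2H}}f_r(\varphi_{u'}+\mathbb{E}_{u'}B^H_r)\big]\dd r,
\]
and the combination of $\|P_{|r-u'|^{2H}}f_r\|_{C^1_x}\lesssim|r-u'|^{(\alpha-1)H}\|f_r\|_{C^\alpha_x}$ with the assumed a priori bound \eqref{eq:apriori-estim} on $\varphi$ yields $\|\mathbb{E}_{s'}\delta A_{s',u',t'}\|_{L^\infty}\lesssim|t'-s'|^{2\beta-H} w_f(s',t')^{1/q} w(s',t')^{1/q}$. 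The integrability and sewing-dimension conditions $\beta+1/q>1/2$ and $2\beta-H+2/q>1$ are exactly those encoded in \eqref{eq:condition-existence}, so \cite[Theorem~2.7]{FHL} applies.

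Next I would identify the resulting sewing $\mathcal{A}$ with $t\mapsto \int_0^t f_r(X_r)\dd r$ via a three-term decomposition mirroring the one leading to \eqref{eq:identify cA}: one estimates in turn the replacement of $\varphi_r$ by $\varphi_s$ using the $C^\delta$-regularity of $f$ together with the $L^m$-bound on $\varphi_{s,r}$ coming from \eqref{eq:apriori-estim}, the replacement of $B^H_r$ by $\mathbb{E}_s B^H_r$ using $\|B^H_r-\mathbb{E}_s B^H_r\|_{L^m}\lesssim|r-s|^H$, and finally the heat-kernel smoothing $f_r\mapsto P_{|r-s|^{2H}}f_r$. The SSL output then takes the form
\[
\big\|\big\|{\textstyle\int_s^t} f_r(X_r)\dd r\big\|_{L^m|\mathcal{F}_s}\big\|_{L^\infty}\lesssim w_f(s,t)^{1/q}|t-s|^\beta\bigl(1+w(s,t)^{1/q}|t-s|^{\beta-H}\bigr),
\]
and the parenthesis is bounded by a constant depending only on the admissible parameters and $\|b\|_{L^q_t C^\alpha_x}$, since \eqref{eq:condition-existence} forces $\beta>H$ while $w(s,t)\leq w(0,1)$ is a priori controlled by the given data. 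This proves the first claim.

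To obtain \eqref{eq:estim-singular-integrals} I would take unconditional $L^m$-norms in the previous display (using $w_f(s,t)^{1/q}\leq\|f\|_{L^q_t C^\alpha_x}$) and then invoke a Kolmogorov-type continuity criterion such as Lemma~\ref{lem:kolmogorov-2}, absorbing the loss $\eps$ by taking $m$ large. Since the resulting estimate is linear and continuous in $f$ with respect to the $L^q_t C^\alpha_x$-norm, the map $f\mapsto \int_0^\cdot f_r(X_r)\dd r$ extends uniquely and continuously from the smooth dense subclass to all of $L^q_t\overline{C^\alpha_x}$ with values in $L^m_\omega C^0_t$. I expect the main technical subtlety to lie precisely in this identification and extension step: for generic $f\in L^q_t\overline{C^\alpha_x}$ the integral $\int_0^\cdot f_r(X_r)\dd r$ has no classical meaning, so one must first run the sewing argument on a smooth approximation and then verify in the limit that the sewing object and the $L^q_tC^\alpha_x$-continuous extension coincide.
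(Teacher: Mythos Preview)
Your proposal is correct and follows essentially the same route as the paper: the same germ $A_{s,t}=\E_s\int_s^t f_r(\varphi_s+B^H_r)\,\dd r$, the same SSL from \cite[Theorem~2.7]{FHL}, the same exponent verification under \eqref{eq:condition-existence}, followed by Kolmogorov continuity and a density extension. The paper's proof is in fact only a sketch referring back to Lemmas~\ref{lem:apriori-estim} and~\ref{lem:integral-estimates}, so your write-up is more detailed than the original but otherwise identical in strategy.
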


\begin{proof}
We only sketch the proof, since it is very similar to others already presented (cf. Lemma \ref{lem:integral-estimates}).
By Lemma \ref{lem:apriori-estim} and the stochastic sewing (again in the version of \cite[Theorem 2.7]{FHL}), setting $A_{s,t}:=\E_s \int_s^t f_r(\varphi_s+B^H_r)\dd r$ and denoting $\beta=1/q'+\alpha H$, standard computations imply
\begin{align*}
\| A_{s,t}\|_{L^\infty} & \lesssim |t-s|^\beta w_f(s,t)^{1/q},\\
\big\| \|\E_s\delta A_{s,u,t}\|_{L^m| \mathcal{F}_s} \big\|_{L^\infty}
& \lesssim |t-s|^{\beta-H} w_f(s,t)^{1/q} \big\| \| \varphi_{s,u}\|_{L^m|\cF_s}\big\|_{L^\infty}\\
& \lesssim |t-s|^{2\beta-H} w_f(s,t)^{1/q} w_b(s,t)^{1/q}.
\end{align*}
Under condition \eqref{eq:condition-existence}, one can check that the hypotheses of \cite[Theorem 2.7]{FHL} are satisfied, which easily yields all the desired estimates.
\end{proof}

Let us also recall the definition of $\bF$-fBm and the associated Volterra kernel representation \eqref{eq:Volterra} from Section \ref{sec:prelim-fbm}. With these preparations, we can now present the

\begin{proof}[Proof of Theorem \ref{thm:weak-existence}]
As before, we can assume $x_0=0$ without loss of generality.
Let $b\in L^q_t C^\alpha_x$ with $(q,\alpha)$ satisfying \eqref{eq:condition-existence} be given. Since \eqref{eq:condition-existence} is a strict inequality, we can assume without loss of generality that $q<\infty$, $b\in L^q_t\overline {C^\alpha_x}$, and in particular there exists a sequence $\{b^n\}_n\subset L^q_t C^1_x$ such that $b^n\to b$ in $L^q_t C^\alpha_x$ and $\int_s^t \| b^n_r\|_{C^\alpha_x}^q \dd r \leq \int_s^t \| b_r\|_{C^\alpha_x}^q \dd r$ (this can be accomplished by taking $b^n_r=\rho_{1/n}\ast b_r$ for some standard mollifiers $\{\rho_\delta\}_{\delta>0}$, up to replacing $\alpha$ with $\alpha-\eps$).

To each such $b^n$ we can associate a solution $X^n=\varphi^n + B^H$, where by Lemma \ref{lem:apriori-estim} $\varphi^n$ satisfy the bound \eqref{eq:apriori-estim} for $w=w_{\alpha,b,q}$; this implies in particular that $\| \varphi^n_{s,t}\|_m \lesssim |t-s|^{\alpha H + 1/q'}$ uniformly in $n$, which by Kolmogorov's theorem readily implies the tightness of the family $\{\varphi^n\}_n$.
As a consequence, the family $\{(\varphi^n, B^H, W)\}_n$ is tight in $C_t\times C_t\times C_t$.

By Prokhorov's and Skorokhod's theorems, we can construct another probability space $(\tilde{\Omega},\tilde{\mathcal{F}},\tilde{\PP})$ on which there exists a sequence $\{(\tilde{\varphi}^n,\tilde{B}^{H,n}, \tilde{W}^n)\}_n$ such that $(\tilde{\varphi}^n,\tilde{B}^{H,n}, \tilde{W}^n)$ is distributed as $(\varphi^n, B^H, W)$ for each $n$ and $(\tilde{\varphi}^n,\tilde{B}^{H,n}, \tilde{W}^n) \to (\tilde{\varphi},\tilde{B}^{H}, \tilde{W})$ $\tilde{\PP}$-a.s. in $C_t\times C_t\times C_t$. We claim that $\tilde{X}=\tilde{\varphi}+\tilde{B}^H$ is a weak solution to \eqref{eq:SDE-weak}, in the sense of Definition \ref{defn:weak-solution}. For notational simplicity, we drop the tildes for the rest of the proof.

First of all we claim that $B^H$ is still distributed as an fBm of parameter $H$, $W$ as a standard Bm and that the relation $B^H_t = \int_0^t K_H(t,s) \dd W_s$ still holds. The first two statements are an immediate consequence of passing to the limit. For the last one, we can use the fact that for each $n$, the same relation holds between $B^{H,n}$ and $W^n$, the fact that $K_H(t,\cdot)$ is square integrable and standard results on convergence of stochastic integrals (e.g. \cite[Lemma 2.1]{DGHT2011}) to conclude that for any fixed $t$, \eqref{eq:Volterra} holds $\PP$-a.s. The upgrade to a $\PP$-a.s. statement valid for all $t\in [0,1]$ follows from combining this fact with the uniform convergence of $B^{H,n}$ to $B^H$.

Next, since $X^n=\varphi^n+B^{H,n}$ is still a solution to the SDE \eqref{eq:SDE-weak} with regular drift $b^n$, $\varphi^n$ is adapted to $\mathcal{F}^n_t:=\sigma\{ B^{H,n}_s:s\leq t\}=\sigma\{W^n_s: s\leq t\}$; so for any $s<t$, any $t_1,\ldots, t_n \leq s$ and any pair of continuous bounded functions $F,G$ it holds
\begin{align*}
\E\big[F(W^n_{s,t})G(W^n_{t_1}, \varphi^n_{t_1},\ldots,W^n_{t_n},\varphi^n_{t_n})\big]
= \E\big[F(W^n_{s,t})\big]\,\E\big[G(W^n_{t_1}, \varphi^n_{t_1},\ldots,W^n_{t_n},\varphi^n_{t_n})\big].
\end{align*}
Passing to the limit as $n\to\infty$, the same relation holds for $W$ and $\varphi$ in place of $W^n$ and $\varphi^n$, which shows that $W$ is an $\bF$-Bm for $\mathcal{F}_t:=\sigma\{(W_s,\varphi_s):s\leq t\}$; in particular, $B^H$ is an $\bF$-fBm. 
Similarly, since $\varphi^n$ uniformly satisfy the bound \eqref{eq:apriori-estim} w.r.t. $\mathcal{F}^n_t$, it holds
\begin{equs}
\E\big[|\varphi^n_{s,t}|^m \, &G(W^n_{t_1}, \varphi^n_{t_1},\ldots,W^n_{t_n},\varphi^n_{t_n})\big]
\\
&\lesssim \big(w(s,t)^{1/q} |t-s|^{\alpha H + 1/q'}\big)^m \E\big[G(W^n_{t_1}, \varphi^n_{t_1},\ldots,W^n_{t_n},\varphi^n_{t_n})\big].
\end{equs}
Passing to the limit as $n\to\infty$ we conclude that $\varphi$ satisfies \eqref{eq:apriori-estim} w.r.t. the filtration $\mathcal{F}_t$.

Finally, it remains to show that $X$ satisfies the relation $X_t= V_t + B^H_t$ for $V$ satisfying the requirements of Definition \ref{defn:weak-solution}.
First, since $B^H$ is an $\bF$-fBm and $\varphi$ satisfies \eqref{eq:apriori-estim}, Lemma \ref{lem:singular-integrals} applies, so that the process $V_t:=\int_0^t b_r(X_r)\dd r$ is well defined; by this we mean that the map $f\mapsto \int_0^\cdot f_r(X_r)\dd r$ admits a unique extension and $V$ is the limit in $L^m_\omega C^0_t$ of the processes $\int_0^\cdot b^n_t(X_r)\dd r$, for any sequence of smooth $b^n\to b$ in $L^q_T \overline{C^\alpha_x}$. By linearity, we have
\begin{equation}\label{eq:weak-existence-proof-eq1}
\E \bigg[ \Big\| \int_0^\cdot f_r(X_r)\dd r - V_\cdot \Big\|_{C^{\alpha H + 1/q'-\eps}_t}^m \bigg]^{1/m} \lesssim \| f-b\|_{L^q_t C^\alpha_x}
\end{equation}
for any regular $f$; a similar estimate holds for any $X^n$, with $b$ replaced by $b^n$, with the hidden constants being uniform in $n$.
In order to conclude, again thanks to Lemma \ref{lem:singular-integrals}, it suffices to show that $\varphi^n\to V$; for any $f$ as above, it holds
%
\begin{align*}
\E\big[\| \varphi^n-V\|_{C^0_t}\big]
& \leq \E \bigg[\Big\| \int_0^\cdot [b^n-f]_r(X^n_r) \dd r \Big\|_{C^0_t}\bigg] + \E\bigg[\Big\| \int_0^\cdot [f_r(X^n_r)-f_r(X_r)] \dd r \Big\|_{C^0_t}\bigg] 
\\
&\qquad+ \E\bigg[\Big\| \int_0^\cdot f_r(X_r) \dd r - V_\cdot \Big\|_{C^0_t}\bigg]\\
& \lesssim \|b^n-f\|_{L^q_t C^\alpha_x} + \E \bigg[\Big\| \int_0^\cdot [f_r(X^n_r)-f_r(X_r)] \dd r \Big\|_{C^0_t}\bigg] + \|b-f\|_{L^q_t C^\alpha_x}
\end{align*}
where we applied several times estimate \eqref{eq:weak-existence-proof-eq1}.
Since $f$ is regular, $b^n\to b$ and $X^n\to X$, passing to the limit we get
\begin{align*}
\limsup_{n\to\infty} \E\bigg[\Big\| \int_0^\cdot b^n_r(X^n_r) \dd r - V_\cdot \Big\|_{C^0_t}\bigg] \lesssim 2 \|b-f\|_{L^q_t C^\alpha_x};
\end{align*}
by the arbitrariness of $f$, we can conclude that $\varphi^n \to V = \varphi$ and so that $X$ is a weak solution.
\end{proof}

\begin{remark}\label{rem:nYoung-BassChen}
Under Assumption \eqref{eq:main exponent}, the unique strong solution $X$ to the SDE constructed in Section \ref{sec:distributional} satisfies Definition \ref{defn:weak-solution}, as readily seen by applying Lemma \ref{lem:integral-estimates} with $h^n=b^n-b$.
In most situations, pathwise solutions $X$ to \eqref{eq:SDE-weak} in the nonlinear Young sense (cf. Definition \ref{defn:nYoung-def}) which are $\mathbb{F}_t$-adapted are also weak solutions in the sense of Definition \ref{defn:weak-solution}.
Indeed in order to construct such $X$, usually one must have already verified that $T^{B^H}$ extends to a bounded operator from $L^q_t C^\alpha_x$ to $L^m_\omega C^{p-\var}_t C^{\eta,\loc}_x$ (similarly to Corollary \ref{cor:T}) and that $X=\varphi+B^H$ with $\varphi\in C^{\zeta-\var}_t$ $\PP$-a.s., for suitable parameters $(p,\eta,\zeta)$ satisfying $1/p +\eta/\zeta>1$.
Linearity of $T^{B^H}$ and stability of nonlinear Young integration $(A,x)\mapsto \int_0^\cdot A(\dd s,x_s)$ (cf. \cite[Theorem 2.7-4)]{galeati2021nonlinear}) then yields
\begin{align*}
	\Big\| \int_0^\cdot b^n(s,X_s)\dd s-\int_0^\cdot T^{B^H}b(\dd s,\varphi_s) \Big\|_{C^0_t}
	\lesssim \big\| T^{B^H}(b^n-b)\big\|_{C^{p-\var}_t C^{\eta}_{\| \varphi\|_{\infty}}} (1+\| \varphi\|_{C^{\zeta-\var}_t})
\end{align*}
where the r.h.s. converges in probability to $0$ due to the aforementioned mapping properties of $T^{B^H}$ and the assumption $b^n\to b$ in $L^q_t C^\alpha_x$.

The converse implication, namely whether the weak solution constructed in Theorem \ref{thm:weak-existence} is also a pathwise solution in the nonlinear Young sense, might only be true for a more restricted range of parameters.
Let us only sketch the power counting, omitting the arbitrarily small exponents everywhere.
The averaged field $T^{B^H}b$ can be constructed as in Corollary \ref{cor:T}, as an element of $C^{2-\var}_t C^{\alpha+1/(2H),\loc}_x$. Furthermore, we know from Lemma \ref{lem:apriori-estim} that $\varphi\in C^{r-\var}_t$ with $1/r=1+\alpha H$. Therefore if
\begin{equation}\label{eq:nYoung-BassChen}
\frac{1}{2}+\left(\alpha + \frac{1}{2H} \right)(\alpha H + 1) >1,
\end{equation}
then the nonlinear Young integral $\int_0^\cdot (T^{B^H}b)_{\dd t}(\varphi_t)$ is well-defined and agrees with $V$.
Note that the regime \eqref{eq:nYoung-BassChen} is nontrivial in the sense that it allows for drifts for which strong uniqueness is not known, since the right-hand side is strictly greater than $1$ for $\alpha=1-1/(2H)$.
We also remark that \eqref{eq:nYoung-BassChen} is sufficient, but not necessary to define $\int_0^\cdot (T^{B^H}b)_{\dd t}(\varphi_t)$, since for particular choices of $b$ the averaged field $T^{B^H}b$ may enjoy better regularity than $C^{2-\var}_t C^{\alpha+1/(2H),\loc}_x$, see e.g. \cite{anzeletti2021regularisation} for such situations.

For a deeper discussion about equivalence of different solution concepts for distributional drifts, including the nonlinear Young one, Definition \ref{defn:weak-solution} and others, we refer to \cite[Theorem 2.15]{anzeletti2021regularisation} and \cite[Theorem 2.11]{butkovsky2023}.
\end{remark}

\section{$\rho$-irregularity}\label{sec:rho}
The goal of this section is to derive some pathwise properties for solutions of \eqref{eq:SDE}, without appealing to Girsanov transform.
Indeed, in the time-homogeneous setting Girsanov is unavailable for $H>1$,\footnote{In the case $H>1$ and $b\in C^\alpha_x$, $\int_0^\cdot b(B^H_r)\dd r\in C^{1+\alpha}_t$, so that Girsanov would require the condition $1+\alpha>H+1/2$; covering the whole regime $\alpha>1-1/(2H)$ would lead to the condition $1-1/(2H)>H-1/2$, which cannot hold for $H>1$.}
while in the time-dependent case it doesn't apply for any value of $H>0$ (since we can allow drifts which are only $L^q$ in time, for values of $q$ arbitrarily close to $1$).
For more details see Appendix \ref{app:girsanov}.

As a meaningful representative of a larger class of pathwise properties, we will focus on the notion of \textit{$\rho$-irregularity}, first introduced in \cite{CG16} in the context of regularisation by noise for ODEs; it has later found several applications in regularisation for PDEs, see \cite{ChoGub2015, ChoGub2014, ChoGes2019,galeati2020prevalence}, and more recently in the inviscid mixing properties of shear flows \cite{galeati2021mixing}. Let us also mention the recent work \cite{romito2022} for an alternative notion of irregularity, partially related to this one.

\begin{definition}
Let $\gamma\in (0,1)$, $\rho>0$. We say that a function $h\in C([0,1],\R^d)$ is $(\gamma,\rho)$-irregular if there exists a constant $N$ such that
\begin{equ}
\Big|\int_s^te^{i\xi\cdot h_r}\dd r\Big|\leq N\,|\xi|^{-\rho}|t-s|^\gamma \quad \forall \xi\in\R^d,\quad 0\leq s\leq t\leq 1;
\end{equ}
we denote by $\| \Phi^h\|_{\mathcal{W}^{\gamma,\rho}}$ the optimal constant. We say that $h$ is $\rho$-irregular for short if there exists $\gamma>1/2$ such that it is $(\gamma,\rho)$-irregular.
\end{definition}

It was shown in~{\cite{CG16, galeati2020prevalence}} that for any $H\in(0,\infty)\setminus\mathbb{N}$, $B^H$ is $\rho$-irregular for any $\rho<1/(2H)$; we establish the same for a class of perturbations of $B^H$ satisfying the following assumption.

\begin{ass}\label{asn:abstract-condition}
Let $\varphi:[0,1]\to\R^d$ be a continuous adapted process which admits moments of any order; moreover, there exist $\beta > 0$ and a control $w$ such that, for any $m \in [1,\infty)$, there exists a constant $C_m$ such that
\begin{equation}\label{eq:abstract-condition}
\big\|\| \varphi_t -\mathbb{E}_s \varphi_t\|_{L^1|\cF_s}\big\|_{L^m} \leq C_m  w(s,t)^{1/2}|t-s|^{\beta}\quad \forall\, 0\leq s\leq t\leq 1. 
\end{equation}
\end{ass}

\begin{theorem}\label{thm:rho-irr}
Let $H\in (0, +\infty) \setminus \mathbb{N}$ and let $\varphi$ satisfy Assumption \ref{asn:abstract-condition} with $\beta=H$;
then $X:=\varphi+B^H$ is $\PP$-almost surely $\rho$-irregular for any $\rho< 1 /(2H)$. More precisely, for any such $\rho$ and any $m\in [1,\infty)$ there exists $\gamma=\gamma(m,\rho)>1/2$ such that
\begin{equation}\label{eq:rho-irr-moments}
\E[ \| \Phi^X\|_{\mathcal{W}^{\gamma,\rho}}^m ]<\infty.
\end{equation}
\end{theorem}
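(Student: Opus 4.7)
The plan is to apply the stochastic sewing lemma in the spirit of Lemma \ref{lem:SSL1} to the $\xi$-parametrised germ
\begin{equ}
A_{s,t}(\xi):=\int_s^t \E_{s_-}e^{i\xi\cdot X_r}\,\dd r
=\int_s^t e^{-c_H|r-s_-|^{2H}|\xi|^2/2}\,e^{i\xi\cdot(\E_{s_-}B^H_r+\E_{s_-}\varphi_r)}\,\dd r,
\end{equ}
where the second equality follows from \eqref{eq:conditioning} applied to $f(x)=e^{i\xi\cdot x}$ (the $\cF_{s_-}$-conditional law of $B^H_r$ being Gaussian with covariance $c_H|r-s_-|^{2H}I_d$). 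For $(s,t)\in\overline{[0,1]}^2_\leq$ one has $r-s_-\geq t-s$ on $[s,t]$, so using the elementary bound $x^{H\rho}e^{-x}\leq C_\rho$ the Gaussian factor yields, for any $\rho\in(0,1/H)$,
\begin{equ}\label{eq:planA}
|A_{s,t}(\xi)|\leq (t-s)\,e^{-c_H(t-s)^{2H}|\xi|^2/2}\leq C_\rho\,|\xi|^{-\rho}(t-s)^{1-H\rho}.
\end{equ}
For $\rho<1/(2H)$ the exponent $1-H\rho$ exceeds $1/2$, which will produce the desired H\"older regularity $\gamma>1/2$.

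The second sewing condition requires bounding $\E_{s_-}\delta A_{s,u,t}(\xi)$ for $(s,u,t)\in\overline{[0,1]}_\leq^3$. Since $A_{s,t}$ and $A_{s,u}$ are $\cF_{s_-}$-measurable, it suffices to control
\begin{equ}
\E_{s_-}\delta A_{s,u,t}(\xi)=\int_u^t\!e^{i\xi\cdot(\E_{s_-}B^H_r+\E_{s_-}\varphi_r)}\Big(e^{-c_H|r-s_-|^{2H}|\xi|^2/2}-e^{-c_H|r-u_-|^{2H}|\xi|^2/2}\E_{s_-}\big[e^{i\xi\cdot\Delta_r}\big]\Big)\dd r,
\end{equ}
with $\Delta_r:=(\E_{u_-}-\E_{s_-})(B^H_r+\varphi_r)$. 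The triangle inequality splits this into two contributions. The first one, coming from the difference of Gaussian factors, is handled via $|e^{-a}-e^{-b}|\leq e^{-(a\wedge b)}|a-b|$ together with $||r-s_-|^{2H}-|r-u_-|^{2H}|\lesssim (t-s)^{2H}$ and the same interpolation $x^{H\rho}e^{-x}\leq C_\rho$. The second contribution uses $|1-e^{ix}|\leq|x|$ and
\begin{equ}
\E_{s_-}|\Delta_r|\leq \E_{s_-}|B^H_r-\E_{s_-}B^H_r|+\E_{s_-}|\varphi_r-\E_{s_-}\varphi_r|,
\end{equ}
the first summand being bounded by $|r-s_-|^H$ by Gaussianity and the second by $C_1w(s_-,r)^{1/2}|r-s_-|^H$ via Assumption \ref{asn:abstract-condition}. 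Combining these with the Gaussian decay under the $r$ integral and interpolating to trade one extra power of $|\xi|$ for one extra power of $(t-s)^H$, one obtains for any $\rho<1/(2H)$ a bound of the form
\begin{equ}
\big\|\E_{s_-}\delta A_{s,u,t}(\xi)\big\|_{L^m}\lesssim C_{m,\rho}\,|\xi|^{-\rho}(t-s)^{1-H\rho}\,\big(1+w(s_-,t)^{1/2}\big).
\end{equ}
This matches conditions \eqref{eq:SSL-cond1}-\eqref{eq:SSL-cond2} of Lemma \ref{lem:SSL1} with $\eps_1=\eps_2>1/2$, once the additional powers of $(t-s)$ are split off from the controls.

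Identifying the sewing is standard: the crude bound $|J_t(\xi)-J_s(\xi)-A_{s,t}(\xi)|\leq 2(t-s)$ (where $J_t(\xi):=\int_0^t e^{i\xi\cdot X_r}\dd r$) suffices by the uniqueness part of Lemma \ref{lem:SSL1}, so we obtain $\|J_t(\xi)-J_s(\xi)\|_{L^m}\leq C_{m,\rho}|\xi|^{-\rho}(t-s)^{1-H\rho}$ for every $m$ and every $\rho<1/(2H)$. To promote this pointwise (in $\xi$) estimate to the uniform $\rho$-irregularity norm, I would use the complementary trivial Lipschitz bound $|J_t(\xi)-J_t(\eta)|\leq |\xi-\eta|\int_0^t|X_r|\,\dd r$, interpolate it against \eqref{eq:planA}-type bounds to produce joint $(s,t,\xi)$-regularity, and then apply Kolmogorov's continuity theorem on dyadic annuli $\{|\xi|\sim 2^k\}$, summing over $k$ using the $|\xi|^{-\rho}$ factor. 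Here one has to use a slightly smaller $\tilde\rho<\rho<1/(2H)$ to absorb the Kolmogorov loss and still remain strictly below the critical exponent $1/(2H)$; the H\"older exponent $\gamma>1/2$ is preserved by picking $\rho$ sufficiently close to $\tilde\rho$.

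The main obstacle will be the careful bookkeeping in the $\delta A$ bound: one must track three competing small/large quantities simultaneously ($|\xi|^{-\rho}$, the $(t-s)$-power, and the control $w$), keeping the exponent of $(t-s)$ strictly above $1/2$ uniformly in $\rho<1/(2H)$, and doing so while replacing two Gaussian bases ($s_-$ vs.\ $u_-$) and two conditional expectations ($\E_{s_-}$ vs.\ $\E_{u_-}$) on both $B^H$ and $\varphi$. The rest (the final Kolmogorov-in-$\xi$ step with dyadic summation, and the deduction of \eqref{eq:rho-irr-moments} for arbitrary $m$) is mechanical once the pointwise-in-$\xi$ sewing estimate is in place.
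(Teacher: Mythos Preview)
Your overall strategy --- shifted stochastic sewing for the pointwise-in-$\xi$ estimate, then an upgrade to a uniform bound --- coincides with the paper's, but two concrete errors in the setup need fixing.

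First, your two displayed expressions for $A_{s,t}$ are \emph{not} equal: identity \eqref{eq:conditioning} requires the additive shift to be $\cF_{s_-}$-measurable, whereas $\varphi_r$ is only $\cF_r$-measurable, so $\E_{s_-}e^{i\xi\cdot X_r}$ does not reduce to a Gaussian factor times a single phase. The paper's germ is your \emph{second} expression, but obtained as $A_{s,t}=\E_{s_-}\int_s^t e^{i\xi\cdot(\E_{s_-}\varphi_r+B^H_r)}\,\dd r$, i.e.\ one first freezes $\varphi_r$ at its $\cF_{s_-}$-projection and keeps $B^H_r$ random. Second, your formula for $\E_{s_-}\delta A_{s,u,t}$ is incomplete: in Lemma \ref{lem:SSL1} the shift inside $A_{s,u}$ is $s-(u-s)$, not $s-(t-s)$, so $A_{s,u}$ is \emph{not} $\cF_{s_-}$-measurable and there is a nontrivial $[s,u]$ contribution in addition to your $[u,t]$ integral. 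More importantly, working with the unexpanded germ the factor $e^{i\xi\cdot B^H_r}$ is common to all three pieces of $\delta A$; after splitting into $[s,u]$ and $[u,t]$ integrals and applying the tower property, the only difference sits in the phases $e^{i\xi\cdot\E_\cdot\varphi_r}$, controlled via $|e^{ix}-e^{iy}|\leq|x-y|$ and Assumption \ref{asn:abstract-condition}. There is no ``difference of Gaussian factors'' at all, and the bookkeeping you flag as the main obstacle disappears. This matters: your Gaussian-difference contribution is of order $|\xi|^{-\rho}(t-s)^{1-H\rho}$ with no accompanying control, and such a term cannot be written as $w_2(s_-,t)|t-s|^{\eps_2}$ with $w_2$ superadditive and $\eps_2>0$, so it does not feed into condition \eqref{eq:SSL-cond2} of Lemma \ref{lem:SSL1}.

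On the final step you and the paper diverge. Instead of Kolmogorov on dyadic annuli with a Lipschitz-in-$\xi$ interpolation, the paper simply integrates the pointwise moment bound against $|\xi|^{\tilde\rho m}\dd\xi$ to land in a Fourier--Lebesgue space $\mathcal{F}L^{\tilde\rho,m}$, and then invokes a deterministic embedding (Lemma 57 of \cite{galeati2020prevalence}, using that $X$ has moments) to pass to $\mathcal{F}L^{\tilde\rho,\infty}$, which is exactly $(\gamma,\tilde\rho)$-irregularity. Your route should also work, but the Fourier--Lebesgue argument bypasses the interpolation and the annular summation entirely.
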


\begin{remark}
Let us make some observations on Assumption \ref{asn:abstract-condition} and Theorem \ref{thm:rho-irr}:
\begin{itemize}
\item Lemmas \ref{lem:drift-regularity} and \ref{lem:apriori-estim} provide sufficient conditions on $q$ and $\alpha$ that guarantee that solutions of \eqref{eq:SDE} with $b\in L^q_t C^\alpha_x$ satisfy Assumption \ref{asn:abstract-condition}. Note that in some cases we can therefore obtain $\rho$-irregularity of solutions but not uniqueness.
%
\item Our usual toolbox could in principle be also used to study Gaussian moments of $\Phi^X$ (under a somewhat stronger condition than \eqref{eq:abstract-condition}). For simplicity we do not pursue this in detail.
\item 
In terms of exponents, the condition \eqref{eq:abstract-condition} appears to require the same order of ``regularity'', namely $1/2+H$, as Girsanov transform (see Appendix \ref{app:girsanov}).
However, \eqref{eq:abstract-condition} is a significantly weaker condition: instead of controlling the usual increments $\varphi_t-\varphi_s$, one only needs to control the stochastic increments $\varphi_t-\E_s\varphi_t$, which can be much smaller.
%
\item In \cite{CG16,galeati2020prevalence} the \textit{additive perturbation problem} is studied in detail; the authors try to establish, in a deterministic framework, whether a path $h+\varphi$ can be shown to be $\rho$-irregular, given the knowledge that $h$ is so and $\varphi$ enjoys higher H\"older regularity. Such results usually come with a loss of regularity in the exponent $\rho$ at least $1/2$, cf. \cite[Theorem 1.6]{CG16} and \cite[Lemma 78]{galeati2020prevalence}; the use of more probabilistic arguments and stochastic sewing techniques from Theorem \ref{thm:rho-irr} instead allows to cover the whole range $\rho<1/(2H)$ without difficulties.
\end{itemize}
\end{remark}

\begin{proof}
In order to conclude, it suffices to prove the following claim: for any $\rho<1/(2H)$, we can find $\gamma>1/2$ such that for any $m \in [1, \infty)$ it holds
\begin{equation}\label{eq:main-estim}
\Big\| \int_s^t e^{i \xi \cdot X_r} \mathd r \Big\|_{L^m} \lesssim_m |t-s|^{\gamma}  |\xi|^{-\rho}
\quad \forall \, \xi \in \mathbb{R}^d, 0 \leqslant s \leqslant t \leqslant 1. 
\end{equation}
It's clear that in~\eqref{eq:main-estim} we can restrict to $| \xi | \geqslant 1$ (or $| \xi | \geqslant R$) whenever needed, since for small $\xi$ the estimate is trivial.
Once~\eqref{eq:main-estim} is obtained, we can deduce that, for any $\tilde{\rho}<\rho-d/m$, it holds
\begin{equation}\label{eq:auxiliary-estim1}
\mathbb{E} \left[ \int_{\mathbb{R}^d} | \xi |^{\tilde \rho} \bigg| \int_s^t e^{i\xi\cdot X_r} \mathd r \bigg|^m \mathd \xi \right]
= \mathbb{E} \big[\| \mu^X_{s, t} \|_{\mathcal{F} L^{\tilde\rho, m}}^m\big ] \lesssim |t-s|^{\gamma m};
\end{equation}
here we follow the notation from~{\cite{galeati2020prevalence}}, so that $\mu^X_{s,t}$ denotes the occupation measure of $X$ on $[s,t]$ and $\mathcal{F} L^{\rho, m}$ denote Fourier--Lebesgue spaces.
Applying Lemma~57 from~{\cite{galeati2020prevalence}} to~\eqref{eq:auxiliary-estim1}, together with Assumption \ref{asn:abstract-condition}, yields
\begin{align*}
\mathbb{E} \big[\| \mu^X_{s, t} \|_{\mathcal{F} L^{\tilde\rho, \infty}}^m\big] 
& \lesssim \mathbb{E} \big[\| X \|_{C_t}^d  \| \mu^X_{s, t} \|_{\mathcal{F} L^{\tilde \rho,m}}^m\big]\\
& \lesssim \mathbb{E} \big[\| X \|_{C_t}^{2 d} \big]^{1 / 2}\, \mathbb{E} \big[\| \mu^X_{s, t} \|_{\mathcal{F} L^{\tilde\rho, m}}^{2 m}\big]^{1/2}
\lesssim |t-s|^{\gamma m}.
\end{align*}
By the arbitrariness of $m$ and Kolmogorov's continuity criterion, one then deduces that $\mu^X\in C^{\tilde \gamma}_t \mathcal{F}L^{\tilde\rho,\infty}_x$ for any $\tilde{\gamma}<\gamma$ and $\tilde{\rho}<\rho$; but this is equivalent to saying that $X$ is $(\tilde{\gamma},\tilde{\rho})$-irregular, cf. \cite[Section 3.2]{galeati2020prevalence}. The arbitrariness of $\rho<1/(2H)$ readily implies the conclusion as well as the moment estimate \eqref{eq:rho-irr-moments}.

In order to prove the claim \eqref{eq:main-estim}, we will apply Lemma \ref{lem:SSL1}, with
$(S,T)=(0,1)$, and $n=m$.
Fix $\xi \in \mathbb{R}^d$; arguing as in Lemma \ref{lem:SSL2}, it is easy to check that $\int_0^\cdot e^{i\xi\cdot X_r} \dd r$ is the stochastic sewing of
\[ A_{s, t} := \mathbb{E}_{s-(t-s)} \int_s^t e^{i \xi \cdot (\mathbb{E}_{s-(t-s)} \varphi_r + B^H_r)} \mathd r. \]
%
Note that for any $r\in (s,t)$ one has
\begin{equ}
\big|\mathbb{E}_{s-(t-s)}e^{i\xi\cdot B^H_r}\big|=\big|\mathbb{E}_{s-(t-s)}e^{i\xi\cdot (B^H_r-\E_{s-(t-s)}B^H_r)}\big|=e^{-c|\xi|^2|r-s+(t-s))|^{2H}}
\end{equ}
and therefore we have
\begin{equation}\label{eq:auxiliary-estim2}
| A_{s,t} | \lesssim e^{-c |\xi|^2 |t-s|^{2 H}} |t-s| 
\lesssim |\xi|^{-\rho} |t-s|^{1-\rho H}
\end{equation}
where we used the basic inequality $e^{-c|y|^2}\lesssim |y|^{-\rho}$. By the assumption on $\rho$, $\eps_1:=1/2-\rho H>0$, and therefore the condition \eqref{eq:SSL-cond1} is satisfied with $w_1(s,t)=N|\xi|^{-2\rho}(t-s)$.

As for the second condition of Lemma \ref{lem:SSL1}, we have for $(s,u,t)\in\overline{[0,1]}_\leq^3$ that
\begin{align*}
	\| \mathbb{E}_{s_-} \delta A_{s, u, t} \|_{L^m}
	& \leq  \int_u^t \big\| \mathbb{E}_{u-(t-u)}e^{i \xi \cdot B^H_r} (e^{i \xi \cdot \mathbb{E}_{s-(t-s)} \varphi_r} - e^{i \xi \cdot \mathbb{E}_{u-(t-u)} \varphi_r}) \big\|_{L^m}\dd r\\
	&\quad+\int_s^u \big\| \mathbb{E}_{s-(t-s)}e^{i \xi \cdot B^H_r} (e^{i \xi \cdot \mathbb{E}_{s-(t-s)} \varphi_r} - e^{i \xi \cdot \mathbb{E}_{s-(u-s)} \varphi_r}) \big\|_{L^m}\dd r=:I+J.
	\end{align*}
As usual, $I$ and $J$ are treated identically, so we only consider the former. We write
\begin{align*}
I	& = \int_u^t e^{-c | \xi |^2 | r - u + t - u |^{2 H}} \big\| e^{i \xi \cdot \mathbb{E}_{s - (t - s)} \varphi_r} - e^{i \xi \cdot \mathbb{E}_{u-(t-u)} \varphi_r} \big\|_{L^m}\dd r\\
	& \leq e^{- \tilde{c} | \xi |^2 | t - s |^{2 H}} | \xi | \int_u^t \|\mathbb{E}_{s - (t - s)} \varphi_r -\mathbb{E}_{u - (t - u)} \varphi_r \|_{L^m}\dd r\\
 	& \lesssim e^{- \tilde{c} |\xi|^2 |t-s|^{2H}}  |\xi|\, w(s_-,t)^{1/2} |t-s|^{1+H},
\end{align*}
where in the second line we used $(s,u,t)\in\overline{[0,1]}_{\leq}^3$ and in the last one  we used Assumption \ref{asn:abstract-condition}.
Applying again the basic inequality $e^{- \tilde{c}|y|^{2}} \lesssim |y|^{-1-\rho}$, we obtain
\begin{equ}
\| \mathbb{E}_{s_-} \delta A_{s,u,t} \|_{L^m} \lesssim |\xi|^{-\rho}w(s_-,t)^{1/2}|t-s|^{1-H\rho}.
\end{equ}
Therefore, condition \eqref{eq:SSL-cond2} is satisfied with $\eps_2=\eps_1=1/2-\rho H$ and $w_2(s,t)=N|\xi|^{-\rho}w^{1/2}(s,t)(t-s)^{1/2}$ and by \eqref{eq:SSL-conc3} we finally get 
\begin{equ}
\Big\| \int_s^t e^{i \xi \cdot X_r} \mathd r \Big\|_{L^m} \lesssim|\xi|^{-\rho}|t-s|^{1/2+\eps_1}\big(1+w(s,t)\big),
\end{equ}
yielding \eqref{eq:main-estim}.
\end{proof}

\section{Applications to transport and continuity equations}\label{sec:transport}

Having established well-posedness of the characteristic lines $\dd X_t= b_t(X_t)\dd t + \dd B^H_t$, the next natural step is to investigate the associated stochastic transport equation
\begin{equation}\label{eq:stoch-transport}
\partial_t u + b\cdot\nabla u + \dot B^H\cdot \nabla u =0.
\end{equation}
Natural questions in PDE theory and regularization by noise for \eqref{eq:stoch-transport} are its well-posedness, cf. the seminal work \cite{flandoli2010well}, and propagation of the regularity of initial data, first addressed in \cite{fedrizzi2013noise}.
Both features need not be true in the absence of noise; among the vast literature, let us mention: the work \cite{modena2018} where counterexamples to uniqueness are provided even for Sobolev differentiable drifts;
\cite{brue2021positive} where it is shown how uniqueness of the generalized Lagrangian flow (in the sense of DiPerna-Lions \cite{diperna1989ordinary}) does not imply uniqueness of trajectorial solutions to the ODE;
finally \cite{brue2021sharp}, providing sharp examples that DiPerna-Lions flows can at most propagate a ``logarithmic derivative'' of regularity of the initial data $u_0$, but not better. As we will see in Theorem \ref{thm:transport-equation}, the presence of $B^H$ allows to prevent all such pathologies, yielding nontrivial regularisation by noise results even in situations where uniqueness of solutions is already known to hold.

Rather than working directly with equation \eqref{eq:stoch-transport}, following  \cite{flandoli2010well}, it is useful to introduce the transformation $\tilde u_t(x)=u_t(x+B^H_t)$, $\tilde{b}_t(x)=b_t(x+B^H_t)$, which relates it to
\begin{equation}\label{eq:stoch-transport-transformed}
\partial_t \tilde u + \tilde{b}\cdot \nabla \tilde u=0.
\end{equation}
This transformation formally assumes $B^H$ to be differentiable, but the resulting equation \eqref{eq:stoch-transport-transformed} is then well defined (at least for bounded $b$) for any continuous path $B^H$.
More rigorously, we are implicitly assuming that the chain rule applies, which amounts to working with $B^H$ as a geometric rough path, see \cite{catellier2016rough} for the rigorous equivalence between \eqref{eq:stoch-transport}-\eqref{eq:stoch-transport-transformed} in this case. In the Brownian case, this means that the multiplicative noise must be interpreted in the Stratonovich sense, as in \cite{flandoli2010well}.
On the other hand, the resulting PDE \eqref{eq:stoch-transport-transformed} is well defined also for values $H\leq 1/4$, where the rough path formalism no longer applies, and indeed it can be regarded as a \textit{PDE with random drift} $\tilde{b}$, rather than a stochastic PDE.

A nice feature of the regular regime $H>1$, included in our setting, is that here $B^H$ is $\PP$-a.s. differentiable and so \eqref{eq:stoch-transport} is perfectly well defined and the above transformation is completely rigorous (as soon as $(u_t)_{t\in[0,1]}$ is bounded in some function space) and does not involve any ``choice'' of the rough lift.
The above considerations motivate the following definition; from now on we will use both notations $\tilde{u}_t(x)$ and $\tilde{u}_t(x;\omega)$ to denote $u_t(\omega, x+B^H_t(\omega))$, in order to stress the fixed realization $\omega\in \Omega$ whenever needed; similarly for $\tilde{b}_t(x)$ and $\tilde{b}_t(x;\omega)$.

\begin{definition}\label{defn:solution-transport}
For a fixed $\omega\in \Omega$, we say that $v$ is a weak solution to the PDE \eqref{eq:stoch-transport-transformed} associated to $\tilde{b}_t(x;\omega)$ if $v\in L^1_t W^{1,1,\loc}_{x}$, $\tilde{b}\cdot\nabla v\in L^1_t L^{1,\loc}_x$ and for any smooth, compactly supported function $\varphi:[0,1]\times\R^d\to \R$ and any $t\in [0,1]$ it holds
\begin{equation}\label{eq:defn-solution-transport}
\langle\varphi_t,v_t\rangle-\langle \varphi_0,v_0 \rangle =\int_0^t [\langle \partial_t\varphi_s ,v_s\rangle + \langle \varphi_s, \tilde{b}_s(\cdot\,;\omega)\cdot\nabla v_s \rangle] \dd s.
\end{equation}
We say that a stochastic process $u$ is a pathwise solution to the stochastic transport equation \eqref{eq:stoch-transport} if for $\PP$-a.e. $\omega\in\Omega$, the corresponding $\tilde{u}_t(x;\omega)$ is a weak solution to \eqref{eq:stoch-transport-transformed} associated to $\tilde{b}_t(x;\omega)$, in the above sense. Finally, a pathwise solution is said to be strong if it is adapted to the filtration generated by $B^H$.
\end{definition}

Similarly to equations \eqref{eq:stoch-transport}-\eqref{eq:stoch-transport-transformed}, we can relate the stochastic continuity equation
\begin{equation}\label{eq:stoch-continuity}
\partial_t \mu + \nabla\cdot (b\, \mu) + \dot B^H\cdot \nabla \mu =0
\end{equation}
to its random PDE counterpart
\begin{equ}\label{eq:stoch-continuity-transformed}
\partial_t \tilde\mu + \nabla \cdot (\tilde b\, \tilde\mu)=0
\end{equ}
by means of the transformation $\tilde{\mu}_t(x;\omega)=\mu_t(\omega,x+B^H_t(\omega))$.
In the next definition, $\mathcal{M}_+=\mathcal{M}_+(\R^d)$ denotes the set of nonnegative finite Radon measures. For $\mu\in\mathcal{M}_+$ we write $\mu\in L^p_x$ to mean that $\mu$ admits an $L^p$-integrable density w.r.t. the Lebesgue measure, in which case with a slight abuse we will identify $\mu(\dd x)=\mu(x) \dd x$.

\begin{definition}\label{defn:solution-continuity}
For a fixed $\omega\in \Omega$, we say that $\rho$ is a weak solution to the PDE \eqref{eq:stoch-continuity-transformed} associated to $\tilde{b}_t(x;\omega)$ if $\rho_t\in \mathcal{M}_+$ for Lebesgue-a.e. $t$,
\[\int_0^1\int_{\R^d} |\tilde{b}_t(x;\omega)| \rho_t(\dd x)<\infty\]
and for any smooth, compactly supported $\varphi:[0,1]\times\R^d\to \R$ and any $t\in [0,1]$ it holds
\[
\langle\varphi_t,\rho_t\rangle-\langle \varphi_0,\rho_0 \rangle =\int_0^t \langle \partial_t\varphi_s + b_s(\cdot\,;\omega)\cdot\nabla \varphi ,\rho_s\rangle \dd s.
\]
We say that a stochastic process $\mu$ is a pathwise solution to the stochastic continuity equation \eqref{eq:stoch-continuity} if for $\PP$-a.e. $\omega\in\Omega$, the corresponding $\tilde{\mu}_t(x;\omega)$ is a weak solution to \eqref{eq:stoch-continuity-transformed} associated to $\tilde{b}_t(x;\omega)$, in the above sense. Finally, a pathwise solution is said to be strong if it is adapted to the filtration generated by $B^H$.
\end{definition}

As it is clear from Definitions \ref{defn:solution-transport}-\ref{defn:solution-continuity}, in order to treat equations \eqref{eq:stoch-transport-transformed}-\eqref{eq:stoch-continuity-transformed} in an analytically weak sense, we need $\tilde{b}$ to enjoy some local integrability and thus to be a well defined measurable function (up to equivalence class). Therefore in the case of coefficients $b\in L^q_t C^\alpha_x$ with $\alpha<0$, throughout this section we will additionally impose that
\begin{equation}\label{eq:transport-condition}
b\in L^r_t L^r_x + L^r_t L^\infty_x \quad \text{for some } r>1;
\end{equation}
we denote by $r'$ the conjugate exponent, i.e. $1/r'+1/r=1$. In the case $\alpha>0$, we will use the convention $r'=1$; in this case under \eqref{eq:main exponent} condition \eqref{eq:transport-condition} is immediately satisfied for $r=q$.
Let us mention that, in the distributional case $\alpha<0$, other approaches for giving meaning \eqref{eq:stoch-transport-transformed}-\eqref{eq:stoch-continuity-transformed} are possible, see Remark \ref{rem:transport-nonlinear-young} below, so it is not obvious whether an assumption of the form \eqref{eq:transport-condition} is needed; still, we will adopt it as it allows us to apply nice analytical tools, while already covering a sufficiently rich class of drifts. 

\begin{remark}\label{rem:transport}
Let us collect a few useful observations:
\begin{itemize}
\item[i)] By standard arguments, whenever a weak solution $v$ to \eqref{eq:stoch-transport-transformed} exists (in the sense of Definition \ref{defn:solution-transport}), then (up to redefining it on a Lebesgue negligible set of $t\in [0,1]$) $t\mapsto v_t$ is continuous w.r.t. suitable weak topologies; in particular it always makes sense to talk about initial/terminal conditions for such equations. The same considerations apply for pathwise solutions, as well as solutions to the continuity equations \eqref{eq:stoch-continuity}-\eqref{eq:stoch-continuity-transformed}; from now on we will always work with these weakly continuous in time versions, without specifying it.
\item[ii)] If $\rho$ is a weak solution to \eqref{eq:stoch-continuity-transformed}, then its mass $\rho_t(\R^d)$ is preserved by the dynamics. In particular, if $\rho\in L^q_t L^p_x$, then it actually belongs to $L^q_t L^{\tilde p}_x$ for all $\tilde{p}\in [1,p]$.
\item[iii)] In Definition \ref{defn:solution-transport} we enforce identity \eqref{eq:defn-solution-transport} to hold for all $\varphi$ smooth and compactly supported, but by standard density arguments it is clear that as soon as more information on $v$ (resp. $u$) and $b$ is available, then \eqref{eq:defn-solution-transport} can be extended to a larger class of $\varphi$, as long as all the terms appearing are well defined. For instance if $v\in L^\infty_t W^{1,p}_x$ and $b\in L^\infty_t L^\infty_x$, then it suffices to know that $\varphi, \partial_t \varphi\in L^1_t L^{p'}_x$, $p'$ being the conjugate of $p$.
\item[iv)] Definitions \ref{defn:solution-transport}-\ref{defn:solution-continuity} and the above observations extend easily to the case of backward equations on $[0,T]$ with terminal conditions $u_T$, $\mu_T$, rather than forward ones with initial $u_0$, $\mu_0$.
\end{itemize}
\end{remark}

The next statement summarizes the main result of this section.

\begin{theorem}\label{thm:transport-equation}
Let $b$ satisfy Assumption \eqref{eq:main exponent} and additionally \eqref{eq:transport-condition} if $\alpha<0$. Then:
\begin{itemize}
\item[i)] For any $p\in [r',\infty)$ and $u_0\in W^{1,p}_x$, there exists a strong pathwise solution $u$ to \eqref{eq:stoch-transport}, which belongs to $L^m_\omega L^\infty_t W^{1,p}_x$ for all $m\in [1,\infty)$.

If moreover $p>r'$, then path-by-path uniqueness holds in the class $L^\infty_t W^{1,p}_x$, in the following sense: there exists an event $\tilde{\Omega}$ of full probability such that, for all $\omega\in\tilde \Omega$ and all $v_0\in W^{1,p}_x$, there can exist at most one weak solution $v \in L^\infty_t W^{1,p}_x$ to the PDE \eqref{eq:stoch-transport-transformed} associated to $\tilde{b}_t(x;\omega)$ and with initial condition $v_0$.
\item[ii)] For any $p\in [r',\infty)$ and any positive measure $\mu_0\in L^p_x$, there exists a strong pathwise solution $\mu$ to \eqref{eq:stoch-continuity}, which belongs to $L^m_\omega L^\infty_t L^p_x$ for all $m\in [1,\infty)$.

Moreover path-by-path uniqueness holds in the class $L^\infty_t L^p_x$, in the following sense:
there exists an event $\tilde{\Omega}$ of full probability such that, for all $\omega\in\tilde \Omega$ and all $\mu_0\in L^p_x$, there can exist at most one weak solution $\rho \in L^\infty_t L^p_x$ to the PDE \eqref{eq:stoch-continuity-transformed} associated to $\tilde{b}_t(x;\omega)$ and with initial condition $\mu_0$.
\end{itemize}
\end{theorem}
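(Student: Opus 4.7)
The plan is to reduce everything to the random PDEs \eqref{eq:stoch-transport-transformed} and \eqref{eq:stoch-continuity-transformed} with drift $\tilde{b}_t(x;\omega)=b_t(x+B^H_t(\omega))$ and to exploit the random flow $\Psi_{s\to t}$ associated to $\dot y_t=\tilde{b}_t(y_t)$ (which, via the change of variable \eqref{eq:change-variables}, is just the flow $\Phi_{s\to t}$ of \eqref{eq:SDE} shifted by $B^H$). By Theorems \ref{thm:flow-invertibility} and \ref{thm:flow-diffeo}, there is an event $\tilde\Omega$ of full probability on which, for every $\omega\in\tilde\Omega$ and every $(s,t)\in[0,1]^2_\leq$, $\Psi_{s\to t}(\cdot;\omega)$ and its inverse $\Psi_{s\leftarrow t}(\cdot;\omega)$ belong to $C^{1+\delta,\loc}_x$, with moment bounds on the spatial Jacobian and its inverse uniform in $(s,t,x)$. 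This is the key quantitative input.

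For existence, the natural candidates are $\tilde u_t(x):=u_0(\Psi_{0\leftarrow t}(x))$ and $\tilde\mu_t:=(\Psi_{0\to t})_*\mu_0$. The $W^{1,p}$ (respectively $L^p$) estimate follows from a change of variables, using the moment bounds on $\nabla\Psi_{0\leftarrow t}$ and $\det\nabla\Psi_{0\leftarrow t}$ (here we crucially use that we can control $|\det\nabla\Psi|^{-1}$, which is obtained from Lemma \ref{lem:moments-jacobian} applied to the inverse variational equation \eqref{eq:eq-for-K}). To check that these candidates are weak solutions, I would approximate $b$ by $b^n\in L^q_tC^2_x$ with $b^n\to b$ in $L^q_tC^{\alpha-1}_x$; then the classical smooth solutions $\tilde u^n_t=u_0\circ\Psi^n_{0\leftarrow t}$ satisfy \eqref{eq:defn-solution-transport} pointwise, and by Theorem \ref{thm:stability} together with the Jacobian convergence of Theorem \ref{thm:flow-diffeo}, $\tilde u^n\to\tilde u$ strongly in $L^1_\omega L^1_t W^{1,p,\loc}_x$; the only nontrivial term to pass to the limit is $\int\varphi\,\tilde b^n\cdot\nabla\tilde u^n$, for which I would use condition \eqref{eq:transport-condition} to bound $\tilde b^n-\tilde b$ in $L^r_tL^{r,\loc}_x+L^r_tL^\infty_x$ and H\"older's inequality with the $L^p$ control on $\nabla\tilde u^n$, valid since $p\geq r'$. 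Adaptedness of the solutions follows from adaptedness of the flow.

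For uniqueness in both statements, I would push the change-of-variables strategy all the way. Given a weak solution $v\in L^\infty_tW^{1,p}_x$ of \eqref{eq:stoch-transport-transformed} with initial datum $v_0$, the plan is to show that $w_t(x):=v_t(\Psi_{0\to t}(x))$ is constant in $t$ (and equals $v_0(x)$), which yields $v_t=v_0\circ\Psi_{0\leftarrow t}$ and hence uniqueness. The formal computation $\partial_t w_t=[\partial_t v_t+\tilde b_t\cdot\nabla v_t]\circ\Psi_{0\to t}=0$ must be justified in the distributional sense, which I would do by mollifying $v$ in space: defining $v^\eps=v*\rho_\eps$, the mollified equation reads $\partial_tv^\eps+\tilde b\cdot\nabla v^\eps=R^\eps$ where $R^\eps$ is the Friedrichs commutator $\tilde b\cdot\nabla v^\eps-(\tilde b\cdot\nabla v)*\rho_\eps$. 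For each fixed $\omega\in\tilde\Omega$ one has $v^\eps\in C^1$, so $w^\eps_t=v^\eps_t\circ\Psi_{0\to t}$ is classically differentiable with $\partial_t w^\eps_t=R^\eps_t\circ\Psi_{0\to t}$, and it remains to show $R^\eps\to0$ in $L^1_tL^{1,\loc}_x$. This is where $p>r'$ enters: since $\nabla v\in L^\infty_tL^p_x$ and $\tilde b\in L^r_t(L^r_x+L^\infty_x)$ with $1/p+1/r<1$, the classical DiPerna--Lions commutator lemma applies (no Sobolev regularity on $\tilde b$ is required, only integrability), giving $R^\eps\to0$. For the continuity equation uniqueness (which is stated for $p\geq r'$, not $p>r'$), the commutator argument also goes through since the dual pairing is $\rho\cdot\tilde b\cdot\nabla\varphi$ and the test function is smooth, so no gain is needed.

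The main obstacle will be the commutator analysis in the distributional regime $\alpha<0$: in the smooth case of \cite{flandoli2010well,fedrizzi2013noise} the drift $\tilde b$ is a genuine function of suitable integrability, whereas here condition \eqref{eq:transport-condition} is precisely what is imposed to make the commutator well-defined and convergent. A secondary subtlety is the passage from the transformed PDE to the original stochastic one: since we have defined solutions pathwise via Definitions \ref{defn:solution-transport}--\ref{defn:solution-continuity}, no stochastic calculus is needed (this is the benefit of having $H\in(0,\infty)\setminus\mathbb{N}$ treated uniformly), but one must verify that the event $\tilde\Omega$ on which everything works can be chosen independently of $u_0$, $v_0$, $\mu_0$ and of the candidate solution; this is achieved by separability of $W^{1,p}_x$ and $L^p_x$ together with the continuity of the flow-based solution map in the data.
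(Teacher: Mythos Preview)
Your existence argument is essentially the paper's (Proposition \ref{prop:transport-existence}): candidate solutions via the flow, $W^{1,p}$/$L^p$ bounds by change of variables and moment estimates on $\nabla\Psi$, $(\nabla\Psi)^{-1}$, $\det\nabla\Psi$, then passage to the limit. The paper uses weak compactness rather than strong convergence of the approximations, but either works.

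The uniqueness arguments differ from the paper's, and the continuity case has a genuine gap.

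For transport uniqueness you compose with the flow: $w^\eps_t=v^\eps_t\circ\Psi_{0\to t}$ and $\partial_t w^\eps_t=R^\eps_t\circ\Psi_{0\to t}$. You are right that the commutator $R^\eps\to 0$ needs only integrability of $\tilde b$, since $\nabla v\in L^p$ is a genuine function; this is also the content of the paper's Lemma \ref{lem:duality-transport}. The issue is the chain rule itself: it requires $\dot\Psi_{0\to t}(x)=\tilde b_t(\Psi_{0\to t}(x))$ as a Lebesgue identity along trajectories. For $\alpha>0$ this is immediate, but for $\alpha<0$ the flow is defined through the nonlinear Young formalism (Definition \ref{def:flow-nYoung}), and it is not established that the nonlinear Young integral coincides with the Lebesgue integral $\int_0^t\tilde b_s(\Psi_{0\to s}(x))\,\dd s$ for individual trajectories, even under \eqref{eq:transport-condition}. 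The paper avoids this by a duality argument: it constructs solutions to the \emph{backward} continuity equation with smooth compactly supported terminal data (Proposition \ref{prop:backward-continuity}), which land in $L^{p_2}_x$ for arbitrarily large $p_2$, and pairs the given forward transport solution against them (Lemma \ref{lem:duality-transport}). This uses only the weak formulations and never a pointwise chain rule with the flow.

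For continuity uniqueness your claim that ``the commutator argument also goes through since the test function is smooth'' does not produce a proof. With $\rho\in L^p_x$ only (not $W^{1,p}_x$), the relevant commutator is $\nabla\!\cdot(\tilde b\,\rho^\eps)-(\nabla\!\cdot(\tilde b\,\rho))^\eps$, and this does \emph{not} converge to zero without Sobolev regularity on $\tilde b$; that is precisely the DiPerna--Lions hypothesis, unavailable here. The observation that $\langle\rho,\tilde b\cdot\nabla\varphi\rangle$ is well defined for smooth $\varphi$ only says the weak formulation makes sense, it gives no uniqueness mechanism. The paper instead invokes Ambrosio's superposition principle (Theorem \ref{thm:ambrosio}): any nonnegative $L^p$ weak solution of \eqref{eq:stoch-continuity-transformed} is a superposition of Dirac masses along integral curves of $\tilde b(\cdot;\omega)$, and path-by-path uniqueness for the ODE (Theorems \ref{thm:functional:PBP-uniqueness}--\ref{thm:distributional:PBP-uniqueness}) forces each such curve to equal $\Psi_{0\to\cdot}(x;\omega)$, whence $\rho_t=(\Psi_{0\to t})_*\rho_0$. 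This combination of superposition with the path-by-path ODE uniqueness is the missing idea in your sketch.
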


Theorem \ref{thm:transport-equation} will be proved by mostly analytical techniques, once they are combined with the information coming from the previous sections.
We will first establish existence of pathwise solutions to equations \eqref{eq:stoch-transport}-\eqref{eq:stoch-continuity} satisfying the desired a priori bounds, see Proposition \ref{prop:transport-existence}.

Uniqueness will be established by two different methods.
In the transport case, we will first establish a priori bounds for solutions the dual equation (backward continuity equation) in Proposition \ref{prop:backward-continuity} and then perform a duality argument (Lemma \ref{lem:duality-transport}); see \cite{diperna1989ordinary} and \cite{beck2019stochastic} for significant precursors in this direction.

For the continuity equation we will instead infer uniqueness from Ambrosio's superposition principle (cf. Theorem \ref{thm:ambrosio}) combined with our path-by-path uniqueness results (Theorems \ref{thm:functional:PBP-uniqueness}-\ref{thm:distributional:PBP-uniqueness}).
To the best of our knowledge, it is the first time these two results are combined in this way to infer path-by-path uniqueness for \eqref{eq:stoch-continuity}; let us mention however that in \cite [Section 4]{beck2019stochastic} the opposite idea is developed, proving path-by-path uniqueness for the SDE starting from the corresponding results for \eqref{eq:stoch-continuity}.

Before giving the proofs, let us recall a few notations and basic facts. We will use $\Psi$ to denote the random flow of diffeomorphisms associated to the (random) ODE $\dot \varphi = \tilde{b}_t(\varphi)$, where we recall the fundamental relation $X_t=\varphi_t+B^H_t$ as well as \eqref{eq:change-variables}.
Similarly to Section \ref{sec:flow}, we will use the notations $J^x_{s\to t} := \nabla \Psi_{s\to t}(x)$, $K^x_{s\to t} := (J^x_{s\to t})^{-1} = \nabla \Psi_{s \leftarrow t}(\Psi_{s\to t}(x))$; we also set $j_{s\to t}(x):=\det J^x_{s\to t}$, similarly for $j_{s\leftarrow t}(x)$.
Recall that, in the case of regular $b$, we have the relations
\begin{equation}\label{eq:determinant-formula}
j_{s\to t}(x) = \exp\Big(\int_s^t {\rm div} b_r (\Phi_{s\to r}(x)) \dd r\Big), \ \
j_{s\leftarrow t}(x) = \exp\Big(-\int_s^t {\rm div} b_r (\Phi_{r\leftarrow t}(x)) \dd r\Big).
\end{equation}

\begin{prop}\label{prop:transport-existence}
Let $b$ satisfy Assumption \eqref{eq:main exponent} and additionally \eqref{eq:transport-condition} if $\alpha<0$. Then:
\begin{itemize}
\item[i)] For any $p\in [r',\infty)$ and $u_0\in W^{1,p}_x$, there exists a strong pathwise solution $u$ to \eqref{eq:stoch-transport}, which belongs to $L^m_\omega L^\infty_t W^{1,p}_x$ for all $m\in [1,\infty)$.
\item[ii)] For any $p\in [r',\infty)$ and any positive measure $\mu_0$ such that $\mu_0\in L^p_x$, there exists a strong pathwise solution $\mu$ to \eqref{eq:stoch-continuity}, which belongs to $L^m_\omega L^\infty_t L^p_x$ for all $m\in [1,\infty)$.
\end{itemize}
\end{prop}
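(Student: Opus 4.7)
The plan is to construct solutions via the random flow $\Psi$ of Section \ref{sec:flow}. For regular $b$, the method of characteristics yields explicit solutions to \eqref{eq:stoch-transport-transformed} and \eqref{eq:stoch-continuity-transformed} with data $(u_0,\mu_0)$:
\begin{equation*}
\tilde u_t(x)=u_0(\Psi_{0\leftarrow t}(x)),\qquad \tilde\mu_t(x)=\mu_0(\Psi_{0\leftarrow t}(x))\,j_{0\leftarrow t}(x),
\end{equation*}
where $j_{0\leftarrow t}(x)=\det\nabla\Psi_{0\leftarrow t}(x)$. I approximate $b$ by a sequence $b^n$ of regular fields, chosen so that $b^n\to b$ in $L^q_tC^\alpha_x$ (and also in the norm from \eqref{eq:transport-condition} when $\alpha<0$), write down the corresponding $\tilde u^n,\tilde\mu^n$ via the above formulas, derive uniform-in-$n$ bounds, and pass to the limit.

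The a priori estimates follow from the change of variables $y=\Psi^n_{0\leftarrow t}(x)$, yielding
\begin{equation*}
\|\tilde u^n_t\|_{L^p_x}^p=\int|u_0(y)|^p j^n_{0\to t}(y)\,\dd y,\quad\|\nabla\tilde u^n_t\|_{L^p_x}^p=\int|\nabla u_0(y)|^p|K^{n,y}_{0\to t}|^p j^n_{0\to t}(y)\,\dd y,
\end{equation*}
and, for the continuity equation, $\|\tilde\mu^n_t\|_{L^p_x}^p=\int|\mu_0(y)|^p j^n_{0\to t}(y)^{1-p}\,\dd y$. Taking the $L^m_\omega$-norm and invoking Minkowski's inequality reduces everything to uniform-in-$(n,y,t)$ bounds on $|K^{n,y}_{0\to t}|$ and $(j^n_{0\to t}(y))^{\pm 1}$ in every $L^m_\omega$. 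The former follows from Lemma \ref{lem:moments-jacobian} together with its analogue for $K$ via the variational equation \eqref{eq:eq-for-K}; the latter from the representation \eqref{eq:determinant-formula} combined with Lemma \ref{lem:integral-estimates} applied to $\mathrm{div}\,b^n\in L^q_tC^{\alpha-1}_x$, all with constants depending only on $\|b^n\|_{L^q_tC^\alpha_x}\leq\|b\|_{L^q_tC^\alpha_x}$.

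For the limit, Section \ref{sec:flow} gives (along a subsequence) $\Psi^n,K^n,j^n\to\Psi,K,j$ a.s. locally uniformly in $(t,x)$; coupled with the uniform moment bounds, Vitali's theorem upgrades this to strong convergence $\tilde u^n\to\tilde u$, $\tilde\mu^n\to\tilde\mu$ in $L^m_\omega L^\infty_t L^p_{x,\mathrm{loc}}$ and weak-$\ast$ convergence in $L^m_\omega L^\infty_t W^{1,p}_x$ (resp. $L^m_\omega L^\infty_t L^p_x$), with the claimed norms. Writing \eqref{eq:defn-solution-transport} for $(b^n,\tilde u^n)$ and a compactly supported test function $\varphi$, the linear terms pass to the limit trivially, while the nonlinear pairing $\langle\varphi_s,\tilde b^n_s\cdot\nabla\tilde u^n_s\rangle\to\langle\varphi_s,\tilde b_s\cdot\nabla\tilde u_s\rangle$ by combining the strong convergence $\tilde b^n\to\tilde b$ in $L^r_t(L^r_x+L^\infty_x)$ with the weak convergence of $\nabla\tilde u^n$ in $L^\infty_tL^p_x$, using the duality $1/r+1/p\leq 1$ ensured by $p\geq r'$. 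The continuity equation is identical, and the translation $x\mapsto x+B^H_t$ carries $\tilde u,\tilde\mu$ back to $u,\mu$ while preserving the norms and the adaptedness.

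The principal difficulty is the distributional regime $\alpha<0$, where $\tilde b$ is a priori not a function and the nonlinear term in \eqref{eq:defn-solution-transport} is therefore a priori meaningless. Condition \eqref{eq:transport-condition} is designed precisely for this obstacle: it provides a locally integrable representative of $b$ of summability dual to $W^{1,p}_x$ for every $p\geq r'$, making the product $\tilde b\cdot\nabla\tilde u$ a well-defined element of $L^1_{t,x,\mathrm{loc}}$ and permitting the stable passage to the limit. A secondary technical point is controlling $j^n$ and $(j^n)^{-p+1}$ on the Gaussian-moment scale (the exponent $1-p$ in the continuity-equation estimate may be arbitrarily negative), which is the one place where the sewing bound of Lemma \ref{lem:integral-estimates}, as opposed to mere flow regularity, is genuinely exploited.
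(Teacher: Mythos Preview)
Your proposal is correct and follows essentially the same approach as the paper: explicit flow formulas for smooth $b$, change of variables to reduce the $W^{1,p}_x$ and $L^p_x$ bounds to moment estimates on $K^y_{0\to t}$ and $j_{0\to t}(y)^{\pm 1}$, and appeal to Lemma~\ref{lem:integral-estimates} for the latter via \eqref{eq:determinant-formula}.

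The only noteworthy difference is in the passage to the limit. The paper takes the more abstract route of extracting a weak limit in the reflexive space $L^p_tL^p_\omega W^{1,p}_x$ and then verifying it solves the equation, without ever identifying the limit explicitly. You instead exploit the flow convergence $\Psi^n\to\Psi$, $K^n\to K$, $j^n\to j$ established in Section~\ref{sec:flow} to identify the limit concretely as $u_0\circ\Psi_{0\leftarrow t}$, and use Vitali to upgrade to strong local convergence. Your route is more constructive and yields the explicit representation formula for the solution as a byproduct; the paper's is shorter and sidesteps the need to track convergence of $K^n$ and $j^n$. One small point: to land in $L^m_\omega L^\infty_t$ (rather than the weaker $L^\infty_t L^m_\omega$) you must take the supremum over $t$ \emph{before} the $L^m_\omega$-norm, which requires $\sup_t|K^y_{0\to t}|$ and $\sup_t j_{0\to t}(y)^{\pm1}$ in $L^m_\omega$; these do follow from Lemma~\ref{lem:moments-jacobian} and from combining Lemma~\ref{lem:integral-estimates} with Lemma~\ref{lem:kolmogorov}, but your write-up glosses over this.
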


\begin{proof}
Let us first assume $b$ to be smooth and derive estimates which only depend on $\| b\|_{L^q_t C^\alpha_x}$.
In this case, the unique solution to \eqref{eq:stoch-transport-transformed} is given by $\tilde u_t(x)= u_0(\Psi_{0\leftarrow t}(x))$.
Let us give the bound on $\|\nabla \tilde u\|_{L^p}$, the one for $\| \tilde u\|_{L^p}$ being similar; also observe that these quantities coincide with the corresponding ones for $u$. It holds
\begin{align*}
\sup_{t\in [0,1]} \| \nabla \tilde u_t\|_{L^p}^p
& = \sup_{t\in [0,1]} \int_{\R^d} |\nabla \tilde u_t(x)|^p \dd x\\
& \leq \sup_{t\in [0,1]} \int_{\R^d} |\nabla u_0(\Psi_{0\leftarrow t}(x))|^p |\nabla \Psi_{0\leftarrow t}(x)|^p \dd x\\
& = \sup_{t\in [0,1]} \int_{\R^d} |\nabla u_0(y)|^p |\nabla \Psi_{0\leftarrow t}(\Psi_{0\to t}(y))|^p j_{0\to t}(y) \dd y\\
& \leq \int_{\R^d} |\nabla u_0(y)|^p \sup_{t\in [0,1]} |K_{0\to t}(y))|^p \, \sup_{t\in [0,1]} j_{0\to t}(y)\, \dd y.
\end{align*}
Taking the $L^m_\omega$-norm on both sides, we arrive at
\begin{align*}
\Big \| \sup_{t\in [0,1]} \| \nabla \tilde u_t\|_{L^p}^p \Big\|_{L^m}
& \leq \int_{\R^d} |\nabla u_0(y)|^p \Big\| \sup_{t\in [0,1]} |K_{0\to t}(y))|^p \, \sup_{t\in [0,1]} j_{0\to t}(y) \Big\|_{L^m} \, \dd y\\
& \leq \| \nabla u_0\|_{L^p}^p\, \sup_{y\in \R^d} \Big\| \sup_{t\in [0,1]} |K_{0\to t}(y))|^p \Big\|_{L^{2m}}^{1/2} \, \Big\| \sup_{t\in [0,1]} j_{0\to t}(y) \Big\|_{L^{2m}}^{1/2}.
\end{align*}
The finiteness of arbitrary moments of $\sup_{t\in [0,1]} j_{0\to t}(y)$ comes from identity \eqref{eq:determinant-formula}, combined with Lemma \ref{lem:integral-estimates} applied to $h={\rm div} b$ and $\varphi_r=\Phi_{0\to r}(y)-B^H_r$. This estimate is clearly uniform in $y\in\R^d$.
The similar bounds for $K$ follow as in Section \ref{sec:flow}, using the fact that $K$ solves the linear Young equation \eqref{eq:eq-for-K}. Up to relabelling $m=m' p$, we have thus shown that
\begin{equation}\label{eq:existence-transport-proof1}
\| \nabla u\|_{L^m_\omega L^\infty_t L^p_x} \lesssim \| \nabla u_0\|_{L^p_x}.
\end{equation}
We now pass to the case of $\mu$; for regular $b$, solutions are given by the identity
\begin{equation*}
\tilde \mu_t(x) = \mu_0(\Psi_{0\leftarrow t}(x)) \exp\Big(-\int_0^t {\rm div} b_r (\Phi_{r \leftarrow t}(x)) \dd r \Big).
\end{equation*}
Arguing similarly to above, it holds
\begin{align*}
\Big\| \sup_{t\in [0,1]} \| \tilde \mu_t\|_{L^p_x}^p \Big\|_{L^m}
& = \Big\| \sup_{t\in [0,1]} \int_{\R^d} |\mu_0(\Psi_{0\leftarrow t}(x))|^p \exp\Big(-p\int_0^t {\rm div} b_r (\Phi_{r \leftarrow t}(x)) \dd r \Big) \dd x \Big\|_{L^m} \\
& = \Big\| \sup_{t\in [0,1]} \int_{\R^d} |\mu_0(y)|^p \exp\Big((1-p) \int_0^t {\rm div} b_r (\Phi_{0\to r}(y)) \dd r \Big) \dd y \Big\|_{L^m}\\
& \leq \sup_{y\in \R^d} \Big\| \sup_{t\in [0,1]} \exp\Big((1-p) \int_0^t {\rm div} b_r (\Phi_{0\to r}(y))\dd r \Big) \Big\|_{L^m} \int_{\R^d} |\mu_0(y)|^p \dd y,
\end{align*}
and so invoking again Lemma \ref{lem:integral-estimates} and relabelling  $m$ we arrive at
\begin{equation}\label{eq:existence-transport-proof2}
\| \tilde \mu\|_{L^m_\omega L^\infty_t L^p_x} \lesssim \| \mu_0\|_{L^p_x}.
\end{equation}

Having established the uniform estimates \eqref{eq:existence-transport-proof1}-\eqref{eq:existence-transport-proof2}, both existence claims for general $b$ now follow from a standard compactness argument, see for instance  \cite{pardoux1975equations} or \cite[Theorem 15]{flandoli2010well}, so we will only sketch it quickly.

Consider smooth approximations $b^n\to b$, $u_0^n\to u_0$ and denote by $u^n$ the associated solutions; by reflexivity of $L^p_t L^p_\omega W^{1,p}_x$, we can extract a (not relabelled) subsequence such that $u^n\rightharpoonup u$ weakly in $L^p_t L^p_\omega L^p_x$. By properties of weak convergence, the limit $u$ still belongs to $L^m_\omega L^\infty_t W^{1,p}_x$ and is progressively measurable, since the sequence $u^n$ was so; also observe that, as in Remark \ref{rem:transport}-i), we can assume $u$ to be weakly continuous in time, so that it is in fact adapted.
By the linear structure of the PDE, one can then finally verify that $u$ is indeed a pathwise solution. Let us stress that here is where for $\alpha<0$ the assumption \eqref{eq:transport-condition} is crucial, since otherwise it is unclear whether $b^n\cdot\nabla u^n$ converges to $b\cdot\nabla u$ in a weak sense (both w.r.t. $L^m_\omega$ and by testing against $\varphi\in C^\infty_c$); indeed since $p\geq r'$, all objects are well defined in $L^m_\omega L^1_t L^{1,{\rm loc}}_x$ and the claim follows from $b^n\to b$ and $u^n\rightharpoonup u$.
The case of $\mu$ can be treated similarly; the only difference is that, since $b\in L^r_t L^r_x + L^r_t L^\infty_x$ and $\mu\in L^m_\omega L^\infty_t (L^{r'}_x\cap L^1_x)$ by Remark \ref{rem:transport}, the additional $\PP$-a.s. integrability constraint $\langle |\tilde b(\omega)|,\tilde\mu(\omega)\rangle<\infty$ coming from Definition \ref{defn:solution-continuity} is also satisfied.
\end{proof}

We now turn to establishing existence of sufficiently regular solutions to the continuity equation with well chosen terminal data; handling the backward nature of the equation yields slightly worsened estimates compared to those of Proposition \ref{prop:transport-existence}.

\begin{prop}\label{prop:backward-continuity}
Let $T\in [0,1]$ and $\mu_T\in L^p$  compactly supported.
Then there exists a pathwise solution $\mu$ to \eqref{eq:stoch-continuity} on $[0,T]$ with terminal condition $\mu\vert_{t=T}=\mu_T$; moreover for any $m\in [1,\infty)$ and any $\tilde p<p$ it holds
$\mu \in L^\infty_t L^m_\omega L^{\tilde p}_x$.
\end{prop}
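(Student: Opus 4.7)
The strategy mirrors that of Proposition~\ref{prop:transport-existence}(ii), the main extra difficulty being the backward-in-time structure of the equation and the need to work with the backward SDE flow $\Phi_{r\leftarrow T}$, which is not adapted to the forward filtration. First, I would approximate $b$ by smooth $b^n\to b$ in $L^q_t C^\alpha_x$ (and in $L^r_t L^r_x+L^r_t L^\infty_x$ when $\alpha<0$), uniformly controlled by $\|b\|_{L^q_t C^\alpha_x}$, and $\mu_T$ by smooth $\mu_T^n\to\mu_T$ in $L^p_x$, all supported in a common compact set. For each smooth pair the transformed backward PDE associated to \eqref{eq:stoch-continuity-transformed} with terminal data $\tilde\mu_T^n$ admits the classical representation
\begin{equation*}
\tilde\mu_t^n(x)=\tilde\mu_T^n(\Psi^n_{t\to T}(x))\exp\Big(\int_t^T {\rm div}\,\tilde b_r^n(\Psi^n_{t\to r}(x))\,\dd r\Big),
\end{equation*}
where $\Psi^n$ is the flow generated by $\tilde b^n=b^n(\cdot+B^H)$.

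Taking $L^{\tilde p}_x$-norm, a change of variables $y=\Psi^n_{t\to T}(x)$ followed by the translation $z=y+B^H_T$ (and the identity $\Psi^n_{r\leftarrow T}(z-B^H_T)=\Phi^n_{r\leftarrow T}(z)-B^H_r$) produces a representation involving the non-adapted backward flow; this is the central obstacle, since Lemma~\ref{lem:integral-estimates} does not apply directly. I circumvent it by one further change of variables $z=\Phi^n_{t\to T}(w)$: the cocycle identity $\Phi^n_{r\leftarrow T}(\Phi^n_{t\to T}(w))=\Phi^n_{t\to r}(w)$ together with \eqref{eq:determinant-formula} absorbs the Jacobian exactly, yielding
\begin{equation*}
\|\tilde\mu^n_t\|_{L^{\tilde p}}^{\tilde p}=\int_{\R^d}|\mu^n_T(\Phi^n_{t\to T}(w))|^{\tilde p}\exp\Big(\tilde p\int_t^T {\rm div}\, b^n_r(\Phi^n_{t\to r}(w))\,\dd r\Big)\,\dd w.
\end{equation*}
Now the integrand depends only on the forward adapted SDE flow starting at $w$ at time $t$, and Lemma~\ref{lem:integral-estimates} applied to $\varphi^w_r:=\Phi^n_{t\to r}(w)-B^H_r$ (which by Lemma~\ref{lem:drift-regularity} satisfies \eqref{eq:key-quantity} with a control uniform in $w$) delivers Gaussian moment bounds on the exponential, uniformly in $w\in\R^d$ and $t\in[0,T]$.

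To close the estimate $\big\|\|\tilde\mu^n_t\|_{L^{\tilde p}_x}\big\|_{L^m_\omega}\lesssim\|\mu_T\|_{L^p_x}$, I combine Hölder in $\omega$ (separating the $|\mu^n_T(\Phi^n_{t\to T}(w))|^{\tilde p}$-factor from the exponential, the latter controlled at any $L^{m'}_\omega$-level) with Hölder in $w$, using the compact support of $\mu^n_T$ to localize the effective integration region to the random set $\{w:\Phi^n_{t\to T}(w)\in{\rm supp}\,\mu^n_T\}$, whose volume is controlled by $L^m_\omega$-moments of the inverse Jacobian $j^n_{t\leftarrow T}$ (again available via Lemma~\ref{lem:integral-estimates}). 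The strict loss $\tilde p<p$ arises precisely at this step: balancing the exponential moment bounds against the random support by Hölder requires a positive integrability gap between $\tilde p$ and $p$, and the constant blows up as $\tilde p\to p^-$. Passing to the limit is then routine: uniform bounds yield weak-$\ast$ compactness in $L^m_\omega L^\infty_t L^{\tilde p}_x$, a diagonal extraction produces a single limit $\mu$ valid for every such pair $(m,\tilde p)$, and the linearity of the PDE combined with condition \eqref{eq:transport-condition} (ensuring that $b^n\tilde\mu^n\to b\tilde\mu$ in $L^1_t L^{1,\loc}_x$ $\PP$-a.s. along the subsequence, by strong convergence of $b^n$ against weak convergence of $\tilde\mu^n$) shows that $\mu$ is a pathwise solution with terminal data $\mu_T$; adaptedness is inherited from the approximating sequence.
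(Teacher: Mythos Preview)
Your sequence of changes of variables is correct and indeed leads to the forward-adapted representation
\[
\|\tilde\mu_t\|_{L^{\tilde p}}^{\tilde p}=\int_{\R^d}|\mu_T(\Phi_{t\to T}(w))|^{\tilde p}\exp\Big(\tilde p\int_t^T {\rm div}\, b_r(\Phi_{t\to r}(w))\,\dd r\Big)\,\dd w,
\]
and the exponential factor has Gaussian moments uniformly in $w$ via Lemma~\ref{lem:integral-estimates}. The gap is in the closing step. You propose to control the volume of the random set $\{w:\Phi_{t\to T}(w)\in\operatorname{supp}\mu_T\}=\Phi_{t\leftarrow T}(B_R)$ by ``$L^m_\omega$-moments of the inverse Jacobian $j_{t\leftarrow T}$, again available via Lemma~\ref{lem:integral-estimates}''. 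But $j_{t\leftarrow T}(z)=\exp\big(-\int_t^T{\rm div}\,b_r(\Phi_{r\leftarrow T}(z))\,\dd r\big)$ is built from the \emph{backward} flow $r\mapsto\Phi_{r\leftarrow T}(z)$, which is not adapted to the forward filtration, so Lemma~\ref{lem:integral-estimates} does not apply to it. Your extra change of variables therefore reintroduces, at the level of the random domain, precisely the non-adaptedness you worked to eliminate from the integrand; the same issue arises if you instead try H\"older in $w$ on the factor $|\mu_T(\Phi_{t\to T}(w))|^{\tilde p}$, since $\int|\mu_T(\Phi_{t\to T}(w))|^p\,\dd w=\int_{B_R}|\mu_T(z)|^p j_{t\leftarrow T}(z)\,\dd z$.

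The paper avoids this circularity by rearranging the order of operations: it applies H\"older in the spatial variable \emph{before} the last change of variables, while the integration domain is still the deterministic ball $B_R$. This cleanly separates $\|\mu_T\|_{L^p}^{\tilde p}$ (and is exactly where $\tilde p<p$ is used), leaving only the exponential to be converted to forward form by the change of variables. One is then left with $\int_{D}N_w\,\dd w$ over a random domain $D$, where $N_w$ has uniform moments; the domain is controlled not through the backward Jacobian but through polynomial growth bounds $\|\Psi_{t\to T}\|_{C^{0,\lambda}}\in L^{\tilde m}_\omega$ on the forward flow (from \eqref{eq:flow-estim-1} and Lemma~\ref{lem:kolmogorov-3}), combined with a decomposition over the level sets $\{\|\Psi_{t\to T}\|_{C^{0,\lambda}}\in[n,n+1)\}$ and Markov's inequality. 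Your argument can be repaired along these lines, but as written the appeal to Lemma~\ref{lem:integral-estimates} for $j_{t\leftarrow T}$ is unjustified.
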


\begin{proof}
We can assume ${\rm supp} \mu_T \subset B_R$ for some $R\geq 1$. We will assume $b$ to be regular and show how to derive suitable a priori estimates; the general case then follows by arguing similarly to Proposition \ref{prop:transport-existence}. The solution is given explicitly by
\begin{equation*}
\mu_t(x) = \mu_T(\Psi_{t\to T}(x)) \exp\Big( \int_t^T {\rm div} b_r (\Psi_{t\to r} (x)) \dd r\Big).
\end{equation*}
For any fixed $t\in [0,T]$, it holds
\begin{align*}
\int_{\R^d} |\mu_t(x)|^{\tilde p} \dd x
& = \int_{\R^d} |\mu_T(\Psi_{t\to T}(x))|^{\tilde p} \exp\Big( \tilde p\int_t^T {\rm div} b_r (\Psi_{t\to r} (x) \dd r)\Big) \dd x\\
& = \int_{\R^d} |\mu_T(y)|^{\tilde p} \exp\Big( (\tilde p-1) \int_t^T {\rm div} b_r (\Psi_{r\leftarrow T} (y) \dd r)\Big) \dd y\\
& \leq \| \mu_T \|_{L^p_x}^{\tilde p} \bigg( \int_{B_R} \exp\Big( \frac{ p(\tilde p-1)}{p-\tilde p}  \int_t^T {\rm div} b_r (\Psi_{r\leftarrow T} (y) \dd r)\Big) \dd y \bigg)^{1-\frac{\tilde p}{p}}
\end{align*}
where in the last passage we used first ${\rm supp} \mu_T\subset B_R$ and then H\"older's inequality.
Applying again the change of variable $x=\psi_{t\leftarrow T}(y)$ and the formula for $j_{t\to T}(x)$, overall we find a costant $\kappa=\kappa(p,\tilde p)$ such that
\begin{align*}
\big\| \| \mu_t\|_{L^{\tilde p}_x} \big\|_{L^m}
\leq \| \mu_T\|_{L^p_x}^{\tilde p}\,
\bigg\| \int_{\Psi_{t\to T}(B_R)} \exp\Big( \kappa \int_t^T {\rm div} b_r (\Psi_{t\to r} (y) \dd r)\Big) \dd y \bigg\|_{L^m}^{1-\frac{\tilde p}{p}}.
\end{align*}
It remains to estimate the last quantity appearing on the r.h.s. above. To this end, let us set $N_y := j_{t\to T}(y)^\kappa$; as usual by Lemma \ref{lem:integral-estimates} it holds $\| N_y\|_{L^m}\lesssim 1$, with an estimate uniform in $y$, $t$ and $T$ and only depending on $\| b\|_{L^q_t C^\alpha_x}$.

Thanks to estimates \eqref{eq:flow-estim-1} and Lemma \ref{lem:kolmogorov-3}, one can show that for any $\tilde m\in [1,\infty)$ and $\lambda>1$, uniformly in $t\in [0,T]$ it holds
\begin{equation*}
\big\| \| \Psi_{t\to T}\|_{C^{0,\lambda}} \big\|_{L^{\tilde m}} <\infty\quad \text{ where } \quad \| \Psi_{t\to T}\|_{C^{0,\lambda}}:= \sup_{|x|\geq 1} |x|^{-\lambda} |\Psi_{t\to T}(x)|;
\end{equation*}
this is because one can first show finiteness of the associated $C^{\eta,\lambda'}_x$-norm by Lemma \ref{lem:kolmogorov-3}, and then deduce from it that $\Psi_{t\to T}$ also belongs to $C^{0,\lambda}_x$ for $\lambda=\lambda'+\eta$ (such an embedding readily follows from the definitions of such spaces).

Therefore it holds
\begin{align*}
\Big\| \int_{\Psi_{t\to T}(B_R)} N_y \dd y \Big\|_{L^m}
& \leq \sum_{n\in \N} \Big\| \chi_{ \| \Psi_{t\to T}\|_{C^{0,\lambda}} \in [n,n+1)}  \int_{\Psi_{t\to T}(B_R)} N_y\, \dd y \Big\|_{L^m}\\
& \leq \sum_{n\in \N} \Big\|   \int_{B_{(n+1)R^\lambda}} \chi_{ \| \Psi_{t\to T}\|_{C^{0,\lambda}}\geq n} N_y\, \dd y \Big\|_{L^m}\\
& \leq \sum_{n\in \N} \int_{B_{(n+1)R^\lambda}} \| \chi_{ \| \Psi_{t\to T}\|_{C^{0,\lambda}}\geq n}\|_{L^{2m}} \|N_y\|_{L^{2m}}\, \dd y\\
& \lesssim \sum_{n\in \N} (n+1)^d R^{\lambda d}\, \PP (\| \Psi_{t\to T}\|_{C^{0,\lambda}}\geq n)^{\frac{1}{2m}}\\
& \lesssim R^{\lambda d} \sum_{n\in \N} n^{d-\frac{\tilde m}{2m}} \big\| \| \Psi_{t\to T}\|_{C^{0,\lambda}}\big\|_{L^{\tilde m}}^{\frac{\tilde m}{2m}}
\end{align*}
where in the last passage we used Markov's inequality. Choosing $\tilde{m}$ large enough, so to make the series convergent, then yields the conclusion.
\end{proof}

The importance of integrability of solutions to the backward continuity equation comes from the following (deterministic) duality lemma.

\begin{lemma}\label{lem:duality-transport}
Let $b$ satisfy \eqref{eq:transport-condition} and let $v$, $\rho$ be analytic weak solutions to respectively the forward transport and backward continuity equations associated to $\tilde{b}_t(\cdot;\omega)$; assume that $v\in L^\infty_t W^{1,p_1}_x$ and $\rho\in L^{r'}_t (L^1_x\cap L^{p_2}_x)$ for some $p_1,\,p_2$ satisfying
\[
p_1,\,p_2\in [1,\infty),\quad \frac{1}{p_1}+\frac{1}{p_2}+\frac{1}{r}=1.
\]
Then it holds
\begin{equation*}
\langle v_T, \rho_T\rangle = \langle v_0, \rho_0\rangle.
\end{equation*}
\end{lemma}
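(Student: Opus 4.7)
The plan is a standard duality-by-mollification argument, arranged so that only $L^p$-continuity of mollification, and not any Sobolev regularity of $\tilde{b}$, is required. Fix a symmetric spatial mollifier $\eta_\eps$, denote the associated convolution operator by $J_\eps$, and set $v^\eps := J_\eps v$, $\rho^\eps := J_\eps\rho$. Testing the weak formulations in Definitions \ref{defn:solution-transport} and \ref{defn:solution-continuity} against $x\mapsto\eta_\eps(\cdot-x)$ produces, for almost every $t\in[0,T]$, the pointwise (in $x$) identities
\begin{equ}
\partial_t v^\eps_t = -J_\eps(\tilde{b}_t\cdot\nabla v_t),\qquad \partial_t\rho^\eps_t = -\nabla\cdot J_\eps(\tilde{b}_t\rho_t).
\end{equ}
By the hypothesis $1/p_1+1/p_2+1/r=1$, both right-hand sides lie in $L^1_t C^\infty_x$, so $v^\eps$ and $\rho^\eps$ are absolutely continuous in time as $C^\infty_x$-valued maps.

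One can then differentiate the pairing $\langle v^\eps_t,\rho^\eps_t\rangle$ using the product rule, integrate by parts in $x$ (justified through cutoffs and dominated convergence, since the integrability bounds produced below ensure sufficient decay), and exploit the self-adjointness $\langle J_\eps f,g\rangle=\langle f,J_\eps g\rangle$ to migrate both mollifiers onto $\rho$. A short calculation yields the commutator identity
\begin{equ}
\frac{d}{dt}\langle v^\eps_t,\rho^\eps_t\rangle
=\langle \nabla v_t,\, J_\eps^2(\tilde{b}_t\rho_t)-\tilde{b}_t J_\eps^2\rho_t\rangle.
\end{equ}
Integrating from $0$ to $T$ and letting $\eps\downarrow 0$, the left-hand side converges to $\langle v_T,\rho_T\rangle-\langle v_0,\rho_0\rangle$ by the weak-in-time continuity of $v$ and $\rho$ (cf. Remark \ref{rem:transport}) and the strong convergence of $J_\eps f$ to $f$. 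For the right-hand side, split $\tilde{b}=\tilde{b}_1+\tilde{b}_2$ with $\tilde{b}_1\in L^r_tL^r_x$, $\tilde{b}_2\in L^r_tL^\infty_x$; since $1/p_1+1/p_2<1$, one has $p_1'<p_2$, and interpolation between $L^1_x$ and $L^{p_2}_x$ gives $\rho\in L^{r'}_tL^{p_1'}_x$. A term-by-term H\"older check then shows $\tilde{b}\rho\in L^1_tL^{p_1'}_x$, and strong $L^{p_1'}$-continuity of mollification yields
\begin{equ}
J_\eps^2(\tilde{b}\rho)-\tilde{b}J_\eps^2\rho\longrightarrow 0\quad\text{in }L^1_tL^{p_1'}_x.
\end{equ}
Pairing with $\nabla v\in L^\infty_tL^{p_1}_x$ forces the time integral to vanish in the limit, so $\langle v_T,\rho_T\rangle=\langle v_0,\rho_0\rangle$.

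The point distinguishing this argument from the classical DiPerna-Lions duality is that both mollifiers are moved onto $\rho$ before the commutator is taken, so the resulting object is a commutator at the level of strong $L^p$-continuity of mollification rather than at the level of transport operators acting on $v$; in particular, no assumption on derivatives of $\tilde{b}$ is needed. The sole (mild) obstacle is the careful bookkeeping of Lebesgue exponents required to check, using the constraint $1/p_1+1/p_2+1/r=1$ together with the interpolation $p_1'<p_2$ derived from it, that every pairing lies in $L^1_t$ and every integration by parts is justified.
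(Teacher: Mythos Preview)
Your argument is correct and follows essentially the same strategy as the paper: mollify, obtain a commutator, and kill it using only $L^p$-continuity of mollification together with the assumed integrability of $\nabla v$ and $\rho$. The only difference is organizational. The paper mollifies $v$ alone, tests $v^\eps$ against $\rho$, and obtains the commutator $(\tilde b\cdot\nabla v)^\eps-\tilde b\cdot\nabla v^\eps$ on the $v$-side; it then shows both terms converge separately to $\tilde b\cdot\nabla v$ in $L^r_tL^{\tilde r}_x$ (with $1/\tilde r=1/r+1/p_1$) and pairs with $\rho\in L^{r'}_tL^{p_2}_x$. You instead mollify both sides, shift the mollifiers onto $\rho$ via self-adjointness, and obtain the multiplication commutator $J_\eps^2(\tilde b\rho)-\tilde b J_\eps^2\rho$. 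Your closing remark slightly oversells the novelty: the paper's proof also avoids any assumption on $\nabla\tilde b$, because once $\nabla v\in L^{p_1}_x$ is available the transport-side commutator vanishes by the same $L^p$-continuity argument, not by the DiPerna--Lions commutator lemma. One small correction: when $r=\infty$ (the convention for $\alpha>0$) you have $1/p_1+1/p_2=1$, so $p_1'=p_2$ rather than $p_1'<p_2$; the interpolation step still goes through trivially.
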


\begin{proof}
The argument is relatively standard in the analytic community and is based on the use of mollifiers and commutators, see the seminal work \cite{diperna1989ordinary}.
Let $v^\eps=v\ast g^\eps$ for some standard mollifiers $g^\eps$; since $v^\eps$ is spatially smooth, we can test it against $\rho$ (cf. Remark \ref{rem:transport}-iii)), which combined with the respective PDEs yields the relation
\begin{equation*}
\langle v^\eps_T , \rho_T\rangle  - \langle v^\eps_0 , \rho_0\rangle = \int_0^T \langle (\tilde b\cdot \nabla v)^\eps - \tilde b\cdot\nabla v^\eps, \rho\rangle \dd s.
\end{equation*}
In order to conclude, it then suffices to show that the r.h.s. converges to $0$ as $\eps\to 0$. Recall that by assumption $b= b^1+b^2$ with $b^1\in L^r_t L^r_x$, $b^2\in L^r_t L^\infty_x$, so that the same holds for $\tilde{b}$; we show how to deal with $\tilde b^1$, the other case being similar.
By our assumptions, H\"older's inequality and properties of mollifiers, it is easy to check that both $(\tilde b^1\cdot\nabla v)^\eps$ and $\tilde b^1\cdot\nabla v^\eps$ converge to $\tilde b^1\cdot\nabla v$ in $L^r_t L^{\tilde r}_x$, where $\tilde{r}\in (1,\infty)$ is defined by $1/\tilde{r}=1/r+1/p_1$. But then
\begin{align*}
\bigg| \int_0^T \langle (\tilde b^1_t\cdot \nabla v_t)^\eps - \tilde b^1_t\cdot\nabla v^\eps_t, \rho_t\rangle \dd t \bigg|
& \leq \int_0^T \| (\tilde b^1_t\cdot \nabla v_t)^\eps - \tilde b^1_t\cdot\nabla v^\eps_t\|_{L^{\tilde r}_x}\, \| \rho_t\|_{L^{p_2}_x} \dd t\\
& \leq \| (\tilde b^1\cdot \nabla v)^\eps - \tilde b^1\cdot\nabla v^\eps\|_{L^r_t L^{\tilde r}_x} \| \rho\|_{L^{r'}_t L^{p_2}_x}
\end{align*}
where the last term converges to $0$.
\end{proof}

As a final ingredient, we give the aforementioned Ambrosio's superposition principle; we stress that the statement is deterministic, but we will apply it for fixed realizations of the random drift $\tilde{b}(\cdot;\omega)$.
Although the full statement is a bit technical, we invite the reader to consult the (more heuristical) Theorem 3.1 from \cite{ambrosio2008transport} to understand the role it plays in our analysis.

\begin{theorem}[Theorem 3.2 from \cite{ambrosio2008transport}]\label{thm:ambrosio}
Let $\mu$ be a weak solution to the continuity equation $\partial_t \mu + \nabla\cdot (\mu f)=0$ such that $\mu_t\in \mathcal{M}_+(\R^d)$ for all $t$ and
\[
\int_0^1 \int_{\R^d} |f_t(x)|\, \mu_t(\dd x)\, \dd t<\infty.
\]
Then $\mu$ is a superposition solution, namely there exists a measure $\eta\in \mathcal{M}_+(\R^d \times C_t)$, concentrated on the pairs $(x,\varphi)$ satisfying the relation
\[
\varphi_t = x + \int_0^t f_s(\varphi_s)\dd s,
\]
such that $\mu_t = (e_t)_\sharp \eta$ for all $t\in [0,1]$, where $e_t(x,\varphi)=\varphi_t$ is the evaluation map and $(e_t)_\sharp \eta$ denote the pushforward measure.
\end{theorem}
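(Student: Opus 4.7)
The plan is to follow Ambrosio's original argument from \cite{ambrosio2008transport}, which proceeds by regularisation, construction of approximating measures on path space, tightness, and identification of the weak limit. Without loss of generality (after dividing by total mass and rescaling time) I would reduce to the case in which $\mu_t$ is a probability measure for each $t\in[0,1]$; the nonnegativity of $\mu_t$ is essential as it allows one to view $\eta$ as a nonnegative measure.

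First I would smooth the measure (not the velocity) by convolution. Let $\rho_\varepsilon$ be a standard mollifier and set $\mu_t^\varepsilon:=\mu_t*\rho_\varepsilon$, which then admits a smooth, strictly positive density. A direct computation shows that $\mu^\varepsilon$ satisfies a regularised continuity equation
\[
\partial_t \mu^\varepsilon + \nabla\cdot(\mu^\varepsilon f^\varepsilon)=0,\qquad f^\varepsilon_t:=\frac{(\mu_t f_t)*\rho_\varepsilon}{\mu^\varepsilon_t},
\]
where $f^\varepsilon$ is smooth in $x$ for each $t$ and satisfies the pointwise Jensen-type bound $\int_0^1\!\!\int|f^\varepsilon_t|\mu^\varepsilon_t\,\mathd x\,\mathd t\leq \int_0^1\!\!\int|f_t|\,\mathd\mu_t\,\mathd t$. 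With $f^\varepsilon$ smooth, the classical flow $X^\varepsilon_\cdot(x)$ solving $\dot y_t=f^\varepsilon_t(y_t)$ with $y_0=x$ is well defined, and $\mu^\varepsilon_t=(X^\varepsilon_t)_\sharp \mu^\varepsilon_0$. I then define the candidate approximation on path space
\[
\eta^\varepsilon:=\int_{\R^d}\delta_{(x,\,X^\varepsilon_\cdot(x))}\,\mu^\varepsilon_0(\mathd x)\in\mathcal{M}_+(\R^d\times C_t),
\]
which by construction is concentrated on pairs $(x,\varphi)$ with $\varphi_t=x+\int_0^t f^\varepsilon_s(\varphi_s)\mathd s$ and satisfies $(e_t)_\sharp\eta^\varepsilon=\mu^\varepsilon_t$.

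The next step is to extract a weak limit by a tightness argument. Tightness in the $x$-variable is immediate from weak convergence $\mu^\varepsilon_0\rightharpoonup \mu_0$. Tightness in the curve variable $\varphi\in C_t$ is obtained by bounding the energy
\[
\int_{\R^d\times C_t}\!\!\int_0^1|\dot\varphi_s|\,\mathd s\,\mathd \eta^\varepsilon(x,\varphi)=\int_0^1\!\!\int|f^\varepsilon_s|\mu^\varepsilon_s\,\mathd x\,\mathd s\leq \int_0^1\!\!\int|f_s|\,\mathd\mu_s\mathd s<\infty
\]
uniformly in $\varepsilon$, which (via Markov's inequality and Arzel\`a–Ascoli applied to equi-absolutely-continuous curves) yields tightness of the projection onto $C_t$. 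Prokhorov's theorem then provides a subsequence $\eta^{\varepsilon_k}\rightharpoonup \eta$ in $\mathcal{M}_+(\R^d\times C_t)$, and the marginal identity $(e_t)_\sharp \eta=\mu_t$ passes to the limit by continuity of $e_t$ and the convergence $\mu^\varepsilon_t\rightharpoonup\mu_t$.

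The main obstacle is the last step: showing that $\eta$ is concentrated on integral curves of the \emph{original} field $f$, not of $f^\varepsilon$. The clean way to proceed is to consider the functional
\[
\Theta(x,\varphi):=\int_0^1\Big|\varphi_t-x-\int_0^t f_s(\varphi_s)\,\mathd s\Big|\wedge 1\,\mathd t,
\]
and to show $\int\Theta\,\mathd\eta=0$. Replacing $f$ by $f^\varepsilon$ inside $\Theta$ gives a quantity which vanishes identically against $\eta^\varepsilon$ by construction, so the task reduces to controlling the swap $f\leftrightarrow f^\varepsilon$ in the integrand. This is the genuinely delicate ingredient of the argument and exploits the integrability hypothesis crucially: one writes the difference as a Bochner integral, uses Fubini, and invokes the $L^1$-convergence of $f^\varepsilon$ to $f$ with respect to the measure $\mu_t\otimes\mathd t$ together with the uniform bound on $\int_0^1\!\int|f^\varepsilon_s|\mathd\mu^\varepsilon_s\mathd s$. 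A lower-semicontinuity argument (possible because $\Theta$ is l.s.c.\ in $\varphi$ once $f$ is approximated by bounded continuous functions) then transfers the vanishing from the approximants to the limit, yielding the desired concentration property. Putting everything together produces the measure $\eta$ claimed in the theorem.
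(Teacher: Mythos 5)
This theorem is not proved in the paper at all: it is imported verbatim as Theorem 3.2 of \cite{ambrosio2008transport}, so there is no in-paper argument to compare against. Your sketch is, in substance, Ambrosio's original proof of the cited result (mollify the measure, set $f^\varepsilon=\frac{(\mu f)*\rho_\varepsilon}{\mu^\varepsilon}$, represent $\mu^\varepsilon$ by the smooth flow, push $\mu^\varepsilon_0$ onto path space, get tightness from the uniform bound $\int_0^1\int|f^\varepsilon|\,\dd\mu^\varepsilon_t\,\dd t\leq\int_0^1\int|f|\,\dd\mu_t\,\dd t$, and identify the limit through a functional like your $\Theta$ after replacing $f$ by bounded continuous approximations), and the outline is correct. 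Two points deserve more care if you were to write this out: first, a compactly supported standard mollifier does not give a strictly positive density, so $f^\varepsilon$ may be undefined or badly behaved where $\mu^\varepsilon_t$ vanishes; Ambrosio uses a Gaussian kernel precisely to guarantee positivity of $\mu^\varepsilon_t$ everywhere. Second, $f^\varepsilon$ is smooth in $x$ but in general neither bounded nor globally Lipschitz, so the global existence of the flow $X^\varepsilon$ (and hence the identity $\mu^\varepsilon_t=(X^\varepsilon_t)_\sharp\mu^\varepsilon_0$ on all of $[0,1]$) is not automatic and must be justified, e.g.\ by showing that for $\mu^\varepsilon_0$-a.e.\ starting point the maximal trajectory cannot blow up thanks to the same integral bound; this is handled explicitly in \cite{ambrosio2008transport} and is glossed over in your sketch. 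With these repairs, your final concentration step (estimate $\int\Theta\,\dd\eta$ via lower semicontinuity for continuous bounded test fields $c$, a commutator-type term that vanishes as $\varepsilon\to0$, and $\int_0^1\int|f-c|\,\dd\mu_t\,\dd t$ made small by density) is exactly the argument in the cited source.
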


We are now ready to give the
\begin{proof}[Proof of Theorem \ref{thm:transport-equation}]
Both existence statements come from Proposition \ref{prop:transport-existence}, so we only need to check path-by-path uniqueness.

Let us start with the continuity equation. We claim that the event $\tilde{\Omega}$ of full probability on which path-by-path uniqueness for \eqref{eq:stoch-continuity} holds is the one for which we have uniqueness of solutions to the ODE $\dot{\varphi}_t=\tilde{b}_t(\varphi_t;\omega)$ for all $x\in \R^d$; its existence is granted by Theorems \ref{thm:functional:PBP-uniqueness}-\ref{thm:distributional:PBP-uniqueness}, which additionally imply that $\varphi_t=\Psi_{0\to t}(x;\omega)$.
Indeed, suppose we are given any weak solution $\rho\in L^\infty_t L^p_x$ to \eqref{eq:stoch-continuity-transformed}; by our assumptions, and possibly Remark \ref{rem:transport}-ii), it holds $\int_0^1 \int_{\R^d} |\tilde{b}_t(x;\omega)| \mu_t(\dd x) \dd t<\infty$.
We can then apply Theorem \ref{thm:ambrosio} to deduce that $\rho$ is a superposition solution; since uniqueness of solutions to $\dot{\varphi}_t=\tilde{b}_t(\varphi_t;\omega)$ holds, we readily deduce that $\rho_t = \Psi_{0\to t}(\cdot;\omega)_\sharp \rho_0$, which gives uniqueness.

We now pass to consider the transport case; by linearity, we only need to find an event $\tilde \Omega$ on which any weak solution $v\in L^\infty_t W^{1,p}_x$ to \eqref{eq:stoch-transport-transformed} with $v_0=0$ is necessarily the trivial one. By Remark \ref{rem:transport}-i), we know that any solution is weakly continuous in time, thus it suffices to verify that $v_t=0$ for all $t$ in a dense subset of $[0,1]$.
To this end, let us fix a countable collection $\{f^n\}_n$ of compactly supported smooth functions which are dense in $C^\infty_x$ and a countable dense set $\Gamma\subset [0,1]$.
By Proposition \ref{prop:backward-continuity}, for any $f^n$ and $\tau\in\Gamma$, we can find a pathwise solution $\mu^{\tau,n}$ to the backward continuity equation on $[0,\tau]$ which $\PP$-a.s. belongs to $L^q_t L^q_x$ for all $q\in [1,\infty)$.
Since everything is countable, we can then find an event $\tilde{\Omega}\subset \Omega$ on which $\mu^{\tau,n}(\omega)$ are all defined at once and have the above regularity; we claim that this is the desired event where uniqueness of weak solutions to \eqref{eq:stoch-transport-transformed} in $L^\infty_t W^{1,p}_x$ holds.
Indeed, since $q$ is arbitrarily large and $p>r'$, we can apply Lemma \ref{lem:duality-transport} and use the fact that $v_0=0$ to deduce that
\begin{equation*}
0 = \langle v_0, \mu^{\tau,n}(\cdot\,;\omega)\rangle = \langle v_\tau, f^n\rangle \quad \forall\, \tau\in \Gamma,\, f^n;
\end{equation*}
by density of $f^n$, it follows that $v_{\tau}=0$ for all $\tau\in \Gamma$, which by density of $\Gamma$ and continuity finally implies $v\equiv 0$.
\end{proof}

\begin{remark}\label{rem:transport-nonlinear-young}
In \cite[Section 5.2]{galeati2021noiseless}, the authors show how to solve the transport equation \eqref{eq:stoch-transport} in a pathwise manner under the assumption that $T^{B^H}b \in C^\gamma_t C^2_x$ for some $\gamma>1/2$; in this case, one can treat purely distributional drifts $b$, without enforcing \eqref{eq:transport-condition}. However, this assumption is satisfied under more restrictive conditions than \eqref{eq:main exponent}, e.g. if $b\in L^\infty_t C^\alpha_x$ for some $\alpha>2-1/(2H)$. We believe that existence and uniqueness for \eqref{eq:stoch-transport} (resp. \eqref{eq:stoch-continuity}) should hold under \eqref{eq:main exponent} even when $\alpha<0$, without the need for \eqref{eq:transport-condition}, but we leave this problem for future investigations.
\end{remark}

\paragraph{Acknowledgments.}
MG thanks Konstantinos Dareiotis for valuable discussions during the development of the parallel article \cite{multi}.
The authors thank the institutions MFO Oberwolfach and TU Wien for their hospitality during their research visits.

\paragraph{Conflicts of Interest Statement.} There are no conflicts of interest.

\paragraph{Financial support.} MG was funded by the Austrian Science Fund (FWF) Stand-Alone programme P 34992. LG was funded by the DFG under Germanys Excellence Strategy - GZ 2047/1, project-id 390685813 and later by the SNSF Grant 182565 and by the Swiss State Secretariat for Education, Research and Innovation (SERI) under contract number MB22.00034 through the project TENSE.

\appendix
\section{Kolmogorov continuity type criteria}\label{app:kolmogorov}

Let us recall (a conditional version of) the classical Azuma--Hoeffding inequality.

\begin{lemma}\label{lem:azuma-hoeffding-conditional}
Let $k\in\N$ and $\{Y_i\}_{i=0}^k$ be a sequence of $\R^d$-valued martingale differences with respect to some filtration $\{\mathcal{F}_i\}_{i=0}^k$, with $Y_0=0$; assume that there exist deterministic constants $\{\delta_i\}_{i=1}^k$ such that $\PP$-a.s. $|Y_i|\leq\delta_i$ for all $i$.
Then for
\begin{equation*}
S_j:=\sum_{i=1}^j Y_i,\qquad\Lambda:=\delta_1^2+\cdots+\delta_k^2,
\end{equation*}
one has the $\PP$-a.s. inequality
\begin{equation}\label{eq:azuma-hoeffding-conditional}
\E\bigg[ \exp\Big(\frac{|S_k|^2}{4 d \Lambda}\Big)\bigg\vert \mathcal{F}_0\bigg]\leq 3.
\end{equation}
\end{lemma}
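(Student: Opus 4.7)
The plan is to reduce to the one-dimensional case by exploiting the convexity of the exponential, then follow the classical proof of Azuma--Hoeffding, being careful to conditionalize everything on $\mathcal{F}_0$.

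First I would write $S_k = (S_k^1,\ldots,S_k^d)$ in coordinates; since $|S_k|^2 = \sum_{j=1}^d |S_k^j|^2$, convexity of $x\mapsto e^x$ gives
\[
\exp\Bigl(\frac{|S_k|^2}{4d\Lambda}\Bigr)=\exp\Bigl(\frac{1}{d}\sum_{j=1}^d\frac{|S_k^j|^2}{4\Lambda}\Bigr)\leq\frac{1}{d}\sum_{j=1}^d\exp\Bigl(\frac{|S_k^j|^2}{4\Lambda}\Bigr),
\]
so it suffices to prove the scalar bound $\E\bigl[\exp(|S_k^j|^2/(4\Lambda))\,\big|\,\mathcal{F}_0\bigr]\leq 3$ for each $j$. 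Note $\{Y_i^j\}$ is still a martingale difference with respect to $\{\mathcal{F}_i\}$ and $|Y_i^j|\leq|Y_i|\leq\delta_i$, so $\Lambda$ is unchanged when passing to a coordinate.

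Next I would establish the conditional moment generating function bound
\begin{equation}\label{eq:mgf-plan}
\E\bigl[e^{\lambda S_k^j}\,\big|\,\mathcal{F}_0\bigr]\leq e^{\lambda^2\Lambda/2}\qquad\forall\lambda\in\R.
\end{equation}
This is the standard iteration: by Hoeffding's lemma applied to the bounded, mean-zero conditional random variable $Y_i^j$ given $\mathcal{F}_{i-1}$, one has $\E[e^{\lambda Y_i^j}\,|\,\mathcal{F}_{i-1}]\leq e^{\lambda^2\delta_i^2/2}$ almost surely; then the tower property, factoring out the $\mathcal{F}_{i-1}$-measurable factor $e^{\lambda S_{i-1}^j}$, gives the bound \eqref{eq:mgf-plan} by induction on $i$.

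Finally I would convert \eqref{eq:mgf-plan} into the claimed Gaussian moment estimate. By conditional Markov applied to $e^{\lambda S_k^j}$ and $e^{-\lambda S_k^j}$ and optimising $\lambda=t/\Lambda$, one obtains the conditional tail bound $\PP(|S_k^j|\geq t\,|\,\mathcal{F}_0)\leq 2e^{-t^2/(2\Lambda)}$. Then by the layer cake formula and a substitution,
\[
\E\Bigl[\exp\Bigl(\tfrac{|S_k^j|^2}{4\Lambda}\Bigr)\,\Big|\,\mathcal{F}_0\Bigr]=1+\int_1^\infty\PP\bigl(|S_k^j|^2\geq 4\Lambda\log s\,\big|\,\mathcal{F}_0\bigr)\,ds\leq 1+\int_1^\infty\frac{2}{s^2}\,ds=3,
\]
as desired. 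Combining this with the coordinate reduction yields \eqref{eq:azuma-hoeffding-conditional}.

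No real obstacle is expected; the only mildly subtle point is keeping track of the conditioning on $\mathcal{F}_0$ throughout (in particular in Hoeffding's lemma applied conditionally on $\mathcal{F}_{i-1}$, and when converting the MGF bound into a tail bound), but this is routine.
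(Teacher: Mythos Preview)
Your proof is correct and follows essentially the same route as the paper: reduce to $d=1$ via Jensen's inequality on the exponential, iterate the conditional Hoeffding lemma to get $\E[e^{\lambda S_k^j}\mid\mathcal{F}_0]\leq e^{\lambda^2\Lambda/2}$, convert this to the conditional tail bound $\PP(|S_k^j|\geq t\mid\mathcal{F}_0)\leq 2e^{-t^2/(2\Lambda)}$ via Chernoff, and conclude with the layer-cake formula. The only cosmetic difference is that the paper spells out the proof of the conditional Hoeffding lemma explicitly, whereas you quote it.
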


\begin{proof}
The proof goes along the same lines as standard Azuma--Hoeffding; since we haven't found a direct reference in the literature, we present it here.

First, observe that we can reduce ourselves to the case $d=1$ by reasoning componentwise, the general one following from a simple application of conditional Jensen's inequality.

Next, we claim that the following version of Hoeffding's lemma holds: given a random variable $X$ and a filtration $\mathcal{F}$ such that $\E[X\vert \mathcal{F}]=0$ and $a\leq X \leq b$ $\PP$-a.s., it holds
\begin{equation}\label{eq:hoeffding-lem}
\E[\exp(\lambda X)\vert \mathcal{F}] \leq \exp\bigg( \frac{\lambda^2 (b-a)^2}{8}\bigg)\quad \forall\, \lambda\in\R.
\end{equation}
By homogeneity, it suffices to prove \eqref{eq:hoeffding-lem} for $b-a=1$; in this case, we have the basic inequality $e^{\lambda x} \leq (b-x)e^{\lambda a} + (x-a)e^{\lambda b}$ for all $x\in [a,b]$. Evaluating in $X$ and taking conditional expectation we obtain
\begin{equation*}
\E [e^{\lambda X}\vert \mathcal{F}]\leq (a+1)e^{\lambda a} - a e^{\lambda (a+1)} = e^{H(\lambda)}, \quad H(\lambda):=\lambda a + \log (1+a - e^\lambda a).
\end{equation*}
It can be readily checked that $H(0)=H'(0)=0$ and $H''(\lambda)\leq 1/4$, which by Taylor expansion yields $H(\lambda)\leq \lambda^2/8$ and thus \eqref{eq:hoeffding-lem}.

Next, given the sequence $\{Y_k\}_k$ as in the hypothesis, we can assume by homogeneity $\Lambda=1$ and apply recursively Hoeffding's lemma as follows:
\begin{align*}
\E[ \exp(\lambda S_k)\vert \mathcal{F}_0]
& = \E\big[ \exp(\lambda S_{k-1})\, \E[\exp(\lambda Y_k) \vert \mathcal{F}_{k-1}] \big\vert \mathcal{F}_0\big]\\
& \leq \exp\big( \lambda^2 (2 \delta_k)^2/8\big) \E[ \exp(\lambda S_{k-1})\vert \mathcal{F}_0]
\leq \ldots \leq e^{\lambda^2/2}.
\end{align*}
By the inequality $e^{|x|}\leq e^x+e^{-x}$ and Chernoff's conditional bound, we have
\begin{align*}
\PP(|S_k|>a\vert \mathcal{F}_0) \leq \inf_{\lambda >0}e^{-\lambda a}\, \E[ e^{\lambda |S_k|}] \leq 2 \inf_{\lambda>0} e^{-\lambda a + \lambda^2/2} = 2 e^{-a^2/2}.
\end{align*}
Therefore we arrive at
\begin{align*}
\E\bigg[ \exp\Big( \frac{|S_k|^2}{4}\Big)\bigg\vert \mathcal{F}_0\bigg]
= \int_0^{+\infty} \PP\bigg(|S_k|> \sqrt{4|\log s|}\bigg)\, \dd s \leq 1 + 2\int_1^{+\infty} s^{-2} \dd s = 3.
\end{align*}
\end{proof}

Next, we pass to a conditional Kolmogorov-type lemma, stated in a way which is suitable for our purposes.

\begin{lemma}\label{lem:kolmogorov}
Let $E$ be a Banach space, $X:[0,T]\to E$ be a continuous random process; suppose there exist $\alpha,\,\beta\in (0,1]$, a control $w:[0,T]^2\to [0,\infty)$, a constant $K>0$ and a $\sigma$-algebra $\mathcal{F}$ such that
\begin{equation}\label{eq:kolmogorov-hypothesis}
\E\bigg[\exp\bigg(\frac{\| X_{s,t} \|_E^2}{|t-s|^{2\alpha} \, w(s,t)^{2\beta}}\bigg)\bigg\vert \mathcal{F}\bigg] \leq K \quad \forall\, s<t.
\end{equation}
Then for any $\eps>0$ there exists a constant $\mu=\mu(\eps)>0$ such that
\begin{equation}\label{eq:kolmogorov-conclusion}
\E\bigg[\exp\bigg( \mu\, \sup_{s<t} \frac{\| X_{s,t}\|_E^2}{|t-s|^{2(\alpha-\eps)} \, w(s,t)^{2\beta}}\bigg)\bigg \vert \mathcal F\bigg] \leq e \, K.
\end{equation}
\end{lemma}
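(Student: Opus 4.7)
The plan is to prove the estimate by a dyadic chaining argument, transferring the pointwise exponential moment hypothesis on increments to a uniform bound on the weighted $(\alpha-\eps)$-H\"older seminorm.

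First, introduce dyadic partitions $t^n_i := iT/2^n$, dyadic increments $\Delta^n_i X := X_{t^n_i}-X_{t^n_{i-1}}$, and the normalized maxima
\[M_n := \max_{1 \leq i \leq 2^n} \frac{\|\Delta^n_i X\|_E}{(T2^{-n})^\alpha\, w(t^n_{i-1},t^n_i)^\beta}.\]
The hypothesis \eqref{eq:kolmogorov-hypothesis} together with a crude union bound yields $\E[\exp(M_n^2)\vert\mathcal F] \leq 2^n K$. Without loss of generality take $\eps\in(0,\alpha)$ and set $\delta:=2\eps$. Applying conditional H\"older's inequality with the exponents $p_n:=(1-2^{-\delta})^{-1}2^{n\delta}$ (which satisfy $\sum_n p_n^{-1}=1$) gives
\[
\E\Big[\exp\Big((1-2^{-\delta})\sum_n 2^{-n\delta}M_n^2\Big)\Big\vert\mathcal F\Big]\leq \prod_n (2^nK)^{(1-2^{-\delta})2^{-n\delta}}=:C_0(\eps,K),
\]
a convergent product. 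Setting $\eta := \sup_n 2^{-n\delta/2} M_n$ and using the trivial bound $\eta^2\leq\sum_n 2^{-n\delta}M_n^2$, this yields $\E[\exp((1-2^{-\delta})\eta^2)\vert\mathcal F]\leq C_0$.

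Next, by definition of $\eta$, one has the pointwise control $\|\Delta^n_i X\|_E\leq \eta\,2^{n\delta/2}(T2^{-n})^\alpha w(t^n_{i-1},t^n_i)^\beta$ for all $n,i$. For dyadic $s<t$ with $T2^{-N-1}\leq t-s\leq T2^{-N}$, consider the standard greedy decomposition $[s,t]=\bigsqcup_{j=1}^k I_j$ into maximal dyadic sub-intervals, which uses at most two intervals at each dyadic level $n\geq N$. Superadditivity of $w$ together with $w\geq 0$ imply monotonicity of $w$ on sub-intervals, so $w(I_j)\leq w(s,t)$ for every $I_j\subset[s,t]$. Summing and bounding by a geometric series (convergent since $\alpha-\delta/2>0$):
\[
\|X_{s,t}\|_E\leq \eta\, w(s,t)^\beta T^\alpha\sum_j 2^{-n_j(\alpha-\delta/2)}\leq 2\eta\,w(s,t)^\beta T^\alpha\sum_{n\geq N}2^{-n(\alpha-\delta/2)}\leq C_1(\alpha,\eps)\,\eta\, w(s,t)^\beta (t-s)^{\alpha-\eps}.
\]
By continuity of $X$ and $w$ this extends to all $s<t\in[0,T]$. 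Taking the supremum and combining with Jensen's inequality $\E[\exp(\lambda(1-2^{-\delta})\eta^2)\vert\mathcal F]\leq C_0^\lambda$ for $\lambda\in(0,1]$, choosing $\lambda$ so that $C_0^\lambda=eK$ and finally $\mu:=\lambda(1-2^{-\delta})/C_1^2$ yields \eqref{eq:kolmogorov-conclusion}.

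\emph{Main obstacle.} The delicate point is the chaining step: producing $w(s,t)^\beta$ (the \emph{local} value of the control) rather than a cruder global quantity such as $w(0,T)^\beta$ on the right-hand side. A naive uniform decomposition $[s,t]=\bigsqcup_j[t^M_{j-1},t^M_j]$ at a single fine dyadic level $M$ produces constants growing without bound in $M$. What saves the day is the combination of two facts: the monotonicity of $w$ on sub-intervals (so that each $w(I_j)^\beta$ is dominated by $w(s,t)^\beta$), and the ``at most two intervals per dyadic level'' property of the greedy decomposition (so that summing $2^{-n_j(\alpha-\delta/2)}$ telescopes into a geometric series starting at scale $T2^{-N}\simeq t-s$, contributing the missing $(t-s)^{\alpha-\eps}$ factor).
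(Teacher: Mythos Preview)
Your argument follows the same dyadic chaining strategy as the paper, and the core is correct: you obtain $\E[\exp(\mu_0\sup_{s<t}\ldots)\mid\mathcal F]\leq C_0(\eps)K$ for some $\mu_0>0$, which is all that is ever used downstream. The difference lies in how the scales are aggregated. The paper introduces the single random variable
\[
J=\sum_{n\geq 1}2^{-2n}\sum_{k}\exp\Big(\tfrac{\|X_{t^n_k,t^n_{k+1}}\|_E^2}{2^{-2n\alpha}w(t^n_k,t^n_{k+1})^{2\beta}}\Big),
\]
notes $\E[J\mid\mathcal F]\leq K$, reads off the pointwise bound $\|X_{t^n_k,t^n_{k+1}}\|_E\leq 2^{-n\alpha}w(t^n_k,t^n_{k+1})^\beta\sqrt{\log(2^{2n}J)}$, chains to $\sup_{s<t}(\ldots)\leq C(1+\sqrt{\log J})$, and then applies $x\mapsto\exp(\mu x^2)$ with $2\mu C^2=1$ to land exactly on $eK$. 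Your H\"older-across-scales device gives instead $C_0=c'(\delta)K$ with $c'(\delta)=2^{2^{-\delta}/(1-2^{-\delta})}$, and your final Jensen step ``choose $\lambda\in(0,1]$ with $C_0^\lambda=eK$'' requires $c'(\delta)\geq e$, which fails once $\delta\gtrsim 0.76$ (e.g.\ $\delta=1$ gives $c'=2<e$). This is purely cosmetic---for small $\eps$ it works, and for larger $\eps$ one just applies the small-$\eps$ bound---but the paper's $J$-trick is the cleaner route to the stated constant.
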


\begin{proof}
Since we are already assuming $X$ to be continuous, the supremum over $s<t$ appearing in \eqref{eq:kolmogorov-conclusion} equals the supremum over $s,\, t$ taken over dyadic points.
Up to rescaling, we may assume wlog $T=1$.

For any $n\in \N$ and $k\in \{0,\ldots, 2^n\}$, set $t^n_k= k 2^{-n}$ and define a random variable
\[
J=\sum_{n=1}^\infty 2^{-2n} \sum_{k=0}^{2^n-1} \exp\bigg( \frac{\| X_{t^n_k,t^n_{k+1}}\|_E^2}{2^{-2n\alpha} w(t^n_k, t^n_{k+1})^{2\beta}}\bigg);
\]
by \eqref{eq:kolmogorov-hypothesis}, it holds $\E[J\vert \mathcal{F}]\leq K$. Now take $s,t$ to be dyadic points satisfying $|t-s|\sim 2^{-m}$, then by standard chaining arguments (see e.g. the proof of \cite[Theorem 3.1]{Friz-Hairer}) it holds
\[
\| X_{s,t}\|_E \lesssim \sum_{n\geq m} \sup_k \| X_{t^n_k,t^n_{k+1}} \|_E;
\]
on the other hand, by the definition of $J$, it holds
\[
\| X_{t^n_k,t^n_{k+1}} \|_E
\leq 2^{-n\alpha} w(t^n_k, t^n_{k+1})^\beta \sqrt{\log(2^{2n} J)}
\lesssim_\eps 2^{-n(\alpha-\eps)} w(s,t)^\beta (1+\sqrt{\log J})
\]
so that
\begin{align*}
\| X_{s,t} \|_E
& \lesssim \sum_{n\geq m} 2^{-n(\alpha-\eps)} w(s,t)^\beta (1+\sqrt{\log J})\\
& \lesssim 2^{-m(\alpha-\eps)} w(s,t)^\beta (1+\sqrt{\log J})
\sim |t-s|^{\alpha-\eps} w(s,t)^\beta (1+\sqrt{\log J}).
\end{align*}
Overall, we deduce the existence of a constant $C=C(\eps)>0$ such that
\begin{equation}\label{eq:kolmogorov-proof}
\sup_{s<t} \frac{\| X_{s,t}\|_E}{|t-s|^{\alpha-\eps} w(s,t)^\beta} \leq C (1+\sqrt{\log J}).
\end{equation}
The conclusion now readily follows by applying $x\mapsto \exp(\mu x^2)$ on both sides of \eqref{eq:kolmogorov-proof} and choosing $\mu=\mu(\eps)$ so that $2\mu C^2(\eps) =1$, so that
\begin{equation*}
\E\Big[\exp\big(\mu C^2(1+\sqrt{\log J}\big)^2\Big\vert \mathcal{F}\Big]
\leq \E\Big[\exp\big(2\mu C^2(1+\log J)\big)\Big\vert \mathcal{F}\Big]
= e\, \E[J\vert \mathcal{F}] \leq e K.
\end{equation*}
\end{proof}

Going through an almost identical argument, one can also obtain the following result, whose proof is therefore omitted.

\begin{lemma}\label{lem:kolmogorov-2}
Let $E$ be a Banach space, $X:[0,T]\to E$ be a continuous random process; suppose there exist $\alpha,\,\beta\in (0,1]$, $m\in (1,\infty)$, a control $w:[0,T]^2\to [0,\infty)$, a constant $K>0$ and a $\sigma$-algebra $\mathcal{F}$ such that
\begin{equation}\label{eq:kolmogorov-hypothesis-2}
\E\big[\,\| X_{s,t}\|_E^m\big \vert \mathcal{F}]^{1/m} \leq K |t-s|^\alpha\, w(s,t)^\beta \quad \forall\, s<t.
\end{equation}
Then for any $0<\gamma<\alpha-1/m$ there exists a constant $C=C(\alpha,\gamma,m)>0$ such that
\begin{equation}\label{eq:kolmogorov-conclusion-2}
\E\bigg[ \bigg(\sup_{s<t} \frac{\| X_{s,t}\|_E}{|t-s|^{\gamma} \, w(s,t)^{\beta}}\bigg)^m \bigg \vert \mathcal F\bigg]^{1/m} \leq C \, K.
\end{equation}
\end{lemma}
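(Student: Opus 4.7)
The plan is to adapt the chaining argument from Lemma \ref{lem:kolmogorov}, replacing the exponential moment bookkeeping with an $L^m$ one. Without loss of generality we take $T=1$ and work with the dyadic grid $t_k^n := k 2^{-n}$, $n\in\N$, $k=0,\ldots,2^n-1$. Since $X$ is continuous, the supremum in \eqref{eq:kolmogorov-conclusion-2} may be restricted to dyadic pairs.

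The first step is to introduce the single auxiliary random variable that controls all dyadic increments simultaneously: for a parameter $\eta>0$ to be chosen below, set
\begin{equation*}
J:=\sum_{n=1}^\infty 2^{-n\eta}\sum_{k=0}^{2^n-1}\frac{\|X_{t_k^n,t_{k+1}^n}\|_E^m}{2^{-nm\alpha}w(t_k^n,t_{k+1}^n)^{m\beta}}.
\end{equation*}
By the assumed bound \eqref{eq:kolmogorov-hypothesis-2}, each summand has conditional $\cF$-mean at most $K^m$, so
\begin{equation*}
\E[J\mid\cF]\leq K^m\sum_{n=1}^\infty 2^{-n(\eta-1)},
\end{equation*}
which is finite as soon as $\eta>1$. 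From the definition of $J$ we extract the pointwise bound
\begin{equation*}
\|X_{t_k^n,t_{k+1}^n}\|_E\leq J^{1/m}\,2^{n\eta/m}\,2^{-n\alpha}\,w(t_k^n,t_{k+1}^n)^\beta
\end{equation*}
valid for all $n,k$.

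The second step is the standard dyadic chaining: for dyadic $s<t$ with $|t-s|\asymp 2^{-m_0}$, one writes $\|X_{s,t}\|_E\lesssim\sum_{n\geq m_0}\sup_k\|X_{t_k^n,t_{k+1}^n}\|_E$, where the supremum is restricted to $[t_k^n,t_{k+1}^n]\subset[s,t]$. Using $w(t_k^n,t_{k+1}^n)\leq w(s,t)$ (from superadditivity of $w$) and $\beta\leq 1$, the above pointwise estimate gives
\begin{equation*}
\|X_{s,t}\|_E\lesssim J^{1/m}w(s,t)^\beta\sum_{n\geq m_0}2^{-n(\alpha-\eta/m)}.
\end{equation*}
To make the geometric series converge with rate $2^{-m_0\gamma}\asymp|t-s|^\gamma$, we choose $\eta=m(\alpha-\gamma)$. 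The twin requirements $\eta>1$ and $\eta<m\alpha$ translate exactly into $0<\gamma<\alpha-1/m$, which is the hypothesis on $\gamma$. Hence
\begin{equation*}
\sup_{s<t}\frac{\|X_{s,t}\|_E}{|t-s|^\gamma w(s,t)^\beta}\leq C(\alpha,\gamma,m)\,J^{1/m},
\end{equation*}
and taking conditional $L^m$-norms and inserting the bound on $\E[J\mid\cF]$ yields \eqref{eq:kolmogorov-conclusion-2}.

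I expect no real obstacle here; the only delicate points are the bookkeeping around the two opposite constraints on $\eta$ (which force exactly the sharp threshold $\gamma<\alpha-1/m$) and the use of superadditivity of the control $w$ to absorb the local $w(t_k^n,t_{k+1}^n)^\beta$ factors into the global $w(s,t)^\beta$ (here one uses $\beta\in(0,1]$ so that $w^\beta$ is still controlled by $w(s,t)^\beta$). Everything else is a routine $L^m$-analogue of the exponential chaining already carried out in Lemma \ref{lem:kolmogorov}.
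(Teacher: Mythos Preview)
Your proposal is correct and follows essentially the same approach as the paper, which explicitly states that the proof of Lemma~\ref{lem:kolmogorov-2} goes through ``an almost identical argument'' to Lemma~\ref{lem:kolmogorov} and is therefore omitted. Your $L^m$ bookkeeping via the auxiliary variable $J$ with the tuned weight $2^{-n\eta}$ is precisely the natural $L^m$ analogue of the exponential $J$ used there; one minor remark is that the condition $\beta\leq 1$ is not actually needed in the step $w(t_k^n,t_{k+1}^n)^\beta\leq w(s,t)^\beta$, since monotonicity of the control plus $\beta>0$ already suffices.
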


Let us also mention that, although for simplicity we assumed in Lemmas \ref{lem:kolmogorov} and \ref{lem:kolmogorov-2} to work with a norm $\|\cdot\|_E$, it suffices for it to be a seminorm instead.

Next, we need some basic lemmas in order to control the space-time regularity of random vector fields $A:[0,1]\times \R^d\to \R^m$.
We start by considering the time independent case.
%
%
\begin{lemma}\label{lem:kolmogorov-3}
Let $F:\R^d\to\R^n$ be a continuous field and suppose there exist $\alpha\in (0,1]$, $m\in (1,\infty)$, a constant $K>0$ and a $\sigma$-algebra $\mathcal{F}$ such that
\begin{equation}\label{eq:kolmogorov-hypothesis-3}
\| F(x)-F(y)\|_{L^m\vert \mathcal{F}} \leq K |x-y|^\alpha \quad \forall\, x,y\in\R^d.
\end{equation}
Then for any choice of parameters $\lambda,\eta\in (0,1]$ such that $\eta<\alpha-d/m$, $\lambda > \alpha-\eta$ there exists a constant $C=C(\alpha,m,d,n,\eta,\lambda)$ such that
\begin{equation}\label{eq:kolmogorov-conclusion-3}
\big\|\, \llbracket F\rrbracket_{C^{\eta,\lambda}} \big\|_{L^m\vert \mathcal{F}} \leq C \, K.
\end{equation}
\end{lemma}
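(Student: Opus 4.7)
The plan is to combine a rescaled (conditional) Kolmogorov continuity theorem on each ball $B_R$ with a dyadic summation argument in $R$. First, I would verify the following local estimate: for every $R\geq 1$ and every $\eta<\alpha-d/m$,
\begin{equation}\label{eq:kolm3-local}
\Big\|\sup_{x\neq y\in B_R}\frac{|F(x)-F(y)|}{|x-y|^\eta}\Big\|_{L^m\vert\mathcal{F}}\leq C_0\,K\,R^{\alpha-\eta},
\end{equation}
where $C_0=C_0(\alpha,\eta,m,d,n)$. This reduces by the rescaling $G(x):=F(Rx)$ to the case $R=1$: by \eqref{eq:kolmogorov-hypothesis-3} we have $\|G(x)-G(y)\|_{L^m\vert\mathcal{F}}\leq K R^\alpha|x-y|^\alpha$, and an application of the standard (conditional) Kolmogorov continuity theorem on $B_1$, which runs through a Garsia–Rodemich–Rumsey type dyadic chaining using conditional Markov's inequality, yields $\|\sup_{u\neq v\in B_1}|G(u)-G(v)|/|u-v|^\eta\|_{L^m\vert\mathcal{F}}\leq C_0 K R^\alpha$, from which \eqref{eq:kolm3-local} follows by undoing the rescaling.

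Once \eqref{eq:kolm3-local} is in hand, I would pass to the weighted seminorm via dyadic slicing: for any $R\geq 1$ one has $R\in[2^k,2^{k+1}]$ for some $k\geq 0$, hence $R^{-\lambda}\leq 2^{-k\lambda}$ and $B_R\subset B_{2^{k+1}}$, so
\begin{equation*}
\llbracket F\rrbracket_{C^{\eta,\lambda}}\leq \sup_{k\geq 0}2^{-k\lambda}\sup_{x\neq y\in B_{2^{k+1}}}\frac{|F(x)-F(y)|}{|x-y|^\eta}.
\end{equation*}
Using the crude bound $\sup_k a_k\leq (\sum_k a_k^m)^{1/m}$ inside the conditional $L^m$ norm and \eqref{eq:kolm3-local}, we obtain
\begin{equation*}
\big\|\llbracket F\rrbracket_{C^{\eta,\lambda}}\big\|_{L^m\vert\mathcal{F}}^m\leq \sum_{k\geq 0}2^{-k\lambda m}(C_0 K)^m 2^{(k+1)(\alpha-\eta)m}\lesssim K^m\sum_{k\geq 0}2^{k(\alpha-\eta-\lambda)m}.
\end{equation*}
The geometric series converges exactly under the hypothesis $\lambda>\alpha-\eta$, giving \eqref{eq:kolmogorov-conclusion-3}.

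The genuinely non-routine step is the conditional Kolmogorov theorem \eqref{eq:kolm3-local} with the sharp dependence $R^{\alpha-\eta}$; the scaling argument above converts it into the base case $R=1$, which is the classical statement and can be carried out conditionally since Markov's inequality and Jensen's inequality both admit conditional versions (alternatively, one can apply Lemma \ref{lem:kolmogorov-2} with trivial time variable and with the Banach space $E=C^\eta(B_1)$ equipped with its H\"older seminorm, reducing \eqref{eq:kolm3-local} to a one-line consequence of the chaining already established earlier). The remaining dyadic summation is routine and contains no real obstacle.
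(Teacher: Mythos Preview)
Your proof is correct and follows essentially the same route as the paper: a local H\"older estimate on $B_R$ with the sharp $R^{\alpha-\eta}$ scaling (the paper obtains it directly via Garsia--Rodemich--Rumsey and conditional Fubini rather than by rescaling to $B_1$, but this is the same content), followed by a dyadic summation in $R$ that converges precisely when $\lambda>\alpha-\eta$. Your parenthetical suggestion to invoke Lemma~\ref{lem:kolmogorov-2} is a bit off since that lemma concerns one-parameter processes with controls, but this does not affect your main argument.
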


\begin{proof}
By arguing componentwise, we can restrict to $n=1$; by homogeneity, we can assume $K=1$. Recall that by the classical Garsia-Rodemich-Rumsay lemma, there exists a constant $c = c(d,\eta,\alpha,m)$ such that, for any deterministic continuous function $f$ and any $R>0$, it holds
\begin{equation*}
\llbracket f\rrbracket_{C^\eta(B_R)}^m \leq c \int_{B_R\times B_R} \frac{|f(x)-f(y)|^m}{|x-y|^{2d+\eta m}} \dd x \dd y;
\end{equation*}
thus taking conditional expectation and applying Fubini, we find
\begin{equation*}
\E\big[ \llbracket F\rrbracket_{C^\eta(B_R)}^m \big|\mathcal{F}\big] \lesssim R^{(\alpha-\eta)m} \quad \forall\, R\geq 1.
\end{equation*}
Finally observe that
\begin{align*}
\E\big[ \llbracket F\rrbracket_{C^{\eta,\lambda}}^m|\mathcal{F}\big]
& \leq \sum_{R=2^j,\,j\in\N} \E\big[ R^{-\lambda m} \llbracket F \rrbracket_{C^\eta(B_R)}^m |\cF\big] \lesssim \sum_{j\in\N} 2^{-j m(\eta+\lambda-\alpha)}
\end{align*}
with the last quantity being finite under our assumptions.
\end{proof}

A combination of Lemmas \ref{lem:kolmogorov-2} and \ref{lem:kolmogorov-3} immediately yields the following.

\begin{corollary}\label{cor:kolmogorov-4}
Let $G:[0,1]\times \R^d\to\R^n$ be a continuous random vector field  and assume there exist parameters $\alpha,\beta_1,\beta_2\in (0,1]$, $m\in (1,\infty)$, a control $w$, a constant $K>0$ and a $\sigma$-algebra $\mathcal{F}$ such that
\begin{equation}\label{eq:kolmogorov-hypothesis-4}
\| G_{s,t}(x)-G_{s,t}(y)\|_{L^m\vert \mathcal{F}} \leq K |x-y|^\alpha |t-s|^{\beta_1} w(s,t)^{\beta_2}\quad \forall\, x,y\in\R^d,\, s<t.
\end{equation}
Then for any choice of parameters
\[
\gamma <\beta_1-\frac{1}{m},\quad \eta<\alpha-\frac{d}{m},\quad \lambda>\alpha-\eta
\]
there exists $C>0$, depending on all the previous parameters except $K$, such that
\begin{equation}\label{eq:kolmogorov-conclusion-4}
\bigg\| \sup_{0\leq s<t\leq 1} \frac{\llbracket G_{s,t}\rrbracket_{C^{\eta,\lambda}_x}}{|t-s|^\gamma w(s,t)^{\beta_2}}\bigg\|_{L^m|\cF}\leq C K.
\end{equation}
\end{corollary}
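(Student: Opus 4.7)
The proof is essentially a two-step concatenation of the two preceding lemmas.

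\textbf{Plan.} First, I will fix the pair $(s,t)\in[0,1]_\leq^2$ and view $G_{s,t}:\R^d\to\R^n$ as a continuous random field. The hypothesis \eqref{eq:kolmogorov-hypothesis-4} is exactly the hypothesis of Lemma \ref{lem:kolmogorov-3} with $K$ replaced by $K|t-s|^{\beta_1}w(s,t)^{\beta_2}$ (which is a deterministic constant, so can be pulled out of the $L^m|\cF$ norm). Applying that lemma with parameters $\alpha,\eta,\lambda$ satisfying $\eta<\alpha-d/m$, $\lambda>\alpha-\eta$, I obtain the conditional seminorm bound
\begin{equation*}
\big\|\,\llbracket G_{s,t}\rrbracket_{C^{\eta,\lambda}_x}\big\|_{L^m|\cF}\leq C_1 K\,|t-s|^{\beta_1}w(s,t)^{\beta_2}
\end{equation*}
uniformly in $(s,t)\in[0,1]_\leq^2$, with $C_1=C_1(\alpha,m,d,n,\eta,\lambda)$.

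Second, I will view the random process $t\mapsto G(t,\cdot)$ as a continuous process with values in the space $E=C^{\eta,\lambda}_x$ endowed with the seminorm $\llbracket\cdot\rrbracket_{C^{\eta,\lambda}_x}$; the continuity of $G$ in $(t,x)$, combined with the $(s,t)$-uniform bound just obtained, ensures that the sample paths may be identified with an $E$-continuous trajectory after possibly restricting to dyadics (which is what the chaining argument in Lemmas \ref{lem:kolmogorov}--\ref{lem:kolmogorov-2} actually uses). The previous step gives precisely \eqref{eq:kolmogorov-hypothesis-2} for this process, with exponents $\alpha\leadsto\beta_1$ and $\beta\leadsto\beta_2$. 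By Lemma \ref{lem:kolmogorov-2} (and the remark allowing the use of a seminorm in place of a norm), for any $\gamma<\beta_1-1/m$ there exists $C_2=C_2(\beta_1,\gamma,m)$ such that
\begin{equation*}
\bigg\|\sup_{0\leq s<t\leq 1}\frac{\llbracket G_{s,t}\rrbracket_{C^{\eta,\lambda}_x}}{|t-s|^\gamma w(s,t)^{\beta_2}}\bigg\|_{L^m|\cF}\leq C_2\cdot C_1 K,
\end{equation*}
which is the claimed \eqref{eq:kolmogorov-conclusion-4}.

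\textbf{Main obstacle.} The only delicate point is checking that the two lemmas can be composed cleanly, i.e.\ that the space–time supremum on the left of \eqref{eq:kolmogorov-conclusion-4} is indeed measurable and amenable to the chaining procedure. This is handled by noting that both Lemmas \ref{lem:kolmogorov} and \ref{lem:kolmogorov-2} are proved via chaining over dyadic points (cf.\ the remark that ``the supremum equals the supremum taken over dyadic points'' in the proof of Lemma \ref{lem:kolmogorov}); thus separability plus the continuity hypothesis on $G$ suffice. Once this is observed, the result follows by simple concatenation with no further work.
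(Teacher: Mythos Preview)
Your proposal is correct and follows exactly the approach the paper has in mind: the paper merely states that the corollary is an immediate combination of Lemmas \ref{lem:kolmogorov-2} and \ref{lem:kolmogorov-3}, and you have spelled out that combination in the natural order (first apply Lemma \ref{lem:kolmogorov-3} for fixed $(s,t)$, then feed the resulting seminorm bound into Lemma \ref{lem:kolmogorov-2} with $E=C^{\eta,\lambda}_x$). Your remark on using a seminorm rather than a norm matches the paper's own observation immediately following Lemma \ref{lem:kolmogorov-2}.
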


\section{Some a priori estimates for Young equations}\label{app:Young}
In this appendix we prove some basic bounds on (linear and nonlinear) Young differential equations, which are used several times in the article.
Such estimates are folklore, but since we did not find an appropriate version in the literature, we provide short proofs.

\begin{lemma}\label{lem:nY-apriori}
Let $A\in C_t^{p-\var} C^{\eta}_x$ with $\eta\in(0,1)$, $p\in[1,2)$ satisfying $(1+\eta)/p>1$; set $w_A(s,t):=\llbracket A\rrbracket_{p-\var,C^{\eta}_x;[s,t]}^p$.
Let $y$ be any solution to the nonlinear Young equation
\begin{equ}
y_t=y_0+\int_0^t A_{\dd s}(y_s)
\end{equ}
on $[0,1]$; then one has the bounds
\begin{equation}\label{eq:priori-estim-young}
|y_{s,t}| \lesssim w_A(s,t)^{\frac{1}{p}} + w_A(s,t), \qquad |y_{s,t}-A_{s,t}(y_s)| \lesssim w_A(s,t)^{\frac{1+\eta}{p}} + w_A(s,t)^{\frac{1}{p}+\eta}
\end{equation}
valid for all $(s,t)\in[0,1]_\leq^2$, where the hidden constants only depend on $(\eta,p)$. Similar bounds also hold for solutions only defined on an interval $[S,T]\subset [0,1]$.
\end{lemma}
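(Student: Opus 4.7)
The plan is a standard bootstrap on the sewing remainder bound \eqref{eq:nY-remainder}, carefully extracting two regimes of $w_A$.

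\emph{First paragraph: automatic $p$-variation regularity.} By definition of solution (cf. Lemma~\ref{lem:nY-construct}), $y\in C^{\zeta-\var}_t$ for some $\zeta$ with $1/p+\eta/\zeta>1$. Combining the remainder bound \eqref{eq:nY-remainder} with the trivial pointwise estimate
\begin{equation*}
|A_{s,t}(y_s)|\leq\|A_{s,t}\|_{C^0_x}\leq \|A_{s,t}\|_{C^\eta_x}\leq w_A(s,t)^{1/p}
\end{equation*}
(using the inhomogeneous H\"older-Besov norm) gives $|y_{s,t}|^p\lesssim w_A(s,t)\bigl(1+\llbracket y\rrbracket_{\zeta-\var;[s,t]}^{\eta p}\bigr)$. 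Summing over any partition of $[0,1]$ and exploiting the superadditivity of $w_A$, one deduces that $\llbracket y\rrbracket_{p-\var;[0,1]}<\infty$, i.e.\ $y\in C^{p-\var}_t$. Since $(1+\eta)/p>1$, the choice $\zeta=p$ is admissible in \eqref{eq:nY-remainder}, so we may henceforth use
\begin{equation*}
|y_{s,t}-A_{s,t}(y_s)|\leq N_{\eta,p}\,w_A(s,t)^{1/p}\llbracket y\rrbracket_{p-\var;[s,t]}^{\eta},
\end{equation*}
where now $N_{\eta,p}$ depends only on $(\eta,p)$.

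\emph{Second paragraph: local bound via self-improving inequality.} Combining the previous two bounds, raising to the $p$-th power and summing over an arbitrary partition of $[s,t]$, the superadditivity of $w_A$ yields
\begin{equation*}
\llbracket y\rrbracket_{p-\var;[s,t]}^p\leq C_{\eta,p}\,w_A(s,t)\bigl(1+\llbracket y\rrbracket_{p-\var;[s,t]}^{\eta p}\bigr).
\end{equation*}
Writing $u=\llbracket y\rrbracket_{p-\var;[s,t]}^p$ and $a=C_{\eta,p}w_A(s,t)$, this reads $u\leq a+au^\eta$. A straightforward dichotomy (either the first term dominates, giving $u\leq 2a$, or the second does, giving $u^{1-\eta}\leq 2a$) produces $u\leq\max\bigl(2a,(2a)^{1/(1-\eta)}\bigr)$, which closes because $\eta<1$. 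Choosing $\delta=\delta(\eta,p)$ so that $C_{\eta,p}\delta\leq 1/2$, we obtain on every interval with $w_A(s,t)\leq\delta$ the local bound $\llbracket y\rrbracket_{p-\var;[s,t]}\leq \tilde C_{\eta,p}\,w_A(s,t)^{1/p}$ and, in particular, $|y_{s,t}-A_{s,t}(y_s)|\lesssim w_A(s,t)^{(1+\eta)/p}$.

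\emph{Third paragraph: globalization.} For arbitrary $(s,t)\in[0,1]_\leq^2$, partition $[s,t]$ into $n\leq 1+\lceil w_A(s,t)/\delta\rceil\lesssim 1+w_A(s,t)$ subintervals $[t_k,t_{k+1}]$ on each of which $w_A(t_k,t_{k+1})\leq\delta$. Telescoping and using H\"older's inequality,
\begin{equation*}
|y_{s,t}|\leq\sum_k|y_{t_k,t_{k+1}}|\lesssim \sum_k w_A(t_k,t_{k+1})^{1/p}\leq n^{1-1/p}\,w_A(s,t)^{1/p}\lesssim w_A(s,t)^{1/p}+w_A(s,t),
\end{equation*}
the last inequality separating the regimes $w_A\leq 1$ and $w_A>1$; this gives the first bound in \eqref{eq:priori-estim-young}. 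For the remainder, the second paragraph already provides $|y_{s,t}-A_{s,t}(y_s)|\lesssim w_A(s,t)^{(1+\eta)/p}$ whenever $w_A(s,t)\leq\delta$. For $w_A(s,t)>\delta$, the triangle inequality combined with the first bound and $|A_{s,t}(y_s)|\lesssim w_A(s,t)^{1/p}$ yields $|y_{s,t}-A_{s,t}(y_s)|\lesssim w_A(s,t)+w_A(s,t)^{1/p}$, which is absorbed into $w_A(s,t)^{1/p+\eta}$ since our hypothesis $(1+\eta)/p>1$ together with $p\geq 1$ forces $1/p+\eta>1$ (hence $w_A^{1/p+\eta}\gtrsim w_A$ when $w_A\gtrsim 1$).

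\emph{Main obstacle.} The crux of the argument is the self-consistent inequality $u\leq a+au^\eta$, which closes precisely because $\eta<1$; the scaling hypothesis $(1+\eta)/p>1$ is exactly what makes $\zeta=p$ admissible in Lemma~\ref{lem:nY-construct} and so ensures that all constants depend only on $(\eta,p)$ and not on the a priori $\zeta$-variation regularity of the solution. A secondary subtlety is the transition between the two regimes: the term $w_A^{(1+\eta)/p}$ is sharp when $w_A$ is small, while the cruder $w_A^{1/p+\eta}$ takes over for $w_A$ large, and the stated bound in \eqref{eq:priori-estim-young} encodes both regimes simultaneously.
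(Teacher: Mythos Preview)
Your proof is correct and follows essentially the same strategy as the paper's: bootstrap from the a priori $\zeta$-variation to $p$-variation, derive a self-improving inequality for $\llbracket y\rrbracket_{p-\var;[s,t]}$, close it locally, then globalize. The paper closes the inequality via Young's inequality and cites \cite[Proposition~5.10]{FVBook} for the globalization, whereas you do both steps by hand (dichotomy on $u\leq a+au^\eta$ and an explicit greedy partition with H\"older); your route is slightly more self-contained. For the remainder bound, the paper first obtains the global estimate $\llbracket y\rrbracket_{p-\var;[s,t]}\lesssim w_A^{1/p}+w_A$ and plugs it back into \eqref{eq:nY-remainder}, while you split into $w_A\leq\delta$ versus $w_A>\delta$; both are valid. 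One small wording point: in your last step the parenthetical ``when $w_A\gtrsim 1$'' should really read ``when $w_A>\delta$'', since the absorption $w_A+w_A^{1/p}\lesssim w_A^{1/p+\eta}$ relies only on $w_A$ being bounded below by the fixed constant $\delta=\delta(\eta,p)$.
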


\begin{proof}
By definition, $y$ must be of finite $q$-variation for some $q$ satisfying $1/p +\eta/q>1$; applying \eqref{eq:nY-remainder} with $x=y$ one finds
\begin{equation*}
|y_{s,t}|
\leq |A_{s,t}(y_s)| + |y_{s,t} - A_{s,t}(y_s)|
\lesssim w_A(s,t)^{\frac{1}{p}} \big(1 + \llbracket y \rrbracket_{q-\var;[s,t]}^\eta\big),
\end{equation*}
which in particular shows that $y$ is of finite $p$-variation. Then going through the same computation with $q=p$ and applying \cite[Proposition 5.10-(i)]{FVBook}, there exists a constant $C$ such that, for any $s\leq t$, it holds
\begin{align*}
\llbracket y\rrbracket_{p-\var;[s,t]}
\leq C w_A(s,t)^{\frac{1}{p}} \big(1 + \llbracket y \rrbracket_{p-\var;[s,t]}^\eta\big)
\leq \tilde C w_A(s,t)^{\frac{1}{p}} + \frac{1}{2} w_A(s,t)^{\frac{1}{p}} \llbracket y\rrbracket_{p-\var;[s,t]},
\end{align*}
where in the second step we used the fact that $\eta\in (0,1)$ and Young's inequality. 
This readily implies a local bound of the form
\begin{align*}
\llbracket y\rrbracket_{p-\var;[s,t]} \lesssim w_A(s,t)^{\frac{1}{p}}\quad \text{ for all $s<t$ such that } w_A(s,t)\leq 1.
\end{align*}
We can then apply \cite[Proposition 5.10-(ii)]{FVBook} to deduce that, for all $(s,t)\in [0,1]_{\leq}^2$, 
\begin{equation}\label{eq:priori-estim-young-proof}
\llbracket y\rrbracket_{p-\var;[s,t]} \lesssim w_A(s,t)^{\frac{1}{p}} + w_A(s,t).
\end{equation}
The first inequality in \eqref{eq:priori-estim-young} immediately follows from \eqref{eq:priori-estim-young-proof}, the second one from a combination of \eqref{eq:priori-estim-young-proof} with \eqref{eq:nY-remainder} for $x=y$.
\end{proof}

In the next statement instead we pass to consider more standard affine Young equations. In particular $t\mapsto A_t$ is an $\R^{d\times d}$-valued map of finite $p$-variation and the notation $\int_0^t \dd A_s\, x_s$ denotes a usual Young integral, equivalently the (deterministic) sewing of the germ $\Sigma_{s,t}:= A_{s,t} x_s$.

\begin{lemma}\label{lem:young-estimate}
Let $x$ be a solution to the affine Young equation
\[
\dd x_t = \dd A_t\, x_t + \dd z_t, \quad x\vert_{t=0}=x_0,
\]
where $A\in C^{p-\var}_t \R^{d\times d}$ and $z\in C^{\tilde{p}-\var}_t$, for some $p\in [1,2)$ and $\tilde{p}\geq p$ such that $1/p+1/\tilde{p}>1$; assume $z_0=0$. Then there exists a constant $C=C(p,\tilde{p})>0$ such that
\begin{equation}\label{eq:young-estimate-1}
\sup_{t\in [0,1]} |x_t| + \llbracket x \rrbracket_{\tilde{p}-\var} \leq C e^{C \llbracket A \rrbracket_{p-\var}^{p}} \big(|x_0|+\llbracket z \rrbracket_{\tilde p-\var}\big).
\end{equation}
When $z=0$, setting $w(s,t):=\llbracket A\rrbracket_{p-\var;[s,t]}^p$, it holds
\begin{equation}\label{eq:young-estimate-2}
|x_{s,t}| \leq C\, w(s,t)^{1/p}\, e^{C \llbracket A \rrbracket_{p-\var}^{p}} |x_0|\quad \forall \, s\leq t.
\end{equation}
\end{lemma}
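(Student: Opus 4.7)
The plan is to first establish a local estimate via Young's sewing, then chain the local estimates into a global Gronwall-type bound by partitioning $[0,1]$ according to the control $w_A(s,t):=\llbracket A\rrbracket_{p-\var;[s,t]}^p$. Throughout, write $R_{s,t}:=x_{s,t}-A_{s,t}x_s-z_{s,t}$; by the standard Young sewing bound (applied to the germ $\Sigma_{s,t}=A_{s,t}x_s$), since $1/p+1/\tilde p>1$, one has
\begin{equ}
|R_{s,t}|\lesssim \llbracket A\rrbracket_{p-\var;[s,t]}\,\llbracket x\rrbracket_{\tilde p-\var;[s,t]}.
\end{equ}
Consequently, using $|A_{s,t}|\leq w_A(s,t)^{1/p}$ and $\llbracket x\rrbracket_{\tilde p-\var;[s,t]}\geq \sup_{u\in[s,t]}|x_{s,u}|$, one gets the elementary inequality
\begin{equ}\label{eq:plan-local}
\llbracket x\rrbracket_{\tilde p-\var;[s,t]}\leq C_0\,w_A(s,t)^{1/p}\bigl(|x_s|+\llbracket x\rrbracket_{\tilde p-\var;[s,t]}\bigr)+\llbracket z\rrbracket_{\tilde p-\var;[s,t]}
\end{equ}
(the first factor may require combining several subintervals, but the argument is routine).

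Fix $\delta\in(0,1)$ such that $C_0\delta^{1/p}\leq 1/2$, and partition $[0,1]$ greedily into $0=t_0<t_1<\cdots<t_N=1$ with $w_A(t_{i-1},t_i)\leq \delta$; superadditivity of $w_A$ forces $N\leq 1+\llbracket A\rrbracket_{p-\var}^p/\delta$. On each subinterval $[t_{i-1},t_i]$ the absorption step in \eqref{eq:plan-local} yields
\begin{equ}
\llbracket x\rrbracket_{\tilde p-\var;[t_{i-1},t_i]}\leq 2C_0\delta^{1/p}|x_{t_{i-1}}|+2\,\llbracket z\rrbracket_{\tilde p-\var;[t_{i-1},t_i]},
\end{equ}
and hence $|x_{t_i}|\leq (1+2C_0\delta^{1/p})|x_{t_{i-1}}|+2\llbracket z\rrbracket_{\tilde p-\var;[t_{i-1},t_i]}$. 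Iterating this discrete Gronwall-type inequality and using $(1+2C_0\delta^{1/p})^N\leq \exp(2C_0\delta^{1/p}N)\leq \exp(C\llbracket A\rrbracket_{p-\var}^p)$ gives the desired bound on $\sup_t|x_t|$. To turn this into the $\tilde p$-variation bound in \eqref{eq:young-estimate-1}, apply \eqref{eq:plan-local} once more on each $[t_{i-1},t_i]$ with the sup bound in hand and then use superadditivity (via $[\sum a_i^{\tilde p}]^{1/\tilde p}$ being dominated by $\sum a_i$) to assemble the local $\tilde p$-variations into a global one.

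For the second bound \eqref{eq:young-estimate-2}, with $z=0$ the inequality \eqref{eq:plan-local} gives, on intervals with $w_A(s,t)\leq\delta$, the sharper local form $|x_{s,t}|\leq Cw_A(s,t)^{1/p}|x_s|$ after absorbing the $\llbracket x\rrbracket_{\tilde p-\var;[s,t]}$ term. For an arbitrary pair $(s,t)\in[0,1]^2_\leq$ split $[s,t]$ into a minimal subinterval $[s,t']$ with $w_A(s,t')\leq \delta$ (giving the desired factor) together with a remainder where the already-proven global bound $|x_\cdot|\leq Ce^{C\llbracket A\rrbracket_{p-\var}^p}|x_0|$ applies, and combine with $w_A(s,t)^{1/p}$ being bounded below by a positive constant whenever $w_A(s,t)\geq \delta$.

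The main obstacle is the absorption argument justifying \eqref{eq:plan-local}: a naive application of the sewing bound only controls $|R_{s,t}|$, not the full $\llbracket x\rrbracket_{\tilde p-\var}$, so one must carefully estimate sums of $|x_{t_{i-1},t_i}|^{\tilde p}$ over arbitrary subpartitions of $[s,t]$, and verify that the $\llbracket x\rrbracket_{\tilde p-\var;[s,t]}$ appearing on the right of \eqref{eq:plan-local} can indeed be absorbed once $w_A(s,t)$ is small. This is standard Young machinery (see e.g. \cite[Proposition 5.10]{FVBook}), but it is the only nontrivial analytic input; everything else is chaining.
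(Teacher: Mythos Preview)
Your proposal is correct and follows essentially the same architecture as the paper's proof: a local Young estimate on intervals where $w_A$ is small, a greedy partition of $[0,1]$ according to $w_A$, and a discrete Gronwall iteration. The one technical difference is that the paper first performs the change of variable $\theta=x-z$, so that the forcing becomes $\tilde z_t=\int_0^t\dd A_s\,z_s$, which is controlled by $w_A^{1/p}$; this lets the paper work throughout with the single weighted norm $\llbracket\theta\rrbracket_{w;[s,t]}=\sup_{r<u}|\theta_{r,u}|/w_A(r,u)^{1/p}$ rather than tracking the $\tilde p$-variation of $x$ directly. Your direct route via $\llbracket x\rrbracket_{\tilde p-\var}$ is equally valid and perhaps more transparent, at the cost of the extra superadditivity check (that $\sum w_A(t_{i-1},t_i)^{\tilde p/p}\leq w_A(s,t)^{\tilde p/p}$ since $\tilde p\geq p$) which you correctly flag as the only nontrivial point.
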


\begin{proof}
%

Let us first apply the change of variable $\theta=x-z$, so that $\theta$ solves
\[ \dd \theta_t = \dd A_t\, \theta_t + \dd A_t\, z_t = \dd A_t\, \theta_t + \dd \tilde{z}_t \]
where $\tilde{z}_t:=\int_0^t \dd A_s\, z_s$.
The advantage of this maneuver is that $\tilde{z}$ is also of finite $p$-variation and controlled by (a multiple of) $w^{1/p}$.
Indeed, by Young integration it holds
\begin{equ}\label{eq:young-tilde-z}
|\tilde{z}_{s,t}|
 \lesssim |A_{s,t} z_s| + w(s,t)^{1/p} \llbracket z \rrbracket_{\tilde p-\var;[s,t]}\lesssim w(s,t)^{1/p}\llbracket z \rrbracket_{\tilde p-\var}.
\end{equ}
%
For any $s<t$, define
\[
\llbracket \theta \rrbracket_{w;[s,t]} := \sup_{s\leq r <u\leq t} \frac{|\theta_{r,u}|}{w(r,u)^{1/p}},
\]
and similarly for $\tilde{z}$. Manipulating the equation for $\theta$ in a standard manner, one finds a constant $C>0$ such that, for any $s<t$, it holds
\begin{equation}\label{eq:gronwall-young-proof}
\llbracket \theta \rrbracket_{w;[s,t]} \leq |\theta_s| + C w(s,t)^{1/p}\llbracket \theta \rrbracket_{w;[s,t]} + \llbracket \tilde{z} \rrbracket_{w;[s,t]}.
\end{equation}
If $Cw(0,1)^{1/p}\leq 1/2$ then the \eqref{eq:gronwall-young-proof} buckles with $s=0,t=1$. 
Otherwise, define recursively an increasing sequence $t_i$ by $t_0=0$ and $C w(t_i,t_{i+1})^{1/p}\in (1/3,1/2)$ and $t_n=1$ for some $n$. set $J_i:=\sup_{r\in [t_i,t_{i+1}]} |\theta_r|$ with the convention $J_{-1}=|x_0|$. Then thanks to our choice of $t_i$ and equation \eqref{eq:gronwall-young-proof}, it holds
\begin{align*}
J_i
& \leq |\theta_{t_i}| + w(t_i,t_{i+1})^{1/p} \llbracket \theta \rrbracket_{w;[t_i,t_{i+1}]}\\
& \leq (1+2 w(t_i,t_{i+1})^{1/p} )|\theta_{t_i}| + 2 w(t_i,t_{i+1})^{1/p} \llbracket \tilde{z} \rrbracket_{w;[t_i,t_{i+1}]}\\
& \leq \big(1+ C^{-1} \big) J_{i-1} + C^{-1}  \llbracket \tilde{z} \rrbracket_{w;[0,1]}
\end{align*}
Recursively this implies
\begin{align*}
\sup_{t\in [0,1]} |\theta_t|
= \sup_i J_i
\leq \big( 1+C^{-1}\big)^n (J_0 + \llbracket \tilde{z} \rrbracket_{w;[0,1]})
\leq e^{\frac{n}{C}} \big(|x_0| + \llbracket \tilde{z} \rrbracket_{w;[0,1]}\big).
\end{align*}
Finally observe that, by superadditivity of $w$ and our choice of $t_i$, it holds
\[
n = (3C)^p \sum_i w(t_i,t_{i+1}) \leq (3C)^p w(0,1),
\]
and therefore by \eqref{eq:young-tilde-z}
\begin{equ}
\sup_{t\in [0,1]} |\theta_t|\leq e^{C' \llbracket A \rrbracket_{p-\var}^p} \big(|x_0| + \llbracket z\rrbracket_{\tilde p-\var} \big)
\end{equ}
with some other constant $C'>0$.
Substituting this bound back to \eqref{eq:gronwall-young-proof}, we similarly get
\begin{equ}
\llbracket \theta \rrbracket_{w;[0,1]}\leq e^{C' \llbracket A \rrbracket_{p-\var}^p} \big(|x_0| + \llbracket z\rrbracket_{\tilde p-\var} \big).
\end{equ}
Combining everything yields the claimed bounds \eqref{eq:young-estimate-1}-\eqref{eq:young-estimate-2}.
\end{proof}

\section{Fractional regularity and Girsanov's transform}\label{app:girsanov}

We collect in this appendix several definitions of fractional regularity and show how, in certain regularity regimes, they can be combined with our results, so to verify the applicability of Girsanov's transform to the singular SDEs in consideration.

We start by recalling several classical definitions of fractional spaces for paths $f:[0,1]\to E$, $E$ being a Banach space.
For $\beta\in (0,1)$ and $p\in [1,\infty)$, the fractional Sobolev space $W^{\beta,p}=W^{\beta,p}(0,1;E)$ is defined as the set of $f\in L^p(0,1;E)$ such that
\begin{equation*}
\|f\|_{W^{\beta,p}} := \| f\|_{L^p} + \llbracket f \rrbracket_{W^{\beta,p}}<\infty, \quad
\llbracket f \rrbracket_{W^{\beta,p}}:=\Big( \int_{[0,1]^2} \frac{\| f_{s,t}\|_E^p}{|t-s|^{\beta p +1}}\, \dd s \dd t\Big)^{\frac{1}{p}}.
\end{equation*}
Similarly, we define the spaces the Besov--Nikolskii spaces $N^{\beta,p}=N^{\beta,p}(0,1;E)$ as the collections of all $f\in L^p(0,1;E)$ such that
\begin{equation*}
\|f\|_{N^{\beta,p}} := \| f\|_{L^p} + \llbracket f \rrbracket_{N^{\beta,p}}<\infty, \quad
\llbracket f \rrbracket_{N^{\beta,p}}:=\sup_{h\in (0,1)} |h|^{-\beta} \Big( \int_0^{1-h} \| f_{s,s+h}\|_E^p \, \dd s\Big)^{\frac{1}{p}}.
\end{equation*}
In the case $p=\infty$, we will set $W^{\beta,p}=N^{\beta,p}=C^\beta$.
Although we will not need it, let us mention that these spaces are particular instances of the Besov spaces $B^\beta_{p,q}$ as defined in \cite{simon1990}, indeed $W^{\beta,p}=B^\beta_{p,p}$ and $N^{\beta,p}=B^\beta_{p,\infty}$.

There is a final class of spaces we will need, which is an original contribution of this work; many processes arising from stochastic sewing can be shown to belong to this class, thanks to Lemmas \ref{lem:kolmogorov}-\ref{lem:kolmogorov-2}.
Given $\beta\in (0,1]$, $p\in  [1,\infty)$ with $\beta > 1/p$, we define the space $D^{\beta,p}=D^{\beta,p}(0,1;E)$ as the set of all $f$ for which there exists a continuous control $w=w(f)$ such that
\begin{equation}\label{eq:defn-inermediate-space}
\| f_{s,t}\|_E \leq |t-s|^{\beta-\frac{1}{p}}\, w(s,t)^{\frac{1}{p}}\quad \forall\, s<t.
\end{equation}
Observe that by superadditivity, if such a control $w$ exists, then the optimal choice must be necessarily given by
\begin{equation*}
w(s,t)=\llbracket f\rrbracket_{D^{\beta,p};[s,t]}^p:= \sup \sum_{i=1}^n \frac{\| f_{t_i,t_{i+1}}\|_E^p}{|t_{i+1}-t_i|^{\beta p-1}}
\end{equation*}
where the supremum runs over all possible finite partitions $s=t_0 < t_1<\ldots<t_n=t$ of $[s,t]$.
We can therefore endow the space $D^{\beta,p}$ with the norm
\begin{equation}\label{eq:defn-intermediate-space-norm}
\| f\|_{D^{\beta,p}} := \| f_0\|_E + \llbracket f\rrbracket_{D^{\beta,p}}, \quad
\llbracket f\rrbracket_{D^{\beta,p}} = \llbracket f\rrbracket_{D^{\beta,p};[0,1]},
\end{equation}
which makes them Banach spaces; observe the analogy with the definition of $C^{p-\var}$ and its characterization via controls. In particular, if a function $f$ is known to satisfy \eqref{eq:defn-inermediate-space}, then it must hold $\llbracket f\rrbracket_{D^{\beta,p}}\leq w(0,1)^{1/p}$.

For $\beta>1/p$, we define $W^{\beta,p}_0=\{f\in W^{\beta,p}: f_0=0\}$ (as we will shortly see, this is a good definition, as elements of $W^{\beta,p}$ are continuous functions); similarly for $N^{\beta,p}_0$ and $D^{\beta,p}_0$. 

The next proposition summarises the embeddings between these classes of spaces, as well as the Cameron--Martin spaces $\mathcal{H}^H$ and spaces of finite $q$-variation.

\begin{prop}\label{lem:embedding-intermediate-space}
Let $\beta\in (0,1]$, $p\in  [1,\infty)$ with $\beta > 1/p$; then, the following hold:
\begin{itemize}
\item[i)] for any $\eps>0$, we have
$ W^{\beta,p} \hookrightarrow D^{\beta,p} \hookrightarrow N^{\beta,p} \hookrightarrow W^{\beta-\eps,p}$;
\item[ii)] if $\bar\beta\leq\beta$ and $\beta-1/p\geq\bar\beta-1/\bar p$, then $N^{\beta,p}\hookrightarrow N^{\bar\beta,\bar p}$; in particular, $N^{\beta,p} \hookrightarrow C^{\beta-1/p}$;
\item[iii)] $N^{\beta,p} \hookrightarrow C^{1/\beta-\var} \hookrightarrow N^{\beta,1/\beta}$;
\item[iv)] let $H\in (0,1/2)$ and $E=\R^d$, then for any $\eps>0$ it holds
\begin{equation*}
W^{H+\frac{1}{2}+\eps,2}_0 \hookrightarrow \mathcal{H}^H\hookrightarrow W^{H+\frac{1}{2}-\eps,2}_0;
\end{equation*}
in particular, $\mathcal{H}^H\hookrightarrow C^{q-\var}$ for any $q>(H+1/2)^{-1}$.
\end{itemize}
\end{prop}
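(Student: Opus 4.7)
The plan is to establish the four embeddings in sequence, relying in each case on a small number of classical tools: Garsia--Rodemich--Rumsey (GRR), Hölder's inequality, the superadditivity bound for controls, and (for (iv)) the Decreusefond--Üstünel fractional integral representation of the Cameron--Martin space. Parts (i)--(iii) are purely analytical statements about function spaces on $[0,1]$; part (iv) then combines them with an explicit identification of $\mathcal{H}^H$.

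For part (i), I would prove the three inclusions as follows. For $W^{\beta,p}\hookrightarrow D^{\beta,p}$, the standard GRR estimate
\begin{equation*}
|f_{s,t}|^{p} \lesssim |t-s|^{\beta p-1}\int_{s}^{t}\!\!\int_{s}^{t}\frac{|f_{u,v}|^{p}}{|u-v|^{\beta p+1}}\,du\,dv
\end{equation*}
valid for $\beta p>1$ exhibits the double integral on the right as a control $w(s,t)$ compatible with \eqref{eq:defn-inermediate-space}. For $D^{\beta,p}\hookrightarrow N^{\beta,p}$, I first record the general fact that, for any control $w$,
\begin{equation*}
\int_{0}^{1-h} w(s,s+h)\,ds \leq h\,w(0,1),
\end{equation*}
which is obtained by applying superadditivity of $w$ to the partitions $\{r+kh\}_{k}$ and integrating in $r\in[0,h]$. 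Inserting \eqref{eq:defn-inermediate-space} yields $\int_{0}^{1-h}|f_{s,s+h}|^{p}\,ds\leq h^{\beta p}w(0,1)$, hence $\llbracket f\rrbracket_{N^{\beta,p}}\leq w(0,1)^{1/p}$. Finally, $N^{\beta,p}\hookrightarrow W^{\beta-\eps,p}$ is Fubini: writing the $W^{\beta-\eps,p}$-seminorm as a one-dimensional integral over $h=t-s$ and using the Nikolskii bound on the inner integral produces a factor $\int_{0}^{1}h^{\eps p-1}dh<\infty$.

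Part (ii) splits into two subcases. When $\bar\beta=\beta$ and $\bar p\leq p$, a direct application of Hölder's inequality in the space variable inside the definition of $\llbracket \cdot\rrbracket_{N^{\beta,\bar p}}$ gives the embedding. The general inequality $\beta-1/p\geq \bar\beta-1/\bar p$, in particular the limiting case $N^{\beta,p}\hookrightarrow C^{\beta-1/p}$, is the standard Besov embedding $B^{\beta}_{p,\infty}\hookrightarrow B^{\beta-1/p}_{\infty,\infty}$, proved by combining the Nikolskii characterisation with the Littlewood--Paley--Bernstein estimate $\|\Delta_{j}f\|_{\infty}\lesssim 2^{j/p}\|\Delta_{j}f\|_{L^{p}}\lesssim 2^{-j(\beta-1/p)}$, which is summable when $\beta p>1$; I would cite \cite{simon1990} for the detailed argument.

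For part (iii), the embedding $C^{1/\beta-\var}\hookrightarrow N^{\beta,1/\beta}$ is immediate from the same control lemma of (i): if $|f_{s,t}|\leq w(s,t)^{\beta}$, then $\int_{0}^{1-h}|f_{s,s+h}|^{1/\beta}\,ds\leq h\,w(0,1)$ gives $\llbracket f\rrbracket_{N^{\beta,1/\beta}}\leq w(0,1)^{\beta}$. The reverse embedding $N^{\beta,p}\hookrightarrow C^{1/\beta-\var}$ is the sharp variation embedding: for any partition $\{t_i\}$ one applies GRR on each $[t_i,t_{i+1}]$ with double integral $A_i$, obtaining $|f_{t_i,t_{i+1}}|^{1/\beta}\lesssim (t_{i+1}-t_i)^{1-1/(\beta p)}A_i^{1/(\beta p)}$, and then Hölder with conjugate exponents $\beta p/(\beta p-1)$ and $\beta p$, together with disjointness $\sum_{i}A_i\leq A$, yields $\sum_{i}|f_{t_i,t_{i+1}}|^{1/\beta}\lesssim A^{1/(\beta p)}$. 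The Nikolskii version requires a dyadic Littlewood--Paley refinement of this argument; I would invoke the variation embedding theorem of Friz--Victoir for this step, as reproving it at sharp regularity is the main technical obstacle. This is the step I expect to be the most delicate and where I would prefer to cite rather than reprove.

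Finally, part (iv) combines the previous ingredients with the Decreusefond--Üstünel identification $\mathcal{H}^H = I_{0+}^{H+1/2}(L^{2}[0,1])$ valid for $H\in(0,1/2)$, where $I_{0+}^{\alpha}$ denotes the left Riemann--Liouville fractional integral. The Samko--Kilbas--Marichev comparison $W^{\alpha+\eps,2}_{0}\hookrightarrow I_{0+}^{\alpha}L^{2}\hookrightarrow W^{\alpha-\eps,2}_{0}$ (valid for any $\eps>0$, with the $_0$ subscript enforcing the boundary vanishing that any fractional integral automatically satisfies) applied at $\alpha=H+1/2$ yields the two-sided embedding. The $C^{q-\var}$ consequence follows by chaining: $\mathcal{H}^H\hookrightarrow W^{H+1/2-\eps,2}_{0}\hookrightarrow N^{H+1/2-\eps,2}_{0}\hookrightarrow C^{1/(H+1/2-\eps)-\var}$, where the two final embeddings come from (i) and (iii), and any $q>1/(H+1/2)$ is realized by choosing $\eps$ sufficiently small.
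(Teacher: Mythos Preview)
Parts (i)--(iii) are handled essentially as in the paper: both rely on citations to \cite{simon1990}, \cite{friz2006}, \cite{liu2020characterization} together with one direct argument, namely the superadditivity trick giving $D^{\beta,p}\hookrightarrow N^{\beta,p}$, which you prove exactly as the paper does. Two minor points: your bound $\int_0^{1-h} w(s,s+h)\,ds \leq h\,w(0,1)$ ignores a boundary term and should carry a factor $2$ (as in the paper); and your GRR argument in (iii) only yields $W^{\beta,p}\hookrightarrow C^{1/\beta-\var}$, so, like the paper, you ultimately cite the sharper Nikolskii version rather than prove it.

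Part (iv) is where you diverge. You invoke the identification $\mathcal{H}^H = I_{0+}^{H+1/2}(L^2)$ and then apply the potential-space comparison $W^{\alpha\pm\eps,2}_0 \leftrightarrow I_{0+}^\alpha(L^2)$ in one stroke. The paper does \emph{not} rely on this identification; instead it works with the explicit Nualart--Ouknine formula $K_H^{-1} f = s^{1/2-H} D_{0+}^{1/2-H}\big(s^{H-1/2} D_{0+}^{2H} f\big)$: it first uses the potential-space comparison (from \cite{decreusefond2005stochastic}) to remove the unweighted factor $D_{0+}^{2H}$, and then handles the remaining \emph{weighted} fractional derivative $s^{1/2-H} D_{0+}^{1/2-H} s^{H-1/2}$ by hand via the Marchaud representation, splitting it into three explicit terms and checking each lands in $L^2$. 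Your route is cleaner, but it presupposes that $K_H(L^2) = I_{0+}^{H+1/2}(L^2)$ with equivalent norms; proving that equality amounts precisely to showing that the power weights $s^{\pm(1/2-H)}$ in $K_H$ do not alter the range space, which is the substance of the paper's explicit computation (for one direction). So your approach is more economical if one imports that fact as a black box; the paper's is more self-contained.
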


\begin{proof}
i) The last embedding $N^{\beta,p} \hookrightarrow W^{\beta-\eps,p}$ is classical and can be found in \cite[Corollary~23]{simon1990}.
The embedding $W^{\beta,p} \hookrightarrow D^{\beta,p}$ follows from \cite[Theorem~2]{friz2006}; in particular, by Garsia-Rodemich-Rumsay lemma, the associated control $w_f$ can be taken as
\[
w_f(s,t) = \int_{[s,t]^2} \frac{\| f_{r,u}\|_E^p}{|r-u|^{1+\beta p}} \, \dd r \dd u.
\]
It remains to show the embedding $\mathcal{D}^{\beta,p} \hookrightarrow N^{\beta,p}$; this follows the same technique used to show that $C^{p-\var}\hookrightarrow N^{1/p,p}$, see e.g. \cite[Proposition~4.3]{liu2020characterization}.
Indeed, for any $h\in [0,T]$, it holds
\begin{align*}
\| f_{h+\cdot} - f_{\cdot}\|_{L^p}^p
= \int_0^{1-h} \| f_{t,h+t}\|_E^p \dd t \leq |h|^{\beta p-1} \int_0^{1-h} w(t,h+t) \dd t,
\end{align*}
where $w(s,t)=\llbracket f\rrbracket_{D^{\beta,p};[s,t]}^p$.
Denoting by $K$ the largest integer such that $Kh\leq 1-h$, we have
\begin{align*}
\int_0^{1-h} w(t,h+t) \dd t&\leq \int_0^{Kh} w(t,h+t) +|h| w(0,1)
\\
& = \sum_{i=0}^{K-1} \int_{ih}^{(i+1)h} w(s,h+s) \dd s+|h| w(0,1)
\\
&= \int_0^h \sum_{i=0}^{K-1} w(ih+s,(i+1)h+s) \dd s+|h| w(0,1)
\\
&\leq \int_0^h w(0,1) \dd s +|h| w(0,1)= 2 |h| w(0,1)
\end{align*}
where in the last inequality we used the superadditivity of $w$. Overall we conclude that $\llbracket f\rrbracket_{N^{\beta,p}}^p\leq 2\llbracket f\rrbracket_{D^{\beta,p}}^p$.
%

ii) These embeddings can be found in e.g. \cite[Corollary~22]{simon1990}, \cite[Corollary~26]{simon1990}.

iii) These embeddings can be found in e.g. \cite[Proposition~4.1]{liu2020characterization},  \cite[Proposition~4.3]{liu2020characterization}.

iv) The second embedding $\mathcal{H}^H\hookrightarrow W^{H+\frac{1}{2}-\eps,2}_0$ is the result of \cite[Theorem~3]{friz2006}; the last one follows from it combined with $N^{q,2}\hookrightarrow C^{1/q-\var}$.
It only remains to show the first embedding. Although we believe it to be common knowledge, we haven't found a proof in the literature, thus we give a detailed one.

Given $ f\in W^{H+1/2+\eps,2}_0$, in order to verify that $f\in \mathcal{H}^H$, we need to check that $K^{-1}_H f\in L^2$, where 
\begin{equation*}
K^{-1}_H f = s^{1/2-H} D_{0+}^{1/2-H} s^{H-1/2} D^{2H}_{0+},
\end{equation*}
see eq. (12) from \cite{NO1}; $D_{0+}^\gamma$ denotes the Riemann-Liouville fractional derivative of order $\gamma$, for which again we refer to \cite{NO1}.

By using standard embeddings between $W^{\delta,2}$ spaces and potential spaces $I^+_{\delta,2}$ (cf. \cite[Proposition 5]{decreusefond2005stochastic}), up to losing an arbitrary small fraction of regularity, we know that for any $f\in W^{H+1/2+\eps,2}_0$  it holds $h:=D^{2H}_{0+} f \in W^{1/2-H+\eps/2,2}$ (this is the only point in the proof where the condition $f(0)=0$ is needed). Thus we are left with verifying that, for the choice $\gamma=1/2-H$, it holds
\begin{equation*}
(K^{-1}_H f)_t = C_{\gamma} \bigg( t^{-\gamma} h_t + \gamma t^\gamma \int_0^t \frac{t^{-\gamma} h_t - s^{-\gamma} h_s}{|t-s|^{1+\gamma}} \dd s \bigg) \in L^2(0,1;\R^d).
\end{equation*}
From now on we will drop the constants $C_\gamma$ and $\gamma$ for simplicity.

For the first term, observing that $t^{-\gamma}\in L^r$ for any $r$ such that $1/r<1/2-H$ and that $h\in W^{1/2-H+\eps/2,2}\hookrightarrow L^p$ for $1/p=H-\eps/2$, it's easy to check by H\"older's inequality that $t^{-\gamma} h_t \in L^2$.

By time rescaling and addition and subtraction, we can split the integral term respectively into
\begin{equation*}
I^1_t := \int_0^t \frac{h_t-h_s}{|t-s|^{1+\gamma}} \dd s, \quad
I^2_t := t^{-\gamma} \int_0^1 \frac{1-s^{-\gamma}}{(1-s)^{1+\gamma}} h_{t s} \dd s.
\end{equation*}
For $I^1$, it holds
\begin{align*}
\int_0^1 |I^1_t|^2 \dd t
\leq \int_0^1 \bigg( \int_0^1 \frac{|h_t-h_s|}{|t-s|^{1+\gamma}} \dd s\bigg)^2 \dd t
\lesssim \int_{[0,1]^2} \frac{|h_t-h_s|^2}{|t-s|^{1+2\gamma+\eps}} \dd s \dd t \lesssim \| h\|_{W^{\gamma+\eps/2,2}},
\end{align*}
where in the middle passage we used Jensen's inequality.
To handle $I^2$, define $F^\gamma_s := (1-s^{-\gamma})/(1-s)^{1+\gamma}$; $F^\gamma$ is only unbounded at the points $s=0$ and $s=1$, where it behaves asymptotically respectively as $-s^{-\gamma}$ and $(1-s)^{-\gamma}$, therefore $F^\gamma\in L^1\cap L^2$.
As before, $h\in L^p$ for $1/p=H-\eps/2< 1/2$, therefore by H\"older's inequality
\begin{align*}
	|I^2_t|
	\leq t^{-\gamma} \| F^\gamma\|_{L^{p'}} \| h_{t\cdot}\|_{L^p}
	\sim t^{-\gamma-\frac{1}{p}} \| F^\gamma\|_{L^{p'}} \| h\|_{L^p} 
	\sim t^{\eps/2-1/2}
\end{align*}
which readily implies $I^2\in L^2$ as well.
\end{proof}

\begin{remark}\label{rem:Girsanovapp}
By Proposition \ref{lem:embedding-intermediate-space},
for a deterministic path $g$ to belong to the Cameron-Martin space $\mathcal{H}^H$ for $H\in (0,1/2)$, it suffices to verify that $g\in \mathcal{D}^{\beta,p}$ for parameters $p\in (1,2]$ and $\beta>0$ satisfying
\begin{equ}\label{eq:girsanov1}
\beta-\frac{1}{p}>H,
\end{equ}
in which case we have the estimate $\| g\|_{\mathcal{H}^H} \lesssim \| g\|_{\mathcal{D}^{\beta,p}}$.
Therefore, if a stochastic process $h$ is adapted and belongs to $\cD^{\beta,p}$, then for a sequence of stopping times $(\tau_n)_{n\in\N}$ satisfying $\tau_n\nearrow\infty$, the laws of $B^H$ are $B^H_\cdot+h_{\cdot\wedge\tau_n}$ are mutually absolutely continuous.
If the stronger Novikov-type condition
\begin{equ}\label{eq:novikov}
\E\big[\exp\lambda\|h\|_{\cD^{\beta,p}}^2\big]<\infty\quad \forall\,\lambda>0
\end{equ}
holds, then one can infer the stronger conclusion that the laws of $B^H$ are $B^H_\cdot+h$ are equivalent and that the Radon-Nikodym derivative admits moments of any order, see \cite[Proposition 3.10]{galeati2021distribution} for a similar statement.
\end{remark}

With the above considerations in mind, we are now ready to present a result on the applicability of Girsanov's transform, which is the main motivation for this appendix.

\begin{lemma}\label{lem:girsanov}
Assume \eqref{eq:main exponent} and that 
\begin{equation}\label{eq:restriction}
1-1/(Hq')<0.
\end{equation}
Let $b\in L^q_t C^\alpha_x$, $x_0\in \R^d$, and denote by $\mu$ the law of the solution $X$ to the associated SDE \eqref{eq:SDE}. Then Girsanov's transform applies and $\mu$ is equivalent to $\mathcal{L}(x_0+B^H)$. As a consequence, ${\rm supp}\, \mu = C([0,1];\R^d)$.
\end{lemma}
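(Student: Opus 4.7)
The plan is to verify the sufficient conditions of Remark \ref{rem:Girsanovapp} for the drift part $\psi_t := \int_0^t b_r(X_r)\,\dd r = \varphi_t - x_0$: namely that $\psi \in \cD^{\beta,q}_0$ almost surely with $\beta - 1/q > H$, together with an exponential moment bound of Novikov type at the conditional, localized level. Note that \eqref{eq:restriction} rewrites as $Hq' < 1$ and in particular forces $H < 1/q' \leq 1/2$, so Proposition \ref{lem:embedding-intermediate-space}(iv) is available and delivers the embedding $\cD^{\beta,q}_0 \hookrightarrow \cH^H$ together with the estimate $\|\psi\|_{\cH^H} \lesssim \|\psi\|_{\cD^{\beta,q}}$. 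Once these conditions are verified, classical fBm Girsanov applied piecewise gives $\mu \sim \cL(x_0 + B^H)$ on $[0,1]$; the support statement then follows immediately from the well-known full support of $\cL(x_0 + B^H)$ in $C([0,1];\R^d)$ (a standard consequence of Cameron--Martin density for Gaussian measures).

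For the pathwise $\cD^{\beta,q}$-regularity I invoke Lemma \ref{lem:apriori-estim}. The combination of \eqref{eq:main exponent} and \eqref{eq:restriction} places us inside \eqref{eq:condition-existence}, so that in the distributional case $\alpha < 0$ we obtain
\[
\big\|\|\psi_{s,t}\|_{L^m|\cF_s}\big\|_{L^\infty} \leq N\, w(s,t)^{1/q}\,|t-s|^{\alpha H + 1/q'},\qquad w = w_{b,\alpha,q};
\]
the case $\alpha \geq 0$ is handled by H\"older's inequality applied directly to $\psi_{s,t}$. Subcriticality \eqref{eq:main exponent} ensures $\alpha H + 1/q' > H$, so fixing $\gamma \in (H,\, \alpha H + 1/q')$ and applying the conditional Kolmogorov-type Lemma \ref{lem:kolmogorov-2} yields a random variable $M$ with finite $L^m$ moments for every $m$ such that $|\psi_{s,t}| \leq M |t-s|^\gamma w(s,t)^{1/q}$; equivalently, $\psi \in \cD^{\gamma+1/q,\,q}$ with $(\gamma+1/q)-1/q > H$.

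The polynomial moments of $M$ are not sufficient for the Novikov-type bound \eqref{eq:novikov}, so I would upgrade to conditional Gaussian control of $\psi$ by rerunning the stochastic sewing step in Lemma \ref{lem:apriori-estim} using its $L^\infty$-counterpart Lemma \ref{lem:SSL2} in place of \cite[Theorem~2.7]{FHL}. The two hypotheses of Lemma \ref{lem:SSL2} admit pathwise verification: $|A_{s,t}|$ is controlled pathwise by heat-kernel estimates, and $|\E_{s_-}\delta A_{s,u,t}|$ is pathwise bounded because the factor $\E_{s_1}|\psi_r - \E_{s_2}\psi_r|$ appearing in its estimate is dominated, by conditional Jensen, by the $L^\infty_\omega$-deterministic bound $\big\|\|\psi_{\cdot,\cdot}\|_{L^1|\cF}\big\|_{L^\infty}$ obtained in the first pass. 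On subintervals $[S,T]$ with $w(S,T)$ small enough for the buckling step in Lemma \ref{lem:apriori-estim} to close at the $L^\infty_\omega$-level, Lemma \ref{lem:SSL2} then yields
\[
\E\bigg[\exp\bigg(\mu\,\frac{|\psi_{s,t}|^2}{|t-s|^{2(\alpha H + 1/q')}w(s,t)^{2/q}}\bigg)\,\Big|\,\cF_S\bigg] \leq K,\qquad S\leq s<t\leq T,
\]
and a further application of Lemma \ref{lem:kolmogorov} transfers this to a conditional Gaussian moment bound on $\llbracket\psi\rrbracket_{\cD^{\tilde\beta,q};[S,T]}/w(S,T)^{1/q}$ for some $\tilde\beta$ with $\tilde\beta - 1/q > H$.

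Finally I partition $[0,1]$ into finitely many subintervals $\{[t_{i-1},t_i]\}$ with $w(t_{i-1},t_i)$ small enough that, via the embedding $\cD^{\tilde\beta,q}\hookrightarrow\cH^H$ from Proposition \ref{lem:embedding-intermediate-space}(iv), the smallness of $w(t_{i-1},t_i)^{2/q}$ in the exponent ensures the conditional Novikov bound $\E\big[\exp\big(\tfrac12\|\psi\vert_{[t_{i-1},t_i]}\|^2_{\cH^H}\big)\,\big|\,\cF_{t_{i-1}}\big] < \infty$ on each piece. Concatenating the conditional Girsanov densities on these pieces via the tower property then produces the global equivalence $\mu \sim \cL(x_0 + B^H)$ with integrable Radon--Nikodym derivative. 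The main obstacle is precisely the upgrade from the polynomial $L^m$-bounds of Lemma \ref{lem:apriori-estim} to the conditionally Gaussian control of $\psi$, which forces the $L^\infty$-sewing step combined with the localization trick needed to close the buckling; the remainder of the argument is routine manipulation of Cameron--Martin embeddings and iterated conditional Girsanov transforms.
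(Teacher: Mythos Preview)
Your Steps 1--3 are essentially the paper's argument: Lemma~\ref{lem:apriori-estim} feeds the a priori bound \eqref{eq:key-quantity} into a Gaussian stochastic sewing (this is exactly Lemma~\ref{lem:integral-estimates}, which already packages the $L^\infty$-sewing of Lemma~\ref{lem:SSL2} you propose to redo), and Lemma~\ref{lem:kolmogorov} then yields $\E\big[\exp(\mu\,\llbracket\psi\rrbracket_{\cD^{\tilde\beta,q}}^2)\big]<\infty$ for some fixed $\mu>0$ and $\tilde\beta-1/q>H$. There is no need to ``rerun'' anything; you can cite Lemma~\ref{lem:integral-estimates} directly.

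The gap is in Steps 4--5. Your plan is to make $w(t_{i-1},t_i)$ small so that the conditional Gaussian bound on $\llbracket\psi\rrbracket_{\cD;[t_{i-1},t_i]}$ implies a conditional Novikov bound on ``$\|\psi\vert_{[t_{i-1},t_i]}\|_{\cH^H}$'', and then to concatenate piecewise Girsanov densities. But the Girsanov density for fBm is built from $v=K_H^{-1}\psi$ at the level of the underlying Brownian motion, and the iterated conditional Novikov criterion requires control of $\int_{t_{i-1}}^{t_i} v_s^2\,\dd s$. The operator $K_H^{-1}$ is nonlocal and anchored at $0$ (cf.\ the explicit formula in the proof of Proposition~\ref{lem:embedding-intermediate-space}(iv)), so $v_s$ for $s\in[t_{i-1},t_i]$ depends on all of $\psi\vert_{[0,s]}$, not just on the increment $\psi\vert_{[t_{i-1},t_i]}$. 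The embedding $\cD^{\tilde\beta,q}_0\hookrightarrow\cH^H$ from Proposition~\ref{lem:embedding-intermediate-space}(iv) is a global statement for paths on $[0,1]$ starting at $0$; it does not give you $\int_{t_{i-1}}^{t_i} v_s^2\,\dd s \lesssim \llbracket\psi\rrbracket_{\cD;[t_{i-1},t_i]}^2$, which is what your argument needs. The phrase ``$\|\psi\vert_{[t_{i-1},t_i]}\|_{\cH^H}$'' is therefore not the quantity entering the conditional Novikov criterion, and the concatenation as you describe it does not go through.

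The paper avoids this by a different trick: rather than localising in time, it decomposes the drift. Since $C^\alpha_x\hookrightarrow\overline{C^{\alpha-\kappa}_x}$, for any $\lambda>0$ one can split $b=b^-+b^+$ with $\|b^-\|_{L^q_tC^{\alpha-\kappa}_x}$ as small as desired and $b^+\in L^q_tC^0_x$ bounded. The $b^+$-piece contributes a deterministic bound to $\llbracket\psi\rrbracket_{\cD}$; the smallness of $b^-$ lets you rescale so that the fixed-$\mu$ Gaussian bound from Step~3 absorbs the arbitrary prefactor $\lambda$. This yields the \emph{global} condition \eqref{eq:novikov} directly, after which Remark~\ref{rem:Girsanovapp} finishes the proof without any piecewise surgery on $\cH^H$.
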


\begin{proof}
Without loss of generality we may assume $\alpha<0$ and $x_0=0$.
In view of Remark \ref{rem:Girsanovapp}, we need to verify \eqref{eq:novikov} with $h=\varphi=X-B^H$ and with some $\beta$, $p$ satisfying \eqref{eq:girsanov1}.

Let $\kappa>0$ small enough so that $H$, $\alpha-\kappa$, and $q$ also satisfy \eqref{eq:main exponent}, and let $\tilde b\in L^q_tC^{\alpha-\kappa}_x$ with norm $1$.
By Lemmas \ref{lem:apriori-estim}, \ref{lem:integral-estimates}, and \ref{lem:kolmogorov} we have that with some $\mu>0$
\begin{equ}\label{eq:girsanov2}
\E\bigg[\exp\bigg( \mu\Big\|\int_0^\cdot \tilde b_r(B^H_r+\varphi_r)\dd r \Big\|_{\cD^{1+(\alpha-\kappa) H-\kappa,q}}^2\bigg) \bigg]<\infty.
\end{equ}
Note that for sufficiently small $\kappa$ the exponents satisfy \eqref{eq:girsanov1} as a consequence of \eqref{eq:main exponent}.
Therefore \eqref{eq:girsanov2} looks like \eqref{eq:novikov}, except the arbitrariness of the coefficient. One can then proceed by an interpolation argument as in \cite[Proposition~3.8]{galeati2021distribution}: for any $\kappa>0$ and $\lambda>0$ there exists $b^{-}$ and $b^{+}$ such that $b=b^-+b^+$ and
\begin{equ}
\frac{2 \lambda}{\mu} \|b^-\|_{L^q_tC^{\alpha-\kappa}_x}^2\leq1,\qquad\qquad\|b^+\|_{L^q_t C^0_x}=:K<\infty,
\end{equ}
where $K$ may depend on all parameters.
Then we can write
\begin{equs}
\E&\bigg[\exp\bigg( \lambda\Big\|\int_0^\cdot b(B^H_r+\varphi_r)\dd r \Big\|_{\cD^{1+(\alpha-\kappa) H-\kappa,q}}^2\bigg) \bigg]
\\
&\leq e^{2K^2}\E\bigg[\exp\bigg( \mu\, \frac{2\lambda}{\mu} \Big\|\int_0^\cdot b^-(B^H_r+\varphi_r)\dd r \Big\|_{\cD^{1+(\alpha-\kappa) H-\kappa,q}}^2\bigg)\bigg]<\infty,
\end{equs}
applying \eqref{eq:girsanov2} with $\sqrt{2\lambda/\mu}b^-$ in place of $\tilde b$ in the last step.
\end{proof}

\begin{remark}
The restriction \eqref{eq:restriction} in Lemma \ref{lem:girsanov} is necessary. Indeed, even taking a space-independent drift $b\in L^q$, so that $\varphi\in W^{1,q}$, the condition $1-1/q>(H+1/2)-1/2$ necessary for the Sobolev embedding implies \eqref{eq:restriction}.
The reader may feel this pathological and rightly so: for such a $b$ we can deduce everything about the law of $B^H+\varphi$ from the law of $B^H$.
Note that this also motivates the use of ``stochastic regularity'' as in e.g. \eqref{eq:drift-regularity}, which assigns to deterministic functions (like $\varphi$ in this example) infinite regularity.

Note also that \eqref{eq:restriction} enforces $H\in(0,1/2)$. We do not discuss the regime of large $H$ in detail, as Girsanov's transform becomes less end less useful as $H$ increases.
For example, for $H>2$ one has $B^H\in C^2$ and (in the nontrivial case $\alpha<1$) $\varphi\notin C^2$, yielding trivially the mutual singularity of the laws of $B^H$ and $X=B^H+\varphi$. Once again, the way out is to use ``stochastic regularity'' as a substitute for Girsanov.
\end{remark}

\bibliography{myBiblio2}{}
\bibliographystyle{plain}

\end{document}